\documentclass[10pt,reqno]{amsart}
\usepackage{color}
\usepackage{hyperref}
\hypersetup{
  citecolor = blue,
  colorlinks,
  linkcolor = blue,
  urlcolor = blue
}
\usepackage{amssymb}
\usepackage{amsfonts}
\usepackage{amsmath}
\usepackage{amsthm}
\newcommand{\N}{\mathbb{N}}
\newcommand{\dd}{\,\mathrm{d}}
\newcommand{\pl}{\mathop{p\operatorname{-lim}}}
\newcommand{\R}{\mathbb{R}}
\newcommand{\C}{\mathbb{C}}
\newcommand{\Z}{\mathbb{Z}}
\newcommand{\Q}{\mathbb{Q}}
\newcommand{\T}{\mathbb{T}}
\newcommand{\A}{\mathbb{A}}
\newcommand{\DT}{DT}
\newcommand{\resfunc}{admissible function}
\renewcommand{\emptyset}{\varnothing}

\newtheorem{theorem}{Theorem}[section]
\newtheorem{definition}[theorem]{Definition}
\newtheorem{lemma}[theorem]{Lemma}
\newtheorem{corollary}[theorem]{Corollary}
\newtheorem{proposition}[theorem]{Proposition}

\newtheorem{example}[theorem]{Example}

\newtheorem{remark}[theorem]{Remark}

\numberwithin{equation}{section}

\begin{document}

\title{Van der Corput's Difference Theorem: some modern developments}

\author{Vitaly Bergelson}
\address[Bergelson]{Department of Mathematics, The Ohio State University, Columbus, OH 43210, USA}
\email{vitaly@math.ohio-state.edu}
\author{Joel Moreira}
\address[Moreira]{Department of Mathematics, The Ohio State University, Columbus, OH 43210, USA}
\email{moreira@math.ohio-state.edu}

\keywords{Uniform distribution, van der Corput difference theorem}

\date{\today}
\thanks{The first author gratefully acknowledges the support of the NSF under grants DMS-1162073 and DMS-1500575}

\begin{abstract}We discuss various forms of the classical van der Corput's difference theorem and explore applications to and connections with the theory of uniform distribution, ergodic theory, topological dynamics and combinatorics.
\end{abstract}
\maketitle

\section{Introduction}\label{sec_intro}
Van der Corput's classical Difference Theorem (\cite{vdCorput31}) is traditionally stated as follows:
\begin{itemize}
  \item[\DT:] Let $(x_n)_{n=1}^\infty$ be a sequence of real numbers. If for any $h\in\N=\{1,2,\dots\}$, the sequence $(x_{n+h}-x_n)_{n=1}^\infty$ is uniformly distributed $\bmod1$, then $(x_n)_{n=1}^\infty$ is uniformly distributed $\bmod1$.
\end{itemize}
Let $f(x)\in\R[x]$ and let $x_n=f(n)$, $n\in\N$.
Then for any $h\in\N$, the degree of the polynomial $f(n+h)-f(n)=x_{n+h}-x_n$ equals $\deg f-1$.
This rather trivial observation, together with \DT{}, allows one to obtain a streamlined inductive proof of Weyl's celebrated theorem, (\cite{Weyl16}), which states that for any polynomial $f\in\R[x]$ which has at least one coefficient, other than the constant term, irrational, the sequence $x_n=f(n)$, $n\in\N$, is uniformly distributed $\bmod\ 1$.
While the proof of \DT{} can be condensed to just a few lines (see the proof of Theorem \ref{thm_vdCoriginal} in the next section), van der Corput's Difference Theorem contains in embryonic form a powerful idea of Complexity Reduction.
In the subsequent sections we will provide numerous examples of applications of various generalizations of \DT.
These applications include results in the theory of uniform distribution, as well as some new multiple recurrence theorems in Ergodic Theory and Topological Dynamics.

The structure of the paper is as follows.
In Section \ref{sec_vdCtrick} we provide a short proof of \DT{} and also formulate and prove a general form of \DT{} that will be utilized in later sections.
Section \ref{sec_welldist} is devoted to various aspects of the phenomenon of well distribution.
Section \ref{sec_besicovitch} deals with uniform distribution along Besicovitch almost periodic sequences.
Among other things, we establish new uniform distribution results involving the sequence of squarefree numbers. 
In Section \ref{sec_othergroups} we provide some additional applications of the (generalized) \DT{}.
In particular, we establish a version of Weyl's equidistribution theorem in the group $\A/\Q$ (where $\A$ is the additive group of adeles).
We also prove a novel kind of ergodic theorem involving measure preserving actions of the additive and multiplicative groups of a countable field.
Finally, in sub-section \ref{sec_ractions} we prove a rather general `non-linear' mean ergodic theorem involving a family of commuting measure preserving $\R$-actions.
In the relatively short Section \ref{sec_katai} we formulate a theorem due to K\'atai which may be interpreted as a multiplicative version of \DT{} and demonstrate its power by providing an application involving the classical M\"obius function.
In Section \ref{sec_ultrafilter} we consider limits along idempotent ultrafilters and prove, with the help of an ultrafilter variant of \DT, a polynomial ergodic theorem for mildly mixing transformations.
Finally, in Section \ref{sec_topologicalvdC} we show that the idea of Complexity Reduction can be applied to multiple recurrence in topological dynamics, with new applications to combinatorics.

Regrettably, due to various constrains, numerous additional applications of, and connections with, \DT{} were not included in this paper. See, for example, \cite{Bergelson_Knutson09,Bergelson_Lesigne08,Bergelson_McCutcheon00,Bergelson_McCutcheon10,Cigler64,Elst_Muller15,Furstenberg_Katznelson85,Haland93,Kamae_France78,Kemperman64,Ruzsa84,Taschner79}.

\section{Some variants of van der Corput's Difference Theorem}\label{sec_vdCtrick}
We start with the classical definition of uniform distribution, which goes back to the groundbreaking paper of Weyl \cite{Weyl16}.
\begin{definition}\label{def_uniformdistribution}
A sequence $(x_n)_{n\in\N}$ taking values in the unit interval $[0,1)$ is \emph{uniformly distributed} if for any subinterval $[a,b)\subset[0,1)$ the proportion of those $n\in\N$ for which $x_n\in[a,b)$ is $b-a$. More precisely, if
\begin{equation}\label{eq_intro_uniformdistribution}
\lim_{N\to\infty}\frac{\Big|\big\{n\in\{1,\dots,N\}:x_n\in[a,b)\big\}\Big|}N=b-a
\end{equation}
for all $0\leq a<b\leq1$.
\end{definition}
One can informally say that a sequence is uniformly distributed if each interval gets its fair share of points.
Since there are uncountably many subintervals $[a,b)\subset[0,1)$, it is a priori not clear from the definition that such sequences exist (although one should certainly believe that a sequence of i.i.d.\ random variables taking values in $[0,1)$ and having probability density function $1$ will have this property almost surely.)

The following classical theorem, providing useful characterizations of uniform distribution, is originally due to Weyl \cite{Weyl16}. For a proof see, for instance, \cite[Theorems 1.1.1 and 1.2.1]{Kuipers_Niederreiter74}.
\begin{theorem}\label{theorem_weyl}
 Let $(x_n)_{n\in\N}$ be a sequence taking values in $[0,1)$. The following are equivalent:
 \begin{enumerate}
   \item $(x_n)_{n\in\N}$ is uniformly distributed.
   \item $$\lim_{N\to\infty}\frac1N\sum_{n=1}^Nf(x_n)=\int_0^1f(x)\dd x\qquad\qquad\forall f\in C[0,1]$$
   \item $$\lim_{N\to\infty}\frac1N\sum_{n=1}^Ne^{2\pi ihx_n}=0\qquad\qquad\forall h\in\N$$
 \end{enumerate}
\end{theorem}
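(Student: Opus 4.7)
My plan is to prove the cycle (1) $\Rightarrow$ (2) $\Rightarrow$ (3) $\Rightarrow$ (1), which yields the full equivalence.

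For (1) $\Rightarrow$ (2), note first that (1) is precisely (2) applied to an indicator function $\mathbf{1}_{[a,b)}$, so by linearity (2) holds for all step functions. Any $f \in C[0,1]$ is uniformly continuous on $[0,1]$, so given $\varepsilon > 0$ one can trap it between step functions $\varphi^- \leq f \leq \varphi^+$ with $\int_0^1 (\varphi^+ - \varphi^-) \dd x < \varepsilon$; sandwiching the Ces\`aro averages of $f(x_n)$ between those of $\varphi^\pm(x_n)$ and letting $\varepsilon \to 0$ yields (2). The implication (2) $\Rightarrow$ (3) is immediate: the real and imaginary parts of $e^{2\pi i h x}$ are continuous on $[0,1]$, and $\int_0^1 e^{2\pi i h x}\dd x = 0$ for every $h \in \N$.

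The main content lies in (3) $\Rightarrow$ (1). My approach is to approximate $\mathbf{1}_{[a,b)}$ from above and below by trigonometric polynomials with small integral gap. Given $[a,b) \subset [0,1)$ and $\varepsilon > 0$, I would first interpolate $\mathbf{1}_{[a,b)}$ by continuous periodic functions $g^- \leq \mathbf{1}_{[a,b)} \leq g^+$ with $\int_0^1 (g^+ - g^-) \dd x < \varepsilon/2$ (piecewise-linear ramps suffice). Then Fej\'er's theorem uniformly approximates each $g^\pm$ to within $\varepsilon/4$ by a trigonometric polynomial, and a small constant shift absorbs the approximation error while preserving the one-sided inequalities $T^- \leq \mathbf{1}_{[a,b)} \leq T^+$ together with $\int_0^1 (T^+ - T^-) \dd x < \varepsilon$. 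Since (3) and linearity imply that the Ces\`aro average of $T(x_n)$ converges to $\int_0^1 T \dd x$ for every trigonometric polynomial $T$, the sandwich
\[
\int_0^1 T^- \dd x \;\leq\; \liminf_{N\to\infty} \frac{1}{N} \sum_{n=1}^N \mathbf{1}_{[a,b)}(x_n) \;\leq\; \limsup_{N\to\infty} \frac{1}{N} \sum_{n=1}^N \mathbf{1}_{[a,b)}(x_n) \;\leq\; \int_0^1 T^+ \dd x
\]
traps the target limit within $\varepsilon$ of $b-a$, and letting $\varepsilon \to 0$ completes the proof.

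The principal obstacle is exactly the construction in (3) $\Rightarrow$ (1) of \emph{one-sided} trigonometric approximants of an indicator function with small integral gap: two-sided uniform approximation of continuous functions is immediate from Fej\'er's theorem, but enforcing the pointwise inequalities $T^- \leq \mathbf{1}_{[a,b)} \leq T^+$ requires the preliminary smoothing of $\mathbf{1}_{[a,b)}$ by continuous envelopes, together with a quantitative adjustment that absorbs the sup-norm error of the Fej\'er approximation into the (already small) integral gap.
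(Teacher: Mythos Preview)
Your argument is correct and follows the standard route. Note, however, that the paper does not actually prove this particular statement: it simply cites \cite[Theorems 1.1.1 and 1.2.1]{Kuipers_Niederreiter74}. The closest thing to a proof in the paper is the sketch given for the general version, Theorem~\ref{theorem_weylgeneral}, and there the organization differs slightly from yours. Rather than closing the cycle with a direct (3)$\Rightarrow$(1), the paper establishes (2)$\Leftrightarrow$(3) via Stone--Weierstrass (the analogue of your Fej\'er step) and (1)$\Leftrightarrow$(2) by sandwiching continuous functions between step functions in one direction and indicators between continuous functions (Urysohn) in the other. Your (3)$\Rightarrow$(1) simply fuses the paper's (3)$\Rightarrow$(2) and (2)$\Rightarrow$(1) into a single step, building one-sided trigonometric approximants to the indicator directly; the content is the same.
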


Condition (3) of Theorem \ref{theorem_weyl} is known as the \emph{Weyl criterion} for uniform distribution.
Using it, one can easily show, for instance, that for any irrational $\alpha\in\R$, the sequence $x_n=n\alpha\bmod1$\footnote{For a real number $x$, we denote by $\lfloor x\rfloor$ the largest integer not exceeding $x$, and denote by $x\bmod1$ the number $x-\lfloor x\rfloor\in[0,1)$.} is uniformly distributed.

In this paper, as is customary in the theory of uniform distribution, we identify $[0,1)$ with the torus $\T:=\R/\Z$.
The van der Corput difference theorem, \DT, gives a sufficient condition for a sequence $(x_n)_{n\in\N}$ taking values in $\T$ to be uniformly distributed:
\begin{theorem}[van der Corput's difference theorem, \cite{vdCorput31}]\label{thm_vdCoriginal}
Let $(x_n)_{n\in\N}$ be a sequence taking values in the torus $\T$.
Assume that for every $d\in\N$, the sequence $n\mapsto x_{n+d}-x_n$ is uniformly distributed.
Then $(x_n)_{n\in\N}$ is uniformly distributed.
\end{theorem}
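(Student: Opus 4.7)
The plan is to reduce the conclusion to Weyl's criterion (condition (3) of Theorem \ref{theorem_weyl}) and then invoke an elementary quadratic inequality, usually called the van der Corput inequality, to deduce it from the same criterion applied to the differences $x_{n+d}-x_n$.

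Fix $h\in\N$ and set $u_n:=e^{2\pi ihx_n}$. By Weyl's criterion, the sequence $(x_n)$ is uniformly distributed if and only if $\frac1N\sum_{n=1}^N u_n\to 0$ for every such $h$. Observe that for any $d\in\N$,
\[
u_{n+d}\overline{u_n}=e^{2\pi ih(x_{n+d}-x_n)},
\]
and the hypothesis, combined with Weyl's criterion applied to $(x_{n+d}-x_n)_{n\in\N}$, gives $\frac1N\sum_{n=1}^N u_{n+d}\overline{u_n}\to 0$ for each fixed $d\ge 1$.

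The key step is an inequality stating that for any bounded sequence $(u_n)$ of complex numbers and any $H\in\N$,
\[
\limsup_{N\to\infty}\left|\frac1N\sum_{n=1}^N u_n\right|^{2}\ \le\ \frac{1}{H}\sup_n|u_n|^2+\frac{2}{H}\sum_{d=1}^{H-1}\limsup_{N\to\infty}\left|\frac1N\sum_{n=1}^N u_{n+d}\overline{u_n}\right|.
\]
To prove this I would approximate $\frac1N\sum_{n=1}^N u_n$ by $\frac1{NH}\sum_{n=1}^N\sum_{d=0}^{H-1}u_{n+d}$ (the discrepancy is $O(H/N)$), apply Cauchy--Schwarz to bring the square inside the sum over $n$, and then expand $\bigl|\sum_{d=0}^{H-1}u_{n+d}\bigr|^2$ as a double sum over pairs $(d,d')$, grouping terms by the difference $d-d'$.

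Applying this inequality to $u_n=e^{2\pi ihx_n}$ (which satisfies $|u_n|=1$), every term on the right with $d\ge 1$ vanishes, leaving only $\frac1H$. Letting $H\to\infty$ yields $\frac1N\sum_{n=1}^N e^{2\pi ihx_n}\to 0$, which by Weyl's criterion is exactly the desired uniform distribution of $(x_n)$. The only real obstacle is proving the van der Corput inequality itself; once that lemma is in hand, the deduction is immediate, and this is precisely why the authors remark that the proof fits in a few lines.
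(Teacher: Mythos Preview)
Your proposal is correct and follows essentially the same route as the paper: reduce to Weyl's criterion, set $u_n=e^{2\pi ihx_n}$, and prove the $\C$-valued statement (the paper's Theorem~\ref{thm_vdCinC}) via the averaging trick $\tfrac1N\sum_n u_n\approx\tfrac1{NH}\sum_n\sum_d u_{n+d}$ followed by Cauchy--Schwarz and expansion of the square. The only cosmetic difference is that you package the estimate as an explicit ``van der Corput inequality'' grouped by the lag $d-d'$, whereas the paper keeps the double sum over $(d_1,d_2)$ and observes directly that each off-diagonal term vanishes; the underlying computation is the same.
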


In view of Weyl's criterion, the assumption of Theorem \ref{thm_vdCoriginal} is that, for any $d,h\in\N$,
$$\lim_{N\to\infty}\frac1N\sum_{n=1}^Ne^{2\pi ih(x_{n+d}-x_n)}=0$$
 and to prove Theorem \ref{thm_vdCoriginal} we have to show that
 $$\lim_{N\to\infty}\frac1N\sum_{n=1}^Ne^{2\pi ihx_n}=0$$
 for any $h\in\N$.
%More precisely, we have the following result, sometimes also called the van der Corput difference theorem.
Therefore, Theorem \ref{thm_vdCoriginal} is a corollary of the following version of \DT.
\begin{theorem}\label{thm_vdCinC}
  Let $(u_n)_{n\in\N}$ be a bounded sequence in $\C$.
  Assume that for every $d\in\N$ we have
  \begin{equation}\label{eq_thm_vdCinC}
    \lim_{N\to\infty}\frac1N\sum_{n=1}^Nu_{n+d}\overline{u_n}=0
  \end{equation}
  Then
  $$\lim_{N\to\infty}\frac1N\sum_{n=1}^Nu_n=0$$
\end{theorem}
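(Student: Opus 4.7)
The plan is to reduce the average $\frac{1}{N}\sum_{n=1}^N u_n$ to a shifted double average, apply Cauchy--Schwarz in the $n$-variable, and then exploit the hypothesis \eqref{eq_thm_vdCinC} on the off-diagonal correlations. First I would fix an auxiliary integer $H\in\N$ and use the boundedness of $(u_n)$ by some constant $C$ to replace the original average by
\begin{equation*}
A_{N,H} \;=\; \frac{1}{N}\sum_{n=1}^N \frac{1}{H}\sum_{h=1}^H u_{n+h},
\end{equation*}
incurring only an $O(CH/N)$ error coming from the boundary of the shift, which disappears as $N\to\infty$ with $H$ fixed.

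The next step is Cauchy--Schwarz, which, after swapping the order of summation, gives
\begin{equation*}
|A_{N,H}|^2 \;\leq\; \frac{1}{N}\sum_{n=1}^N \left|\frac{1}{H}\sum_{h=1}^H u_{n+h}\right|^2 \;=\; \frac{1}{H^2}\sum_{h_1,h_2=1}^H \frac{1}{N}\sum_{n=1}^N u_{n+h_1}\overline{u_{n+h_2}}.
\end{equation*}
I would then split the right-hand side according to whether $h_1=h_2$ or not. The diagonal contribution is bounded by $C^2/H$, since there are $H$ such pairs, each with inner average at most $C^2$. For $h_1\neq h_2$, say $h_1>h_2$, setting $d=h_1-h_2\in\N$ and re-indexing $n\mapsto n-h_2$ (at the cost of yet another $O(CH/N)$ boundary error) reduces the inner average to $\frac{1}{N}\sum_{n=1}^N u_{n+d}\overline{u_n}$, which by hypothesis \eqref{eq_thm_vdCinC} tends to $0$ as $N\to\infty$.

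Combining the two cases yields
\begin{equation*}
\limsup_{N\to\infty}|A_{N,H}|^2 \;\leq\; \frac{C^2}{H},
\end{equation*}
and since this holds for every $H\in\N$, letting $H\to\infty$ shows that $A_{N,H}$, and therefore $\frac{1}{N}\sum_{n=1}^N u_n$, tends to $0$. The main subtlety is the bookkeeping of the several $O(H/N)$ boundary errors, all of which are harmless as long as one takes $N\to\infty$ \emph{before} $H\to\infty$; this ordering of limits is the characteristic feature of van der Corput--type arguments and is really the only non-routine point in the argument.
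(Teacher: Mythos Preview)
Your proposal is correct and follows essentially the same argument as the paper: replace the average by the shifted double average (with a negligible boundary error), apply Cauchy--Schwarz in the $n$-variable, expand the square, and control the off-diagonal terms via the hypothesis while the diagonal contributes $O(1/H)$. The paper's proof is slightly terser (it does not spell out the re-indexing of the off-diagonal terms or the boundary bookkeeping), but the structure and all key steps are the same.
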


\begin{proof}
  %We will use the Weyl criterion in both directions.
  %Observe that \eqref{eq_vdCinC} implies that $\frac1N\sum_{n=1}^Nu_{n+d}\overline{u_{n+d'}}\to0$ for any $d'\in\N$ except $d'=d$.
  Our goal will be achieved if we show that for any $\epsilon>0$
  $$\limsup_{N\to\infty}\left|\frac1N\sum_{n=1}^Nu_n\right|<\epsilon$$
  Notice now that for any $\epsilon>0$ and any $D\in\N$, if $N\in\N$ is large enough we have
  %Then, for any large enough positive integers $D$ and $N$ we have
  $$\left|\frac1N\sum_{n=1}^Nu_n-\frac1N\frac1D\sum_{n=1}^N\sum_{d=1}^Du_{n+d}\right|<\frac\epsilon2$$
   Hence it suffices to show that, if $D$ is large enough,% Therefore, in order to estimate the first sum, it suffices to estimate the second double sum.
   $$\limsup_{N\to\infty}\left|\frac1N\frac1D\sum_{n=1}^N\sum_{d=1}^Du_{n+d}\right|<\frac\epsilon2$$
  Using the Cauchy-Schwarz inequality in $\R^N$ we have
  \begin{eqnarray}\label{eq_vdcproof}
    \limsup_{N\to\infty}\left|\frac1N\frac1D\sum_{n=1}^N\sum_{d=1}^Du_{n+d}\right|^2&\leq& \limsup_{N\to\infty}\frac1N\sum_{n=1}^N\left|\frac1D\sum_{d=1}^Du_{n+d}\right|^2 \notag\\&=&\limsup_{N\to\infty}\frac1N\sum_{n=1}^N\frac1{D^2}\sum_{d_1,d_2=1}^D u_{n+d_1}\overline{u_{n+d_2}}\notag \\&\leq&\frac1{D^2}\sum_{d_1,d_2=1}^D\limsup_{N\to\infty}\frac1N\sum_{n=1}^Nu_{n+d_1}\overline{u_{n+d_2}}\qquad
  \end{eqnarray}
  Note that, for $d_1\neq d_2$, it follows from \eqref{eq_thm_vdCinC} that $\tfrac1N\sum_{n=1}^Nu_{n+d_1}\overline{u_{n+d_2}}\to0$ as $N\to\infty$.
  We conclude that the quantity in \eqref{eq_vdcproof} is bounded by $\tfrac D{D^2}=\tfrac1D$ which is arbitrarily small for large enough $D$.
  %Since $n\mapsto x_{n+d}-x_{n+d'}$ is uniformly distributed for any $d\neq d'$, it follows from the Weyl criterion that the last expression is $0$.
\end{proof}
%Taking, for example $x_n=n^2\alpha\bmod1$ for an irrational $\alpha$, we have $x_{n+d}-x_n=(2d\alpha)n+d^2\alpha$, which is uniformly distributed.
%It follows from the difference theorem (\DT) that $(x_n)_{n\in\N}$ is uniformly distributed.
As was alluded to in the introduction, \DT{} allows one to easily show that for any polynomial $f\in\R[x]$ which has an irrational coefficient, other than the constant term, the sequence $\big(f(n)\bmod1\big)_{n\in\N}$ is uniformly distributed. %; this is a famous theorem of Weyl \cite{Weyl16}.
We will describe now another immediate application of \DT.

A classical result in the theory of uniform distribution is Fej\'er's theorem, which we will presently formulate.
Given a sequence $n\mapsto f(n)$ of real numbers, we define its discrete derivative $\Delta f:\N\to\R$ by the formula $\Delta f(n)=f(n+1)-f(n)$.
We also denote by $\Delta^s f$ the iterated discrete derivative, i.e.\ $\Delta^sf=\Delta(\Delta^{s-1}f)$.
\begin{theorem}[Fej\'er; see, for example, {\cite[Theorem 1.2.5]{Kuipers_Niederreiter74}}]\label{theorem_fejer}
  Let $\big(f(n)\big)_{n\in\N}$ be sequence in $\R$.
  Assume that $\Delta f(n)$ is eventually decreasing and satisfies
  $$\lim_{n\to\infty}\Delta f(n)=0\qquad\text{and}\qquad\lim_{n\to\infty}n\Delta f(n)=\infty$$
  Then the sequence $\big(f(n)\bmod1\big)_{n\in\N}$ is uniformly distributed.
\end{theorem}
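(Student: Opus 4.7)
The plan is to verify Weyl's criterion (Theorem \ref{theorem_weyl}(3)): for each $h\in\N$,
\[
\frac{1}{N}\sum_{n=1}^N e^{2\pi i h f(n)}\longrightarrow 0.
\]
Replacing $f$ by $hf$ preserves all three hypotheses, so I reduce to $h=1$. Since $\Delta f$ is eventually monotone decreasing and tends to $0$, one has $0<\Delta f(n)<\tfrac12$ for all $n$ past some $n_0$.

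Before proceeding, it is worth noting that a direct application of Theorem \ref{thm_vdCoriginal} cannot succeed: for fixed $d\in\N$, $f(n+d)-f(n)=\sum_{j=0}^{d-1}\Delta f(n+j)\to 0$, so these differences cluster at $0$ modulo $1$ rather than being uniformly distributed. I would instead carry out a summation-by-parts argument. Setting $a_n:=\bigl(e^{2\pi i\Delta f(n)}-1\bigr)^{-1}$ for $n\ge n_0$, the identity $e^{2\pi i f(n)}=a_n(e^{2\pi i f(n+1)}-e^{2\pi i f(n)})$ together with Abel summation gives
\[
\sum_{n=n_0}^N e^{2\pi i f(n)}=a_N e^{2\pi i f(N+1)}-a_{n_0}e^{2\pi i f(n_0)}+\sum_{n=n_0+1}^N(a_{n-1}-a_n)\,e^{2\pi i f(n)}.
\]

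Since $|a_n|=1/\bigl(2\sin(\pi\Delta f(n))\bigr)\sim 1/(2\pi\Delta f(n))$ and $n\Delta f(n)\to\infty$, the boundary term $|a_N|$ is $o(N)$. The heart of the argument is bounding $\sum_n|a_{n-1}-a_n|$. Writing $a_n=-ib_n e^{-i\pi\Delta f(n)}$ with $b_n:=1/(2\sin(\pi\Delta f(n)))$ positive and monotonically increasing (thanks to monotonicity of $\Delta f$), I would split the increment into a modulus contribution---which telescopes to $b_N-b_{n_0}=O(|a_N|)$---and a phase contribution, bounded above by $\pi\sum_n b_n\bigl(\Delta f(n-1)-\Delta f(n)\bigr)\le\pi b_N\Delta f(n_0)$, also $O(|a_N|)=o(N)$. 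Dividing the resulting estimate $\sum_{n=1}^N e^{2\pi i f(n)}=o(N)$ by $N$ completes the proof.

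The main obstacle is the control of $\sum|a_{n-1}-a_n|$: one must separate the (monotone) modulus of $a_n$ from its (slowly varying) argument, then use the monotonicity of $\Delta f$ to telescope. The condition $n\Delta f(n)\to\infty$ is exactly what promotes the resulting $O(1/\Delta f(N))$ bounds to $o(N)$. In this sense Fej\'er's theorem is analytic in flavor rather than a strict corollary of \DT; its kinship with \DT{} lies in extracting cancellation from a monotonic difference structure.
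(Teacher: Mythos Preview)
The paper does not supply its own proof of Theorem~\ref{theorem_fejer}; it merely states the result with a reference to \cite[Theorem~1.2.5]{Kuipers_Niederreiter74}. Your Abel summation argument is correct and is in fact the classical proof one finds in that reference: write $e^{2\pi i f(n)}=a_n\bigl(e^{2\pi i f(n+1)}-e^{2\pi i f(n)}\bigr)$ with $a_n=(e^{2\pi i\Delta f(n)}-1)^{-1}$, sum by parts, and use monotonicity of $\Delta f$ together with $n\Delta f(n)\to\infty$ to show both the boundary term $|a_N|$ and the variation $\sum|a_{n-1}-a_n|$ are $o(N)$. Your separation of $a_n$ into modulus $b_n$ and phase $e^{-i\pi\Delta f(n)}$, and the telescoping bounds that follow, are exactly the standard way to control that variation.

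Your closing remark is also on point: Fej\'er's theorem is indeed not deduced from \DT{} in the paper but is used as an independent input (for the base case of tempered functions and for the case $d=1$ in the proof of Theorem~\ref{theorem_Boshernitzan}), precisely because, as you note, the differences $f(n+d)-f(n)\to0$ and so cannot themselves be uniformly distributed.
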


%Sequences which have some discrete derivative satisfying the conditions of Fej\'er's theorem are called tempered.
%More precisely we have:
\begin{definition}
  A function $f:\R\to\R$ is called \emph{tempered} if for some $\ell\in\{0,1,2,\dots\}$ the first $\ell+1$
  derivatives of $f$ exist, are continuous on some interval $(a,\infty)$ and satisfy the following conditions:
  \begin{enumerate}
    \item $f^{(\ell+1)}(s)$ (eventually) decreases to $0$.
    \item $\displaystyle\lim_{s\to\infty} f^{(\ell)}(s)=\lim_{s\to\infty}sf^{(\ell+1)}(s)=\infty$.
  \end{enumerate}
  The sequence $\big(f(n)\big)_{n\in\N}$ is called a \emph{tempered sequence} and $\ell$ is called the \emph{degree} of $f$.
\end{definition}

Here are some examples of tempered sequences.
\begin{itemize}
  \item $x_n=\sum_{i=1}^kc_in^{\alpha_i}$, where $c_i,\alpha_i\in\R$, $c_k,\alpha_k>0$ and $\alpha_1<\cdots<\alpha_k$.
\item $x_n=cn^\alpha\big(\cos(\log^\beta n)+d\big)$, where $\alpha,\beta,c,d\in\R$ satisfy $\alpha,c>0$, $\beta<1$, and $d>1$.
\end{itemize}
Fej\'er's theorem together with van der Corput's \DT{} readily implies:% that any tempered sequence is uniformly distributed modulo $1$.
\begin{corollary}\label{cor_tempered}
  Let $f:\R\to\R$ be a tempered function. Then the sequence $\big(f(n)\bmod1\big)_{n\in\N}$ is uniformly distributed.
\end{corollary}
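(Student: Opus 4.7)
The plan is to proceed by induction on the degree $\ell$ of the tempered function $f$, with Fej\'er's theorem (Theorem \ref{theorem_fejer}) furnishing the base case and van der Corput's difference theorem (Theorem \ref{thm_vdCoriginal}) driving the inductive step.

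For the base case $\ell=0$, the hypotheses are that $f'$ is continuous on some interval $(a,\infty)$, eventually decreases to $0$, and that $f(s),\,sf'(s)\to\infty$. I would verify the hypotheses of Fej\'er's theorem directly for $\big(f(n)\big)_{n\in\N}$ by writing $\Delta f(n)=\int_n^{n+1}f'(s)\dd s$. Since $f'$ is eventually monotonically decreasing to $0$, the bound $f'(n+1)\le\Delta f(n)\le f'(n)$ shows that $\Delta f(n)$ is eventually decreasing (the difference $\Delta f(n+1)-\Delta f(n)=\int_0^1\big(f'(n+1+t)-f'(n+t)\big)\dd t$ is eventually non-positive), tends to $0$, and satisfies $n\Delta f(n)\ge\tfrac{n}{n+1}(n+1)f'(n+1)\to\infty$.

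For the inductive step ($\ell\ge1$), I assume the statement for tempered functions of degree $\ell-1$ and, for each $h\in\N$, introduce the auxiliary function $F_h(s):=f(s+h)-f(s)$. The key claim is that \emph{$F_h$ is tempered of degree $\ell-1$}. Granted this, the induction hypothesis yields that $\big(F_h(n)\bmod1\big)_{n\in\N}=\big((f(n+h)-f(n))\bmod1\big)_{n\in\N}$ is uniformly distributed for every $h$, and Theorem \ref{thm_vdCoriginal} applied to $x_n=f(n)\bmod1$ completes the argument.

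The heart of the matter, and the step I expect to be the main obstacle, is verifying the claim about $F_h$. Since $F_h^{(\ell)}(s)=\int_s^{s+h}f^{(\ell+1)}(t)\dd t$ and $f^{(\ell+1)}$ eventually decreases to $0$, the function $F_h^{(\ell)}$ is eventually positive, bounded above by $hf^{(\ell+1)}(s)\to0$, and decreasing because $\tfrac{d}{ds}F_h^{(\ell)}(s)=f^{(\ell+1)}(s+h)-f^{(\ell+1)}(s)\le0$. For the growth conditions, one has $sF_h^{(\ell)}(s)\ge h\cdot sf^{(\ell+1)}(s+h)\to\infty$ (using $sf^{(\ell+1)}(s)\to\infty$), and, by the mean value theorem, $F_h^{(\ell-1)}(s)=hf^{(\ell)}(\eta)$ for some $\eta\in(s,s+h)$, which tends to $\infty$ since $f^{(\ell)}(s)\to\infty$. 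These estimates, though elementary, require careful assembly, as one must alternate between integrating $f^{(\ell+1)}$ (to control the monotonicity of $F_h^{(\ell)}$) and applying the mean value theorem to lower-order derivatives (to control their growth).
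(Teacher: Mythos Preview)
Your proposal is correct and follows exactly the approach the paper indicates: the corollary is stated immediately after the sentence ``Fej\'er's theorem together with van der Corput's \DT{} readily implies,'' with no further proof given, and your induction on the degree $\ell$---using Fej\'er for $\ell=0$ and \DT{} for the step---is precisely how one unpacks that remark. The verifications you supply (monotonicity and growth of $\Delta f$ in the base case, and that $F_h$ is tempered of degree $\ell-1$ in the inductive step) are all sound.
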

%(PROVE THAT POLYNOMIALS ARE UNIFORMLY DISTRIBUTED. STATE FEJER AND PROVE THAT TEMPERED FUNCTIONS ARE UNIFORMLY DISTRIBUTED)
The notion of uniform distribution in the torus $\T$ can naturally be extended to a much more general setup.
For instance, one can consider sequences (functions) defined on more general domains and taking values in various metric spaces.
%For instance, one can consider sequences taking values in other spaces.
%One can also generalize the notion of a sequence to a function with a different domain.
In order to make a generalization of Definition \ref{def_uniformdistribution}, one needs an appropriate replacement for the discrete interval $\{1,\dots,N\}$ in \eqref{eq_intro_uniformdistribution}.
Such replacement is provided by the notion of F\o lner sequence.
\begin{definition}
  Let $G$ be a locally compact Hausdorff group.
  A (left) \emph{F\o lner sequence} is a sequence $(F_N)_{N\in\N}$ of compact positive measure subsets of $G$ asymptotically invariant under left translations.
  More precisely:
  \begin{equation}\label{eq_folnerproperty}
  \lim_{N\to\infty}\frac{\lambda\big(gF_N\cap F_N\big)}{\lambda(F_N)}=1\qquad\qquad\forall g\in G
  \end{equation}
  where $gF_N:=\{gx:x\in F_N\}$ and $\lambda$ is the (left) Haar measure on $G$.
\end{definition}
Not every locally compact group has a F\o lner sequence.
Groups admitting  F\o lner sequences are called amenable.
It is well known that any abelian group $G$ is amenable.
We can now define uniform distribution of a function $u:G\to K$ from an amenable group $G$ to a compact group $K$.
\begin{definition}
  Let $G$ be a $\sigma$-compact locally compact amenable group with Haar measure $\lambda$, let $(F_N)_{N\in\N}$ be a F\o lner sequence in $G$, let $K$ be a compact group with normalized Haar measure $\mu$ and let $u:G\to K$ be a continuous function.
  We say that $u$ is \emph{uniformly distributed} (with respect to $(F_N)$) if, for every open set $U\subset K$ with boundary of measure $0$,
  \begin{equation}\label{eq_generalud}
    \lim_{N\to\infty}\frac{\lambda\big(\{t\in F_N:u(t)\in U\}\big)}{\lambda(F_N)}=\mu(U)
  \end{equation}
\end{definition}
In later chapters we will use the following general form of Theorem \ref{theorem_weyl}.

\begin{theorem}\label{theorem_weylgeneral}
 Let $G$ be a $\sigma$-compact locally compact amenable group with Haar measure $\lambda$, let $(F_N)_{N\in\N}$ be a F\o lner sequence in $G$, let $K$ be a compact abelian group with Haar measure $\mu$ and let $u:G\to K$ be a measurable function. The following are equivalent:
 \begin{enumerate}
   \item $u$ is uniformly distributed with respect to $(F_N)$.
   \item $$\lim_{N\to\infty}\frac1{\lambda(F_N)}\int_{F_N}f\big(u(t)\big)\dd\lambda(t)=\int_Kf(x)\dd \mu(x)\qquad\qquad\forall f\in C(K)$$
   \item $$\lim_{N\to\infty}\frac1{\lambda(F_N)}\int_{F_N}\chi\big(u(t)\big)\dd\lambda(t)=0\qquad\qquad\forall \chi\in\hat K\setminus\{0_{\hat K}\}$$
       where $\hat K$ is the Pontryagin dual of $K$.
 \end{enumerate}
\end{theorem}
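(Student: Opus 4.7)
The plan is to prove the cycle $(2)\Rightarrow(3)\Rightarrow(2)$ and $(1)\Leftrightarrow(2)$, since this avoids any direct comparison between the open-set condition (1) and the character condition (3). The only non-trivial inputs I will use beyond the definitions are Urysohn's lemma (available because the compact Hausdorff group $K$ is normal), Pontryagin duality for compact abelian groups, and the Stone--Weierstrass theorem.

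First I would dispose of $(2)\Rightarrow(3)$: plug $f=\chi$ into (2). For any nontrivial character $\chi\in\hat K$ there exists $g_0\in K$ with $\chi(g_0)\neq 1$, and translation invariance of $\mu$ gives $\int_K\chi\dd\mu=\chi(g_0)\int_K\chi\dd\mu$, hence $\int_K\chi\dd\mu=0$, so the right-hand side of (2) is $0$. For the converse $(3)\Rightarrow(2)$, the span $\mathcal{A}$ of characters is a subalgebra of $C(K)$ which contains the constants, is closed under complex conjugation (because $\overline{\chi}=\chi^{-1}\in\hat K$), and separates points of $K$ by Pontryagin duality. Stone--Weierstrass then yields uniform density of $\mathcal{A}$ in $C(K)$. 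A routine three-$\epsilon$ argument, in which one approximates $f\in C(K)$ by a trigonometric polynomial $P=\sum a_\chi\chi$ and applies (3) to each nontrivial $\chi$ appearing in $P$ (noting $\int_K P\dd\mu=a_{\mathbf{1}}$), delivers (2).

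For $(2)\Rightarrow(1)$, fix an open $U\subset K$ with $\mu(\partial U)=0$ and $\epsilon>0$. By regularity of $\mu$, choose a compact $C\subset U$ and an open $V\supset\overline{U}$ with $\mu(V\setminus C)<\epsilon$ (using $\mu(\partial U)=0$). Since $K$ is compact Hausdorff, hence normal, Urysohn's lemma produces continuous $g,h:K\to[0,1]$ with $g\leq\mathbf{1}_U\leq h$, $g\equiv 1$ on $C$ and $h\equiv 0$ off $V$, so $\int_K(h-g)\dd\mu<\epsilon$. Applying (2) to $g$ and $h$ sandwiches the liminf and limsup of the left side of \eqref{eq_generalud} within $\epsilon$ of $\mu(U)$; letting $\epsilon\to 0$ finishes.

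For $(1)\Rightarrow(2)$, I would use the layer-cake representation to push the open-set hypothesis to all continuous test functions. Reducing to $f\in C(K)$ with $0\leq f\leq M$, write
\[
\frac{1}{\lambda(F_N)}\int_{F_N}f\bigl(u(t)\bigr)\dd\lambda(t)=\int_0^M\frac{\lambda\bigl(\{t\in F_N:u(t)\in U_c\}\bigr)}{\lambda(F_N)}\dd c,
\]
where $U_c=\{x\in K:f(x)>c\}$ is open. The sets $\partial U_c\subset\{f=c\}$ are pairwise disjoint for distinct $c$, so $\mu(\partial U_c)=0$ for all but countably many $c$. For every such $c$, hypothesis (1) gives pointwise convergence of the integrand to $\mu(U_c)$; the integrand is uniformly bounded by $1$, so bounded convergence identifies the limit with $\int_0^M\mu(U_c)\dd c=\int_K f\dd\mu$. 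The main technical point — and really the only place one must be careful — is the observation that among the uncountably many level sets $U_c$, only countably many can have a boundary of positive measure; once this is noted, the rest is soft measure theory.
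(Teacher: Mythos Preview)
Your proof is correct and, for the equivalences $(2)\Leftrightarrow(3)$ and $(2)\Rightarrow(1)$, essentially identical to the paper's: Stone--Weierstrass via the character algebra for $(3)\Rightarrow(2)$, and a Urysohn sandwich for $(2)\Rightarrow(1)$.

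Where you diverge is in $(1)\Rightarrow(2)$. The paper approximates a given $f\in C(K)$ from above and below by finite linear combinations of indicator functions of open sets with null boundary, and then passes to the limit. You instead use the layer-cake identity $f=\int_0^M\mathbf{1}_{U_c}\,\mathrm{d}c$ and observe that, since the sets $\{f=c\}\supset\partial U_c$ are pairwise disjoint and $\mu$ is finite, all but countably many $U_c$ have $\mu(\partial U_c)=0$; bounded convergence then finishes. The two arguments are close cousins---indeed, the paper's assertion that suitable step-function approximants $f_1\le f\le f_2$ exist is left unjustified in the sketch, and the cleanest way to justify it is precisely your observation about level sets. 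Your route has the advantage of making this point explicit and avoiding any need to construct the approximating step functions by hand; the paper's route has the (minor) advantage of staying closer to the classical Weyl argument and not invoking Fubini/bounded convergence.
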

%The proof of this theorem is standard; we include it for completeness but we were unable to find a reference in this generality.
%For completeness we sketch its proof, following the idea outlined in \cite[Exercise 1.1.1]{Tao12}.

\begin{proof}[Sketch of the proof \emph{(cf.\ {\cite[Exercise 1.1.1]{Tao12}})}]

  The implication (2)$\Rightarrow$(3) is trivial and the implication (3)$\Rightarrow$(2) follows directly from the Stone--Weierstrass theorem (and the fact that characters separate points).
  To prove the implication (1)$\Rightarrow$(2), let us assume that $u$ is uniformly distributed.
  Condition \eqref{eq_generalud} states that
  \begin{equation}\label{eq_proof_weylgeneral} \lim_{N\to\infty}\frac1{\lambda(F_N)}\int_{F_N}f\big(u(t)\big)\dd\lambda(t)=\int_Kf(x)\dd \mu(x)\end{equation}
  whenever $f$ is the indicator function of an open set $U$ with boundary of $0$ measure.
  Since \eqref{eq_proof_weylgeneral} is linear in $f$, it follows that \eqref{eq_proof_weylgeneral} holds whenever $f$ is a finite linear combination of indicator functions of open sets with boundary of $0$ measure.
  Now let $f\in C(K)$.
  For any $\epsilon>0$, one can find $f_1,f_2:K\to\R$ which are finite linear combinations of indicator functions of open sets with boundary of $0$ measure and satisfy $f_1\leq f\leq f_2$ and $\int_K f_2-f_1\dd\mu<\epsilon$.
  It follows that \eqref{eq_proof_weylgeneral} holds for continuous functions.

  To prove the converse implication (2)$\Rightarrow$(1), one can approximate the indicator function of any open set with boundary of $0$ measure from above and from below by continuous functions, using Urysohn's lemma.
  One can then proceed as in the proof of (1)$\Rightarrow$(2) above.
  %Since the Haar measure $\lambda$ on $K$ is Radon, one can approximate
\end{proof}
%Here is now the general form
Van der Corput's difference theorem can also be adapted to the following generality:
\begin{theorem}[General form of \DT]\label{thm_intro_vdCgeneral}
Let $G$ be a $\sigma$-compact locally compact amenable group with Haar measure $\lambda$, let $(F_N)_{N\in\N}$ be a F\o lner sequence in $G$, let $K$ be a compact abelian group with Haar measure $\mu$ and let $u:G\to K$ be a measurable function.
Assume that for every $d\in G\setminus\{1_G\}$, the function $t\mapsto u(td)-u(t)$ is uniformly distributed in $K$.
Then $u$ is uniformly distributed.
\end{theorem}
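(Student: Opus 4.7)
The plan is to adapt the proof of Theorem~\ref{thm_vdCinC}, replacing the discrete averages $\tfrac1N\sum_{n=1}^N$ by the Haar averages $\tfrac1{\lambda(F_N)}\int_{F_N}$. First I will invoke Theorem~\ref{theorem_weylgeneral}(3): it suffices to prove, for every nontrivial character $\chi\in\hat K$, that
$$\lim_{N\to\infty}\frac1{\lambda(F_N)}\int_{F_N}\chi\bigl(u(t)\bigr)\dd\lambda(t)=0.$$
Setting $v(t):=\chi\bigl(u(t)\bigr)$, the same criterion applied to the uniformly distributed function $t\mapsto u(td)-u(t)$ translates the hypothesis into
$$\frac1{\lambda(F_N)}\int_{F_N}v(td)\overline{v(t)}\dd\lambda(t)\longrightarrow0\qquad\forall\,d\in G\setminus\{1_G\},$$
which is the precise continuous analogue of \eqref{eq_thm_vdCinC}. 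The task thus reduces to showing $\tfrac1{\lambda(F_N)}\int_{F_N}v(t)\dd\lambda(t)\to0$, knowing that $|v|\equiv1$.

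Next, I will fix $\epsilon>0$ and a finite set $D=\{d_1,\dots,d_m\}\subset G$ of size $m$ (to be chosen later). Using the F\o lner property, each average $\frac1{\lambda(F_N)}\int_{F_N}v(td_i)\dd\lambda(t)$ differs from $\frac1{\lambda(F_N)}\int_{F_N}v(t)\dd\lambda(t)$ by a quantity tending to $0$ as $N\to\infty$; consequently, for $N$ large enough,
$$\left|\frac1{\lambda(F_N)}\int_{F_N}v(t)\dd\lambda(t)-\frac1{m\lambda(F_N)}\int_{F_N}\sum_{i=1}^mv(td_i)\dd\lambda(t)\right|<\frac\epsilon2.$$
Applying the Cauchy--Schwarz inequality exactly as in \eqref{eq_vdcproof} then yields
$$\left|\frac1{m\lambda(F_N)}\int_{F_N}\sum_{i=1}^mv(td_i)\dd\lambda(t)\right|^2\leq\frac1{m^2}\sum_{i,j=1}^m\frac1{\lambda(F_N)}\int_{F_N}v(td_i)\overline{v(td_j)}\dd\lambda(t).$$

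The $m$ diagonal terms contribute $1/m$ in total (since $|v|\equiv1$). For each off-diagonal pair $i\neq j$, I will carry out the change of variables $s=td_j$ and use the F\o lner property to replace the shifted region $F_Nd_j$ by $F_N$ up to $o(\lambda(F_N))$; the integrand becomes $\chi\bigl(u(sd_j^{-1}d_i)-u(s)\bigr)$, and since $d_j^{-1}d_i\neq1_G$ this integral tends to $0$ by the reformulated hypothesis. Letting $N\to\infty$ and then taking $m>4/\epsilon^2$ will deliver $\limsup_N\bigl|\tfrac1{\lambda(F_N)}\int_{F_N}v\dd\lambda\bigr|<\epsilon$, completing the proof. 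The delicate point is precisely this change of variables: in a nonabelian or non-unimodular setting one must know that $F_Nd_j$ is asymptotically equivalent to $F_N$, which requires either a two-sided F\o lner sequence (available in any amenable group) or unimodularity; in the abelian situation governing the applications of later sections the issue evaporates entirely.
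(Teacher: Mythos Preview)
Your argument is correct and follows a genuinely different route from the paper. The paper does \emph{not} adapt the elementary Cauchy--Schwarz argument of Theorem~\ref{thm_vdCinC}; instead it first proves a Hilbert-space version of \DT{} (Theorem~\ref{theorem_vdctrickcontinuous}) via positive definite functions, the Naimark dilation theorem and the mean ergodic theorem, and then deduces Theorem~\ref{thm_intro_vdCgeneral} from it by composing with a character and checking the averaged hypothesis. In fact the paper explicitly remarks, just before the proof of Theorem~\ref{theorem_vdctrickcontinuous}, that ``it is possible to adapt the proof of Theorem~\ref{thm_vdCoriginal} to this general setting'' but that the authors ``instead choose to employ ideas, going back to Bass''---your proof is exactly the adaptation they allude to.

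What each approach buys: yours is shorter and entirely self-contained, needing only Cauchy--Schwarz and the F\o lner property; the paper's route, while heavier, produces along the way the Hilbert-space form (Theorem~\ref{theorem_vdctrickcontinuous}) with the weaker hypothesis that only the \emph{double} average in $h$ and $s$ vanishes, a version they actually need later (e.g.\ in Section~\ref{sec_welldist} and Section~\ref{sec_ractions}). Your honest caveat about right translations is well placed: the substitution $s=td_j$ converts $F_N$ into $F_Nd_j$ and introduces a modular factor, so one really wants a two-sided F\o lner sequence or unimodularity. The same implicit assumption is present in the paper's own proof of Theorem~\ref{theorem_vdctrickcontinuous} (the verification that $\gamma$ is positive definite uses the same kind of right shift), so you are not in worse shape than the authors on this point.
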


We will derive Theorem \ref{thm_intro_vdCgeneral} from the following result involving functions taking values in a Hilbert space which can be interpreted as yet another form of \DT{} (cf. the remark after Theorem 2.2 in \cite{Bergelson96}).

\begin{theorem}[\DT{} for Hilbert spaces]\label{theorem_vdctrickcontinuous}
Let $G$ be a $\sigma$-compact locally compact amenable group with Haar measure $\lambda$ and let $u:G\to H$ be a continuous bounded map into a Hilbert space $H$.
  Let $(F_N)_{N\in\N}$ be a F\o lner sequence in $G$.
  Assume that
  $$\lim_{D\to\infty}\frac1{\lambda(F_D)}\int_{F_D}\limsup_{N\to\infty}\left|\frac1{\lambda(F_N)}\int_{F_N}\big\langle
  u(sh),u(s)\big\rangle\dd\lambda(s)\right|\dd\lambda(h)=0$$
  Then we have
  $$\lim_{N\to\infty}\left\|\frac1{\lambda(F_N)}\int_{F_N}u(s)\dd\lambda(s)\right\|=0$$
\end{theorem}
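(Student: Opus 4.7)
My plan is to adapt the argument in the proof of Theorem \ref{thm_vdCinC} to the Hilbert-space-valued, continuous setting, with integrals against the F\o lner sequence replacing the Ces\`aro averages over $\{1,\dots,N\}$ and the inner product on $H$ playing the role of complex multiplication. Writing $v_N := \frac{1}{\lambda(F_N)}\int_{F_N}u(s)\,d\lambda(s)$, the goal is to show $\|v_N\|\to 0$.

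First I would fix $D\in\N$ and introduce the smoothed vector $a_D(s) := \frac{1}{\lambda(F_D)}\int_{F_D}u(sh)\,d\lambda(h)$. A right-translation change of variable, combined with the F\o lner property and the uniform boundedness of $u$, shows that for each fixed $D$,
$$\left\|v_N \;-\; \frac{1}{\lambda(F_N)}\int_{F_N}a_D(s)\,d\lambda(s)\right\|\ \longrightarrow\ 0 \quad\text{as } N\to\infty.$$
Hence it suffices to control the norm of the averaged $a_D$. Cauchy--Schwarz (Jensen) in $H$ gives
$$\left\|\frac{1}{\lambda(F_N)}\int_{F_N}a_D(s)\,d\lambda(s)\right\|^2\ \leq\ \frac{1}{\lambda(F_N)}\int_{F_N}\|a_D(s)\|^2\,d\lambda(s),$$
and expanding $\|a_D(s)\|^2$ as a double integral over $F_D\times F_D$ and interchanging the order of integration rewrites the right-hand side as
$$\frac{1}{\lambda(F_D)^2}\int_{F_D}\int_{F_D}\left(\frac{1}{\lambda(F_N)}\int_{F_N}\langle u(sh_1),u(sh_2)\rangle\,d\lambda(s)\right)d\lambda(h_1)\,d\lambda(h_2).$$
For fixed $h_1,h_2$, the change of variable $s\mapsto sh_1^{-1}$ together with the F\o lner property (which lets me replace $F_Nh_1$ by $F_N$ up to an error of vanishing relative measure) rewrites the innermost average as $\frac{1}{\lambda(F_N)}\int_{F_N}\langle u(s(h_1^{-1}h_2)),u(s)\rangle\,d\lambda(s)$, whose $\limsup$ in absolute value as $N\to\infty$ is exactly $\phi(h_1^{-1}h_2)$, where $\phi(h)$ denotes the integrand appearing in the hypothesis. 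Boundedness of $u$ and dominated convergence then permit passage of $\limsup_N$ inside the $(h_1,h_2)$ integral, yielding
$$\limsup_N\|v_N\|^2\ \leq\ \frac{1}{\lambda(F_D)^2}\int_{F_D}\int_{F_D}\phi(h_1^{-1}h_2)\,d\lambda(h_1)\,d\lambda(h_2).$$

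The main obstacle I anticipate is the final step: showing that the right-hand side vanishes as $D\to\infty$, given only the single-average hypothesis on $\phi$. My plan is to apply Fubini and the substitution $(h_1,h_2)\mapsto(h_1,h_1^{-1}h_2)$ to rewrite it as $\frac{1}{\lambda(F_D)}\int_G\phi(h)\gamma_D(h)\,d\lambda(h)$, where $\gamma_D(h) := \lambda(F_D\cap F_Dh^{-1})/\lambda(F_D)$ is a nonnegative kernel bounded by $1$, supported on $F_DF_D^{-1}$, with $\int_G\gamma_D\,d\lambda = \lambda(F_D)$. The contribution of $h\in F_D$ is dominated directly by the hypothesis. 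The contribution of $h\notin F_D$ is handled using the symmetry $\phi(h^{-1})=\phi(h)$ (itself verified by a further F\o lner change of variable) together with a Ces\`aro/triangle-function type estimate on $\gamma_D$, which reduces matters once more to the hypothesis. Letting $D\to\infty$ then forces $\limsup_N\|v_N\|^2 = 0$, completing the proof.
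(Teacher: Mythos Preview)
Your approach is the natural direct adaptation of the Cauchy--Schwarz proof of Theorem~\ref{thm_vdCinC}, and everything up to and including the bound
\[
\limsup_{N}\|v_N\|^2\ \le\ \frac{1}{\lambda(F_D)^2}\int_{F_D}\int_{F_D}\phi(h_1^{-1}h_2)\,d\lambda(h_1)\,d\lambda(h_2)
=\frac{1}{\lambda(F_D)}\int_G\phi(h)\,\gamma_D(h)\,d\lambda(h)
\]
is fine (modulo left/right F\o lner technicalities, which the paper itself glosses over). The genuine gap is the last step, where you claim that the hypothesis $\frac{1}{\lambda(F_D)}\int_{F_D}\phi\to0$ forces this $\gamma_D$–weighted average to vanish. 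The kernel $\gamma_D$ is supported on $F_D^{-1}F_D$, which for a general F\o lner sequence need not overlap $F_D$ at all: take $G=\Z$ and $F_D=\{D!,\,D!+1,\dots,D!+D-1\}$, so that $F_D-F_D\subset[-D,D]$ while $F_D\subset[D!,\infty)$. In that situation your ``contribution of $h\in F_D$'' is vacuous and the ``contribution of $h\notin F_D$'' is the entire integral; the hypothesis gives no information whatsoever about $\phi$ on $[-D,D]$, so no symmetry or Abel-summation manoeuvre on $\gamma_D$ alone can recover the conclusion. One can in fact write down bounded nonnegative functions $\phi$ satisfying the single-average hypothesis for this F\o lner sequence yet having $\frac{1}{D}\sum_{|k|<D}\phi(k)\gamma_D(k)\not\to0$, so the inference you propose is false at that level of generality.

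The paper's proof takes a genuinely different route precisely to avoid this obstacle: rather than bounding by $\phi(h)=\limsup_N|\cdots|$ (which discards all structure), it passes to a subsequence along which the correlations converge and works with the resulting function $\gamma(h)$, shows that $\gamma$ is \emph{positive definite}, invokes the Naimark dilation theorem to realize $\gamma(h)=\langle U_hv,v\rangle$ for a unitary representation, and then applies the mean ergodic theorem to the single average $\frac{1}{\lambda(F_D)}\int_{F_D}\gamma(h)\,d\lambda(h)$ directly. The positive-definiteness is exactly the extra structure that ties the value of the single F\o lner average to a nonnegative quantity $\|Pv\|^2$, yielding the contradiction; your purely metric argument never uses it and therefore cannot close the gap for arbitrary F\o lner sequences.
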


While it is possible to adapt the proof of Theorem \ref{thm_vdCoriginal} to this general setting we instead choose to employ ideas, going back to Bass (\cite{Bass59}) that involve properties of positive definite functions.

\begin{proof}
Define a sequence $(a_N)_{N\in\N}$ in $H$ via the Bochner integral
$$a_N:=\frac1{\lambda(F_N)}\int_{F_N}u(s)\dd\lambda(s)$$
  Assume for the sake of a contradiction that the theorem is false.
  After passing, if needed, to a subsequence, we can assume that $a:=\lim_{N\to\infty}\|a_N\|$ exists and is not $0$.
  Define, for each $h\in G$
  \begin{equation}\label{eq_gamma}
    \gamma(h):=\lim_{N\to\infty}\frac1{\lambda(F_N)}\int_{F_N}\big\langle u(sh)-a_N,u(s)-a_N\big\rangle\dd\lambda(s)
  \end{equation}
  Passing to a further subsequence we can assume that the limit in \eqref{eq_gamma} exists for a countable dense subset of $h$.
  Since $\gamma$ is continuous, it follows that $\gamma$ can be defined for every $h\in G$.
  Using the F\o lner property \eqref{eq_folnerproperty} we can rewrite $\gamma(h)$ as
  \begin{equation}\label{eq_gamma2}
    \gamma(h)=-a+\lim_{N\to\infty}\frac1{\lambda(F_N)}\int_{F_N}\big\langle u(sh),u(s)\big\rangle\dd\lambda(s)
  \end{equation}
  Therefore
  \begin{equation}\label{eq_climgamma}
  \lim_{D\to\infty}\frac1{\lambda(F_D)}\int_{h\in F_D}\gamma(h)\dd\lambda(h)=-a
  \end{equation}
  On the other hand, it follows directly from \eqref{eq_gamma} that $\gamma$ is a positive definite function.
  Indeed, let $f:G\to\C$ be a function with finite support.
  Then
  $$\begin{aligned}
   \sum_{g,h\in G}f(g)\overline{f(h)}&\gamma(h^{-1}g)\\=& \sum_{g,h\in G}f(g)\overline{f(h)}\lim_{N\to\infty}\frac1{\lambda(F_N)}\int_{F_N}\big\langle u(nh^{-1}g)-a_N,u(n)-a_N\big\rangle\dd\lambda(n)\\=& \sum_{g,h\in G}f(g)\overline{f(h)}\lim_{N\to\infty}\frac1{\lambda(F_N)}\int_{F_N}\big\langle u(mg)-a_N,u(mh)-a_N\big\rangle\dd\lambda(m)\\=& \displaystyle\lim_{N\to\infty}\frac1{\lambda(F_N)}\int_{F_N}\left\|\sum_{g\in G}f(g)\big(u(mg)-a_N\big)\right\|^2\dd\mu(m)\\\geq&0
  \end{aligned}$$
  By the Naimark dilation theorem (see, for example, \cite[Theorem 4.8]{Paulsen02}), there exists a unitary representation $(U_g)_{g\in G}$ of $G$ in some Hilbert space $V$ and some vector $v\in V$ such that $\gamma(h)=\langle U_hv,v\rangle$.
  Invoking the mean ergodic theorem (see, for instance, \cite[Theorem 3.3]{Greenleaf73}) we conclude that
  \begin{eqnarray*}
    -a&=&\lim_{D\to\infty}\frac1{\lambda(F_D)}\int_{h\in F_D}\gamma(h)\dd\lambda(h)\\&=& \lim_{D\to\infty}\frac1{\lambda(F_D)}\int_{h\in F_D}\langle U_hv,v\rangle\dd\lambda(h)=\langle Pv,v\rangle=\|Pv\|^2
  \end{eqnarray*}
  where $P:V\to V$ is the orthogonal projection onto the subspace of invariant functions.
  The last equation contradicts the hypothesis that $a>0$.
\end{proof}

We can now derive Theorem \ref{thm_intro_vdCgeneral} from Theorem \ref{theorem_vdctrickcontinuous}:
\begin{proof}[Proof of Theorem \ref{thm_intro_vdCgeneral}]
We will use the Weyl criterion, statement (3) in Theorem \ref{theorem_weylgeneral}.
Let $\chi\in\hat K$ be nontrivial.
Let $v(s)=\chi\big(u(s)\big)\in\C$.
Viewing $\C$ as a one dimensional Hilbert space we have, for every $s,h\in G$, $h\neq1_G$
$$\big\langle v(sh),v(s)\big\rangle=\chi\big(u(sh)\big)\overline{\chi\big(u(s)\big)}=\chi\big(u(sh)-u(s)\big)$$
Since the map $s\mapsto u(sh)-u(s)$ is uniformly distributed, it follows from the Weyl criterion that
$$\lim_{N\to\infty}\frac1{\lambda(F_N)}\int_{F_N}\big\langle v(sh),v(s)\big\rangle\dd\lambda(s)= \lim_{N\to\infty}\frac1{\lambda(F_N)}\int_{F_N}\chi\big(u(sh)-u(s)\big)\dd\lambda(s)=0$$
for any $h\in G\setminus\{1_G\}$.
If we average the previous equation over all $h$ we will also trivially get $0$.
Therefore the conditions of Theorem \ref{theorem_vdctrickcontinuous} are met and hence
$$0=\lim_{N\to\infty}\left\|\frac1{\lambda(F_N)}\int_{F_N} v(s)\dd\lambda(s)\right\|= \lim_{N\to\infty}\left|\frac1{\lambda(F_N)}\int_{F_N}\chi\big(u(s)\big)\dd\lambda(s)\right|$$
It follows now from Theorem \ref{theorem_weylgeneral} that $u$ is uniformly distributed.
\end{proof}

\section{Well distribution}\label{sec_welldist}

A sequence $(u_n)_{n\in\N}$ taking values in the torus $\T$ is \emph{well distributed} if for every $0\leq a<b\leq 1$
\begin{equation}
\lim_{N\to\infty}\frac{\Big|\big\{n\in\{M+1,\dots,M+N\}:x_n\in[a,b)\big\}\Big|}N=b-a
\end{equation}
uniformly in $M$.
An equivalent definition is that $(u_n)_{n\in\N}$ is uniformly distributed along any F\o lner sequence in $\N$. % (a F\o lner sequence in $\N$ is a F\o lner sequence in $\Z$ which is contained in $\N$).
It follows from the Weyl criterion that $(u_n)$ is well distributed if and only if for every $h\in\N$
$$\lim_{N\to\infty}\sup_{M\in\N}\left|\frac1N\sum_{n=M}^{M+N}e^{2\pi ihu_n}\right|=0$$
It is not hard to see that a trivial modification of the proof of Theorem \ref{thm_vdCoriginal} allows one to establish the following version of \DT{} for well distribution (one can also derive it from Theorem \ref{thm_intro_vdCgeneral}).

\begin{theorem}\label{thm_vdcwelldist}
Let $(x_n)_{n\in\N}$ be a sequence taking values in the torus $\T$.
Assume that for every $d\in\N$, the sequence $n\mapsto x_{n+d}-x_n$ is well distributed.
Then $(x_n)_{n\in\N}$ is well distributed.
\end{theorem}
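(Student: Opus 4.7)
My plan is to adapt the proof of Theorem \ref{thm_vdCinC} essentially verbatim, checking that every estimate along the way is in fact uniform in the left endpoint of the averaging interval. (An alternative would be to invoke Theorem \ref{thm_intro_vdCgeneral} with $G=\Z$ and a suitable F\o lner sequence, using the fact that well distribution is equivalent to uniform distribution along every F\o lner sequence in $\N$; but the hands-on version is quicker and more transparent.)

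Fix $h\in\N$ and set $u_n = e^{2\pi i h x_n}$. By the Weyl-type characterization of well distribution recorded just before the theorem, the hypothesis becomes: for every $d\in\N$,
$$\lim_{N\to\infty}\sup_{M\in\N}\left|\frac{1}{N}\sum_{n=M+1}^{M+N} u_{n+d}\overline{u_n}\right|=0,$$
and the goal is the analogous statement for $\tfrac1N\sum_{n=M+1}^{M+N} u_n$. First, for any fixed $D\in\N$, a straightforward telescoping gives
$$\left|\frac{1}{N}\sum_{n=M+1}^{M+N} u_n - \frac{1}{ND}\sum_{n=M+1}^{M+N}\sum_{d=1}^D u_{n+d}\right|\leq\frac{2D}{N}$$
uniformly in $M$, so it suffices to bound the double average. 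Second, Cauchy--Schwarz in $\R^N$, applied exactly as in \eqref{eq_vdcproof}, yields
$$\left|\frac{1}{ND}\sum_{n=M+1}^{M+N}\sum_{d=1}^D u_{n+d}\right|^2\leq\frac{1}{D^2}\sum_{d_1,d_2=1}^D \frac{1}{N}\sum_{n=M+1}^{M+N}u_{n+d_1}\overline{u_{n+d_2}}.$$
The $D$ diagonal terms contribute at most $1/D$ in total. For each off-diagonal pair $d_1\neq d_2$, shifting the summation variable by $\min(d_1,d_2)$ rewrites the inner average (up to a complex conjugation, according as $d_1>d_2$ or $d_1<d_2$) as one of the quantities controlled by the hypothesis, and so tends to $0$ as $N\to\infty$ uniformly in $M$. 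Taking $\limsup_{N\to\infty}\sup_M$ thus bounds the right-hand side by $1/D$, and sending $D\to\infty$ concludes the proof.

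The only point needing a small amount of care, and which I do not expect to be a genuine obstacle, is to confirm that uniformity in $M$ survives the index shift in the Cauchy--Schwarz step: since each shift is by at most $D$ and $D$ is fixed before $N\to\infty$, a supremum over $M\in\N$ in the original variable differs from one in the shifted variable by $O(D/N)$, which is negligible. Thus Theorem \ref{thm_vdcwelldist} is really a bookkeeping upgrade of Theorem \ref{thm_vdCinC}, requiring no new ideas beyond tracking the uniformity.
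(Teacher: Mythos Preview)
Your proposal is correct and is precisely the approach the paper itself indicates: the paper does not give a separate proof of Theorem~\ref{thm_vdcwelldist} but simply remarks that ``a trivial modification of the proof of Theorem~\ref{thm_vdCoriginal}'' suffices (and notes the alternative derivation from Theorem~\ref{thm_intro_vdCgeneral}), which is exactly what you carry out. Your bookkeeping of the uniformity in $M$ through the telescoping step, the Cauchy--Schwarz step, and the index shift is accurate; in fact the shift by $\min(d_1,d_2)$ only increases the starting point $M$, so the shifted supremum is dominated by the original one with no $O(D/N)$ correction needed.
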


\subsection{Well distribution and Hardy fields}
It follows from Theorem \ref{theorem_fejer} that the sequence $x_n=\sqrt{n}\bmod1$ is uniformly distributed.
Since the differences between consecutive terms $\sqrt{n+1}-\sqrt{n}$ converge to $0$, it is clear that $(x_n)_{n\in\N}$ is not well distributed.
On the other hand, it is easy to see that the sequence $n\alpha\bmod1$ is well distributed for any irrational $\alpha$.
What about $(n\alpha+\sqrt{n})\bmod1$?
As it turns out, this sequence is well distributed, and a quick way to prove this is to utilize (a special case of) Theorem \ref{theorem_vdctrickcontinuous}.
More generally, we have the following:
\begin{theorem}\label{theorem_welldistributionperturbation}
  Let $(v_n)_{n\in\N}$ be a non-decreasing sequence in $\R$ such that
  \begin{equation}\label{eq_theorem_welldistributionperturbation}
  \lim_{N\to\infty}\sup_{M\in\N}\frac{v_{M+N}-v_M}N=0\end{equation}
  % \footnote{In fact we only need that the uniform Ces\`aro limit of $v_{n+1}-v_n$ is $0$.}.
  Then for every $\alpha\notin\Q$ the sequence $u_n=n\alpha+v_n\bmod1$ is well distributed in $\T$.
\end{theorem}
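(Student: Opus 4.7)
My plan is to prove this directly by summation by parts, exploiting the fact that the linear phase $n\alpha$ has uniformly bounded Weyl sums while the perturbation $v_n$, being non-decreasing, has easily controlled total variation.

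By the Weyl criterion for well distribution stated in the paragraph preceding Theorem \ref{thm_vdcwelldist}, it suffices to show that for every $h\in\N$,
$$\lim_{N\to\infty}\sup_{M\in\N}\left|\frac{1}{N}\sum_{n=M+1}^{M+N}e^{2\pi ih(n\alpha+v_n)}\right|=0.$$
Fix such an $h$ and set $a_n:=e^{2\pi ihn\alpha}$ and $b_n:=e^{2\pi ihv_n}$. Two ingredients feed into the argument. First, because $h\alpha\notin\Z$, the partial sums $A_k:=\sum_{n=M+1}^{M+k}a_n$ of the geometric sequence $(a_n)$ are uniformly bounded by $C_{h,\alpha}:=|\sin(\pi h\alpha)|^{-1}$, with the bound independent of $M$ and $k$. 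Second, the monotonicity of $(v_n)$ lets the inequality $|e^{i\theta}-e^{i\phi}|\leq|\theta-\phi|$ telescope to give
$$\sum_{k=1}^{N-1}|b_{M+k+1}-b_{M+k}|\leq 2\pi h\sum_{k=1}^{N-1}(v_{M+k+1}-v_{M+k})=2\pi h(v_{M+N}-v_{M+1}).$$

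Combining these via Abel summation,
$$\sum_{n=M+1}^{M+N}a_nb_n = A_N b_{M+N}-\sum_{k=1}^{N-1}A_k(b_{M+k+1}-b_{M+k}),$$
so that
$$\left|\frac{1}{N}\sum_{n=M+1}^{M+N}a_nb_n\right|\leq\frac{C_{h,\alpha}}{N}+2\pi hC_{h,\alpha}\cdot\frac{v_{M+N}-v_M}{N}.$$
Taking $\sup_M$ and then $N\to\infty$, the first term vanishes trivially and the second vanishes by hypothesis \eqref{eq_theorem_welldistributionperturbation}, establishing the Weyl criterion.

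I do not anticipate a serious obstacle. The only delicate point is the telescoping step: the non-decreasing assumption on $(v_n)$ is essential, as it allows the absolute values to be dropped and the sum to collapse to the exact quantity $v_{M+N}-v_{M+1}$ controlled by the hypothesis. Without monotonicity one would face the true total variation of $v$, which need not be $o(N)$ uniformly in $M$. One might instead try to deduce the theorem from the vdC machinery of Section \ref{sec_vdCtrick}, but a direct application of Theorem \ref{thm_vdcwelldist} seems blocked: the differences $u_{n+d}-u_n=d\alpha+(v_{n+d}-v_n)$ typically fail to be well distributed, since the hypothesis forces $\tfrac{1}{N}\sum_n(v_{n+d}-v_n)\to 0$ and hence $\tfrac{1}{N}\sum_ne^{2\pi ih(v_{n+d}-v_n)}$ tends to $1$ rather than to $0$.
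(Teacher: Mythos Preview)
Your proof is correct and is genuinely different from the paper's. You argue directly via Abel summation: the geometric partial sums of $e^{2\pi ihn\alpha}$ are uniformly bounded, and the monotonicity of $(v_n)$ makes the total variation of $e^{2\pi ihv_n}$ over $\{M+1,\dots,M+N\}$ exactly $O(v_{M+N}-v_M)$, which is $o(N)$ uniformly in $M$ by hypothesis. This is elementary and self-contained.

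The paper, by contrast, uses this theorem precisely as a showcase for the van der Corput machinery. Your remark that Theorem~\ref{thm_vdcwelldist} is blocked is accurate --- the differences $u_{n+d}-u_n$ are indeed not well distributed --- but the paper sidesteps this by invoking the Hilbert-space form, Theorem~\ref{theorem_vdctrickcontinuous}, which only requires the \emph{iterated} Ces\`aro limit of the correlations to vanish. The paper first proves (Lemma~\ref{lemma_uniformcesaro}) that $\tfrac{1}{N}\sum_n e^{2\pi ik(v_{n+d}-v_n)}\to 1$ uniformly, so that the inner correlation limit equals $e^{2\pi ikd\alpha}$, and then averages over $d$ to get $0$. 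Your approach buys simplicity and avoids the entire vdC apparatus; the paper's approach buys thematic coherence, since the whole article is organized around demonstrating the reach of \DT{} in its various forms.
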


\begin{remark}
\

\begin{enumerate}
  \item Theorem \ref{theorem_welldistributionperturbation} extends Theorem 1.3.3 in \cite{Kuipers_Niederreiter74}, where, under somewhat stronger assumptions, it is concluded that $u_n$ is uniformly distributed (by the way, Theorem 1.3.3 in \cite{Kuipers_Niederreiter74} also appears in van der Corput's seminal paper \cite{vdCorput31}).
      %The reason we are able to deduce well distribution is that we apply a second average version of the \DT.
  \item Under the slightly stronger condition that $v_{N+1}-v_N\to0$ as $N\to\infty$, it is actually true that $u_n=a_n+v_n$ is well distributed for any well distributed sequence $a_n$.
      This follows from Theorem 4.2.3 in \cite{Kuipers_Niederreiter74}.
\end{enumerate}
\end{remark}

%We will use a version of the van der Corput trick with a second Ces\`aro average:
Examples of sequences $(v_n)$ satisfying \eqref{eq_theorem_welldistributionperturbation} are $v_n=\sqrt{n}$, $v_n=\log n$ and $v_n=cn^a\big(\cos(\log^b n)+d\big)$, where $a,b,c,d\in\R$ satisfy $a,c>0$, $a,b<1$, $d>1$.
More generally, if $f:\R\to\R$ is a non-decreasing smooth function such that $f'(x)\to0$ as $x\to\infty$, then
$v_n=f(n)$ also satisfies \eqref{eq_theorem_welldistributionperturbation}.
Notice that $\log n$ is not even uniformly distributed in $\T$ but, in view of Theorem
\ref{theorem_welldistributionperturbation}, $n\alpha+\log n$ is well distributed.

We need the following elementary lemma:
\begin{lemma}\label{lemma_uniformcesaro}
  Let $(v_n)_{n\in\N}$ be a non-decreasing sequence in $\R$ satisfying \eqref{eq_theorem_welldistributionperturbation}.
  Then for every pair of sequences $(M_m),(N_m)$ of natural numbers such that $N_m-~M_m\to~\infty$ and every $d\neq0$ we have
  $$\lim_{m\to\infty}\frac1{N_m-M_m}\sum_{n=M_m+1}^{N_m}\exp\big(2\pi ik(v_{n+d}-v_d)\big)=1$$
\end{lemma}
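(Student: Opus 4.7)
The plan is to use the elementary estimate $|e^{2\pi i x}-1|\leq 2\pi|x|$, which reduces the problem to showing that the Ces\`aro averages of the non-negative differences $v_{n+d}-v_n$ tend to $0$ uniformly along the intervals $(M_m,N_m]$. (I am reading the exponent in the statement as $v_{n+d}-v_n$; as written, the exponent does not depend on $n$ in the intended way.)

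Writing $L_m:=N_m-M_m$, the pointwise bound gives
\begin{equation*}
\left|\frac{1}{L_m}\sum_{n=M_m+1}^{N_m}\exp\big(2\pi i k(v_{n+d}-v_n)\big)-1\right|
\;\leq\;\frac{2\pi|k|}{L_m}\sum_{n=M_m+1}^{N_m}(v_{n+d}-v_n),
\end{equation*}
where the absolute value is dropped inside the sum on the right because $(v_n)$ is non-decreasing. It therefore suffices to prove that the right-hand side tends to $0$ as $m\to\infty$.

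The key observation is a telescoping-after-shift identity: setting $M=M_m$ and $L=L_m$,
\begin{equation*}
\sum_{n=M+1}^{M+L}(v_{n+d}-v_n)=\sum_{n=M+L+1}^{M+L+d}v_n-\sum_{n=M+1}^{M+d}v_n=\sum_{j=1}^{d}\bigl(v_{(M+j)+L}-v_{M+j}\bigr).
\end{equation*}
Each of the $d$ terms on the right is of the form $v_{M'+L}-v_{M'}$ for some $M'\in\N$, so by the hypothesis \eqref{eq_theorem_welldistributionperturbation} each of them, divided by $L$, tends to $0$ as $L\to\infty$ uniformly in $M'$. Since $d$ is a fixed integer, summing $d$ such quantities and dividing by $L$ still yields something that tends to $0$; combining with the previous inequality finishes the proof.

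The argument is essentially direct; the only thing one has to be careful about is that the uniformity in $M$ in the hypothesis is what allows us to handle the (possibly unbounded) starting points $M_m$, and that the sign of $v_{n+d}-v_n$ lets us pass from absolute values to the plain telescoping sum. Neither of these is a serious obstacle, and no deeper structure of $(v_n)$ beyond monotonicity and \eqref{eq_theorem_welldistributionperturbation} is needed.
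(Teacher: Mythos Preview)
Your proof is correct and follows essentially the same route as the paper's: both use the chord-arc inequality $|e^{i\theta}-1|\le|\theta|$ to reduce to bounding $\frac{1}{L_m}\sum_{n=M_m+1}^{N_m}(v_{n+d}-v_n)$, then telescope this sum into $\sum_{j=1}^{d}(v_{N_m+j}-v_{M_m+j})$ and apply \eqref{eq_theorem_welldistributionperturbation} to each of the $d$ terms. Your observation about the typo in the exponent (it should read $v_{n+d}-v_n$, not $v_{n+d}-v_d$) is also correct.
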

\begin{proof}
  %Let $a_n=v_{n+d}-v_n$.
  Observe that $\big|\exp(i\theta)-1\big|\leq|\theta|$ for any $\theta\in\R$ (geometrically this says that a
  chord is shorter than the corresponding arc).
  Then
  \begin{eqnarray*}\left|\sum_{n=M_m+1}^{N_m}\exp\big(2\pi
  ik(v_{n+d}-v_d)\big)-1\right|&\leq&\sum_{n=M_m+1}^{N_m}\left|\exp\big(2\pi ik(v_{n+d}-v_d)\big)-1\right|\\&\leq&
  2\pi k\sum_{n=M_m+1}^{N_m}\big|v_{n+d}-v_n\big|\\&=&2\pi k\sum_{i=1}^d\big(v_{N_m+i}-v_{M_m+i}\big)\end{eqnarray*}
  %It is clear that $v_{n+d}-v_n\to0$ as $n\to\infty$.
  %Therefore the uniform Ces\`aro limit is also $0$, which means that
  Dividing by $N_m-M_m$ and using \eqref{eq_theorem_welldistributionperturbation} $d$ times we conclude that
  $$\lim_{m\to\infty}\left|\frac1{N_m-M_m}\sum_{n=M_m+1}^{N_m}\exp\big(2\pi ik(v_{n+d}-v_d)\big)-1\right|=0$$
  as desired.
\end{proof}
\begin{proof}[Proof of Theorem \ref{theorem_welldistributionperturbation}]
We need to show that for all sequences $(M_m),(N_m)$ such that $N_m-M_m\to\infty$ and every
$k\neq0$ we have
\begin{equation}\label{eq_thm_welldistrib}
  \lim_{m\to\infty}\frac1{N_m-M_m}\sum_{n=M_m+1}^{N_m}\exp(2\pi iku_n)=0
\end{equation}
%We will use Theorem \ref{theorem_vdctrickabelian}.
Let $w(n)=\exp(2\pi iku_n)$.
%Viewing $\C$ as the one dimensional Hilbert space, i
In order to prove \eqref{eq_thm_welldistrib}, we will use Theorem \ref{theorem_vdctrickcontinuous} with the Hilbert space being $\C$, the group $G=\Z$, the Haar measure $\lambda$ being simply the counting measure and the F\o lner sequence being defined by $F_m=\{M_m+1,\dots,N_m\}$.
We have
$$w(n+d)\overline{w(n)}=\exp\big(2\pi ik(u_{n+d}-u_n)\big)=\exp\big(2\pi ik(d\alpha+v_{n+d}-v_n)\big)$$
Taking Ces\`aro limits and appealing to Lemma \ref{lemma_uniformcesaro} we obtain
$$\begin{aligned}
   \lim_{m\to\infty}\frac1{|F_m|}\sum_{n\in F_m}w(n+d)&\overline{w(n)}\\
  =&\exp(2\pi
  ikd\alpha)\lim_{m\to\infty}\frac1{|F_m|}\sum_{n\in F_m}\exp\big(2\pi ik(v_{n+d}-v_n)\big) \\
  =&\exp(2\pi ikd\alpha)
\end{aligned}$$
Taking the Ces\`aro limit in $d$ we deduce that
$$\lim_{D\to\infty}\frac1D\sum_{d=1}^D\lim_{m\to\infty}\frac1{N_m-M_m}\sum_{n=M_m+1}^{N_m}w(n+d)\overline{w(n)}=0$$
Now \eqref{eq_thm_welldistrib} follows from Theorem \ref{theorem_vdctrickcontinuous} and this finishes the proof.
\end{proof}

Observe that if $(au_n+bv_n)_{n\in\N}$ is well distributed in $\T$ for every $(a,b)\in\Z^2\setminus\{(0,0)\}$, then it follows from Weyl's criterion (Theorem \ref{theorem_weylgeneral}) that the sequence $(u_n,v_n)_{n\in\N}$ is well distributed in $\T^2$.
This observation should be juxtaposed with the following corollary of Theorem \ref{theorem_welldistributionperturbation}.
\begin{corollary}
  There exists a sequence $(u_n,v_n)\in\T^2$ which is not uniformly distributed but such that for any $a,b\in\Z$ with
  $a\neq0$ we have that $au_n+bv_n$ is well distributed in $\T$.
\end{corollary}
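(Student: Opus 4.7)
The plan is to exhibit $(u_n, v_n)$ explicitly as an irrational rotation perturbed by a logarithmic sequence. Building on the observation made just after Theorem \ref{theorem_welldistributionperturbation} that $\log n$ is not uniformly distributed in $\T$ while $n\alpha + \log n$ is well distributed, I fix any irrational $\alpha \in \R$ and set
$$u_n = n\alpha + \log n \bmod 1, \qquad v_n = \log n \bmod 1.$$

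First I would argue that $(u_n, v_n)$ is not uniformly distributed in $\T^2$. It suffices to note that the projection onto the second coordinate, $v_n = \log n \bmod 1$, fails to be uniformly distributed in $\T$: a short Euler--Maclaurin computation gives
$$\frac1N\sum_{n=1}^N e^{2\pi i k \log n} \sim \frac{N^{2\pi i k}}{1+2\pi i k},$$
whose modulus is bounded below by $1/|1+2\pi i k|>0$ for every nonzero $k\in\Z$, contradicting Weyl's criterion. Since marginals of a uniformly distributed sequence in $\T^2$ are themselves uniformly distributed in $\T$, the pair cannot be uniformly distributed.

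Next I would verify well distribution of $au_n + bv_n$ for every $(a,b)\in\Z^2$ with $a\neq 0$. Rewriting
$$au_n + bv_n \equiv (a\alpha)n + (a+b)\log n \pmod 1,$$
three subcases appear. If $a+b=0$, the sequence is $(a\alpha)n \bmod 1$, which is well distributed because $a\alpha$ is irrational. If $a+b>0$, then $(a+b)\log n$ is non-decreasing and satisfies condition \eqref{eq_theorem_welldistributionperturbation}, since $\sup_M \bigl((a+b)\log(1+N/M)\bigr)/N \leq (a+b)\log(1+N)/N \to 0$. Theorem \ref{theorem_welldistributionperturbation} applied with $a\alpha$ in place of $\alpha$ gives the conclusion directly. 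Finally, if $a+b<0$, the coefficient in front of $\log n$ is negative and Theorem \ref{theorem_welldistributionperturbation} does not apply verbatim; I would invoke the fact that the reflection $x\mapsto -x$ on $\T$ preserves Haar measure, so well distribution is preserved under negation. The negated sequence $(-a\alpha)n + |a+b|\log n$ falls into the previous case and is well distributed, whence so is $au_n+bv_n$.

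I do not anticipate any substantive obstacle; the whole argument is essentially a bookkeeping exercise around Theorem \ref{theorem_welldistributionperturbation}. The conceptual point is that the only combination preventing uniform distribution of the pair, namely $a=0$, $b=1$ (which is the marginal $v_n = \log n \bmod 1$), is precisely the combination excluded from the corollary's hypothesis; every combination with $a\neq 0$ is driven by the irrational rotation $(a\alpha)n$, and the logarithmic perturbation $(a+b)\log n$ is absorbed by Theorem \ref{theorem_welldistributionperturbation} after possibly flipping sign.
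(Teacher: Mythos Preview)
Your proof is correct and follows essentially the same approach as the paper: the paper simply takes $u_n=n\alpha$ and $v_n=\log n$ (rather than your $u_n=n\alpha+\log n$), so that $au_n+bv_n=(a\alpha)n+b\log n$, and then invokes Theorem \ref{theorem_welldistributionperturbation}. Your choice differs only cosmetically---the coefficient on $\log n$ is $a+b$ instead of $b$---and the verification (including your handling of the negative-coefficient case by reflection) is the same in spirit.
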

\begin{proof}
  Let $u_n=n\alpha$ for some irrational $\alpha\in\R$, let $v_n=\log n$ and apply Theorem \ref{theorem_welldistributionperturbation}.
\end{proof}

Recall that $\Delta^sf$ denotes the $s$-th iterated discrete derivative of $f$.
Bootstrapping Theorem \ref{theorem_welldistributionperturbation} with the help of \DT{} we obtain the following more general result.
\begin{corollary}\label{cor_theorem_welldistributionperturbation}
  Let $s\in\N$ and let $p\in\R[x]$ be a polynomial with $\deg p=s$ whose leading coefficient is irrational.
  Let $f:\N\to\R$ be such that
  \begin{equation}\label{eq_cor_theorem_welldistributionperturbation}
    \lim_{N\to\infty}\sup_{M\in\N}\left|\frac1N\sum_{n=M}^{M+N}\Delta^sf(n)\right|=0
  \end{equation}
  Then the sequence
  $\big(p(n)+f(n)\bmod1\big)_{n\in\N}$ is well distributed.
\end{corollary}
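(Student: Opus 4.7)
The plan is to proceed by induction on $s\in\N$, using Theorem \ref{thm_vdcwelldist} (the well-distribution form of \DT) to step down from degree $s$ to degree $s-1$, and to close the recursion at $s=1$ by appealing to Theorem \ref{theorem_welldistributionperturbation}.

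For the inductive step, suppose the corollary is known at level $s-1\geq 1$, and let $p$ be a polynomial of degree $s$ with irrational leading coefficient $\alpha_s$, together with $f$ satisfying the hypothesis at level $s$. Set $x_n:=p(n)+f(n)$. By Theorem \ref{thm_vdcwelldist}, well distribution of $(x_n\bmod 1)_{n\in\N}$ will follow once we show that for every $h\in\N$ the difference sequence
\[
x_{n+h}-x_n \;=\; \bigl(p(n+h)-p(n)\bigr) + \bigl(f(n+h)-f(n)\bigr) \;=:\; q_h(n)+g_h(n)
\]
is well distributed. The polynomial $q_h$ has degree $s-1$, and its leading coefficient $s\,h\,\alpha_s$ is irrational. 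For the function part, using that $\Delta$ commutes with the shift one obtains
\[
\Delta^{s-1}g_h(n) \;=\; \Delta^{s-1}f(n+h)-\Delta^{s-1}f(n) \;=\; \sum_{j=0}^{h-1}\Delta^{s}f(n+j).
\]
Applying the triangle inequality inside $\sup_{M}\bigl|\tfrac{1}{N}\sum_{n=M}^{M+N}(\,\cdot\,)\bigr|$ and reindexing $M\mapsto M+j$ (which preserves the supremum over $M\in\N$), the hypothesis on $\Delta^{s}f$ transfers verbatim to the analogous hypothesis for $g_h$ at level $s-1$. The inductive hypothesis then yields well distribution of $q_h+g_h$, closing the step.

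The base case $s=1$ asserts that for irrational $\alpha$ and $f:\N\to\R$ with $\sup_M |f(M+N+1)-f(M+1)|/N\to 0$, the sequence $(\alpha n+f(n)\bmod 1)_{n\in\N}$ is well distributed; this is essentially the content of Theorem \ref{theorem_welldistributionperturbation} (possibly after rerunning its proof via Theorem \ref{theorem_vdctrickcontinuous} to cover $f$ that is not assumed monotone, using $|e^{i\theta}-1|\leq|\theta|$ and the telescoping identity $\sum_{n=M+1}^{M+N}\Delta f(n)=f(M+N+1)-f(M+1)$ to show that $\frac{1}{|F_m|}\sum_{n\in F_m}w(n+h)\overline{w(n)}\to e^{2\pi ikh\alpha}$, then averaging over $h$). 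I expect the only real subtlety to lie exactly at this base step; once $s=1$ is secured, the inductive step is a purely structural combination of Theorem \ref{thm_vdcwelldist} with elementary difference-operator bookkeeping.
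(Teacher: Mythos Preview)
Your argument follows the paper's exactly: induction on $s$, with the base case $s=1$ identified with Theorem~\ref{theorem_welldistributionperturbation} and the inductive step carried out via Theorem~\ref{thm_vdcwelldist} after writing $x_{n+h}-x_n=q_h(n)+g_h(n)$, where $q_h$ has degree $s-1$ with irrational leading coefficient $sh\alpha_s$ and $g_h$ satisfies \eqref{eq_cor_theorem_welldistributionperturbation} at level $s-1$. The paper uses the same decomposition (with $d$ in place of your $h$) and writes $\tilde f(n)=\Delta f(n)+\cdots+\Delta f(n+d-1)$, which is your $g_h$.

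You are in fact more careful than the paper at the base step: the paper simply writes ``the case $s=1$ being Theorem~\ref{theorem_welldistributionperturbation}'' without remarking that Theorem~\ref{theorem_welldistributionperturbation} assumes $(v_n)$ non-decreasing while the corollary does not. Your concern is legitimate, but be aware that your sketched patch does not actually close it: applying $|e^{i\theta}-1|\le|\theta|$ termwise (as in Lemma~\ref{lemma_uniformcesaro}) requires a bound on $\tfrac1N\sum_n|f(n+h)-f(n)|$, whereas telescoping together with \eqref{eq_cor_theorem_welldistributionperturbation} only controls $\tfrac1N\bigl|\sum_n\Delta f(n)\bigr|$. Without monotonicity these are genuinely different---for instance $f(n)=\sin(2\pi\alpha n)$ satisfies \eqref{eq_cor_theorem_welldistributionperturbation} with $s=1$, yet $\alpha n+\sin(2\pi\alpha n)$ is not equidistributed (Jacobi--Anger gives the nonzero limit $J_{-1}(2\pi)$ for the first Weyl sum). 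This is a gap the paper's presentation shares rather than one in your reading of it.
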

\begin{proof}
  We proceed by induction on $s$, the case $s=1$ being Theorem \ref{theorem_welldistributionperturbation}.
  Next assume that $s>1$ and that the result holds for $s-1$.
  Let $u_n=p(n)+f(n)\bmod1$ and let $d\in\N$ be arbitrary.
  The difference $u_{n+d}-u_n$ can be written as $\tilde p(n)+\tilde f(n)$, where $\tilde p(n)=p(n+d)-p(n)$ is a polynomial of
  degree $s-1$ with leading coefficient irrational and $\tilde f(n)=\Delta f(n)+\Delta f(n+1)+\cdots+\Delta f(n+d-1)$ is a sequence satisfying \eqref{eq_cor_theorem_welldistributionperturbation} with $s-1$ instead of $s$.
  By the induction hypothesis, $u_{n+d}-u_n$ is well distributed for every $d\in\N$.
  It follows from \DT{} (in the form of Theorem \ref{thm_vdcwelldist}) that $u_n$ is well distributed as well.
\end{proof}

Observe that if $f:\R\to\R$ is a smooth function such that $f^{(s)}(x)\to0$ as $x\to\infty$, then the sequence $f(n)$ satisfies \eqref{eq_cor_theorem_welldistributionperturbation}.

We will now demonstrate the usefulness of Corollary \ref{cor_theorem_welldistributionperturbation} by providing a relatively short proof of a result of Boshernitzan pertaining to Hardy fields.
First we need to introduce some relevant definitions.
Two real valued functions $f,g$ defined for all large enough $x\in\R$ are \emph{equivalent} if there exists $M\in\R$ such that $f(x)=g(x)$ for all $x>M$.
A \emph{germ} is an equivalence class of functions under this equivalence relation.
\begin{definition}
  A Hardy field is a collection of germs of smooth functions $f:\R\to\R$ that forms a field (with respect to the operations of pointwise addition and multiplication) and is closed under differentiation.
\end{definition}
Observe that any function representing a non-zero germ in a Hardy field must be eventually non-zero (since it is invertible).
It follows that any non-zero element of a Hardy field is eventually either positive or negative.
 Since for any Hardy field function $f$, the derivative $f'$ belongs to the same Hardy field, we deduce that $f$ is eventually monotone, and hence the limit $\lim_{x\to\infty}f(x)$ always exists in $[-\infty,\infty]$.
 Many familiar functions belong to some Hardy field, including any function obtained from $e^x$, $\log x$ and polynomial functions by composition and standard arithmetic operations.
 On the other hand, (non-constant) periodic functions do not belong to any Hardy field.

 A function $f:\R\to\R$ is called \emph{subpolynomial} if there exists $n\in\N$ such that $f(x)/x^n\to0$ as $x\to\infty$.
\begin{theorem}[Boshernitzan, \cite{Boshernitzan94}]\label{theorem_Boshernitzan}
  Assume $f:\R\to\R$ belongs to a Hardy field and is subpolynomial. Then the sequence $\big(f(n)\bmod1\big)_{n\in\N}$ is uniformly distributed in $\T$ if and only if for every $p(x)\in\Q[x]$ we have
  \begin{equation}\label{eq_boshernitzan}
\lim_{x\to\infty}\frac{\log x}{|f(x)-p(x)|}=0
  \end{equation}
\end{theorem}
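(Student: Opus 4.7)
The plan is to treat the two directions separately. For \textbf{necessity} I would argue by contraposition: if \eqref{eq_boshernitzan} fails for some $p\in\Q[x]$, then $g:=f-p$ is a Hardy-field function for which $\log x/|g(x)|$ has a nonzero limit in $[0,\infty]$, forcing $|g|=O(\log x)$; by L'Hôpital in the Hardy field this also gives $g'(x)=O(1/x)$. Choose $Q\in\N$ such that $Qp(n)\in\Z$ for every $n$, so that $e^{2\pi iQf(n)}=e^{2\pi iQg(n)}$. The slow variation of $g$ lets one replace $\tfrac1N\sum_{n=1}^Ne^{2\pi iQg(n)}$ by $\tfrac1N\int_1^N e^{2\pi iQg(x)}\dd x$ up to vanishing error; after the change of variable $t=Qg(x)$, this integral becomes a weighted average of $e^{2\pi it}$ whose weight concentrates near the upper endpoint $t=Qg(N)$, and a short computation shows it to be bounded away from $0$ in modulus (tending to $e^{2\pi iQL}$ if $g$ converges to $L$, and comparable to $e^{2\pi iQg(N)}$ otherwise). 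This violates the Weyl criterion.

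For \textbf{sufficiency} I would induct on $s=s(f)\in\N$, the smallest positive integer such that $f^{(s)}(x)\to 0$; this exists because $f$ is subpolynomial and lies in a Hardy field, via iterated applications of Hardy-field L'Hôpital to $f(x)/x^k\to 0$. One first observes that $f(x)\to\infty$ (a finite Hardy-field limit would force \eqref{eq_boshernitzan} to fail with $p$ a rational constant). In the base case $s=1$, the hypothesis with $p=0$ gives $|f|/\log x\to\infty$, which L'Hôpital promotes to $xf'(x)\to\infty$; combined with $f'\to 0$ monotonically and $f\to\infty$, this shows $f$ is tempered of degree $0$, so Corollary \ref{cor_tempered} yields uniform distribution. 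For the inductive step, set $L=\lim_{x\to\infty}f^{(s-1)}(x)\in[-\infty,\infty]\setminus\{0\}$. If $L=\pm\infty$ and $xf^{(s)}(x)\to\infty$, then $f$ is tempered of degree $s-1$ and Corollary \ref{cor_tempered} finishes; if $L$ is finite, set $\alpha=L/(s-1)!$ and $P(x)=\alpha x^{s-1}$, so that $g:=f-P$ has $g^{(s-1)}\to 0$. A telescoping calculation together with $g^{(s-2)}(x)=o(x)$ verifies the Cesàro hypothesis \eqref{eq_cor_theorem_welldistributionperturbation} for $\Delta^{s-1}g$. If $\alpha\notin\Q$, Corollary \ref{cor_theorem_welldistributionperturbation} applies with this $P$ and parameter $s-1$, giving well distribution of $f=P+g$. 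If $\alpha\in\Q$ then $P\in\Q[x]$, $g$ still satisfies \eqref{eq_boshernitzan}, and $s(g)<s(f)$; by induction $g(n)$ is uniformly distributed along every arithmetic progression (this strengthened conclusion follows by applying the inductive hypothesis to the composition $g(Qk+j)$, which is itself Hardy-field subpolynomial satisfying \eqref{eq_boshernitzan} with the same value of $s$), and since $P(n)\bmod 1$ is periodic this gives uniform distribution of $f$.

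The main obstacle is the intermediate sub-case of the inductive step, namely $L=\pm\infty$ with $xf^{(s)}(x)\not\to\infty$; here $f^{(s-1)}(x)\asymp\log x$, $f$ grows roughly like $x^{s-1}\log x$, and no obvious polynomial can be peeled off. My plan to close this case is to apply the Hilbert-space van der Corput theorem, Theorem \ref{theorem_vdctrickcontinuous}, directly to $u(n)=e^{2\pi ihf(n)}$ via the Weyl criterion. The inner product $\langle u(n+d),u(n)\rangle=e^{2\pi ih(f(n+d)-f(n))}$ has leading behaviour $e^{2\pi ihdf'(n)}$, and in the model case $f(n)=n\log n$ one has $|\tfrac1N\sum_ne^{2\pi ihdf'(n)}|=O(1/(hd))$, so that the average over $d\in\{1,\dots,D\}$ is $O(\log D/D)\to 0$; I expect a Hardy-field computation to extend this averaged exponential-sum estimate to the full intermediate regime. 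Verifying this bound, which genuinely requires \eqref{eq_boshernitzan} in its full strength (since the naive transfer of the hypothesis to the derivative fails — $x\log x$ satisfies \eqref{eq_boshernitzan} but $\log x$ does not), is the most delicate step of the argument.
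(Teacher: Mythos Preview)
Your overall architecture --- contraposition for necessity, and an induction that peels off rational polynomials and feeds the remainder to Fej\'er/Corollary~\ref{cor_tempered} --- matches the paper's. The necessity argument is essentially the same (the paper simply writes $Mf(n)\equiv(Ma+o(1))\log n\bmod 1$ for a suitable $M\in\N$ and cites a standard non-equidistribution result for such slowly growing sequences).

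The real divergence is the induction parameter. You use $s=s(f)$, the least positive integer with $f^{(s)}\to0$; the paper uses $d=d(f)$, the least integer with $f(x)/x^d$ bounded, together with $L=\lim f(x)/x^d$. With the paper's parameter the only case not immediately dispatched by Corollary~\ref{cor_theorem_welldistributionperturbation} (irrational $L$) or by subtracting $Lx^d$ (rational $L\neq 0$) is $L=0$. For $d=1$ this is Fej\'er. For $d>2$ the paper applies \DT{} (Theorem~\ref{thm_vdCoriginal}): the difference $f_h(x)=f(x+h)-f(x)$ has $d(f_h)\le d-1$ and still satisfies \eqref{eq_boshernitzan}, so induction closes. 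Only $d=2$ (the $x\log x$ regime) needs a separate argument, and there the paper invokes a concrete classical estimate --- Exercise~1.2.26 in \cite{Kuipers_Niederreiter74}, itself a result of van der Corput --- using $f'\to\infty$, $f''\to0$, and $f'/(x^2f'')\to0$.

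Your ``intermediate sub-case'' is the same obstruction viewed through $s$, but your write-up leaves it open at every level $s\ge 2$. In fact for $s\ge 3$ your own induction would close it: the difference $f(\,\cdot\,+h)-f$ has $s$-value $s-1$ and (one checks, since it grows like $hf'(x)\gg x^{s-2}$) still satisfies \eqref{eq_boshernitzan}, so plain \DT{} reduces to smaller $s$. The genuine gap is the bottom case $s=2$, where the differences $f(n+d)-f(n)\approx df'(n)$ grow only like $\log n$ and hence fail \eqref{eq_boshernitzan}; here your vdC argument is verified only for the single model $n\log n$, and the general Hardy-field estimate you anticipate is essentially the content of the exercise the paper cites. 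So the missing ingredient is precisely that $d=2$ exponential-sum input; once imported, either induction scheme works.
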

\begin{proof}
First assume that
$$\lim_{x\to\infty}\frac{|f(x)-p(x)|}{\log x}=a<\infty$$
for some $p\in\Q[x]$.
Then $f(x)=\big(a+g(x)\big)\log x+p(x)$ for some function $g(x)$ which approaches $0$ as $x\to\infty$.
Let $M$ be a common multiple of the denominators of the coefficients of $p$.
Then $Mf(n)=\big(Ma+g(n)\big)\log n\bmod1$ for every $n\in\N$ grows too slowly to be uniformly distributed (see, for example, Theorem 1.2.6 in \cite{Kuipers_Niederreiter74}).

We move now to the proof of the converse direction.
 Let $f$ be a function satisfying the conditions of the theorem. %, the sequence $\big(f(n)\bmod1\big)_{n\in\N}$ is uniformly distributed.
  Let $d=d(f)$ be the smallest integer such that $f(x)/x^d$ is bounded and let $L=\lim f(x)/x^d$. (The limit exists because $f$ belongs to a Hardy field.)

  We will prove the result by induction on $d$.
  If $L$ is irrational, then the uniform distribution (and indeed well distribution) of $\big(f(n)\bmod1\big)_{n\in\N}$ follows directly from either Theorem \ref{theorem_welldistributionperturbation} or Corollary \ref{cor_theorem_welldistributionperturbation}.

  Observe that for any $p\in\Q[x]$, the sequence $\big(p(n)\bmod1\big)_{n\in\N}$ is periodic.
  Hence $\big(f(n)+p(n)\bmod1\big)_{n\in\N}$ is uniformly distributed if and only if $\big(f(n)\bmod1\big)_{n\in\N}$ is.
  In particular, if $L\in\Q\setminus\{0\}$, then the uniform distribution of $\big(f(n)\bmod1\big)_{n\in\N}$ is equivalent to that of $\big(g(n)\bmod1\big)_{n\in\N}$, with $g(x)=f(x)-Lx^d$ satisfying the conditions of the theorem and either $d(g)<d(f)$ or $L(g)=0$.
  Therefore it suffices to assume that $L=0$.

  Assume first that $d=1$.
  Then L'H\^opital's rule implies that $f'(x)\to0$ and, taking into account that $f(x)/\log(x)\to\infty$, we obtain that $xf'(x)\to\infty$ as $x\to\infty$.
  It now follows from Theorem \ref{theorem_fejer} that $\big(f(n)\bmod1\big)_{n\in\N}$ is uniformly distributed in $\T$.

  Next we deal with the case $d=2$ (a typical example is $f(x)=x\log x$).
  The reason why we can not use induction here is that the derivative $f'(x)$ may not be uniformly distributed $\bmod\  1 $.
  %The classical method to prove that $x\log x$ is uniformly distributed$\bmod1$ shows in fact that any $f$ in the conditions of this theorem with $d=2$ and $L=0$ is uniformly distributed.
  By L'H\^opital's rule we have that $f'(x)\to\infty$ and $f''(x)\to0$.
  Therefore also $f'(x)/x^2f''(x)\to0$ and now the uniform distribution of $\big(f(n)\bmod1\big)_{n\in\N}$ follows from Exercise 1.2.26 in \cite{Kuipers_Niederreiter74}\footnote{Curiously enough, this exercise is based on yet another result of van der Corput, Theorem 1.2.7 in \cite{Kuipers_Niederreiter74}.}.

  Finally, to deal with $d>2$, we apply Theorem \ref{thm_vdCoriginal}.
  Take an arbitrary $h\in\N$ and let $f_h(x)=f(x+h)-f(x)$.
  Since $f$ is a subpolynomial function in a Hardy field, one can show that $f_h$ is also a subpolynomial function in a Hardy field and that $f_h(x)/x^{d-1}\to0$.
  Therefore $d(f_h)\leq d-1$.
  On the other hand, $f_h(x)/x\to\infty$, and in fact $|f_h(x)-p(x)|/x\to\infty$ for any $p\in\Q[x]$.
  This implies that $\log(x)/|f_h(x)-p(x)|\to0$ and now the result follows by induction.
\end{proof}

Recently, Theorem \ref{theorem_Boshernitzan} was extended to sequences of the form $f(p_n)$ where $f$ is a subpolynomial Hardy field function and $p_n$ is the $n$-th prime:%which deals with the subsequence indexed by primes.

\begin{theorem}[see {\cite[Theorem 3.1]{Bergelson_Son15}}]
  Assume that $f:\R\to\R$ belongs to a Hardy field and is subpolynomial and let $(p_n)_{n\in\N}$ be the increasing sequence listing all the prime numbers. Then the sequence $\big(f(p_n)\bmod1\big)_{n\in\N}$ is uniformly distributed in $\T$ if and only if for every $p(x)\in\Q[x]$ we have
  $$\lim_{x\to\infty}\frac{\log x}{|f(x)-p(x)|}=0$$
\end{theorem}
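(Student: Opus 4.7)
The plan is to follow the strategy of Theorem~\ref{theorem_Boshernitzan}, with the essential new ingredient being a Vinogradov/Vaughan-type analysis of the resulting exponential sums over primes.

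For the \emph{necessity} direction, the argument is essentially inherited from Theorem~\ref{theorem_Boshernitzan}: suppose $|f(x)-p(x)|\leq C\log x$ for some $p\in\Q[x]$ and all sufficiently large $x$. Let $M\in\N$ clear the denominators of the coefficients of $p$; then $Mp(p_n)\in\Z$ and consequently $Mf(p_n)\bmod 1$ agrees (modulo $1$) with the sequence $M(f(p_n)-p(p_n))$, whose absolute value is bounded by $MC\log p_n\sim MC\log(n\log n)$. Such a slowly growing sequence cannot be uniformly distributed in $\T$ by Theorem~1.2.6 of~\cite{Kuipers_Niederreiter74}.

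For the \emph{sufficiency} direction, by the Weyl criterion and partial summation it suffices to establish, for every nonzero $k\in\Z$, the asymptotic
$$\frac{1}{N}\sum_{n\leq N}\Lambda(n)\,e^{2\pi ikf(n)}\xrightarrow[N\to\infty]{}0,$$
where $\Lambda$ is the von Mangoldt function. I would apply Vaughan's identity to decompose $\Lambda$ and reduce the problem to two families of sums: \emph{Type I} sums of the form $\sum_{m\leq M}a_m\sum_{n\leq N/m}e^{2\pi ikf(mn)}$, and \emph{Type II} bilinear sums $\sum_{m}\sum_{n}a_mb_n\,e^{2\pi ikf(mn)}$ with both indices in intermediate ranges. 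For fixed $m$, the function $g_m(x):=f(mx)$ is again a subpolynomial Hardy field function, and a direct computation shows that it still satisfies $\log x/|g_m(x)-q(x)|\to0$ for every $q\in\Q[x]$; hence Theorem~\ref{theorem_Boshernitzan} yields $\sum_{n\leq N/m}e^{2\pi ikf(mn)}=o(N/m)$, controlling the Type I contribution. For the Type II sums, a Cauchy--Schwarz step in the spirit of Theorem~\ref{thm_vdCinC} reduces matters to bounding $\sum_{n}e^{2\pi ik(f(m(n+h))-f(mn))}$, where $f(m(x+h))-f(mx)$ is a subpolynomial Hardy field function of strictly smaller degree; an induction on $d(f)$ parallel to that in the proof of Theorem~\ref{theorem_Boshernitzan} then closes the argument.

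The principal obstacle is to verify that the auxiliary Hardy field functions produced by the decomposition --- the dilates $g_m$ and the differences $f_{m,h}(x):=f(m(x+h))-f(mx)$ --- continue to satisfy the hypothesis of Theorem~\ref{theorem_Boshernitzan} at every stage of the induction. Here the Hardy field assumption is indispensable: closure under differentiation, the eventual monotonicity of Hardy field functions, and the control it gives on $f(x+h)-f(x)\sim hf'(x)$ allow one to propagate the growth condition $\log x/|f-p|\to\infty$ through difference operators and dilations. As in the original proof, the borderline case $d(f)=2$ (exemplified by $f(x)=x\log x$) does not reduce cleanly via \DT{} and will require a prime-indexed analogue of the van der Corput--Kuipers--Niederreiter Exercise~1.2.26 used in the proof of Theorem~\ref{theorem_Boshernitzan}.
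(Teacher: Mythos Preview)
The paper does not contain a proof of this theorem: it is merely quoted from \cite{Bergelson_Son15} (see the citation in the theorem header) as a recent extension of Boshernitzan's Theorem~\ref{theorem_Boshernitzan}, and the surrounding text moves on immediately to Corollary~\ref{cor_besibosher}. So there is nothing to compare your proposal against within this paper.

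That said, a word on your sketch. The necessity direction and the overall Vaughan-decomposition plan are indeed the right shape, and this is essentially the route taken in \cite{Bergelson_Son15}. However, your handling of the Type~I sums has a genuine gap: you invoke Theorem~\ref{theorem_Boshernitzan} to get $\sum_{n\leq N/m}e^{2\pi ikf(mn)}=o(N/m)$ for each fixed $m$, but in Vaughan's identity the outer variable $m$ ranges over an interval growing with $N$ (typically up to a small power of $N$). A purely qualitative $o(N/m)$, with no uniformity in $m$, does not let you sum over $m$ and still save a factor; you need either a quantitative saving in the inner sum (e.g.\ via van der Corput's $k$-th derivative test or Weyl differencing with explicit exponents) or a uniform-in-$m$ estimate. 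The same issue recurs in the Type~II step: after Cauchy--Schwarz you must control a double sum with both $m$ and $h$ varying, and the induction on $d(f)$ only helps if the resulting bounds are uniform in those parameters. The actual proof in \cite{Bergelson_Son15} confronts exactly this point and requires quantitative exponential-sum estimates rather than the soft equidistribution statement of Theorem~\ref{theorem_Boshernitzan}.
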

See also Corollary \ref{cor_besibosher} below for another result related to Theorem \ref{theorem_Boshernitzan}.
\subsection{Cigler's characterization of well distribution and some of its corollaries}
For completeness of the picture, we present in this subsection a result due to Cigler \cite{Cigler66} that provides an explanation for the dynamical underpinnings of the notion of well distribution.
We also present another result of Cigler \cite{Cigler68} which states that for any tempered function $f$, the sequence $f(n)$ can not be well distributed, complementing Corollary \ref{cor_tempered}.

%The results of this subsection are due to Cigler.
\begin{theorem}\label{thm_welldistrdynamical}
Let $x=(x_n)_{n\in\N}$ be a sequence in $\T$.
Consider $x$ as an element of $\T^\N$ and let $S:\T^\N\to\T^\N$ be the shift map.
  Then $(x_n)_{n\in\N}$ is well distributed in $\T$ if and only if for any $S$-invariant measure $\mu$ on the orbit closure $\overline{\{S^nx:n\in\N\}}$ of $x$, the pushforward $\pi_*\mu$ by the projection $\pi:\T^\N\to\T$ onto the first coordinate is the Lebesgue measure $\lambda$ on $\T$.
\end{theorem}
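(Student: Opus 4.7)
My plan is to prove both implications via the standard machinery relating equidistribution to ergodic properties of the orbit closure. Throughout I will write $X := \overline{\{S^n x : n \in \N\}} \subset \T^\N$, and for each $f \in C(\T)$ set $F := f \circ \pi \in C(X)$ together with the Birkhoff averages $A_N(y) := \tfrac{1}{N}\sum_{n=0}^{N-1} F(S^n y)$. Since a Borel probability measure on $\T$ is determined by its pairings with $C(\T)$, proving $\pi_\ast \mu = \lambda$ amounts to proving $\int F \, d\mu = \int_\T f \, d\lambda$ for every $f \in C(\T)$.

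For the forward direction, I will first note that well distribution of $(x_n)$ is precisely the statement that $A_N(S^M x) \to \int f \, d\lambda$ as $N \to \infty$, uniformly in $M \in \N$. The main step is to upgrade this to uniform convergence of $A_N$ on the entire orbit closure $X$, which I intend to do by the following density argument: for an arbitrary $y \in X$ pick $n_k$ with $S^{n_k} x \to y$ coordinatewise; by continuity of $f$ we have $A_N(S^{n_k} x) \to A_N(y)$ for each fixed $N$, so the uniform bound on the dense orbit $\{S^M x\}$ passes to every $y \in X$. Once $A_N \to \int f \, d\lambda$ uniformly on $X$ is in hand, $S$-invariance gives $\int F \, d\mu = \int A_N \, d\mu$ for every $N$ and every $S$-invariant $\mu$, and uniform convergence lets me pass to the limit and conclude.

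For the converse I would argue contrapositively. If $(x_n)$ fails to be well distributed, the exponential-sum criterion recalled just before Theorem \ref{thm_vdcwelldist} supplies $h \in \N$, $\varepsilon > 0$, and sequences $M_k, N_k$ with $N_k \to \infty$ such that $\bigl|\tfrac{1}{N_k}\sum_{n=M_k+1}^{M_k + N_k} e^{2\pi i h x_n}\bigr| \geq \varepsilon$. I form the empirical measures $\nu_k := \tfrac{1}{N_k}\sum_{n=0}^{N_k-1} \delta_{S^n(S^{M_k} x)}$ on the compact metrizable space $X$, extract a weak-$\ast$ subsequential limit $\nu$, and verify $S$-invariance of $\nu$ via the standard telescoping estimate $|\int F \circ S \, d\nu_k - \int F \, d\nu_k| \leq 2\|F\|_\infty / N_k$. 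Evaluating against $F(y) := e^{2\pi i h \pi(y)}$ then shows that $\pi_\ast \nu$ pairs with the character $t \mapsto e^{2\pi i h t}$ to a complex number of modulus $\geq \varepsilon$, so $\pi_\ast \nu \neq \lambda$, contradicting the hypothesis.

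The only genuinely delicate step is the uniform-upgrade used in the forward direction: passing from uniform-in-$M$ convergence along the countable dense orbit $\{S^M x : M \in \N\}$ to uniform convergence on the full orbit closure $X$. I plan to avoid any appeal to Arzelà--Ascoli or equicontinuity and use only the coordinatewise-convergence argument above, which suffices because $F = f \circ \pi$ depends on a single coordinate and $f$ is bounded and continuous.
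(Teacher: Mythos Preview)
Your proposal is correct and follows essentially the same route as the paper. In the forward direction the paper likewise uses $S$-invariance to replace $\int F\,d\mu$ by $\int A_N\,d\mu$ and then, for each $y\in X$, approximates $y$ by orbit points $S^{k_i(y)}x$ to transfer the uniform-in-$M$ estimate to all of $X$; your version isolates the ``uniform upgrade'' as a separate lemma, which is perhaps cleaner but the content is the same. For the converse the paper runs exactly your empirical-measure/weak-$\ast$-limit argument, testing against a general $f\in C(\T)$ rather than a single character, a cosmetic difference only.
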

\begin{proof}
  Assume first that $(x_n)_{n\in\N}$ is well distributed and let $\mu$ be an invariant measure on $X:=\overline{\{S^nx:n\in\N\}}$.
  Let $f\in C(\T)$ be arbitrary.
  We need to show that $\int_Xf\circ\pi\dd\mu=\int_\T f\dd\lambda$.
  For each $y\in X$, let $k_i(y)$ be a sequence in $\N$ such that $S^{k_i(y)}x\to y$ as $i\to\infty$.
  We have
  \begin{eqnarray*}
    \int_Xf\circ\pi\dd\mu&=&\int_Xf\big(\pi(y)\big)\dd\mu(y)\\&=& \lim_{N\to\infty}\int_X\frac1N\sum_{n=1}^Nf\big(\pi(S^ny)\big)\dd\mu(y)\\&=& \lim_{N\to\infty}\int_X\lim_{i\to\infty}\frac1N\sum_{n=1}^Nf\big(\pi(S^{n+k_i(y)}x)\big)\dd\mu(y)\\&=& \lim_{N\to\infty}\int_X\lim_{i\to\infty}\frac1N\sum_{n=1+k_i(y)}^{N+k_i(y)}f\big(\pi(S^nx)\big)\dd\mu(y)\\&=& \lim_{N\to\infty}\int_X\lim_{i\to\infty}\frac1N\sum_{n=1+k_i(y)}^{N+k_i(y)}f(x_n)\dd\mu(y)%\\&=& %\int_X\lim_{i\to\infty}\left(\int_\T f(x)\dd\lambda(x)\right)\dd\mu(y)\\&=&\int_\T f\dd\lambda
  \end{eqnarray*}
  Since $(x_n)_{n\in\N}$ is well distributed, for any $\epsilon>0$, if $N$ is large enough we have
  $$\left|\frac1N\sum_{n=1+k_i(y)}^{N+k_i(y)}f\big(\pi(S^nx)\big)-\int_\T f(x)\dd\lambda(x)\right|<\epsilon$$
  regardless of $i\in\N$ and $y\in X$.
  Therefore, we deduce that $\int_Xf\circ\pi\dd\mu=\int_\T f(x)\dd\lambda(x)$.

  Now assume that for every invariant measure $\mu$ on $X$ we have $\pi_*\mu=\lambda$.
  Let $f\in C(X)$ and assume, for the sake of a contradiction, that
  $$A_N:=\frac1N\sum_{n=a_N+1}^{a_N+N}f(x_n)$$
  does not converge to $\int_\T f\dd\lambda$ for some sequence $(a_N)_{N\in\N}$.
  Passing, if necessary, to a subsequence we can assume that $\lim_{k\to\infty}A_{N_k}=A$ exists and does not equal $\int_\T f\dd\lambda$.
  Let
  $$\mu_k=\frac1{N_k}\sum_{n=a_{N_k}+1}^{a_{N_k}+N_k}\delta_{S^nx}$$
  be the corresponding average of point masses at the orbit of $x$.
  Passing to a further subsequence we can assume that $\mu_k$ converges as $k\to\infty$ to some $S$-invariant measure $\mu$ in the weak$^*$ topology.
  By the assumption, $\pi_*\mu=\lambda$ and therefore
  \begin{eqnarray*}
    \int_\T f\dd\lambda&=&\int_X f\big(\pi(y)\big)\dd\mu(y)\\&=& \lim_{k\to\infty}\frac1{N_k}\sum_{n=a_{N_k}+1}^{a_{N_k}+N_k}f\big(\pi(S^nx)\big)\\&=& \lim_{k\to\infty}\frac1{N_k}\sum_{n=a_{N_k}+1}^{a_{N_k}+N_k}f(x_n)=A
  \end{eqnarray*}
  This contradiction finishes the proof.
\end{proof}
\begin{corollary}
  For every tempered function $f:\R\to\R$, the sequence $(f(n)\bmod1)_{n\in\N}$ in $\T$ is not well distributed.
\end{corollary}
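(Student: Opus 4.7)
The plan is to apply Theorem~\ref{thm_welldistrdynamical}: it suffices to exhibit a shift-invariant probability measure on the orbit closure $X := \overline{\{S^n y : n \in \N\}} \subset \T^\N$ of $y := (f(n) \bmod 1)_{n \in \N}$ whose pushforward under the projection $\pi$ to the first coordinate is not the Lebesgue measure $\lambda$. I aim to produce a shift-fixed point $z = (\alpha, \alpha, \dots) \in X$; then $\delta_z$ is $S$-invariant and $\pi_*\delta_z = \delta_\alpha \neq \lambda$, so Theorem~\ref{thm_welldistrdynamical} concludes that $\bigl(f(n) \bmod 1\bigr)_n$ is not well distributed.

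To build $z$, I use Newton's exact identity $f(M+k) = \sum_{i=0}^{k} \binom{k}{i} \Delta^i f(M)$. Because $f^{(\ell+1)}$ decreases to $0$, the mean value theorem for divided differences gives $|\Delta^{\ell+1} f(M)| \leq f^{(\ell+1)}(M)$, and a telescoping bound yields $|\Delta^i f(M)| \leq 2^{i-\ell-1} f^{(\ell+1)}(M) \to 0$ for every $i \geq \ell+1$. Consequently, for each fixed $k$ the Newton tail $\sum_{i > \ell} \binom{k}{i}\Delta^i f(M)$ tends to $0$ as $M \to \infty$. If I can pass to a subsequence $M_j \to \infty$ along which $f(M_j) \bmod 1 \to \alpha$ and $\Delta^i f(M_j) \bmod 1 \to 0$ for each $i = 1, \dots, \ell$, then since $\binom{k}{i} \in \Z$ we have $\binom{k}{i}\Delta^i f(M_j) \bmod 1 = \binom{k}{i}\bigl(\Delta^i f(M_j) \bmod 1\bigr) \bmod 1 \to 0$ for each fixed $k$ and $i \geq 1$. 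Combined with the vanishing tail, this gives $f(M_j+k) \bmod 1 \to \alpha$ for every fixed $k$, i.e.\ $S^{M_j} y \to z := (\alpha, \alpha, \dots)$ in the product topology on $\T^\N$, so $z \in X$ as required.

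The main obstacle is producing such a subsequence, which amounts to showing that $(0, \dots, 0) \in \T^\ell$ is an accumulation point of $\bigl(\Delta f(n), \dots, \Delta^\ell f(n)\bigr)_n \bmod 1$. I deduce this from the stronger claim that the joint sequence is \emph{equidistributed} in $\T^\ell$, proved by induction on $\ell$ using Weyl's criterion together with Theorem~\ref{thm_vdCinC}. For $(h_1, \dots, h_\ell) \in \Z^\ell \setminus \{0\}$ let $u_n := \exp\bigl(2\pi i \sum_{i=1}^\ell h_i \Delta^i f(n)\bigr)$. If $h_1 = \cdots = h_{\ell-1} = 0$, then $|\Delta^\ell f(n) - f^{(\ell)}(n)| \leq \ell\, f^{(\ell+1)}(n) \to 0$, and Corollary~\ref{cor_tempered} applied to the tempered function $f^{(\ell)}$ of degree $0$ gives equidistribution of $f^{(\ell)}(n) \bmod 1$, whence $\frac{1}{N}\sum_n u_n \to 0$. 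Otherwise, $u_{n+d}\overline{u_n} = \exp\bigl(2\pi i \sum_i h_i \Delta^i g_d(n)\bigr)$ where $g_d(x) := f(x+d) - f(x)$; a direct verification using $g_d^{(\ell+1)}(x) = f^{(\ell+1)}(x+d) - f^{(\ell+1)}(x) \leq 0$ and $g_d^{(\ell)}(x) \geq d\, f^{(\ell+1)}(x+d)$, combined with $s f^{(\ell+1)}(s) \to \infty$, shows that $g_d$ is tempered of degree $\ell-1$, while $g_d^{(\ell)}(x) = \int_x^{x+d} f^{(\ell+1)}(s)\dd s \to 0$ forces $\Delta^\ell g_d(n) \to 0$. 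Thus the exponent reduces to $\sum_{i=1}^{\ell-1} h_i \Delta^i g_d(n) + o(1)$, and the inductive hypothesis applied to $g_d$ gives $\frac{1}{N}\sum_n u_{n+d}\overline{u_n} \to 0$ for each $d \geq 1$; Theorem~\ref{thm_vdCinC} then yields $\frac{1}{N}\sum_n u_n \to 0$, completing the induction. The most delicate step is verifying that $g_d$ really inherits the tempered structure of one degree lower.
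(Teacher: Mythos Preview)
Your argument is correct and follows the same overall strategy as the paper: produce a shift-fixed point $(\alpha,\alpha,\dots)$ in the orbit closure of $y=(f(n)\bmod1)_n$ and invoke Theorem~\ref{thm_welldistrdynamical}. The paper reaches this point via Taylor's expansion
\[
f(n_k+h)=\sum_{i=0}^\ell \frac{f^{(i)}(n_k)}{i!}h^i+\frac{f^{(\ell+1)}(t)}{(\ell+1)!}h^{\ell+1},
\]
after asserting that the tuple $\bigl(f(n),f'(n),\dots,f^{(\ell)}(n)\bigr)$ is equidistributed in $\T^{\ell+1}$ because ``each non-zero integer combination of $f,f',\dots,f^{(\ell)}$ is tempered''; this yields the specific fixed point $(0,0,\dots)$. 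You instead use Newton's forward-difference formula and show equidistribution of $\bigl(\Delta f(n),\dots,\Delta^\ell f(n)\bigr)$ in $\T^\ell$ by an explicit van der Corput induction, then pick $\alpha$ by compactness.

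The two routes are close cousins, but your discrete version buys something genuine. The paper's one-line claim that an arbitrary integer combination $\sum a_i f^{(i)}$ is tempered tacitly requires derivatives of $f$ beyond order $\ell+1$ (e.g.\ $(af^{(j)}+bf^{(k)})^{(\ell-j+1)}$ involves $f^{(\ell+1+k-j)}$), which the definition does not supply; your argument stays entirely within $f^{(\ell)}$ and $f^{(\ell+1)}$, so it is rigorous for the class of tempered functions as defined. The price is that your equidistribution step is longer, and you have to verify by hand that each $g_d(x)=f(x+d)-f(x)$ is tempered of degree $\ell-1$ --- which you do correctly, using $g_d^{(\ell)}(x)=\int_x^{x+d}f^{(\ell+1)}$ to get monotonicity, decay, and the growth $xg_d^{(\ell)}(x)\ge d\,x f^{(\ell+1)}(x+d)\to\infty$.
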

\begin{proof}
Let $x_n=f(n)\bmod1$ and $x=(x_n)_{n\in\N}$ be the corresponding sequence in $\T^\N$.
Let $\ell$ be the degree of $f$, so that $f^{(\ell)}$ goes to infinity but $f^{(\ell+1)}$ goes to $0$.
Then each of the functions $f,f',\dots,f^{(\ell)}$ is tempered and, moreover, each non-zero linear combination of these functions (with integer coefficients) is tempered.
In view Corollary \ref{cor_tempered} and Theorem \ref{theorem_weylgeneral}, the tuple $\big(f(n),f'(n),\dots,f^{(\ell)}(n)\big)$ is uniformly distributed in $\T^{\ell+1}$.
In particular, that sequence is dense in $\T^{\ell+1}$ so there exists some sequence $n_k\to\infty$ such that $f^{(i)}(n_k)\to0$ as $k\to\infty$ for all $i=0,\dots,\ell$.

For each $h\in\N$ we now get
$$f(n_k+h)=\sum_{i=0}^\ell\frac{f^{(i)}(n_k)}{i!}h^i+\frac{f^{(\ell+1)}(t)}{(\ell+1)!}h^{(\ell+1)}$$
for some $t\in[n_k,n_k+h]$.
After reducing modulo $1$ we deduce that $x_{n_k+h}\to0$ as $k\to\infty$ for every $h\in\N$.
Translating to a dynamical viewpoint, this means that the point $(0,0,\dots)$ belongs to the orbit closure of $x$.
Since this point is fixed under the shift map $S$, it supports a point mass which is an invariant measure.
The projection under $\pi:\T^\N\to\T$ of this measure is then the point mass at $0$ which is not the Lebesgue measure.
It now follows from Theorem \ref{thm_welldistrdynamical} that $x$ is not well distributed.
\end{proof}
\begin{corollary}
  Almost every sequence in $\T^\N$ (with the product Lebesgue measure) is not well distributed.
\end{corollary}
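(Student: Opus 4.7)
The plan is to invoke the dynamical characterization of well distribution given by Theorem \ref{thm_welldistrdynamical}: it suffices to exhibit, for almost every $x = (x_n)_{n\in\N} \in \T^\N$, some $S$-invariant probability measure supported on the orbit closure $\overline{\{S^n x : n \in \N\}}$ whose projection under $\pi$ is not the Lebesgue measure $\lambda$ on $\T$.

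The simplest such candidate is a Dirac mass at a fixed point of $S$. Observing that the constant sequence $\mathbf{0} = (0,0,\dots) \in \T^\N$ is fixed by $S$, the Dirac mass $\delta_{\mathbf 0}$ is automatically $S$-invariant and satisfies $\pi_* \delta_{\mathbf 0} = \delta_0 \neq \lambda$. Thus the whole problem reduces to showing that, with full product Lebesgue measure, $\mathbf 0$ belongs to the orbit closure of $x$; equivalently, there exist $n_m \to \infty$ with $S^{n_m} x \to \mathbf 0$ in the product topology.

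To establish this, fix $k\in\N$ and $\epsilon>0$ and consider, for $j\in\N$, the events
$$ A_j := \bigl\{(x_n)_{n\in\N} : x_{jk+1}, x_{jk+2}, \dots, x_{jk+k} \in [0,\epsilon)\bigr\}. $$
Because distinct $A_j$ depend on disjoint blocks of coordinates, they are mutually independent under the product Lebesgue measure, and each has probability $\epsilon^k>0$. The second Borel--Cantelli lemma then implies that almost every $x$ lies in infinitely many $A_j$, so for a.e.\ $x$ there exist arbitrarily large $n$ with $x_{n+1},\dots,x_{n+k}\in[0,\epsilon)$. Intersecting this full-measure event over a countable sequence of pairs $(k_m,\epsilon_m)$ with $k_m\to\infty$ and $\epsilon_m\to 0$, and diagonalizing, we obtain for a.e.\ $x$ a sequence $n_m\to\infty$ along which $S^{n_m}x\to\mathbf 0$ in the product topology, as required.

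The argument is essentially routine measure-theoretic combinatorics and I do not anticipate a real obstacle; the only mild subtlety is to ensure that Borel--Cantelli is applied to a genuinely independent subfamily (hence the spacing of the $A_j$ by $k$) and to diagonalize in $(k_m,\epsilon_m)$ so as to upgrade ``close to $\mathbf 0$ in finitely many coordinates'' to true convergence in $\T^\N$.
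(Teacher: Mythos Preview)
Your proof is correct and follows the same overall strategy as the paper: show that the fixed point $\mathbf{0}=(0,0,\dots)$ lies in the orbit closure of almost every $x$, then invoke Theorem \ref{thm_welldistrdynamical} with the $S$-invariant Dirac mass $\delta_{\mathbf 0}$, whose pushforward under $\pi$ is $\delta_0\neq\lambda$.

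The only difference lies in how the sub-claim ``$\mathbf{0}$ is in the orbit closure of a.e.\ $x$'' is established. The paper simply quotes ergodicity of the Bernoulli shift on $\T^\N$ (with respect to product Lebesgue measure), which implies that almost every orbit is dense in $\T^\N$ and hence in particular accumulates at $\mathbf{0}$. You instead give a direct, self-contained argument via the second Borel--Cantelli lemma applied to independent cylinder events. Your route is more elementary and avoids invoking ergodicity as a black box; the paper's route is a one-line appeal to a standard fact and is correspondingly shorter. Both are perfectly valid.
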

%\new{This corollary states that a sequence $(x_n)$ of i.i.d.\ random variables in $[0,1)$ having probability density function $1$ is almost surely \emph{not} well distributed.}
\begin{proof}
Notice that the shift $S$ on $\T^\N$ is ergodic with respect to the product Lebesgue measure, and hence almost every $\omega\in\T^\N$ has a dense orbit (under $S$) in $\T^\N$.
  In particular, the orbit closure of $\omega$ contains the fixed point $(0,0,\dots)$ which supports an invariant (atomic) measure $\delta$ whose projection under $\pi$ is not the Lebesgue measure.
\end{proof}

\section{Uniform distribution along Besicovitch almost periodic subsequences}\label{sec_besicovitch}

The results obtained in this section heavily depend on ideas introduced in papers by Mend\`es France \cite{MendesFrance73} and Daboussi and Mend\`es France \cite{Daboussi_MendesFrance74}.
In particular, we establish a refined version of \DT{} involving Besicovitch almost periodic functions and derive some interesting new applications.
\begin{definition}\label{def_besicovitch}\
\begin{itemize}
    \item For a subset $A\subset\N$, we define its upper density and Banach lower density, respectively, by
    $$\bar d(A)=\limsup_{N\to\infty}\frac{|A\cap\{1,\dots,N\}|}N$$ $$d_*(A)=\lim_{N\to\infty}\inf_{M\in\N}\frac{|A\cap\{M+1,\dots,M+N\}|}N$$
  \item A function $f:\N\to\C$ is \emph{Besicovitch almost periodic} if for every $\epsilon>0$ there exist $k\in\N$
      and $\alpha_1,\dots,\alpha_k\in\T$, $c_1,\dots,c_k\in\C$ such that
  \begin{equation}\label{eq_besicovitchalmostperiodic}\limsup_{N\to\infty}\frac1N\sum_{n=1}^N\left|f(n)-\sum_{j=1}^kc_je^{2\pi i\alpha_jn}\right|<\epsilon\end{equation}
    \item A function $f:\N\to\C$ is \emph{uniform Besicovitch almost periodic} if for every $\epsilon>0$ there exist $k\in\N$
      and $\alpha_1,\dots,\alpha_k\in\T$, $c_1,\dots,c_k\in\C$ such that
  $$\limsup_{N\to\infty}\sup_{M\in\Z}\frac1N\sum_{n=M+1}^{M+N}\left|f(n)-\sum_{j=1}^kc_je^{2\pi i\alpha_jn}\right|<\epsilon$$
  \item An increasing sequence $(n_k)_{k\in\N}$ in $\N$ is a \emph{(uniform) Besicovitch sequence} if the indicator function
      $1_A$ of the set $A=\{n_k:k\in\N\}$ is (uniform) Besicovitch almost periodic and $A$ has positive upper density (positive lower Banach density).
  %\item More generally, a sequence $(n_k)_{k\in\N}$ of positive integers has \emph{almost periodic nature} if the
      %the function $\chi(n):=|\{k\in\N:n_k=n\}|$ is Besicovitch almost periodic.
\end{itemize}
\end{definition}
The following lemma provides a way to check that certain functions are Besicovitch almost periodic.
\begin{lemma}[{\cite{Besicovitch55}}, cf.\ also {\cite[Corollary 3.21]{Bellow_Losert85}}]\label{lemma_besi}
  A function $f:\N\to\C$ is (uniform) Besicovitch almost periodic if there exists a compact abelian group $K$, a
  point $a\in K$ and a Riemann integrable function $\phi:K\to\C$ such that $f(n)=\phi(na)$.
\end{lemma}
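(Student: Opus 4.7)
The plan is to exploit two ingredients: the uniform (indeed well) distribution of the orbit $(na)_{n\in\N}$ in the closed subgroup $H := \overline{\langle a\rangle} \leq K$, together with the sandwich characterization of Riemann integrable functions. First I would observe that replacing $K$ by $H$ changes nothing, since $\phi|_H$ is still Riemann integrable with respect to the Haar measure $\nu$ on $H$. With this reduction, $(na)_{n\in\N}$ is dense in $H$ and in fact uniformly distributed with respect to $\nu$ by Weyl's criterion (Theorem \ref{theorem_weylgeneral}): for every nontrivial character $\chi\in\hat H$ one has $\chi(a)\neq 1$ (otherwise $\chi$ would be trivial on $\langle a\rangle$ and hence on $H$ by continuity), so
\[
\frac{1}{N}\sum_{n=M+1}^{M+N}\chi(na)=\chi(Ma)\cdot\frac{\chi(a)}{N}\cdot\frac{\chi(a)^N-1}{\chi(a)-1}
\]
has modulus at most $\frac{2}{N|\chi(a)-1|}$, which tends to $0$ uniformly in $M$. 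This simultaneously establishes well distribution of $(na)$ in $H$, which will take care of the uniform case.

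Next, given $\epsilon>0$, the Riemann integrability of $\phi$ on $H$ furnishes continuous $\phi_1,\phi_2:H\to\C$ (one approximates the real and imaginary parts separately) with $\phi_1\leq\phi\leq\phi_2$ pointwise and $\int_H(\phi_2-\phi_1)\,\mathrm d\nu<\epsilon$. By the Stone--Weierstrass theorem, the trigonometric polynomials (finite $\C$-linear combinations of characters of $H$) are uniformly dense in $C(H)$, so there is a trigonometric polynomial $\psi(x)=\sum_{j=1}^k c_j\chi_j(x)$ with $\|\psi-\phi_2\|_\infty<\epsilon$. Writing $\chi_j(a)=e^{2\pi i\alpha_j}$ with $\alpha_j\in\T$, we obtain
\[
\psi(na)=\sum_{j=1}^k c_j\,\chi_j(a)^n=\sum_{j=1}^k c_j\,e^{2\pi i\alpha_j n},
\]
which is exactly the form required in Definition \ref{def_besicovitch}.

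The final step is the trivial sandwich estimate $|\phi(x)-\psi(x)|\leq|\phi(x)-\phi_2(x)|+|\phi_2(x)-\psi(x)|\leq(\phi_2-\phi_1)(x)+\epsilon$, valid pointwise on $H$. Specialising to $x=na$ and averaging,
\[
\frac{1}{N}\sum_{n=1}^{N}\Bigl|f(n)-\sum_{j=1}^k c_j e^{2\pi i\alpha_j n}\Bigr|\leq \frac{1}{N}\sum_{n=1}^{N}(\phi_2-\phi_1)(na)+\epsilon,
\]
and the first term on the right converges to $\int_H(\phi_2-\phi_1)\,\mathrm d\nu<\epsilon$ by Weyl's theorem, giving the Besicovitch bound with $2\epsilon$ in place of $\epsilon$. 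For the uniform version one replaces $\frac1N\sum_{n=1}^N$ by $\sup_{M\in\Z}\frac1N\sum_{n=M+1}^{M+N}$ throughout, and uses the well-distribution of $(na)$ established in the first paragraph to conclude that the analogous supremum still tends to $\int_H(\phi_2-\phi_1)\,\mathrm d\nu$.

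The only genuine subtlety here is the first step: one must know that Riemann integrability on the large group $K$ restricts correctly to the (possibly much smaller) subgroup $H$, which amounts to checking that $\phi|_H$ is still bounded with its discontinuity set of $\nu$-measure zero; this is the usual definition of Riemann integrability on $K$ (bounded plus discontinuity set of Haar measure zero), and nothing deeper is needed. Everything else is a routine application of Stone--Weierstrass and the Weyl equidistribution criterion already established as Theorem \ref{theorem_weylgeneral}.
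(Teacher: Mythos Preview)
The paper does not supply its own proof of this lemma; it is stated with citations to \cite{Besicovitch55} and \cite{Bellow_Losert85}. Your argument is the standard one and is correct \emph{provided} $a$ topologically generates $K$, i.e., provided $H=K$ and the restriction step is vacuous. The gap lies precisely where you yourself flagged the ``only genuine subtlety'' and then waved it away: Riemann integrability on $K$ does \emph{not} in general pass to a closed subgroup $H$. For a concrete counterexample, take $K=\T^2$, $a=(\alpha,0)$ with $\alpha$ irrational (so $H=\T\times\{0\}$), and let $\phi=1_{C\times\{0\}}$ where $C\subset\T$ is a Cantor set of positive Lebesgue measure. The discontinuity set of $\phi$ in $\T^2$ is contained in $\T\times\{0\}$, a set of two-dimensional Haar measure zero, so $\phi$ is Riemann integrable on $K$; but $\phi|_H=1_C$ has discontinuity set $\partial C=C$ of positive one-dimensional measure, hence is not Riemann integrable on $H$. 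Your sandwich bound on $\tfrac1N\sum_n(\phi_2-\phi_1)(na)$ is then uncontrolled.

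The cited references, and the applications in this paper (Proposition~\ref{prop_apexample}), effectively work in the monothetic situation $H=K$, under which your proof is complete and correct. If you want the lemma exactly as stated here (for arbitrary $a\in K$), you would need a genuinely different argument, and indeed the example above suggests the statement may implicitly require the hypothesis that $\langle a\rangle$ is dense in $K$.
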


In the next proposition we collect some examples of Besicovitch sequences.
\begin{proposition}\label{prop_apexample}\leavevmode
  %The following increasing sequences $(n_k)_{k\in\N}$ of natural numbers are Besicovitch sequences:
  \begin{enumerate}
    \item For any $\alpha>0$, the sequence $k\mapsto\lfloor k\alpha\rfloor$ is a uniform Besicovitch sequence.
    \item The increasing sequence $(n_k)_{k\in\N}$ of squarefree numbers is a Besicovitch sequence.
    \item More generally, for any set $Q\subset\N$ such that $\sum_{m\in Q}1/m<\infty$, the increasing sequence  $(n_k)_{k\in\N}$ listing the numbers $n$ without a divisor in $Q$ is a Besicovitch sequence.
    \item For any irrational $\alpha\in\R$, the set $A:=\big\{n\in\N:\gcd(n,\lfloor n\alpha\rfloor)=1\big\}$ has a Besicovitch almost periodic indicator function, therefore the increasing enumeration of the elements of $A$ is a Besicovitch sequence.
    %\item The sequence that maps $k\in\N$ to the sum of the digits of $k$ in a given base $b\in\N$.
  \end{enumerate}
\end{proposition}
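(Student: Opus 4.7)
My plan is to reduce each part to Lemma \ref{lemma_besi}, either directly or via an $L^1$-mean approximation by indicator functions that are pullbacks from a compact abelian group. Positive density will, in each case, come from Weyl equidistribution (Theorem \ref{theorem_weylgeneral}) applied inside these groups. The most involved part will be (4), where divisibility of $\lfloor n\alpha\rfloor$ by a prime must be converted into a fractional-part condition before the framework of Lemma \ref{lemma_besi} becomes available.

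For (1), after disposing of the trivial rational case and the case $\alpha\le 1$, a direct calculation shows that $n=\lfloor k\alpha\rfloor$ for some $k$ if and only if the interval $[n/\alpha,(n+1)/\alpha)$ contains an integer, which is equivalent to $\{n/\alpha\}>1-1/\alpha$. Hence $1_A(n)=\phi(n\cdot 1/\alpha)$ with $\phi=1_{(1-1/\alpha,\,1)}$, a Riemann integrable function on $\T$, so Lemma \ref{lemma_besi} gives uniform Besicovitch almost periodicity, and well-distribution of $n/\alpha\bmod 1$ gives lower Banach density $1/\alpha>0$.

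For (2) and (3), let $f_P$ be the indicator of $\{n:m\nmid n\text{ for all }m\in Q\cap[1,P]\}$ (with $Q=\{p^2:p\text{ prime}\}$ in (2)). Each $f_P$ is periodic, hence a pullback $\phi_P(n)$ from a finite cyclic group via Lemma \ref{lemma_besi}. Its support differs from $B:=\{n:n\text{ has no divisor in }Q\}$ only on a subset of $\bigcup_{m\in Q,\,m>P}m\Z$, of upper density at most $\sum_{m\in Q,\,m>P}1/m$, which vanishes as $P\to\infty$ by hypothesis; this gives \eqref{eq_besicovitchalmostperiodic}. For positive density of $B$: in (2) this is the classical $6/\pi^2$, while for (3) one combines pairwise positive correlation of the events $\{m\nmid n\}$ (the identity $d(\{m_1\nmid n,\,m_2\nmid n\})-d(\{m_1\nmid n\})\,d(\{m_2\nmid n\})=1/(m_1m_2)-1/\mathrm{lcm}(m_1,m_2)\ge 0$) with convergence of $\sum 1/m$ to conclude $d(B)>0$.

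For (4), the key observation is that $p\mid\lfloor n\alpha\rfloor$ is equivalent to $\{n\alpha/p\}\in[0,1/p)$. Set
$$A_P=\bigl\{n\in\N:\text{for every prime }p\le P,\ p\nmid n\text{ or }\{n\alpha/p\}\notin[0,1/p)\bigr\}.$$
Then $1_{A_P}(n)=\phi_P(na_P)$ inside the compact abelian group $K_P=\prod_{p\le P}(\Z/p\Z\times\T)$, with $a_P=\bigl((1\bmod p,\,\alpha/p)\bigr)_{p\le P}$ and $\phi_P$ the indicator of a set whose boundary has Haar measure zero, so Lemma \ref{lemma_besi} gives Besicovitch almost periodicity of $1_{A_P}$. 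The crucial technical step, which I expect to be the main obstacle, is joint equidistribution of $(na_P)_{n\in\N}$ in $K_P$: every nontrivial character of $K_P$ evaluated at $na_P$ produces $e^{2\pi in(r+s\alpha)}$ with $(r,s)\in\Q^2$ such that either $s\ne 0$ (so that $r+s\alpha$ is irrational) or $s=0$ and $r\notin\Z$ (verified via a short CRT argument using that the primes $p\le P$ are pairwise coprime); in either case the Weyl criterion supplies vanishing averages. This yields $d(A_P)=\prod_{p\le P}(1-1/p^2)$, while a union bound gives $d(A_P\setminus A)\le\sum_{p>P}1/p^2\to 0$. Letting $P\to\infty$ shows that $1_A$ is Besicovitch almost periodic with $d(A)=6/\pi^2>0$.
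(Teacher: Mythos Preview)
Your treatment of (1)--(3) is essentially the paper's: for (1) you rewrite membership in the Beatty set as a fractional-part condition and invoke Lemma~\ref{lemma_besi}, and for (3) (hence (2)) you approximate by the periodic indicator obtained from truncating $Q$ at height $P$. The paper in fact skips (2), subsuming it under (3); you go slightly further by supplying the positive-density argument for (3), which the paper omits.

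For (4) the paper does not give a proof at all and simply cites \cite{Spilker00}, so your argument is genuinely new relative to the paper. It is mostly sound, but the step you flagged as ``the main obstacle'' contains a real gap: the orbit $(na_P)_{n\in\N}$ is \emph{not} equidistributed in $K_P=\prod_{p\le P}(\Z/p\Z\times\T)$. Concretely, take $P\ge3$, set all $c_p=0$, and take $d_2=2$, $d_3=-3$, $d_p=0$ otherwise. The resulting character is nontrivial, yet $s=2/2-3/3=0$ and $r=0\in\Z$, so $e^{2\pi i n(r+s\alpha)}\equiv1$; the CRT argument you sketch only forces $r\notin\Z$ when some $c_p\ne0$, not when the $d_p$'s conspire. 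The orbit closure is therefore a proper closed subgroup of $K_P$.

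Fortunately this affects only the exact computation $d(A_P)=\prod_{p\le P}(1-1/p^2)$, which is more than the proposition needs. Two easy repairs: either (i) note that positive density already follows from the union bound $\bar d(A^c)\le\sum_p d(E_p)=\sum_p 1/p^2<1$, where $E_p=\{n:p\mid n\text{ and }p\mid\lfloor n\alpha\rfloor\}$ has density $1/p^2$ by the substitution $n=pm$ and Weyl equidistribution of $m\alpha$; or (ii) replace $K_P$ by $\Z/N\Z\times(\R/N\Z)$ with $N=\prod_{p\le P}p$ and $a_P=(1,\alpha)$, where every nontrivial character evaluates to $e^{2\pi i(c+d\alpha)/N}$ with $d\ne0$ or $c\not\equiv0\pmod N$, so equidistribution genuinely holds and all the conditions $p\mid n$, $p\mid\lfloor n\alpha\rfloor$ are visible (the latter as $n\alpha\bmod p\in[0,1)$). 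Lemma~\ref{lemma_besi} itself requires no equidistribution, so the Besicovitch almost periodicity of each $1_{A_P}$, and hence of $1_A$, is unaffected.
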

\begin{proof}\

  \begin{enumerate}
    \item Let $\beta=1/\alpha$ and let $A=\{\lfloor k\alpha\rfloor:k\in\N\}$.
    Observe that $\lfloor k\alpha\rfloor=n\iff n\beta\leq k<n\beta+\beta$ and therefore $n\in A\iff
    n\beta\bmod1\in(1-\beta,1]$.
    Let $\phi:\T\to\C$ be the indicator function of the set $(1-\beta,1]$.
    I   t follows from Lemma
    \ref{lemma_besi} that the sequence $k\mapsto\lfloor k\alpha\rfloor$ is uniform Besicovitch.
    \stepcounter{enumi}
    \item For each $M\in\N$, let $Q_M$ be the set of natural numbers not divisible by an element of $Q\cap[1,M]$.
    The indicator function $1_{Q_M}$ is periodic and hence can be written as a finite sum
    $\sum_{j=1}^kc_je^{2\pi i\alpha_jn}$ (with all the $\alpha_j\in\Q$).

    Let %$Q$ denote the set of square free numbers, fix
    $\epsilon>0$ and choose $M$ so that $\sum_{m>M}1_Q(m)/m<\epsilon$.
    %Let $Q_M$ be the set of natural numbers not divisible by $4,9,\dots,M^2$.
    It is clear that $Q\subset Q_M$.
    Moreover any number in $Q_M\setminus Q$ must be divisible by some element of $Q\cap[M+1,\infty)$.
    Therefore, for each $N\in\N$, the cardinality of $\{1,\dots,N\}\cap(Q_M\setminus Q)$ is at most
    $\sum_{m=M+1}^\infty1_Q(m) N/m<N\epsilon$.
    We conclude that
    $$\limsup_{N\to\infty}\frac1N\sum_{n=1}^N\left|1_Q(n)-\sum_{j=1}^kc_je^{2\pi i\alpha_jn}\right|<\epsilon$$
   \item This is the content of the paper \cite{Spilker00}.\qedhere
  \end{enumerate}
\end{proof}

We have the following result (cf.\ \cite{MendesFrance73,Daboussi_MendesFrance74,Rauzy76}) which can be viewed as a refinement of Theorem \ref{thm_vdCoriginal}.
%These authors deal with uniform distribution, but their ideas can be adapted to the case of well distribution as well:

\begin{theorem}\label{theorem_besicovitchsubsequence}
  Let $(u_n)_{n\in\N}$ be a sequence in $\T$.
  If for every $h\in\N$ the difference $n\mapsto u_{n+h}-u_n$ is uniformly distributed in $\T$ then for any Besicovitch sequence $(n_k)_{k\in\N}$ the sequence $(u_{n_k})_{k\in\N}$ is uniformly distributed.
  Moreover, if for every $h\in\N$ the sequence $(u_{n+h}-u_n)_{n\in\N}$ is well distributed and $(n_k)_{k\in\N}$ is uniform Besicovitch, then $(u_{n_k})_{k\in\N}$ is well distributed.
\end{theorem}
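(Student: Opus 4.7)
The plan is to verify the Weyl criterion (Theorem \ref{theorem_weyl}(3)) for $(u_{n_k})_{k\in\N}$. Fix $h\in\N$, set $A=\{n_k:k\in\N\}$ and $N_K=n_K$. The target sum can be rewritten as
\[
\frac{1}{K}\sum_{k=1}^K e^{2\pi i h u_{n_k}}=\frac{N_K}{K}\cdot\frac{1}{N_K}\sum_{n=1}^{N_K}1_A(n)\,e^{2\pi i h u_n}.
\]
Besicovitch almost periodicity of $1_A$ forces the Ces\`aro mean $d(A):=\lim_N\tfrac1N\sum_{n=1}^N1_A(n)$ to exist (it is read off from the constant Fourier coefficient of the approximating trigonometric polynomial), and the hypothesis $\bar d(A)>0$ then yields $d(A)>0$, so $N_K/K\to1/d(A)<\infty$. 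Thus the first claim reduces to showing $\tfrac{1}{N}\sum_{n=1}^N 1_A(n)\,e^{2\pi i h u_n}\to0$.

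Given $\epsilon>0$, apply \eqref{eq_besicovitchalmostperiodic} to obtain a trigonometric polynomial $T(n)=\sum_{j=1}^r c_j e^{2\pi i\alpha_j n}$ approximating $1_A$ to within $\epsilon$ in Besicovitch norm. For each fixed $j$, set $y_n^{(j)}:=\alpha_j n+h u_n$; its difference
\[
y_{n+d}^{(j)}-y_n^{(j)}=\alpha_j d+h(u_{n+d}-u_n)
\]
is a constant translate of $h(u_{n+d}-u_n)$, which is uniformly distributed in $\T$ because $u_{n+d}-u_n$ is and multiplication by the nonzero integer $h$ preserves uniform distribution via Weyl's criterion. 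Hence \DT{} (Theorem \ref{thm_vdCoriginal}) gives that $y_n^{(j)}$ is uniformly distributed, so $\tfrac1N\sum_{n=1}^N e^{2\pi i y_n^{(j)}}\to 0$. Replacing $1_A$ by $T$ inside the sum and invoking the triangle inequality then gives $\limsup_N\bigl|\tfrac1N\sum_{n=1}^N1_A(n)e^{2\pi ihu_n}\bigr|\le\epsilon$, which suffices since $\epsilon$ is arbitrary.

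For the well distribution half, exactly the same argument is run \emph{uniformly over shifts}: one picks $T$ using the uniform Besicovitch definition so that the approximation error averaged over an arbitrary window $\{M+1,\dots,M+N\}$ stays below $\epsilon$ for all large $N$ uniformly in $M$, and one uses the well distribution hypothesis on $u_{n+d}-u_n$ together with Theorem \ref{thm_vdcwelldist} to upgrade each $y_n^{(j)}$ from uniform distribution to well distribution. The final bookkeeping step translates $k$-windows $\{M+1,\dots,M+K\}$ into the corresponding $n$-windows $\{n_{M+1},\dots,n_{M+K}\}$, and the positive Banach lower density $d_*(A)>0$ guaranteed by uniform Besicovitch bounds the ratio of window lengths uniformly in $M$. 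The principal obstacle I foresee is not any single deep step but rather the careful coordination of these several uniformities---uniform Besicovitch approximation, uniform well-distribution estimates from Theorem \ref{thm_vdcwelldist}, and uniform density of $A$---across the re-indexing from $k$ to $n$; each piece of machinery invoked is designed precisely to cooperate in this way, so the remaining work is essentially expository.
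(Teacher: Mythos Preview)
Your proof is correct and follows essentially the same approach as the paper: rewrite the sum over $k$ as a weighted sum over $n$ via $1_A$, approximate $1_A$ by a trigonometric polynomial, and then apply \DT{} to the twisted sequence $n\mapsto \alpha_j n+hu_n$ (equivalently, to $n\mapsto e^{2\pi i\alpha_j n}e^{2\pi i h u_n}$). The paper carries this out for a multidimensional generalization (Theorem~\ref{thm_besimultidim}) and phrases the core step slightly differently---working directly with the bounded $\C$-valued sequence $v(n)=e^{2\pi i\langle\alpha_j,n\rangle}\chi(u(n))$ and Theorem~\ref{thm_vdcwelldist}---but the mechanism is identical; your explicit observation that Besicovitch almost periodicity of $1_A$ forces $d(A)$ to exist is a clean touch that the paper leaves implicit.
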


We will actually prove a multidimensional version of Theorem \ref{theorem_besicovitchsubsequence}.
First, we will need the following multidimensional version of Definition \ref{def_besicovitch}.
\begin{definition}Let $d\in\N$.
\begin{itemize}
\item For a set $A\subset\N^d$ we define the \emph{upper density} by the formula
$$\bar d(A)=\limsup_{N\to\infty}\frac{\big|A\cap\{1,\dots,N\}^d\big|}{N^d}$$
and the \emph{lower Banach density} by the formula
$$d_*(A)=\liminf_{N\to\infty}\inf_{(M_1,\dots,M_d)\in\N^d}\frac{\left|A\cap\prod_{i=1}^d\{M_i+1,\dots,M_i+N\}\right|}{N^d}$$

  \item A function $f:\N^d\to\C$ is \emph{Besicovitch almost periodic} if for any $\epsilon>0$ there exists $\ell\in\N$ and $\alpha_1,\dots,\alpha_\ell\in\T^d$, $c_1,\dots,c_\ell\in\C$ such that
  $$\limsup_{N\to\infty}\frac1{N^d}\sum_{n\in[1,N]^d}\left|f(n)-\sum_{j=1}^\ell c_j\exp2\pi i\langle\alpha_j,n\rangle\right|<\epsilon$$
  \item We say that $f:\N^d\to\C$ is \emph{uniform Besicovitch almost periodic} if one can replace the above limit by its uniform version: $$\limsup_{N\to\infty}\sup_{m\in\N^d}\frac1{N^d}\sum_{n\in m+[1,N]^d}\left|f(n)-\sum_{j=1}^\ell c_j\exp2\pi i\langle\alpha_j,n\rangle\right|<\epsilon$$
      where $[1,N]=\{1,\dots,N\}$.
  \item A sequence $n:\N^d\to\N^d$ is \emph{increasing} if whenever $a,b\in\N^d$ and for some coordinate $j\in\{1,\dots,d\}$, one has $a_j\leq b_j$, then also $n(a)_j\leq n(b)_j$.
  \item An increasing sequence $n:\N^d\to\N^d$ is a \emph{(uniform) Besicovitch sequence} if the indicator function $1_A$ of the set $A:=\{n(k):k\in\N^d\}$ is (uniform) Besicovitch almost periodic and $A$ has positive upper density (positive lower Banach density).
\end{itemize}
\end{definition}

\begin{theorem}\label{thm_besimultidim}
  Let $d\in\N$, let $K$ be a compact abelian group and let $u:\N^d\to K$ be such that for any $h\in\N^d$ the map $n\mapsto u(n+h)-u(n)$ is uniformly distributed in $K$.
  Let $n:\N^d\to\N^d$ be an increasing Besicovitch sequence.
  Then the sequence $k\mapsto u\big(n(k)\big)$ is uniformly distributed in $K$.

  Moreover, if the maps $n\mapsto u(n+h)-u(n)$ are well distributed and $n$ is uniform Besicovitch, then $k\mapsto u\big(n(k)\big)$ is well distributed in $K$.
\end{theorem}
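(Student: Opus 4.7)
The plan is to combine the Weyl criterion with a Besicovitch approximation of $1_A$ (where $A=\{n(k):k\in\N^d\}$) and an application of \DT{} in the form of Theorem \ref{theorem_vdctrickcontinuous}. By Theorem \ref{theorem_weylgeneral} it suffices to show that for every non-trivial character $\chi\in\hat K$,
$$\lim_{N\to\infty}\frac1{N^d}\sum_{k\in[1,N]^d}\chi\big(u(n(k))\big)=0.$$
The increasing condition on $n$ forces each coordinate $n(k)_j$ to depend only on $k_j$ (take $a,b$ agreeing in the $j$-th coordinate and apply the inequality both ways), so $n$ is a product $n_1\times\cdots\times n_d$ of strictly increasing one-dimensional sequences and $n([1,N]^d)=A\cap\prod_j[1,n_j(N)]$. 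Hence the displayed average equals, up to the positive factor $\prod_j n_j(N)/N^d\to 1/\bar d(A)$, the quantity
$$\frac1{\prod_j n_j(N)}\sum_{m\in A\cap\prod_j[1,n_j(N)]}\chi\big(u(m)\big),$$
so it suffices to prove the auxiliary claim: $\frac1{|F_N|}\sum_{m\in F_N}1_A(m)\chi\big(u(m)\big)\to 0$ for any F\o lner sequence of rectangles $F_N=\prod_j[1,M_j(N)]$ in $\Z^d$.

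For the auxiliary claim, fix $\varepsilon>0$ and use Besicovitch almost periodicity to approximate $1_A$ in Ces\`aro $L^1$ within $\varepsilon$ by a trigonometric polynomial $P(m)=\sum_{j=1}^\ell c_je^{2\pi i\langle\alpha_j,m\rangle}$. Since $|\chi(u(\cdot))|\leq 1$, the triangle inequality reduces the problem to showing, for every $\alpha\in\T^d$,
$$\lim_{N\to\infty}\frac1{|F_N|}\sum_{m\in F_N}e^{2\pi i\langle\alpha,m\rangle}\chi\big(u(m)\big)=0.$$
I would apply Theorem \ref{theorem_vdctrickcontinuous} to $g(m)=e^{2\pi i\langle\alpha,m\rangle}\chi(u(m))$ on $G=\Z^d$. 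For $h\in\Z^d\setminus\{0\}$,
$$g(m+h)\overline{g(m)}=e^{2\pi i\langle\alpha,h\rangle}\chi\big(u(m+h)-u(m)\big),$$
whose Ces\`aro average over $F_N$ tends to $0$ because $u(m+h)-u(m)$ is uniformly distributed in $K$ by hypothesis, so by the Weyl criterion $\chi(u(m+h)-u(m))$ has zero Ces\`aro mean. The outer F\o lner average in $h$ over $[1,D]^d$ therefore also tends to $0$ (the $h=0$ term contributes only $O(1/D^d)$), so Theorem \ref{theorem_vdctrickcontinuous} gives the desired vanishing and, letting $\varepsilon\to 0$, the auxiliary claim.

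The well-distribution version proceeds identically, using the uniform Besicovitch approximation to get the $L^1$-density bound uniformly over all translates of $F_N$, and appealing to the hypothesis that each $u(m+h)-u(m)$ is well distributed (so the Ces\`aro averages in the \DT{} computation are uniform over translates and Theorem \ref{thm_vdcwelldist} / Theorem \ref{theorem_vdctrickcontinuous} applies along arbitrary F\o lner sequences of translated rectangles). The main obstacle I anticipate is verifying that the auxiliary claim really holds uniformly over the rectangles $\prod_j[1,M_j(N)]$ with each $M_j(N)\to\infty$, rather than merely over cubes; this is resolved by noting that arbitrary products of one-dimensional F\o lner sequences in $\Z$ are F\o lner in $\Z^d$, so Theorem \ref{theorem_vdctrickcontinuous} applies directly to the rectangle F\o lner sequence that arises naturally from the increasing enumeration.
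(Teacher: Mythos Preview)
Your proposal is correct and follows essentially the same route as the paper: reduce via the Weyl criterion, convert the average over $k$ to a weighted average over $n$ with weight $1_A$, approximate $1_A$ by a trigonometric polynomial using the Besicovitch hypothesis, and then kill each term $e^{2\pi i\langle\alpha,n\rangle}\chi(u(n))$ by \DT{} using the equidistribution of the differences $u(n+h)-u(n)$. Your explicit observation that the increasing condition forces $n$ to be a product $n_1\times\cdots\times n_d$ of one-dimensional increasing sequences is a helpful clarification that the paper uses only implicitly (in defining the rectangles $\tilde I(m,N)$); otherwise the arguments are parallel, with you invoking Theorem~\ref{theorem_vdctrickcontinuous} where the paper invokes Theorem~\ref{thm_vdcwelldist}.
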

\begin{proof}
%Notice that if $\big(u(n+h)-u(n)\big)_{n\in\N^d}$ is well distributed in $\T$ for every $h\in\N^d$, then by van der Corput trick $\big(u(n)\big)_{n\in\N^d}$ is well distributed.
%Observe that, for any $\alpha\in\T^d$, the sequence $v:n\mapsto u(n)+\langle\alpha,n\rangle$ satisfies $v(n+h)-v(n)=u(n+h)-u(n)+\langle\alpha,h\rangle$, and is therefore also well distributed.
%It now follows from the \DT{} that $v:\N^d\to\T$ is itself well distributed.

  Let $\chi\in\hat K\setminus\{0_{\hat K}\}$ be an arbitrary non-constant character on the Pontryagin dual $\hat K$ of the compact abelian group $K$.
  We need to show that
  $$\lim_{N\to\infty}\frac1{N^d}\sum_{k\in m+[1,N]^d}\chi\Big(u\big(n(k)\big)\Big)=0$$
  uniformly in $m\in\N^d$.
  Let $A=\{n(k):k\in\N^d\}$ and let $1_A$ be the indicator function of $A$.
  Also, for each $m\in\N^d$ and $N\in\N$, let $I(m,N)=m+[1,N]^d=\prod_i[m_i,m_i+N]$ and let $\tilde I(m,N)=\prod_i[n(m_i),n(m_i+N)]$.
  Observe that $n^{-1}\big(\tilde I(m,N)\big)=I(m,N)$ because $n$ is increasing, and therefore $n\big(I(m,N)\big)=A\cap\tilde I(m,N)$.
  Hence
  $$\liminf_{N\to\infty}\frac{N^d}{|\tilde I(m,N)|}=\liminf_{N\to\infty}\frac{|A\cap\tilde I(m,N)|}{|\tilde I(m,N)|}\geq d_*(A)>0$$
  Moreover
  $$\frac1{N^d}\sum_{k\in I(m,N)}\chi\Big(u\big(n(k)\big)\Big)= \frac1{N^d}\sum_{n\in\tilde I(m,N)}1_A(n)\chi\big(u(n)\big)$$
  Fix $\epsilon>0$ and let $\alpha_1,\dots,\alpha_s\in\T^d$ and $c_1,\dots,c_s\in\C$ such that
  $$\limsup_{N\to\infty}\frac1{|\tilde I(m,N)|}\sum_{n\in\tilde I(m,N)}\left|1_A(n)-\sum_{j=1}^sc_j\exp(2\pi i\langle\alpha_j,n\rangle)\right|<\epsilon$$
  We deduce that
  \begin{eqnarray*}
    &&\limsup_{N\to\infty}\frac1{N^d}\left|\sum_{n\in\tilde I(m,N)}1_A(n)\chi\big(u(n)\big)\right|\\&=& \limsup_{N\to\infty}\frac1{N^d}\left|\sum_{n\in\tilde I(m,N)}\left(1_A(n)-\sum_{j=1}^sc_j\exp(2\pi
    i\langle\alpha_j,n\rangle)+\sum_{j=1}^sc_j\exp(2\pi i\langle\alpha_j,n\rangle)\right)\chi\big(u(n)\big)\right|\\&<&
    \epsilon\limsup_{N\to\infty}\frac{|\tilde I(m,N)|}{N^d}+\limsup_{N\to\infty}\frac1{N^d}\left|\sum_{n\in\tilde I(m,N)}\sum_{j=1}^sc_j\exp\big(2\pi i\langle\alpha_j,n\rangle\big)\cdot\chi\big(u(n)\big)\right|\\&\leq& \frac\epsilon{d_*(A)}+\sum_{j=1}^sc_j\frac1{d_*(A)}\limsup_{N\to\infty}\left|\frac1{\big|\tilde I(m,N)\big|}\sum_{n\in\tilde I(m,N)}\exp\big(2\pi i\langle\alpha_j,n\rangle\big)\cdot\chi\big(u(n)\big)\right|
  \end{eqnarray*}
  %As argued in the beginning of this proof, the sequence $n\mapsto u(n)+\langle\alpha_j/t,n\rangle$ is well distributed in $\T$.
  %Hence the limit in the last step is $0$, and since $\epsilon>0$ was arbitrary we conclude that indeed $k\mapsto U\big(n(k)\big)$ is well distributed.
  It suffices now to show that the uniform Ces\`aro limit of the sequence $v:n\mapsto\exp\big(2\pi i\langle\alpha_j,n\rangle\big)\cdot\chi\big(u(n)\big)$ is $0$.
  To show this we will use Theorem \ref{thm_vdcwelldist}.
  We have
  $$v(n+h)\overline{v(n)}=\exp\big(2\pi i\langle\alpha_j,h\rangle\big)\cdot\chi\big(u(n+h)-u(n)\big)$$
  Therefore, averaging over $n$, with $h$ fixed we obtain
  \begin{multline}\label{eq_besicovitchmultidim}
    \frac1{\big|\tilde I(m,N)\big|}\sum_{n\in\tilde I(m,N)}v(n+h)\overline{v(n)}\\=\exp\big(2\pi i\langle\alpha_j,h\rangle\big)\frac1{\big|\tilde I(m,N)\big|}\sum_{n\in\tilde I(m,N)}\chi\big(u(n+h)-u(n)\big)
  \end{multline}
  It follows from Weyl's criterion, together with the hypothesis that $u(n+h)-u(n)$ is well distributed, that the limit as $N\to\infty$ of the quantity in \eqref{eq_besicovitchmultidim} is $0$.
It now follows from \DT{} (in the form of Theorem \ref{thm_vdcwelldist}) that the uniform Ces\`aro limit of $v(n)$ is $0$ and we are done.
\end{proof}
%\begin{remark}
 % This proof can be easily changed to the case where instead of well distribution one has only uniform distribution and the sequence $n(k)$ is only Besicovitch (not necessarily uniform Besicovitch). In this scenario, the conclusion is that $u(n(k))$ is uniformly distributed.
%\end{remark}
The argument used in part (1) of Proposition \ref{prop_apexample} can be utilized to show the following:
\begin{proposition}\label{prop_besicovitch}
  Let $\alpha,\beta\in[1,\infty)$.
  Then the sequence $n:(k_1,k_2)\mapsto(\lfloor k_1\alpha\rfloor,\lfloor k_2\beta\rfloor)$ is uniform Besicovitch.
\end{proposition}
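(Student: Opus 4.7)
The plan is to reduce the proposition to part~(1) of Proposition~\ref{prop_apexample} by exploiting the product structure of the range of $n$. Setting $A_1 := \{\lfloor k\alpha\rfloor : k \in \N\}$ and $A_2 := \{\lfloor k\beta\rfloor : k \in \N\}$, the definition of $n$ gives
\[
A := \{n(k) : k \in \N^2\} = A_1 \times A_2,
\]
so the indicator factorizes as $1_A(n_1, n_2) = 1_{A_1}(n_1)\, 1_{A_2}(n_2)$. By part~(1) of Proposition~\ref{prop_apexample}, each $A_i$ is uniform Besicovitch almost periodic.

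Given $\varepsilon > 0$, I would choose trigonometric polynomials $P_i : \N \to \C$ (finite $\C$-linear combinations of characters $n \mapsto e^{2\pi i \alpha_j n}$) whose uniform Ces\`aro $L^1$-error in approximating $1_{A_i}$ is at most $\varepsilon$. The product $Q(n_1, n_2) := P_1(n_1) P_2(n_2)$ is again a trigonometric polynomial, this time on $\N^2$ (each summand is a character $e^{2\pi i \langle \lambda, (n_1,n_2)\rangle}$ of $\T^2$). The algebraic identity
\[
1_A - Q = (1_{A_1} - P_1) \otimes 1_{A_2} + P_1 \otimes (1_{A_2} - P_2),
\]
combined with Fubini over a box $m + [1,N]^2 \subset \N^2$, bounds the uniform Ces\`aro mean of $|1_A - Q|$ by a $O(\varepsilon)$ quantity: the first term contributes at most $\|1_{A_2}\|_{\infty}\cdot\varepsilon \leq \varepsilon$, and the second at most $(1+\varepsilon)\cdot\varepsilon$, where the bound on the uniform Ces\`aro norm of $|P_1|$ follows asymptotically from the triangle inequality $|P_1| \leq 1_{A_1} + |P_1 - 1_{A_1}|$. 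Since $\varepsilon$ was arbitrary, $1_A$ is uniform Besicovitch almost periodic.

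For the positive lower Banach density condition, I would use the sharper statement implicit in the proof of part~(1) of Proposition~\ref{prop_apexample}: the condition $n \in A_1$ is equivalent to $n/\alpha \bmod 1$ lying in a half-open interval of length $1/\alpha$, so (by uniform distribution of $(n/\alpha)_{n \in \N}$ in $\T$ when $\alpha$ is irrational, and by periodicity when $\alpha$ is rational) one has $|A_1 \cap [M+1, M+N]|/N \to 1/\alpha$ uniformly in $M$, and analogously for $A_2$. The product structure $A = A_1 \times A_2$ then yields
\[
d_*(A) \geq \frac{1}{\alpha \beta} > 0.
\]

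I do not expect any serious obstacle: the argument is essentially routine once the product structure of $A$ is identified, and no idea beyond part~(1) of Proposition~\ref{prop_apexample} is required. The only mild subtlety is the uniform $L^1$-control on the trigonometric approximant $P_1$ in the cross term $P_1 \otimes (1_{A_2} - P_2)$, which, as described above, is handled by the triangle inequality bound $|P_1| \leq 1_{A_1} + |P_1 - 1_{A_1}|$.
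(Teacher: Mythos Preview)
Your argument is correct and is a valid way to carry out what the paper only hints at (the paper gives no proof, merely the remark that ``the argument used in part~(1) of Proposition~\ref{prop_apexample} can be utilized''). The most direct reading of that hint is to redo the argument of part~(1) in two dimensions: observe that $1_A(n_1,n_2)=\phi\big((n_1/\alpha,n_2/\beta)\big)$ where $\phi$ is the indicator of a rectangle in $\T^2$, and then appeal to (the obvious $\N^2$-version of) Lemma~\ref{lemma_besi}. You instead treat part~(1) as a black box and tensor the resulting one-dimensional trigonometric approximations; this is a slightly different but entirely equivalent route, and your handling of the cross term $P_1\otimes(1_{A_2}-P_2)$ via $|P_1|\leq 1_{A_1}+|P_1-1_{A_1}|$ is the right way to keep the estimates uniform.

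One small omission: the definition of a (uniform) Besicovitch sequence in $\N^2$ requires the map $n$ to be \emph{increasing}. You should note that since $\alpha,\beta\geq1$, each coordinate map $k\mapsto\lfloor k\alpha\rfloor$ is strictly increasing, so this is immediate.
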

Putting Proposition \ref{prop_besicovitch} together with Theorem \ref{thm_besimultidim}, we conclude that, for instance, the sequence $(n,m)\mapsto(\alpha\lfloor n\beta\rfloor^3,\sqrt{\lfloor m\gamma\rfloor})$ is uniformly distributed in $\T^2$ for any irrational $\alpha\in\R$ and $\beta,\gamma\geq1$.

Observe that Theorem \ref{theorem_besicovitchsubsequence} does not remain true if one only requires that $u_n$ is itself uniformly distributed.
For instance, for any irrational $\alpha\in(0,1)$, the sequence $n\mapsto n\alpha\bmod1$ is uniformly distributed, and, in view of Proposition \ref{prop_apexample}, the increasing sequence $k\mapsto\lfloor k\alpha\rfloor$ is a Besicovitch sequence.
However, it is not hard to show that $\lfloor k/\alpha\rfloor\alpha\bmod1$ is not uniformly distributed (indeed, this sequence always lands in the interval $(1-\alpha,1)$).
On the other hand, it can be proved that $\lfloor k\alpha\rfloor\beta\bmod1$ \emph{is} uniformly distributed whenever $\alpha\beta$ and $\beta$ are irrational (cf.\ \cite[Theorem 5.1.8]{Kuipers_Niederreiter74}).

By looking more closely at the proof of Theorem \ref{theorem_besicovitchsubsequence}, one notices that the condition that $u_{n+h}-u_n$ is uniformly distributed for every $h\in\N$ can be replaced with the (weaker) condition that $u_n+n\alpha$ is uniformly distributed for every $\alpha\in\T$.
More precisely, we only need $u_n+n\alpha$ to be uniformly distributed for the frequencies $\alpha=\alpha_j$ that appear in the periodic approximation \eqref{eq_besicovitchalmostperiodic} of the (indicator function of the) Besicovitch sequence $(n_k)$.
This improvement of Theorem \ref{theorem_besicovitchsubsequence} was obtained by Daboussi and Mend\`es France in \cite{Daboussi_MendesFrance74} (the authors called the set of such frequencies the \emph{spectrum} of $(u_n)$).

Of particular interest is the case when all the frequencies needed to approximate the (indicator function of the) Besicovitch sequence $(n_k)$ are rational:

%can squeeze out a more precise statement, this was done by Daboussi and Mendes-France, and in more generality by Rauzy.
%We will not state the full theorem they obtained, but will mention an interesting special case.
\begin{definition}
  \
  \begin{itemize}
  \item A function $f:\N\to\C$ is \emph{rational Besicovitch almost periodic} if for every $\epsilon>0$ there exist $k\in\N$
      and $\alpha_1,\dots,\alpha_k\in\Q$, $c_1,\dots,c_k\in\C$ such that
  $$\limsup_{N\to\infty}\frac1N\sum_{n=1}^N\left|f(n)-\sum_{j=1}^kc_je^{2\pi i\alpha_jn}\right|<\epsilon$$
    %\item A function $f:\N\to\C$ is \emph{uniform Besicovitch almost periodic} if for every $\epsilon>0$ there exist $k\in\N$
     % and $\alpha_1,\dots,\alpha_k\in\T$, $c_1,\dots,c_k\in\C$ such that
  %$$\limsup_{N\to\infty}\sup_{M\in\Z}\frac1N\sum_{n=M+1}^{M+N}\left|f(n)-\sum_{j=1}^kc_je^{2\pi i\alpha_jn}\right|<\epsilon$$
  \item An increasing sequence $(n_k)_{k\in\N}$ in $\N$ is a \emph{rational Besicovitch sequence} if the indicator function
      $1_A$ of the set $A=\{n_k:k\in\N\}$ is rational Besicovitch almost periodic and $A$ has positive upper density.
  %\item More generally, a sequence $(n_k)_{k\in\N}$ of positive integers has \emph{almost periodic nature} if the
      %the function $\chi(n):=|\{k\in\N:n_k=n\}|$ is Besicovitch almost periodic.
\end{itemize}
\end{definition}

It follows from (the proof of) Proposition \ref{prop_apexample} that the sequence $(n_k)_{k\in\N}$ enumerating the squarefree numbers is rational Besicovitch.
\begin{theorem}\label{theorem_rationalbesicovitch}
  Let $u:\N\mapsto\T$ and assume that for every $\alpha\in\Q$, the sequence $n\mapsto u(n)+n\alpha$ is uniformly distributed.
  Let $(n_k)_{k\in\N}$ be a rational Besicovitch sequence.
  Then the sequence $k\mapsto u(n_k)$ is uniformly distributed in $\T$.
\end{theorem}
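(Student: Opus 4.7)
The plan is to mimic the proof of Theorem \ref{theorem_besicovitchsubsequence} (equivalently, the one-dimensional case of Theorem \ref{thm_besimultidim}), but exploit the fact that all the approximating frequencies lie in $\Q$, so that they can be absorbed into the character $h$ applied to the sequence $u$.

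First, by Weyl's criterion (Theorem \ref{theorem_weyl}(3)), it suffices to show, for every $h\in\Z\setminus\{0\}$, that
$$\lim_{K\to\infty}\frac1K\sum_{k=1}^K e^{2\pi i h u(n_k)}=0.$$
Write $A=\{n_k:k\in\N\}$ and rewrite the partial sum as $\tfrac1K\sum_{n=1}^{n_K}1_A(n)\,e^{2\pi i h u(n)}$. Since $1_A$ is rational Besicovitch almost periodic, a routine $\epsilon$-approximation argument (using that the Ces\`aro mean of each trigonometric polynomial $\sum_j c_j e^{2\pi i\alpha_j n}$ converges) shows that $d(A):=\lim_{N\to\infty}\tfrac1N\sum_{n=1}^N 1_A(n)$ exists and equals $\bar d(A)>0$; consequently $K/n_K\to d(A)$, so passing between the normalizations $\tfrac1K$ and $\tfrac1{n_K}$ costs only a bounded factor.

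Now fix $\epsilon>0$ and pick $\alpha_1,\dots,\alpha_s\in\Q$ and $c_1,\dots,c_s\in\C$ so that
$$\limsup_{N\to\infty}\frac1N\sum_{n=1}^N\left|1_A(n)-\sum_{j=1}^s c_j e^{2\pi i\alpha_j n}\right|<\epsilon.$$
Substituting this approximation in the expression $\tfrac1{n_K}\sum_{n=1}^{n_K}1_A(n)\,e^{2\pi i h u(n)}$, the error contributes at most $\epsilon$ in the limit, while the main term is a finite linear combination of quantities of the form
$$\frac1{n_K}\sum_{n=1}^{n_K}e^{2\pi i(h u(n)+\alpha_j n)}=\frac1{n_K}\sum_{n=1}^{n_K}\exp\bigl(2\pi i h(u(n)+(\alpha_j/h)n)\bigr).$$
Here the crucial point is that $\alpha_j/h\in\Q$, so by hypothesis the sequence $n\mapsto u(n)+(\alpha_j/h)n$ is uniformly distributed in $\T$, and Weyl's criterion applied with character $h$ forces each such Ces\`aro average to tend to $0$.

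Combining these observations yields $\limsup_{K\to\infty}\bigl|\tfrac1K\sum_{k=1}^K e^{2\pi i h u(n_k)}\bigr|\leq \epsilon/d(A)$, and since $\epsilon>0$ was arbitrary, the limit is $0$, as required. The only genuinely delicate point is the verification that the density $d(A)$ exists and is positive (so that normalizations by $K$ and by $n_K$ are comparable); this is what permits the absorption trick $h u(n)+\alpha_j n=h(u(n)+(\alpha_j/h)n)$ to be applied uniformly in $K$. All other steps are routine once the rational structure of the approximating frequencies is invoked.
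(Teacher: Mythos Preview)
Your proposal is correct and follows precisely the approach the paper intends. The paper does not give an explicit proof of this theorem; instead, the discussion immediately preceding it explains that in the proof of Theorem~\ref{theorem_besicovitchsubsequence} (equivalently the one-dimensional case of Theorem~\ref{thm_besimultidim}) the only place the hypothesis on $u_{n+h}-u_n$ is used is to show that the twisted averages $\tfrac1N\sum_n e^{2\pi i(\alpha_j n)}\chi(u(n))$ vanish, and that this follows directly once $u(n)+n\alpha$ is uniformly distributed for the relevant frequencies $\alpha$. Your absorption trick $hu(n)+\alpha_jn=h\bigl(u(n)+(\alpha_j/h)n\bigr)$ with $\alpha_j/h\in\Q$ is exactly this observation specialized to rational frequencies, and your handling of the normalization via the existence of $d(A)$ matches the role played by $d_*(A)$ in the paper's proof of Theorem~\ref{thm_besimultidim}.
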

%\begin{proof}(Sketch)

%Let $A=\{n_k:k\in\N\}$.
%We can approximate $1_A(n)$ by $\sum_{j=1}^kc_je^{2\pi i\alpha_jn}$.
%Therefore, for any $d\in\N$
%$$\frac1N\sum_{k=1}^Ne^{2\pi idu(n_k)}\cong\frac{\bar d(A)}N\sum_{n=1}^Ne^{2\pi idu(n)}1_A(n)$$
%\end{proof}

Theorem \ref{theorem_rationalbesicovitch} immediately implies that if $(n_k)$ is the increasing enumeration of the set of squarefree numbers then, for any irrational $\alpha$, the sequence $k\mapsto n_k\alpha\bmod1$ is uniformly distributed.
A more sophisticated application involves Boshernitzan's theorem (Theorem \ref{theorem_Boshernitzan}).
It is easy to see that a function $f:\R\to\R$ satisfies \eqref{eq_boshernitzan} for every $p\in\Q[x]$ if and only if $f(x)+x\alpha$ does, for a rational $\alpha$.
Therefore, we obtain the following corollary of Theorem \ref{theorem_Boshernitzan}:
\begin{corollary}\label{cor_besibosher}
  Let $f:\R\to\R$ be subpolynomial and a member of a Hardy field.
  Let $(n_k)$ be a rational Besicovitch sequence.
  Assume that condition \eqref{eq_boshernitzan} holds for every $p\in\Q[x]$.
  Then the sequence $\big(f(n_k)\bmod1\big)_{k\in\N}$ is uniformly distributed.
\end{corollary}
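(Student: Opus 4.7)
The plan is to apply Theorem \ref{theorem_rationalbesicovitch} to the function $u(n) := f(n) \bmod 1$. For this I must verify that for every $\alpha \in \Q$, the sequence $\big(f(n) + n\alpha \bmod 1\big)_{n\in\N}$ is uniformly distributed in $\T$, and this will follow by reducing back to Boshernitzan's Theorem \ref{theorem_Boshernitzan} applied to the shifted function $g(x) := f(x) + \alpha x$.

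Fix $\alpha \in \Q$. First I would verify that $g$ satisfies the hypotheses of Theorem \ref{theorem_Boshernitzan}: since $f$ lies in some Hardy field $\mathcal{H}$ and $\alpha x$ is a polynomial (hence lies in every Hardy field), $g = f + \alpha x \in \mathcal{H}$; and since $f$ is subpolynomial, adding the linear term $\alpha x$ keeps $g$ subpolynomial. Next I would verify condition \eqref{eq_boshernitzan} for $g$: for any $p(x) \in \Q[x]$, the polynomial $q(x) := p(x) - \alpha x$ again lies in $\Q[x]$ (this is where it is crucial that $\alpha$ be rational), so
\[
\lim_{x\to\infty}\frac{\log x}{|g(x)-p(x)|}=\lim_{x\to\infty}\frac{\log x}{|f(x)-q(x)|}=0
\]
by the hypothesis on $f$. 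Boshernitzan's theorem then yields that $\big(g(n) \bmod 1\big)_{n\in\N} = \big(f(n)+n\alpha \bmod 1\big)_{n\in\N}$ is uniformly distributed in $\T$.

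Having established this for every $\alpha \in \Q$, the hypothesis of Theorem \ref{theorem_rationalbesicovitch} is satisfied by $u(n) = f(n) \bmod 1$, and since $(n_k)_{k\in\N}$ is by assumption a rational Besicovitch sequence, that theorem yields the desired conclusion that $\big(f(n_k) \bmod 1\big)_{k\in\N}$ is uniformly distributed. There is no serious obstacle here: the content of the argument is packaged entirely in Theorems \ref{theorem_Boshernitzan} and \ref{theorem_rationalbesicovitch}, and the only genuine point to check is the (trivial but essential) observation that a rational shift of $f$ preserves both membership in a Hardy field and the divergence condition \eqref{eq_boshernitzan}, since rational polynomial shifts act by permutation on $\Q[x]$.
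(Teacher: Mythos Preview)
Your proposal is correct and follows exactly the approach indicated in the paper: the text immediately preceding the corollary observes that $f$ satisfies \eqref{eq_boshernitzan} for every $p\in\Q[x]$ if and only if $f(x)+\alpha x$ does (for rational $\alpha$), and then invokes Theorem~\ref{theorem_rationalbesicovitch}. One small caveat: your claim that ``$\alpha x$ \dots\ lies in every Hardy field'' is not literally true (the field of constant germs is a Hardy field), but any Hardy field can be extended to one containing $\R(x)$, so $g=f+\alpha x$ does lie in some Hardy field, which is all you need.
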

\section{%Applications of \DT{} involving functions on other groups (than $\Z$)
Some additional applications of \DT}\label{sec_othergroups}
In this section we demonstrate the versatility of \DT{} by providing three more applications of Theorem \ref{theorem_vdctrickcontinuous}.% in three different setups.
%These include

\subsection{A Weyl type theorem in $\A/\Q$}\label{sec_adeles}
%We mention

%The most classical application of the \DT{} is to show that polynomials (with at least one irrational coefficient other than the constant term) are uniformly distributed in $\T=\R/\Z$.
In this subsection, with the help of (a special case of) Theorem \ref{thm_intro_vdCgeneral}, we establish an ``adelic'' version of Weyl's theorem on equidistribution of polynomials.

%For the reader's convenience, we define the objects we work on in this section.
%For each prime number $p\in\N$, define a metric $d_p$ on $\Q$ as follows.
Fix a prime $p$.
Given $x,y\in\Q$, write $x-y=p^n\cdot\tfrac ab$ where $a,b,n\in\Z$, $b>0$ and both $a$ and $b$ are coprime with $p$.
Let $d_p(x,y)=p^{-n}$.
It is not hard to check that $d_p$ is a metric on $\Q$.
We denote by $\Q_p$ the completion of $\Q$ with respect to this metric and extend $d_p$ to $\Q_p$.
A more concrete description of $\Q_p$ is
$$\Q_p=\left\{\sum_{i=N}^\infty x_ip^i:x_i\in\{0,\dots,p-1\};\ N\in\Z\right\}$$
We let $\Z_p\subset\Q_p$ be the set $\{n\in\Q_p:d_p(n,0)\leq1\}$.
Equivalently, $\Z_p$ consists of sums $\sum_{i=N}^\infty x_ip^i$ where $N\geq0$.
Observe that $\Z\subset\Z_p$.

Next, let $\A$ denote the additive group of the adeles:
$$\A=\left\{(a_\infty,a_2,a_3,a_5,\dots)\in\left(\R\times\prod_{p\text{ prime}}\Q_p\right):a_p\in\Z_p\text{ for all but finitely many }p\right\}$$
%where $\Q_p$ is the field of $p$-adic numbers, i.e.
%$$\Q_p=\left\{\sum_{i=N}^\infty x_ip^i:x_i\in\{0,\dots,p-1\};\ N\in\Z\right\}$$
%endowed with the $p$-adic metric $d_p(x,y)=p^{-\min\{i\in\Z:x_i\neq y_i\}}$ and $\Z_p\subset\Q_p$ is the ring of integers of $\Q_p$, defined by $\Z_p=\big\{x\in\Q_p:d_p(x,0)\leq1\big\}$.
%It is not hard to show (and it is a classical fact) that $\Q\subset\Q_p$ for every prime $p$.
If $x\in\Q$ and $p$ is a prime which does not divide the denominator of $x$ then $x\in\Z_p$; in particular, $x\in\Z_p$ for all but finitely many $p$.
Therefore, one can embed $\Q$ in $\A$ by identifying $x\in\Q$ with $(x,x,x,\dots)\in\A$.
Given $x=\sum_{i=N}^\infty x_ip^i\in\Q_p$ with $x_i\in\{0,\dots,p-1\}$ we denote by
\begin{equation}\label{eq_fractionalpartpadic}f_p(x)=\sum_{i=N}^{-1} x_ip^i
\end{equation}
the $p$-fractional part of $x$ (with the convention that $f_p(x)=0$ when $N>-1$).
Therefore $x-f_p(x)\in\Z_p$ for any $x\in\Q_p$.
Observe that $f_p(x)$ is in $\Q$ for any $x\in\Q_p$ since \eqref{eq_fractionalpartpadic} is a finite sum.
Moreover, the denominator of $f_p(x)$ is a power of $p$; it follows that $f_p(x)\in\Z_q$ for any $p\neq q$ and $x\in\Q_p$.
Given $x\in\A$, the sum $\tilde f(x)=\sum_pf_p(x_p)\in\Q$ has only finitely many non-zero terms and hence is well defined. Note that each $p$-adic coordinate of $x-\tilde f(x)$ is in $\Z_p$ and therefore $x-\tilde f(x)\in\R\times\prod_p\Z_p$.
Finally, let
\begin{equation}\label{eq_fractionaladeles}
  f(x)=\big(\tilde f(x)+\lfloor x_\infty-\tilde f(x)\rfloor\big)\in\Q
\end{equation}
and observe that $x-f(x)\in[0,1)\times\prod_p\Z_p$.

The topology on $\A$ is generated by a basis of open sets of the form
$$U_\infty\times\prod_{p\in F}U_p\times\prod_{p\notin F}\Z_p$$
where $U_\infty\subset\R$ is open, $F$ is a finite set of primes and $U_p\subset\Q_p$ is an open set.

The following lemma is well known; we give a proof for the sake of completeness.
\begin{lemma}
  The subgroup $\Q\subset\A$ is discrete and co-compact.
\end{lemma}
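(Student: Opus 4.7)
The plan is to exhibit an explicit open neighborhood of $0 \in \mathbb{A}$ that isolates $0$ from the rest of $\mathbb{Q}$, and then use the map $f$ from \eqref{eq_fractionaladeles} to produce an explicit compact fundamental domain. Since $\mathbb{A}$ is a topological group it suffices, for discreteness, to find one such neighborhood of $0$.

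For discreteness, the natural candidate is
\[
U := \bigl(-\tfrac12,\tfrac12\bigr)\times\prod_{p}\mathbb{Z}_p,
\]
which is open in the basis described just before the lemma (take $F=\varnothing$). If $q \in \mathbb{Q} \cap U$, then viewing $q$ as the diagonal element $(q,q,q,\dots)$, we have $q \in \mathbb{Z}_p$ for every prime $p$. Since a rational belongs to $\mathbb{Z}_p$ exactly when $p$ does not divide its denominator, $q$ has no prime in its denominator, so $q \in \mathbb{Z}$. Combined with the real-coordinate condition $q \in (-\tfrac12,\tfrac12)$, this forces $q=0$.

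For co-compactness, I would use the map $f:\mathbb{A}\to\mathbb{Q}$ constructed in \eqref{eq_fractionaladeles}. By the remark immediately following that equation, $x - f(x) \in [0,1)\times\prod_p\mathbb{Z}_p$ for every $x\in\mathbb{A}$. Hence the compact set
\[
K := [0,1]\times\prod_{p}\mathbb{Z}_p
\]
meets every coset of $\mathbb{Q}$ in $\mathbb{A}$. Compactness of $K$ follows from Tychonoff's theorem together with the fact that each $\mathbb{Z}_p$ is compact (being a closed ball in the complete metric $d_p$, and explicitly homeomorphic to $\prod_{i\geq 0}\{0,\dots,p-1\}$). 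Therefore the natural projection $\pi:\mathbb{A}\to\mathbb{A}/\mathbb{Q}$ restricted to $K$ is a continuous surjection onto $\mathbb{A}/\mathbb{Q}$, so $\mathbb{A}/\mathbb{Q}=\pi(K)$ is compact.

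No step is really a serious obstacle: discreteness is immediate from the definition of the basis of open sets, and co-compactness is handed to us by the construction of $f$ carried out in the paragraphs preceding the lemma. The only point that requires a line of justification is the compactness of $\prod_p\mathbb{Z}_p$, which I would dispatch by Tychonoff applied to the individually compact factors.
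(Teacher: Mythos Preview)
Your proof is correct and uses the same ingredients as the paper: the same open neighborhood (shifted to $0$ and using the topological group structure rather than checking every $x\in\Q$ separately) for discreteness, and the same map $f$ together with the same set $[0,1)\times\prod_p\Z_p$ for co-compactness. The only notable difference is that for co-compactness you take the shortcut of exhibiting a continuous surjection from a compact set onto $\A/\Q$, whereas the paper works harder and actually identifies $\A/\Q$ explicitly as a group isomorphic to $[0,1)\times\prod_p\Z_p$ (with a twisted addition) and homeomorphic to $\T\times\prod_p\Z_p$; this extra information is used later in the section, but is not needed for the lemma as stated.
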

\begin{proof}
  Let $x\in\Q$ be an arbitrary rational number. % and let $U_\infty$ be an open neighborhood of $x$ containing no integer, except possibly $x$ itself.
On the one hand, observe that %for any prime $p$ not dividing the denominator of $x$ we have $x\in\Z_p$, and since the denominator of $x$ can be divisible by at most finitely many primes, we conclude that
the set
$$U_x=(x-1,x+1)\times\prod_p\left(x+\Z_p\right)\subset\A$$
is open, because $x\in\Z_p$ for all but finitely many primes $p$ (those which divide the denominator of $x$).
On the other hand, for any rational $y\in U_x$, the difference $y-x$ belongs to every $\Z_p$ and hence to $\Z$. However, that difference is also in $(-1,1)$, and so it must be $0$.
We just showed that $x$ is the only rational in $U_x$, and because $x\in\Q$ was arbitrary it follows that $\Q$ is a discrete subset of $\A$.

Next we focus on the quotient group $K:=\A/\Q$.
Let $\tilde K:=[0,1)\times\prod_p\Z_p\subset\A$.
We define a group structure in $\tilde K$ as follows: given $x,y\in\tilde K$, let $x+y:=(x+y)-f(x+y)$, where the sums and the subtraction on the right are the (usual) pointwise operations in $\A$, and $f:\A\to\Q$ is defined by \eqref{eq_fractionaladeles}.
Note that this group operation is different from the pointwise addition.

We claim that $K$ is isomorphic to $\tilde K$.
To show this, let $\phi:\A\to\tilde K$ be the map $\phi(x)=x-f(x)\in\tilde K\subset\A$.
First we show that $\phi$ is a group homomorphism.
Indeed, for any $x,y\in\A$, the difference (in $\A$) between $\phi(x+y)$ and $\phi(x)+\phi(y)$
is a rational number which belongs to the set $\tilde K-\tilde K=(-1,1)\times\prod_p\Z_p$, and $0$ is the only such number.

Next, observe that $\phi$ is surjective. Indeed, for $x\in\tilde K\subset\A$ we have $f(x)=0$ and hence $\phi(x)=x$.
To see that the kernel of $\phi$ is $\Q$, notice that $f(x)\in\Q$ for any $x\in\A$, and so, on the one hand,  $x=f(x)\Rightarrow x\in\Q$, and on the other hand, $x\in\Q\Rightarrow\phi(x)\in\Q\cap\tilde K=\{0\}$.

Finally, observe that $\phi$ is not quite a homeomorphism, because the open set $[0,1/2)\times\prod_p\Z_p\subset\tilde K$ can not be written as $\phi(U)$ where $U\subset\A$ is an open set invariant under $\Q$.
In fact, if we endow $K\cong\tilde K$ with the topology consisting of the open sets $\phi(U)$ with $U\subset\A$ open and invariant under $\Q$, we see that the topology on $K$ is the same as $\T\times\prod_p\Z_p$ (but keep in mind that the group structure in $K$ is \emph{not} the product (or pointwise) structure).
%let $x,y\in\A$ be arbitrary and define $z=\big(x-f(x)\big)+\big(y-f(y)\big)$.
%Each $p$-adic coordinate of $z$ is in $\Z_p$, so $f'(z)=0$.
%Moreover, if we denote by $\tilde x$ and $\tilde y$, respectively, the first coordinates of $x-f(x)$ and $y-f(y)$, then $\tilde x,\tilde y\in[0,1)$ and $f(z)=\lfloor \tilde x+\tilde y\rfloor\in\{0,1\}$.
%If $f(z)=0$, then $\phi(x)+\phi(y)=z$ and $\phi(x+y)=x+y-f(x+y)$
%We define a group structure in % is a fundamental domain for $K$.
%If $x,y\in\tilde K$ are such that $z=x-y\in\Q$, then observe that $z\in\Z_p$ for every $p$, and hence $z\in\Z$. But the only integers which appears as the difference of two elements of $[0,1)$ is $0$, and hence $x=y$.
%Also, given an arbitrary $x\in\A$, let $F$ be the finite set of primes $p$ for which $x_p\notin\Z_p$ and let $f(x)=\sum_{p\in F}f_p(x_p)\in\Q$, where $f_p$ is the $p$-fractional part defined above.
%We have that $x_p-f(x)\in\Z_p$ for every $p$.
%Finally, let $n\in\Z$ be such that $x_\infty-f(x)-n\in[0,1]$.
%We conclude that $f(x)+n\in\Q$ and $x-(f(x)+n)\in\tilde K$, which shows that indeed $\tilde K$ is a fundamental domain for $K$.
\end{proof}

%a discrete co-compact subgroup through the constant sequences.
%We are interested in the compact quotient group $K=\A/\Q$ and in its powers $K^d$.
%Before we proceed

A function $g:\Q\to K$ is a \emph{polynomial} if it takes the form $g(x)=\sum_{j=0}^m\alpha_jx^j$ for some $\alpha_j\in\A$, $j=0,\dots,m$.
We say that the polynomial $g$ \emph{has an irrational coefficient} if for some $j>0$ one has $\alpha_j\in\A\setminus\Q$.
\begin{theorem}\label{thm_adelespolequidistribution}
  Let $g_1,\dots,g_d:\Q\to K$ be polynomials.
  If for any $(n_1,\dots,n_d)\in\Q^d\setminus\{0\}$ the sum $\sum n_jg_j:\Q\to K$ is not a constant map, then the sequence $(g_1,\dots,g_d):\Q\to K^d$ is well distributed.
\end{theorem}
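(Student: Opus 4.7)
The plan is to invoke the Weyl criterion (Theorem \ref{theorem_weylgeneral}) and reduce to a statement about a single nonconstant polynomial $\Q\to K$, then induct on degree using Theorem \ref{thm_intro_vdCgeneral}.

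First, I need a concrete description of $\widehat{K}$. Adelic Pontryagin duality identifies $\widehat{K}$ with $\Q$: fix the standard character $\psi:\A\to\T$ defined (using the $p$-fractional part $f_p$ of \eqref{eq_fractionalpartpadic}) by $\psi(x)=\exp\bigl(2\pi i(x_\infty-\sum_pf_p(x_p))\bigr)$. Then $\psi$ is trivial on $\Q$ and hence descends to $K$; the pairing $(\alpha,y)\mapsto\psi(\alpha y)$ identifies $\A$ with $\widehat\A$ in such a way that $\Q$ is self-annihilating, so that $\widehat{K}\cong\Q$ via $q\mapsto(y\mapsto\psi(qy))$. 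Accordingly, characters of $K^d$ are indexed by $\Q^d$ through $(n_1,\dots,n_d)\mapsto\bigl((y_j)_j\mapsto\psi(\sum_jn_jy_j)\bigr)$. Applying Theorem \ref{theorem_weylgeneral}, it suffices to show that for every $(n_1,\dots,n_d)\in\Q^d\setminus\{0\}$ and every F\o lner sequence $(F_N)$ in the discrete group $\Q$,
\begin{equation*}
\lim_{N\to\infty}\frac{1}{|F_N|}\sum_{x\in F_N}\psi\bigl(g(x)\bigr)=0,
\end{equation*}
where $g:=\sum_jn_jg_j:\Q\to K$ is nonconstant by hypothesis.

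To establish this, I define the \emph{effective degree} $m$ of a polynomial $g(x)=\sum_j\alpha_jx^j$ to be the largest $j\geq 1$ with $\alpha_j\notin\Q$; it is well defined because $g$ is nonconstant in $K$. I proceed by induction on $m$. In the base case $m=1$, every $\alpha_j$ with $j\geq 2$ is rational, so modulo $\Q$ we have $g(x)\equiv\alpha_1x+c$ for some $c\in K$, whence $\psi(g(x))=\psi(c)\chi_{\alpha_1}(x)$ with $\chi_{\alpha_1}(x):=\psi(\alpha_1 x)$. The self-annihilating property of $\Q$ forces $\chi_{\alpha_1}$ to be a \emph{nontrivial} character of the discrete countable amenable group $\Q$, and a standard telescoping argument (pick $x_0\in\Q$ with $\chi_{\alpha_1}(x_0)\neq 1$ and compare $\sum_{F_N}\chi_{\alpha_1}$ with $\sum_{F_N+x_0}\chi_{\alpha_1}$, using the F\o lner property) shows that its Ces\`aro averages vanish along any F\o lner sequence.

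For the inductive step $m\geq 2$, I apply Theorem \ref{thm_intro_vdCgeneral} with $G=\Q$ and $u=g$, reducing the problem to checking that $x\mapsto g(x+h)-g(x)$ is well distributed in $K$ for every $h\in\Q\setminus\{0\}$. A direct binomial expansion shows that in $g(x+h)-g(x)$, the coefficient of $x^{m-1}$ equals $mh\alpha_m$ plus a rational number (contributions from $\alpha_j$ with $j>m$ are rational by the choice of $m$), while coefficients of $x^k$ with $k\geq m$ are rational for the same reason. Since $\alpha_m\notin\Q$ and $h\in\Q\setminus\{0\}$ force $mh\alpha_m\notin\Q$, the difference has effective degree exactly $m-1$, and the inductive hypothesis applies.

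The main obstacle I anticipate is the first step: packaging the adelic duality $\widehat{K}\cong\Q$ carefully enough that the equivalence ``$\chi_{\alpha_1}$ is nontrivial on $\Q\iff\alpha_1\notin\Q$'' is airtight. Once that piece of harmonic analysis on $\A$ is in place, the rest follows the familiar Weyl-criterion-plus-\DT{} blueprint used to prove Weyl's classical polynomial equidistribution theorem.
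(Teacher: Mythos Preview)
Your proposal is correct and follows essentially the same route as the paper: invoke the Weyl criterion via the identification $\widehat{K}\cong\Q$, handle the linear base case by the standard F\o lner telescoping argument for a nontrivial character, and run the inductive step through the general \DT{} (Theorem \ref{thm_intro_vdCgeneral}). The only cosmetic difference is that you first reduce to a single polynomial and induct on its ``effective degree'' (the top index with irrational coefficient), whereas the paper inducts on the maximal degree of the tuple $(g_1,\dots,g_d)$; your version is arguably tidier since it makes the verification that the difference polynomial remains nonconstant completely explicit.
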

In particular, this sequence has a dense image; a special case of this fact was established in \cite{Corwin_Pfeffer95} using different methods.
A related result, concerning a kind of uniform distribution of polynomial sequences in $\A/\Q$ and more general settings, was obtained by D. Cantor in \cite{Cantor65}.
Cantor  used significantly more sophisticated machinery from number theory.

Our proof of Theorem \ref{thm_adelespolequidistribution} is very similar to the classical proof of (multidimensional version of) Weyl's theorem using \DT{} and rests on the Weyl criterion, Theorem \ref{theorem_weylgeneral}.
We state in the following proposition the precise version of Weyl's criterion we will need.
\begin{proposition}\label{prop_adelesweyl}
  A sequence $f:\Q\to K^d$ is well distributed if and only if for any F\o lner sequence $(F_N)_{N\in\N}$ in $\Q$ and any non-constant character $\chi: K^d\to\C$ we have:
  $$\lim_{N\to\infty}\frac1{|F_N|}\sum_{u\in F_N}\chi\big(f(x)\big)=0$$
\end{proposition}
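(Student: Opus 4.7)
The plan is to derive Proposition \ref{prop_adelesweyl} as an essentially immediate consequence of the general Weyl criterion (Theorem \ref{theorem_weylgeneral}) together with the natural definition of well distribution in this setting, mirroring the way Theorem \ref{thm_vdcwelldist} relates to Theorem \ref{thm_vdCoriginal}.

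First, I would set up the ambient framework. The additive group $\Q$ is countable and discrete, hence $\sigma$-compact, locally compact, and amenable (as an abelian group), with Haar measure given by counting measure. F\o lner sequences therefore exist in $\Q$. The target $K^d = (\A/\Q)^d$ is a compact abelian group (the previous lemma identified $K$ with a compact abelian group via the homeomorphism/isomorphism $\phi$, and finite products preserve these properties). As in the transition from uniform distribution to well distribution in $\T$ at the start of Section \ref{sec_welldist}, I would take as the operative definition that $f:\Q\to K^d$ is \emph{well distributed} precisely when it is uniformly distributed along every F\o lner sequence in $\Q$.

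Second, with a F\o lner sequence $(F_N)_{N\in\N}$ in $\Q$ fixed, I would invoke the equivalence (1)$\Leftrightarrow$(3) of Theorem \ref{theorem_weylgeneral} applied to the group $G = \Q$ and the compact abelian group $K^d$. This yields that $f$ is uniformly distributed with respect to $(F_N)$ if and only if
\[
\lim_{N\to\infty}\frac{1}{|F_N|}\sum_{u\in F_N}\chi\bigl(f(u)\bigr)=0
\]
for every non-trivial character $\chi$ of $K^d$.

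Finally, quantifying this equivalence over all F\o lner sequences in $\Q$ and combining with the definition of well distribution from the first step gives both directions of the proposition. There is no genuine obstacle: Proposition \ref{prop_adelesweyl} is a direct specialization of the general Weyl criterion to $\Q$ and $K^d$, with the uniform-over-F\o lner-sequences quantifier corresponding exactly to well distribution.
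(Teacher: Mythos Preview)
Your proposal is correct and matches the paper's approach exactly: the paper's proof is a one-line application of Theorem \ref{theorem_weylgeneral} with $G=\Q$ (discrete topology, counting measure) and compact group $K^d$, which is precisely what you outline.
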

\begin{proof}
  Apply Theorem \ref{theorem_weylgeneral} with $G=\Q$ with the discrete topology, $\lambda$ being the counting measure on $\Q$ and the compact group ($K$ in the statement of Theorem \ref{thm_adelespolequidistribution}) being $K^d$.
\end{proof}

The next step is to give a description of the characters of $K^d$.
It is a classical fact that the Pontryagin dual of $K$ is $\Q$ (with the discrete topology).
More precisely, with any point $r\in\Q$ we associate the character $\chi_r:K\to\C$ described by
 %$u=(u_1,\dots,u_d)\in K^d$,
%Each $u_i\in K=\A/\Q$ can be written as $u_i=(u_{i,\infty},u_{i,2},\dots)+\Q$ where $u_{i,p}\in\Q_p$ (and $\Q_\infty=\R$).
%Then we have
$$\chi_r:u\mapsto\exp\Big(2\pi i\big(\tilde f(ru)-nu_\infty\big)\Big)$$
More generally, the Pontryagin dual of $K^d$ is $\Q^d$ and, for $r=(r_1,\dots,r_d)\in\Q^d$ we define the character $\chi_r:K^d\to\C$ by the formula
$$\chi_r:u=(u_1,\dots,u_d)\mapsto\prod_{j=1}^d\chi_{r_j}(u_j)$$
%where $(\sum_{i=N}^\infty x_ip^i\bmod\Z_p)$ is the (rational) number $\sum_{i<0  }x_ip^i$ and at the infinity place $p=\infty$ we use the convention $(x\bmod\Z_\infty)=-x\bmod1$.

Observe that, if we denote $\chi_1$ by simply $\chi$, then $\chi_r(u)=\chi(ru)$ for any $r\in\Q$ and $u\in\A$.

\begin{proof}[Proof of Theorem \ref{thm_adelespolequidistribution}]
Let $r\in\Q^d\setminus\{0\}$ be arbitrary and let $(F_N)_{N\in\N}$ be an arbitrary F\o lner sequence for the additive group $(\Q,+)$.
In view of Proposition \ref{prop_adelesweyl}, we need to show that
\begin{equation}\label{eq_adelesequidistribution}
  \lim_{N\to\infty}\frac1{|F_N|}\sum_{x\in F_N}\chi_r\big(g_1(x),\dots,g_d(x)\big)=0
\end{equation}
Expanding the definition of $\chi_r$ we get

\begin{equation}
\label{eq_adelestheoremproof1}
\chi_r\big(g_1(x),\dots,g_d(x)\big)=\prod_{j=1}^d\chi_{r_j}\big(g_j(x)\big)= \chi\left(\sum_{j=1}^dr_jg_j(x)\right)
\end{equation}

Without loss of generality, we may (and will) assume that $g_i(0)=0$.
We proceed by induction on the highest degree of the polynomials $g_1,\dots,g_d$.
Assume first that the highest degree is $1$, so that
%Since every polynomials must have an irrational coefficient it follows that each polynomial is of the form
$g_j(x)=\alpha_jx$ where all $\alpha_j\in\A$ and, for every $r=(r_1,\dots,r_d)\in\Q^d\setminus\{0\}$, we have $\alpha:=\sum_{j=1}^dr_j\alpha_j\notin\Q$.

%\begin{eqnarray*}
 % \chi_n\big(g_1(x),\dots,g_d(x)\big)&=&\prod_{j=1}^d\exp\bigg(2\pi i\Big(f'\big(n_jg_j(x)\big)-n_j\big(g_j(x)\big)_\infty\Big)\bigg)\\&=& \prod_{j=1}^d\exp\Big(2\pi i\big(f'(n_j\alpha_jx)-n_j(\alpha_jx)_\infty\big)\Big)
%\end{eqnarray*}
% where
%$$B_n=\prod_{j=1}^d\prod_p\exp\Big(2\pi i\big(n_j\beta_{j,p}\bmod\Z_p\big)\Big)$$
%does not depend on the averaging variable $x$.
In view of \eqref{eq_adelesequidistribution} and \eqref{eq_adelestheoremproof1} it suffices to show that
$$\lim_{N\to\infty}\frac1{|F_N|}\sum_{x\in F_N}\chi(\alpha x)=0$$
for any $\alpha\in\A\setminus\Q$ and any additive F\o lner sequence $(F_N)_{N\in\N}$ in $\Q$.
Since $\alpha\in\A\setminus\Q$, the projection $\alpha+\Q\in K$ is nonzero.
Therefore there exists some character of $K$ which does not vanish at $\alpha$.
Since $K$ is the Pontryagin dual of $\Q$, this means that $\chi(\alpha y)\neq1$ for some $y\in\Q$.

Next take $\epsilon>0$ and let $N\in\N$ be such that
$$\frac{\big|(F_N+y)\bigtriangleup F_N\big|}{|F_N|}\leq\big|1-\chi(\alpha y)\big|\epsilon$$
Then, on the one hand, we have
$$\left|\frac1{|F_N|}\sum_{x\in F_N}\chi(\alpha x)-\frac1{|F_N|}\sum_{x\in F_N+y}\chi(\alpha x)\right|\leq\frac{\big|(F_N+y)\bigtriangleup F_N\big|}{|F_N|}<\big|1-\chi(\alpha y)\big|\epsilon,$$
and, on the other hand, we have
$$\frac1{|F_N|}\sum_{x\in F_N+y}\chi(\alpha x)=\frac1{|F_N|}\sum_{x\in F_N}\chi\big(\alpha(x+y)\big)=\chi(\alpha y)\frac1{|F_N|}\sum_{x\in F_N}\chi(\alpha x)$$
Putting things together we conclude that
$$\left|\big(1-\chi(\alpha y)\big)\frac1{|F_N|}\sum_{x\in F_N}\chi(\alpha x)\right|\leq\big|1-\chi(\alpha y)\big|\epsilon$$
which finally implies that
$$\left|\frac1{|F_N|}\sum_{x\in F_N}\chi(\alpha x)\right|<\epsilon$$
This finishes the proof of the theorem in the case when all the polynomials $g_1,\dots,g_d$ have degree $1$.

Next assume that $s:=\max\{\deg g_1,\dots,\deg g_d\}>1$ and that the result holds for $s-1$.
We will apply \DT{}, Theorem \ref{thm_intro_vdCgeneral}, with the role of $G$ being played by $\Q$ endowed with the discrete topology and the counting Haar measure.
For any $h\in\Q$ and $j\in\{1,\dots,d\}$, the difference $x\mapsto g_j(x+h)-g_j(x)$ is a polynomial of degree at most $s-1$.
It follows from the induction hypothesis that
$$g_h(x):=g(x+h)-g(x)=\big(g_1(x+h)-g_1(x),\dots,g_d(x+h)-g_d(x)\big)$$
is uniformly distributed in $K^d$ for any $h\in\Q\setminus\{0\}$.
The result now follows from Theorem \ref{thm_intro_vdCgeneral}.
\end{proof}

The technique employed in this section works in more general settings. For instance, one can obtain a similar theorem for polynomials $p:K\to\A_K/K$, where $K$ is a number field and $\A_K$ is the group of adeles over $K$.
% the analog setup starting with an arbitrary number field instead of $\Q$, and considering.
These results also extend to the case of polynomials with several variables.
In order to keep the exposition short, we do not pursue these possibilities any further.
\subsection{An application to the $\{x+y,xy\}$ problem}\label{sec_x+yxy}
A long standing open question in Ramsey theory asks whether for any finite partition $\N=C_1\cup\cdots\cup C_r$ of the
natural numbers, there exists a cell $C_i$ of the partition and a pair $(x,y)\in\N^2$ such that
$\{x,y,x+y,xy\}\subset C_i$.
The answer is not known even if one only asks for $\{x+y,xy\}\subset C_i$ and $(x,y)\neq(2,2)$.
In \cite{Bergelson_Moreira15} a similar question pertaining to partitions of fields was answered positively.
In particular, the following result was proved:
\begin{theorem}\label{theorem_x+y,xy}
  For every infinite field $K$ and every finite partition $K=C_1\cup\dots\cup C_r$ there exists a cell $C_i$ and
  infinitely many $x,y\in K$ such that $\{x,x+y,xy\}\subset C_i$.
\end{theorem}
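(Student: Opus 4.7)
The plan is to prove Theorem \ref{theorem_x+y,xy} via a Furstenberg-type correspondence principle combined with a mean ergodic theorem for simultaneous measure-preserving actions of the additive and multiplicative groups of $K$, of the kind developed in Section \ref{sec_othergroups}. Since any finite partition of $K$ restricts to a finite partition of any subfield, and every infinite field contains a countably infinite subfield (generated, say, over its prime subfield by countably many elements), it suffices to prove the statement for countable $K$, which we henceforth assume. Fix Følner sequences $(\Phi_N)$ in $(K,+)$ and $(\Psi_N)$ in $(K^\ast,\cdot)$; by the pigeonhole principle some cell $C = C_i$ has positive upper density with respect to $(\Phi_N)$, and it is in this cell that we will locate the desired configurations.

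The next step is to construct a dynamical model. A Furstenberg-type correspondence adapted to the affine group $K \rtimes K^\ast$ yields a compact metric space $X$, a Borel probability measure $\mu$ on $X$, a measure-preserving additive action $s \mapsto T_s^+$ of $(K,+)$ and a measure-preserving multiplicative action $t \mapsto T_t^\times$ of $(K^\ast,\cdot)$ on $(X,\mu)$ (satisfying the non-abelian relation $T_t^\times T_s^+ = T_{ts}^+ T_t^\times$), together with a measurable set $A \subset X$ with $\mu(A) > 0$, such that a lower bound
$$\mu\big(A \cap (T_y^+)^{-1}A \cap (T_y^\times)^{-1}A\big) > 0$$
for some $y \in K^\ast$ translates back into the existence of infinitely many $x \in K$ with $\{x, x+y, xy\} \subset C$. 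It thus suffices to exhibit such a $y$ --- in fact, a positive-density set of such $y$.

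To produce these $y$, one averages over $y \in \Psi_N$ and studies
$$\frac{1}{|\Psi_N|} \sum_{y \in \Psi_N} \int_X \mathbf{1}_A \cdot (T_y^+)^{-1}\mathbf{1}_A \cdot (T_y^\times)^{-1}\mathbf{1}_A \,\dd\mu.$$
The mean ergodic theorem for simultaneous additive and multiplicative actions (the novel result alluded to in Section \ref{sec_othergroups}) identifies the $L^2(\mu)$-limit of $|\Psi_N|^{-1}\sum_{y \in \Psi_N}(T_y^+)^{-1}\mathbf{1}_A \cdot (T_y^\times)^{-1}\mathbf{1}_A$ with an expression which, when paired with $\mathbf{1}_A$, is bounded below by a positive function of $\mu(A)$. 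This positivity guarantees a positive-density set of $y \in K^\ast$ for which the triple intersection has positive measure, and, via the correspondence, infinitely many $y$ each yielding infinitely many $x$ with $\{x, x+y, xy\} \subset C$.

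The main obstacle is that $T^+$ and $T^\times$ do not commute: together they generate a measure-preserving action of the non-abelian affine group, so the classical multiple recurrence machinery for commuting $\Z$-actions is not directly available. This is exactly the point where \DT{} in the form of Theorem \ref{theorem_vdctrickcontinuous} becomes indispensable: a van der Corput argument applied to the Hilbert-space-valued function $y \mapsto (T_y^+)^{-1}\mathbf{1}_A \cdot (T_y^\times)^{-1}\mathbf{1}_A$ on $(K^\ast,\cdot)$ reduces the convergence and positivity question to an average of \emph{differences} in the multiplicative variable, which can then be handled using the Kronecker decomposition of the multiplicative action. Carrying out this van der Corput reduction in the affine-group setting --- and verifying that its hypotheses hold --- is the genuinely nontrivial step of the argument, and it is exactly what makes Complexity Reduction in the sense of this paper the right tool.
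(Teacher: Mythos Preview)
The paper does not give a self-contained proof of Theorem~\ref{theorem_x+y,xy}; it cites \cite{Bergelson_Moreira15} for the full argument and proves only the key ergodic ingredient (the theorem immediately following Proposition~\ref{prop_vdCfields}). Your outline---reduce to countable $K$, pass to a measure-preserving model via a correspondence principle, and establish recurrence through an ergodic theorem whose proof rests on \DT---matches that route in spirit.

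Where your proposal drifts from the paper is in the precise object to which \DT{} is applied. You suggest running van der Corput on the \emph{product} $y\mapsto(T_y^{+})^{-1}\mathbf{1}_A\cdot(T_y^{\times})^{-1}\mathbf{1}_A$ over a purely multiplicative F\o lner sequence; the paper instead applies \DT{} (Proposition~\ref{prop_vdCfields}) to the \emph{composition} $x\mapsto M_xA_xg$, averaged along a single sequence $(F_N)$ that is simultaneously F\o lner for $(K,+)$ and $(K^{*},\times)$. This is not cosmetic. After the van der Corput step, the paper's choice collapses, via the distributivity relation $M_yA_x=A_{xy}M_y$, to the single correlation $\langle A_{x(d-1/d)}g,\,M_{1/d}g\rangle$, which two successive applications of the mean ergodic theorem (additive in $x$, then multiplicative in $d$) reduce to $\|P_AP_Mg\|^2$. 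Your product, by contrast, leads after \DT{} to a four-fold correlation in which the additive shifts $T^{+}_{yd}$ and $T^{+}_{y}$ do not interact cleanly with a multiplicative average in $y$, and the appeal to ``the Kronecker decomposition of the multiplicative action'' does not obviously close that gap. The double-F\o lner requirement is likewise essential rather than incidental: it is what allows the same averaging sequence to serve both the inner (additive) and outer (multiplicative) ergodic-theorem steps in the paper's computation.
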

The proof of Theorem \ref{theorem_x+y,xy} in \cite{Bergelson_Moreira15} is based on ergodic-theoretic techniques.
At its core lies a new ergodic theorem, whose proof hinges on a new application of the following form of \DT.
\begin{proposition}\label{prop_vdCfields}
  Let $H$ be a Hilbert space, let $K$ be a countable field, let $(F_N)_{N\in\N}$ be a F\o lner sequence in the multiplicative group $K^*$ and let $u:K^*\to H$ be a bounded function.
  Assume that
  $$\lim_{D\to\infty}\frac1{|F_D|}\sum_{d\in F_D}\limsup_{N\to\infty}\left|\frac1{|F_N|}\sum_{x\in F_N}\big\langle u(xd),u(x)\big\rangle\right|=0$$
  Then
  $$\lim_{N\to\infty}\frac1{|F_N|}\sum_{x\in F_N}u(x)=0$$
\end{proposition}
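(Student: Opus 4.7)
The plan is to recognize that Proposition \ref{prop_vdCfields} is an immediate specialization of Theorem \ref{theorem_vdctrickcontinuous}. Since $K$ is countable, the multiplicative group $K^*$ is a countable discrete group: in the discrete topology it is $\sigma$-compact and locally compact, and since $K^*$ is abelian (multiplication in a field is commutative) it is in any case amenable; the hypothesis that a F\o lner sequence $(F_N)$ already exists is precisely the witnessing of this amenability. Take the counting measure to be the (bi-invariant) Haar measure $\lambda$, so that $\lambda(F_N) = |F_N|$ and every integral over a subset of $K^*$ appearing in Theorem \ref{theorem_vdctrickcontinuous} becomes a finite sum. Continuity of $u:K^*\to H$ is automatic in the discrete topology, and boundedness is given.

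Under these identifications, the hypothesis and conclusion of Proposition \ref{prop_vdCfields} are syntactically identical to those of Theorem \ref{theorem_vdctrickcontinuous}: the ``left translation'' $s\mapsto sh$ is simply multiplication $x\mapsto xd$ in the group $K^*$, which, being abelian, does not distinguish left from right translation. Thus the conclusion follows by direct application, with no further argument required.

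If one preferred a self-contained proof, the same Bass-style argument used for Theorem \ref{theorem_vdctrickcontinuous} can be carried out verbatim in this discrete setting. Namely, assume for contradiction that $a_N := \frac{1}{|F_N|}\sum_{x\in F_N} u(x)$ satisfies $\|a_N\|\to a > 0$ along a subsequence; define
\[
\gamma(d) := \lim_{N\to\infty}\frac{1}{|F_N|}\sum_{x\in F_N}\bigl\langle u(xd)-a_N,\, u(x)-a_N\bigr\rangle
\]
on (a subsequential limit covering all of) $K^*$; verify that $\gamma$ is positive definite directly from the displayed formula; invoke the Naimark dilation theorem to represent $\gamma(d) = \langle U_d v, v\rangle$ for some unitary representation $(U_d)_{d\in K^*}$ on a Hilbert space $V$ with $v\in V$; and apply the mean ergodic theorem for unitary representations of countable amenable groups along F\o lner sequences to conclude $-a = \langle Pv, v\rangle = \|Pv\|^2 \ge 0$, a contradiction. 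There is essentially no obstacle here: the only point worth noting is that the mean ergodic theorem and the Naimark dilation theorem apply to arbitrary countable amenable (in particular abelian) groups, so the proof of Theorem \ref{theorem_vdctrickcontinuous} transports to $K^*$ without modification.
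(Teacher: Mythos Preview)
Your proposal is correct and matches the paper's own proof exactly: the paper's proof consists of the single sentence ``Apply Theorem \ref{theorem_vdctrickcontinuous} with $G$ being the multiplicative group $K^*$, endowed with the discrete topology, and $\lambda$ being the counting measure.'' Your additional remarks (abelianness, automatic continuity, and the optional self-contained Bass-style rerun) are all accurate but more than the paper bothers to say.
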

\begin{proof}
  Apply Theorem \ref{theorem_vdctrickcontinuous} with $G$ being the multiplicative group $K^*$, endowed with the discrete topology, and $\lambda$ being the counting measure.
\end{proof}
\begin{theorem}
  Let $K$ be a countable infinite field, let $(A_x)_{x\in K}$ and $(M_y)_{y\in K^*}$ be unitary representations, respectively, of the additive and multiplicative groups
  $(K,+)$ and $(K^*,\times)$ on the same Hilbert space $H$ and satisfying the distributivity law
  \begin{equation}\label{eq_distributivitylaw}
  M_yA_x=A_{xy}M_y\qquad\forall x,y\in K,~y\neq0
  \end{equation}
  Then for every sequence $(F_N)_{N\in\N}$ of finite subsets of $K^*$ which is simultaneously a F\o lner sequence for both\footnote{Such sequences exist in any countable field $K$ (see Proposition 2.4 in \cite{Bergelson_Moreira15}).}
  $(K,+)$ and $(K^*,\times)$, and for every $f\in H$ we have
  $$\lim_{N\to\infty}\frac1{|F_N|}\sum_{x\in F_N}M_xA_xf=Pf$$
  where $P$ is the orthogonal projection onto the invariant subspace $\{h\in H:A_xh=M_yh=h~\forall x,y\in K, y\neq0\}$.
\end{theorem}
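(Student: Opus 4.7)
The plan is to combine Proposition \ref{prop_vdCfields} with the mean ergodic theorem for amenable group actions. Set $H_A := \{h \in H : A_x h = h \text{ for all } x \in K\}$ and observe that the distributivity law $M_y A_x = A_{xy} M_y$ makes $H_A$ invariant under every $M_y$: indeed, if $h \in H_A$ then $A_x M_y h = M_y A_{x/y} h = M_y h$. Consequently, the invariant subspace $\mathrm{Inv} := \mathrm{Fix}(M) \cap \mathrm{Fix}(A)$ onto which $P$ projects is contained in $H_A$. I would write $f = f_1 + f_2$ with $f_1 \in H_A$ and $f_2 \in H_A^\perp$ and show separately that the average of $M_x A_x f_1$ tends to $Pf$ while that of $M_x A_x f_2$ tends to $0$.

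For $f_1 \in H_A$ we have $M_x A_x f_1 = M_x f_1$. The mean ergodic theorem, applied to the unitary $K^*$-representation $M$ along the multiplicative Følner sequence $(F_N)$, yields
$$\lim_{N \to \infty} \frac{1}{|F_N|} \sum_{x \in F_N} M_x f_1 = P_M f_1,$$
where $P_M$ is projection onto $\mathrm{Fix}(M)$. Since $H_A$ is closed and $M$-invariant, $P_M f_1 \in H_A \cap \mathrm{Fix}(M) = \mathrm{Inv}$; and as $f_1 - P_M f_1 \perp \mathrm{Fix}(M) \supseteq \mathrm{Inv}$, the vector $P_M f_1$ is exactly the orthogonal projection of $f_1$ onto $\mathrm{Inv}$. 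Combined with $P f_2 = 0$ (because $f_2 \perp H_A \supseteq \mathrm{Inv}$), this gives $P_M f_1 = P f_1 = P f$.

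For $f_2 \in H_A^\perp$ I would set $u(x) := M_x A_x f_2$ and invoke Proposition \ref{prop_vdCfields}. The distributivity law gives both $M_b A_a = A_{ab} M_b$ and, by a one-line rearrangement, $A_a M_b = M_b A_{a/b}$. Using these together with unitarity of $M$ and $A$,
$$\langle u(xd), u(x) \rangle = \langle A_{-x} M_{x^{-1}} M_{xd} A_{xd} f_2, f_2 \rangle = \langle M_d A_{c(d) x} f_2, f_2 \rangle,$$
where $c(d) := d - d^{-1}$. For any $d \in K^* \setminus \{\pm 1\}$ we have $c(d) \neq 0$; the substitution $y = c(d) x$ converts the average over $F_N$ into an average over $c(d) F_N$, which the multiplicative Følner property lets me replace by the average over $F_N$ (up to negligible error), and the additive Følner property combined with the mean ergodic theorem for $A$ then gives
$$\lim_N \frac{1}{|F_N|} \sum_{x \in F_N} \langle M_d A_{c(d) x} f_2, f_2 \rangle = \langle M_d P_A f_2, f_2 \rangle = 0,$$
since $P_A f_2 = 0$. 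The at most two exceptional values $d \in \{\pm 1\}$ contribute boundedly and disappear when averaged in $d$, so the hypothesis of Proposition \ref{prop_vdCfields} is satisfied and $\frac{1}{|F_N|} \sum_{x \in F_N} u(x) \to 0$.

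The main technical point is the interplay between the two Følner structures: the van der Corput computation inevitably produces an additive $A$-average along the multiplicatively scaled set $c(d) F_N$, and the assumption that $(F_N)$ is simultaneously a Følner sequence for $(K,+)$ and for $(K^*,\times)$ is precisely what is needed to replace $c(d) F_N$ by $F_N$ and then invoke the mean ergodic theorem for $A$. The remaining ingredients are the commutation identities coming from distributivity, which make $H_A$ stable under $M$ and collapse $\langle u(xd), u(x) \rangle$ to an expression involving $A$-translates alone.
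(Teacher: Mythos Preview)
Your proof is correct and uses the same van der Corput engine (Proposition \ref{prop_vdCfields}) and the same correlation identity $\langle u(xd),u(x)\rangle=\langle M_dA_{x(d-1/d)}f_2,f_2\rangle$ as the paper. The difference is in the decomposition of $f$. The paper splits $f=Pf+g$ along the full invariant subspace; after the mean ergodic theorem in $x$ one gets $\langle P_Ag,M_{1/d}g\rangle$, and a \emph{second} mean ergodic theorem in $d$ together with the commutation $P_AP_M=P_MP_A$ (cited from \cite{Bergelson_Moreira15}) is needed to reach $\|Pg\|^2=0$. Your decomposition along $H_A=\mathrm{Fix}(A)$ is more economical on the $f_2$ side: since $P_Af_2=0$, the inner limit already vanishes for every $d$ with $d^2\neq1$, so neither the averaging in $d$ nor the commutation lemma is required. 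The cost is the separate treatment of $f_1\in H_A$, but that is a single application of the mean ergodic theorem for $M$, and your observation that $H_A$ is $M$-invariant is exactly the special case of $P_AP_M=P_MP_A$ that the argument needs, proved directly from the distributivity law rather than cited.
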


\begin{proof}
Each $f\in H$ has a unique representation of the form $f=Pf+g$, where $g=f-Pf$ is orthogonal to the invariant subspace.
It is clear that $M_xA_xPf=Pf$, so it suffices to show that the Ces\`aro limit of $M_xA_xg$ is $0$.
  The idea is to apply Proposition \ref{prop_vdCfields} to the function $u:(K^*,\times)\to H$ defined by $u(x)=M_xA_xg$.

  Let $d\in K^*$.
  Using \eqref{eq_distributivitylaw} and the fact that the representations are unitary we have
  $$\langle u(xd),u(x)\rangle=\langle M_{xd}A_{xd}g,M_xA_xg\rangle=\langle A_{-x}M_dA_{xd}g,g\rangle=\langle
  A_{x(d-1/d)}g,M_{1/d}g\rangle$$
  Observe that $x\mapsto x(d-1/d)$ is an automorphism of the group $(K,+)$ for every $d\neq1$.
  Applying twice the mean ergodic theorem (see, for instance, \cite[Theorem 5.5]{Bergelson06}) we have
  $$\lim_{D\to\infty}\frac1{|F_D|}\sum_{d\in F_D}\lim_{N\to\infty}\frac1{|F_N|}\sum_{x\in F_N}\langle
  u(dx),u(x)\rangle=\langle P_Ag,P_Mg\rangle$$
  where $P_A$ is the orthogonal projection onto the additively invariant subspace $\{h\in H:A_xh=h~\forall x\in K\}$ and
  $P_M$ is the orthogonal projection onto the multiplicatively invariant subspace $\{h\in H:M_xh=h~\forall x\in K^*\}$.
  One can show (see \cite[Lemma 3.1]{Bergelson_Moreira15}) that $P_M$ and $P_A$ commute.
  Therefore $\langle P_Ag,P_Mg\rangle=\|P_AP_Mg\|^2$, but $P_AP_Mg=P_MP_Ag$ is invariant under both the additive and
  the multiplicative representation.
  This implies that $P_AP_Mg=Pg=0$.
  We can now invoke Proposition \ref{prop_vdCfields} and conclude that
  $$\lim_{N\to\infty}\frac1{|F_N|}\sum_{x\in F_N}M_xA_xg=0$$
  which finishes the proof.
\end{proof}

\subsection{A non-linear multiple convergence theorem for ergodic $\R$-actions}\label{sec_ractions}
In this subsection we demonstrate the power of \DT{} (in the form of Theorem \ref{theorem_vdctrickcontinuous} with $G=\R$) in yet another situation.
Namely, we obtain a rather general kind of non-linear ergodic theorem involving several commuting measure preserving $\R$-actions.
\begin{definition}
  An increasing function $p\in C^2(b,\infty)$ defined in some interval $(b,\infty)$ is called \emph{admissible} if it satisfies the following:
  \begin{enumerate}
  %\item $\lim_{\tau\to\infty}p(\tau)=\infty$.
    \item The derivative $p'$ is %positive and
    monotone.
    \item For every $c\in(0,1)$ there exist $\epsilon<1$ such that for large enough $\tau\in\R$ we have $\epsilon<p(c\tau)/p(\tau)<1-\epsilon$.
    %\item For every $c\in(0,1)$ there exists $M\in\R$ such that for large enough $\tau$ we have $p(\tau)\leq Mp(c_0\tau)$.
  \end{enumerate}
\end{definition}

Observe that the inverse function $p^{-1}$ of an \resfunc{} $p$ exists in some interval $(\tilde b,\infty)$ and is also an \resfunc.
Moreover, the composition $p_1\circ p_2$ of two \resfunc s is again an \resfunc.
In other words, the class of (germs of) \resfunc s forms a group under composition of functions.
Observe that if $p\in C^2(b,\infty)$ has monotone derivative and if for every $\epsilon>0$ there exist a positive $\alpha$ such that
$$\lim_{s\to\infty}\frac{s^\alpha}{f(s)}=\lim_{s\to\infty}\frac{f(s)}{s^{\alpha+\epsilon}}=0,$$
then $f$ is an \resfunc.
This observation gives the following examples.
\begin{example}\label{example_funcions}
  The following are \resfunc:
  \begin{enumerate}
    \item $p(s)=\sum_{i=1}^kc_is^{\alpha_i}$, where $c_i,\alpha_i\in\R$, $c_k,\alpha_k>0$, $\alpha_1<\cdots<\alpha_k$.
    \item  $p(s)=s^\alpha\log^\beta s$, where $\alpha>0$ and $\beta\in\R$.
    \item $p(s)=s^\alpha\big(\cos(\log^\beta s)+2\big)$, where $\alpha>0$ and $\beta<1$.
    \item $p(s)=s^\alpha\left(1+\tfrac{\sin s}{\log s}\right)$ for $\alpha>0$.
  \end{enumerate}
\end{example}
%Observe that if $p\in C^2(b,\infty)$ has monotone derivative and there exist $0<\alpha<\beta<\infty$ such that $\lim_{s\to\infty}s^\alpha/f(s)=\lim_{s\to\infty}f(s)/s^\beta=0$, then the $f$ is a \resfunc.

Observe that (3) and (4) in Example \ref{example_funcions} do not belong to any Hardy field.

Here now is the formulation of the first theorem of this subsection.
%In this section we establish the following result involving several commuting $\R$-actions.
\begin{theorem}\label{thm_temperedcontconvergence}
  Let $k,\ell$ be positive integers and let $(X,{\mathcal B},\mu)$ be a probability space. For each $i\in\{1,\dots,k\}$ and $j\in\{1,\dots,\ell\}$ let $(T_{i,j}^s)_{s\in\R}:X\to X$ be a continuous ergodic\footnote{An action $(T_g)_{g\in G}$ of a group $G$ on a probability space $(X,{\mathcal B},\mu)$ by measurable maps is \emph{ergodic} if any set $B\in{\mathcal B}$ satisfying $T_g^{-1}(B)=B$ for every $g\in G$ is trivial, in the sense that $\mu(B)\in\{0,1\}$.} measure preserving $\R$-action and let $p_{i,j}:\R\to\R$ be \resfunc s.
  Assume that all the $T_{i,j}$ commute (in the sense that $T_{i,j}^sT_{\tilde i,\tilde j}^{\tilde s}=T_{\tilde i,\tilde j}^{\tilde s}T_{i,j}^s$ for any $i,\tilde i\in\{1,\dots,k\}$, $j,\tilde j\in\{1,\dots,\ell\}$ and $s,\tilde s\in\R$) and that $\lim p_{i,j}(s)/p_{\tilde i,\tilde j}(s)=0$ whenever $i>\tilde i$ or ($i=\tilde i$ and $j>\tilde j$).

  %Assume moreover that both $p_{i,1}^{-1}$ and $p_{1,1}\circ p_{i,1}^{-1}$ exist and satisfy Lemma \ref{lemma_changeofvariable}, for each $i=\{1,\dots,k\}$. % and $p_i-p_j$ is tempered
   %whenever $i<j$
  Then for any $f_1,\dots,f_k\in L^\infty(X)$ we have
  \begin{equation}\label{eq_thm_temperedcontconvergence}\lim_{\tau\to\infty}\frac1\tau\int_0^\tau \prod_{i=1}^k\left[\left(\prod_{j=1}^\ell T_{i,j}^{p_{i,j}(s)}\right)f_i\right]\dd s=\prod_{i=1}^k\int_Xf_i\dd\mu\qquad\text{ in }L^2(X)\end{equation}
\end{theorem}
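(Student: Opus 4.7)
My plan is to prove Theorem \ref{thm_temperedcontconvergence} by a PET-style induction on the complexity of the system of admissible functions $\{p_{i,j}\}$, using the Hilbert-space van der Corput lemma (Theorem \ref{theorem_vdctrickcontinuous}) with $G=(\R,+)$ and F\o lner sequence $F_\tau=[0,\tau]$. Let $R_i(s):=\prod_{j=1}^\ell T_{i,j}^{p_{i,j}(s)}$, so that the $R_i$ are commuting measure-preserving $\R$-actions on $X$. Writing $f_i=c_i+g_i$ with $c_i:=\int_X f_i\,\dd\mu$ and expanding $\prod_i R_i(s)f_i=\prod_i(c_i+R_i(s)g_i)$, the theorem reduces by multilinearity to the following vanishing statement: for every non-empty $S\subseteq\{1,\dots,k\}$ and every choice of $g_i\in L^\infty(X)$, $i\in S$, with $\int_X g_i\,\dd\mu=0$,
$$\lim_{\tau\to\infty}\frac1\tau\int_0^\tau\prod_{i\in S}R_i(s)g_i\,\dd s=0\qquad\text{in }L^2(X).$$

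To establish this vanishing by induction, set $u(s):=\prod_{i\in S}R_i(s)g_i$. Using commutativity of the $T_{i,j}$ and measure-invariance of $\prod_{i\in S}R_i(s)$, a direct computation yields
$$\langle u(s+h),u(s)\rangle_{L^2}=\Big\langle u_h(s),\prod_{i\in S}\bar g_i\Big\rangle_{L^2},\qquad u_h(s):=\prod_{i\in S}\prod_{j=1}^\ell T_{i,j}^{q_{i,j}^{(h)}(s)}g_i,$$
where $q_{i,j}^{(h)}(s):=p_{i,j}(s+h)-p_{i,j}(s)$. By Cauchy--Schwarz, $\big|\tfrac1\tau\int_0^\tau\langle u(s+h),u(s)\rangle\,\dd s\big|\leq\big\|\prod_{i\in S}\bar g_i\big\|_2\cdot\big\|\tfrac1\tau\int_0^\tau u_h(s)\,\dd s\big\|_2$, so the hypothesis of Theorem \ref{theorem_vdctrickcontinuous} will be satisfied once we show the Ces\`aro averages of $u_h$ tend to $0$ in $L^2$ for each $h>0$. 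By the mean value theorem and monotonicity of $p_{i,j}'$, $q_{i,j}^{(h)}(s)\sim h\,p_{i,j}'(s)$ as $s\to\infty$, so for each fixed $h>0$ the $q_{i,j}^{(h)}$ form a new system of admissible functions, inheriting the strict lex ordering of growth rates from $\{p_{i,j}\}$ via the admissibility consequence $p/\tilde p\to 0\Rightarrow p'/\tilde p'\to 0$. The Ces\`aro vanishing for $u_h$ is then an instance of the inductive hypothesis applied to the strictly simpler system $\{q_{i,j}^{(h)}\}$.

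For the induction to close, one needs a well-founded complexity measure that strictly decreases under $p\mapsto q^{(h)}$. On the prototypical examples $p(s)\sim s^\alpha\log^\beta s$, this operation lowers the leading exponent $\alpha$ by one, so after finitely many iterations all functions have sublinear growth and further iteration drives their differences $q^{(h)}$ to near-constants in $s$. At the base case the reduced average collapses to an expression of the form $\lim_D\tfrac1D\int_0^D|\langle T^{F(h)}g,g'\rangle|\,\dd h=0$, which follows from the mean ergodic theorem for the corresponding ergodic $\R$-action, using $\int g\,\dd\mu=0$.

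The principal obstacle is constructing the complexity measure: one must verify that (a) the differencing operation $p\mapsto q^{(h)}$ preserves admissibility, which relies crucially on monotonicity of $p'$ and the two-sided bound $\epsilon<p(c\tau)/p(\tau)<1-\epsilon$; (b) the strict lex ordering of growth rates is preserved under differencing; and (c) the measure decreases strictly at each step and eventually terminates in the mean-ergodic base case. Admissible functions with closely matched growth rates (for instance $s^\alpha$ versus $s^\alpha\log s$) require a refinement of the naive ``degree'' parameter, and handling them carefully---without losing the inductive grip on the system---is where the main technical work lies.
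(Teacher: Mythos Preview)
Your proposal has two substantive gaps.

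First, the inner-product computation is not right. With $R_i(s)=\prod_j T_{i,j}^{p_{i,j}(s)}$ one actually gets
\[
\langle u(s+h),u(s)\rangle=\int_X\prod_{i\in S}R_i(s)\Big[g_i\cdot\Big(\textstyle\prod_j T_{i,j}^{q_{i,j}^{(h)}(s)}\Big)g_i\Big]\,\dd\mu,
\]
so the outer operators $R_i(s)$, carrying the original $p_{i,j}$, are still present. Applying the single measure-preserving map $\prod_i R_i(s)^{-1}$ does \emph{not} cancel $R_i(s)$ factor by factor, since it acts on every factor of the pointwise product simultaneously. Hence one van der Corput step does not hand you a system governed solely by the differences $q_{i,j}^{(h)}$.

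Second, and more fundamentally, the PET scheme you sketch cannot close: the differences $q_{i,j}^{(h)}(s)\sim h\,p_{i,j}'(s)$ are in general \emph{not} admissible. For $p(s)=s^{1/2}$ one gets $q^{(h)}(s)\to0$; for $p(s)=s\log s$ one gets $q^{(h)}(s)\sim h\log s$, which violates the admissibility condition $p(c\tau)/p(\tau)<1-\epsilon$. So there is no class on which your complexity measure can live, let alone decrease, and the ``prototypical $s^\alpha\log^\beta s$'' heuristic does not cover the actual definition. You correctly flag this as the principal obstacle, but it is not a technicality to be patched; it blocks the whole approach.

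The paper sidesteps both issues by a change of variables rather than by differencing. One composes every $p_{i,j}$ with the inverse $\sigma$ of the \emph{fastest} function $p_{1,1}$; Lemma~\ref{lemma_changeofvariable} shows the Ces\`aro average is unaffected. After this, the leading function is exactly $s$ and all the others satisfy $q_{i,j}'(s)\to0$, so the differences $q_{i,j}(s+h)-q_{i,j}(s)$ tend to $0$ (and to $h$ for $(1,1)$). Continuity of the actions (Lemma~\ref{lemma_continuousconvergencevanishing}) then lets one replace these differences by $0$ (resp.\ $h$) inside the average. At that point the $k$-th factor becomes $f_k^2$ with no $s$-dependence; shifting the integrand by $\prod_j T_{k,j}^{-q_{k,j}(s)}$ produces a $(k-1)$-term average to which the induction hypothesis applies, and the remaining $h$-average is killed by the mean ergodic theorem via $\int f_1\cdot T_{1,1}^h f_1\,\dd\mu\to(\int f_1\,\dd\mu)^2=0$. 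Thus the induction is simply on $k$, the admissibility of differenced functions is never invoked, and no PET-type complexity measure is needed.
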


%=====================================================================================================================
%=====================================================================================================================
%=====================================================================================================================

The strategy of the proof of Theorem \ref{thm_temperedcontconvergence} is to apply a change of variables and then use (an appropriate version of) \DT.
We will need the following technical lemma, asserting essentially that a change
of variables involving an \resfunc{} yields an equivalent method of summation.
Similar ``change of variable'' lemmas appeared for instance in \cite[Lemma 7.7]{Bergelson_Leibman_Son14} and \cite[Lemma 2.2]{Austin12}, but we were unable to find the precise formulation we need in the literature.
%=====================================================================================================================
%=====================================================================================================================
%=====================================================================================================================

\begin{lemma}\label{lemma_changeofvariable}
  Let $H$ be a Hilbert space, let $a:\R\to H$ be a continuous and bounded function and let $\sigma\in C^2(0,\infty)$ be an \resfunc{}. %twice differentiable function.
  %Assume that
  %\begin{itemize}
   % \item $\sigma'$ is positive and monotone.
    %\item For every $c\in(0,1)$ there exist $\delta<1$ such that for large enough $\tau$ we have $\sigma(c\tau)\leq\delta\sigma(\tau)$.
    %\item There exists $c_0\in(0,1)$ such that $\sigma'(\tau)/\sigma'(c_0\tau)$ is bounded.
  %\end{itemize}
  %Assume that the derivative of $\sigma$ is monotone, that $\lim\sigma'(s)/\sigma(s)=0$ and that for every for $c\in(0,1)$ there exists $\delta<1$ such that for large enough $s\in\R$ we have $\sigma(cs)/\sigma(s)\leq\delta$.
   %with monotone derivative and satisfying $\lim\sigma'(s)/\sigma(s)=0$ and $\sigma(s)\to\infty$ as $s\to\infty$.
  %Assume that there exists $\delta<1$ such that for every for $c\in(\delta,1)$ and large enough $s\in\R$ we have $1/2\leq\sigma(cs)/\sigma(s)\leq\delta$.
  Then
  \begin{equation}\label{eq_lemma_changeofvariable}
    \lim_{\tau\to\infty}\frac1\tau \int_0^\tau a(s)-a\big(\sigma(s)\big)\dd s=0\qquad\text{ in norm,}
  \end{equation}
  where the integral is understood in the sense of Bochner.
\end{lemma}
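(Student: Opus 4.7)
My strategy is to make the substitution $u = \sigma(s)$ in the integral involving $a \circ \sigma$ and then compare the resulting weighted average with the unweighted average of $a$. After the change of variables,
$$\frac{1}{\tau}\int_0^\tau a(\sigma(s))\,\dd s = \frac{1}{\tau}\int_{\sigma(b_+)}^{\sigma(\tau)} a(u)\,\psi(u)\,\dd u,$$
where $\psi(u) := 1/\sigma'(\sigma^{-1}(u))$ is the Jacobian and $b_+$ is any point slightly larger than the left endpoint of the domain of $\sigma$ (the contribution from $[0, b_+]$ is $O(1/\tau)$ in norm, since $a$ is bounded). Writing $T := \sigma(\tau)$, the lemma reduces to showing
$$\frac{1}{\tau}\int_0^\tau a(s)\,\dd s - \frac{1}{\tau}\int_{\sigma(b_+)}^T a(u)\,\psi(u)\,\dd u \;\longrightarrow\; 0 \quad\text{in } H.$$

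Next I would exploit the monotonicity of $\sigma'$ (admissibility condition~(1)): since $\sigma'$ is monotone, so is $\psi$, and consequently $\psi$ is of bounded variation on compact subintervals. Setting $A(u) := \int_0^u a(v)\,\dd v$, so that $\|A(u)\| \le \|a\|_\infty u$, integration by parts yields
$$\int_{\sigma(b_+)}^T a(u)\psi(u)\,\dd u = \psi(T)A(T) - \psi(\sigma(b_+))A(\sigma(b_+)) - \int_{\sigma(b_+)}^T A(u)\psi'(u)\,\dd u.$$
The monotonicity of $\psi$ makes $\psi'$ of constant sign, so the remainder integral can be estimated by $\|a\|_\infty$ times the total variation of $\psi$. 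An analogous integration-by-parts identity for the first average reduces the desired statement to comparing two such Abel-type representations.

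The crucial structural input is admissibility condition~(2): for each $c \in (0,1)$ the ratio $\sigma(c\tau)/\sigma(\tau)$ is bounded between $\epsilon$ and $1-\epsilon$. This forces $\sigma$, and hence $\sigma^{-1}$, to respect geometric scales with only bounded multiplicative distortion, so that $\tau$ and $T = \sigma(\tau)$ are dyadically comparable, and $\psi$ is essentially constant across each geometric block $[T/2^{k+1}, T/2^k]$. Partitioning $[0, T]$ into such blocks and applying the local integration-by-parts estimate on each, one packages the difference of the two averages as a telescoping sum of block contributions, each of which is $o(1)$ uniformly, and whose number is $O(\log\tau)$ but whose sizes decay geometrically so that the total is $o(1)$.

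The main obstacle, and the reason both conditions of admissibility are needed simultaneously, is that $a$ is assumed only to be continuous and bounded: the Ces\`aro averages $\tau^{-1}\int_0^\tau a(s)\,\dd s$ need not converge at all. The proof therefore cannot identify a common limit for the two expressions; it must instead show that whatever oscillation the first average exhibits, the second tracks with vanishing error. Obtaining this \emph{tracking} estimate requires the monotonicity of $\psi$ (to harvest cancellation via integration by parts) and the scale-invariance of $\sigma$ (to prevent accumulation of errors across the dyadic blocks) in tandem; neither condition alone would suffice.
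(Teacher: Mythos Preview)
Your proposal has a fundamental gap: the statement you are trying to prove---that the two averages track each other even when neither converges---is \emph{false}. Take $H=\C$, let $a(s)=e^{i\log s}$ for $s\geq1$ (extended continuously and boundedly to all of $\R$), and let $\sigma(s)=s^2$, which is admissible since $\sigma'(s)=2s$ is monotone and $\sigma(c\tau)/\sigma(\tau)=c^2$. A direct computation gives
\[
\frac{1}{\tau}\int_1^\tau a(s)\,\dd s=\frac{e^{i\log\tau}}{1+i}+O(1/\tau),\qquad
\frac{1}{\tau}\int_1^\tau a(\sigma(s))\,\dd s=\frac{e^{2i\log\tau}}{1+2i}+O(1/\tau),
\]
and the difference $\dfrac{e^{i\log\tau}}{1+i}-\dfrac{e^{2i\log\tau}}{1+2i}$ oscillates and does not tend to zero (for instance, along $\tau=e^{2\pi k}$ it equals $\tfrac{3-i}{10}$). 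So no dyadic decomposition or integration-by-parts argument can deliver the ``tracking'' estimate you describe; the blocks will not decay, because the two averages genuinely diverge from one another.

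The paper's proof avoids this by inserting, as its very first step, the hypothesis that $L:=\lim_{\tau\to\infty}\tfrac{1}{\tau}\int_0^\tau a(s)\,\dd s$ exists. Under that assumption one shows, via the change of variables you wrote down and a truncation at $u=\sigma(c\tau)$ controlled by condition~(2), that the partial averages $A_{[u,\sigma(\tau)]}$ appearing in the integrand are all close to $L$; the admissibility conditions then let one evaluate the remaining integral explicitly as $L$. This weaker form of the lemma is all that is needed in the application (Theorem~\ref{thm_temperedcontconvergence}), where one first establishes convergence of the transformed average via Proposition~\ref{prop_convergencecontinuous} and then transfers it back. Your diagnosis that the lemma must hold without any convergence assumption is therefore the source of the error; once you add that assumption, your change-of-variables and integration-by-parts framework is close to the paper's approach, though the paper organizes the computation through the kernel identity $\int_0^\tau a(\sigma(s))\,\dd s=\int_0^{\sigma(\tau)}f''(u)(\sigma(\tau)-u)A_{[u,\sigma(\tau)]}\,\dd u$ rather than a dyadic partition.
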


%=====================================================================================================================
%=====================================================================================================================
%=====================================================================================================================

\begin{proof}
Assume, without loss of generality, that $\|a(s)\|\leq1$ for every $s$.
We will use the notation $A_{[x,y]}:=\frac1{y-x}\int_x^ya(s)\dd s$.
Assume that $\lim_{\tau\to\infty}A_{[0,\tau]}$ exists and call it $L$.
%Since the derivative $\sigma'$ is monotone, also $\sigma$ is eventually monotone.
%Thus its inverse $f=\sigma^{-1}:\R\to\R$ exists for large enough $s$.
Let $f$ be the inverse of $\sigma$.
Since the contribution of the initial segment $[0,\tau_0]$ is asymptotically negligible when one considers long range Ces\`aro averages% \eqref{eq_lemma_changeofvariable}, we are concerned with an average over very large $s$
, we may and will assume, without loss of generality, that $f$ is defined on $(0,\infty)$.
%We now separate into the two conditions:

We claim that for any $c\in(0,1)$ and any function $u:\R\to\R$ satisfying $0\leq u(s)\leq\sigma\big(cf(s)\big)$ we have
\begin{equation}\label{eq_lemma_changeofvariable3}
  \lim_{s\to\infty}A_{[u(s),s]}=L
\end{equation}
Indeed,
\begin{equation}\label{eq_affinecombination} A_{[u(s),s]}=\frac s{s-u(s)}A_{[0,s]}-\frac{u(s)}{s-u(s)}A_{[0,u(s)]}
\end{equation}
Since $\sigma(c\tau)\leq\delta\sigma(\tau)$ for some $\delta$ depending on $c$ (but not on $\tau$), taking $\tau=f(s)$ we get that $u(s)\leq\delta s$.
It follows that the coefficients of the linear combination in \eqref{eq_affinecombination} add up to $1$ and stay bounded as $s\to\infty$.
Since both $A_{[0,s]}$ and $A_{[0,u(s)]}$ approach $L$ as $s\to\infty$, this proves the claim \eqref{eq_lemma_changeofvariable3}.

We now carry out some computations.
Making the change of variables $s=f(t)$ we have
$$\int_0^\tau a\big(\sigma(s)\big)\dd s=\int_0^{\sigma(\tau)}a(t)f'(t)\dd t=\int_0^{\sigma(\tau)}a(t)\int_0^tf''(u)\dd u\dd t$$
%$$\int_0^\tau a\big(\sigma(s)\big)\dd s=\int_0^{\sigma(\tau)}a(t)f'(t)\dd t=\int_0^{\sigma(\tau)}a(t)\left[\int_0^tf''(u)\dd u+f'(0)\right]\dd t$$
Since both functions $a$ and $f''$ are bounded in the relevant intervals, we may switch the order of integrals and obtain
\begin{align}\label{eq_lemma_changeofvariable1}
   \int_0^\tau a\big(\sigma(s)\big)\dd s& =\int_0^{\sigma(\tau)}f''(u)\int_u^{\sigma(\tau)}a(t)\dd t\dd u \notag\\
   & =\int_0^{\sigma(\tau)}f''(u)\big(\sigma(\tau)-u\big)A_{[u,\sigma(\tau)]}\dd u
\end{align}
For future use, we now compute the following integral

\begin{equation}
\label{eq_lemma_changeofvariable4}
\begin{split}
\int_x^yf''(u)\big(\sigma(\tau)-u\big)\dd u&=f'(u)\big(\sigma(\tau)-u\big)\Big|_x^y+\int_x^yf'(u)\dd u
\\ &=f'(y)\big(\sigma(\tau)-y\big)-f'(x)\big(\sigma(\tau)-x\big)+f(y)-f(x)
\end{split}
\end{equation}

Let $\epsilon>0$, let $c\in(1-\epsilon,1)$ and let $M=\sup_\tau\sigma'(\tau)/\sigma'(c\tau)$.
Observe that $M$ is finite, as a consequence of L'H\^opital's rule (and of the fact that $\sigma$ is an \resfunc).
 We will show that %for any $c\in(1-\epsilon,1)$,
 we can truncate the integral in \eqref{eq_lemma_changeofvariable1} at $u=\sigma(c\tau)$, at the cost of an error of size at most $M\epsilon\tau$.%, where $M=\sup_\tau\sigma'(\tau)/\sigma'(c\tau)$ is finite,

 Since $\|a(s)\|\leq1$, we have $\|A_{[x,y]}\|\leq1$ for every $x<y$.
 Also, since $f$ is an \resfunc, the sign of $f''$ doesn't change.
 %Assume first that $f''\geq0$ (corresponding to $\sigma'$ being decreasing).
 Therefore, using \eqref{eq_lemma_changeofvariable4} we obtain
 $$\begin{aligned}
    \left\|\int_{\sigma(c\tau)}^{\sigma(\tau)}f''(u)\big(\sigma(\tau)-u\big)A_{[u,\sigma(\tau)]}\dd u\right\|\quad \leq\quad\left|\int_{\sigma(c\tau)}^{\sigma(\tau)}f''(u)\big(\sigma(\tau)-u\big)\dd u\right|&\\
   = \Big|-f'\big(\sigma(c\tau)\big)\big(\sigma(\tau)-\sigma(c\tau)\big)+f\big(\sigma(\tau)\big)-f\big(\sigma(c\tau)\big) \Big| &\\
   = \left|\frac{\sigma(\tau)-\sigma(c\tau)}{-\sigma'(c\tau)}+\tau(1-c) \right|&\\
   = \tau(1-c)\left|1-\frac{\sigma'(b)}{\sigma'(c\tau)}\right|&
 \end{aligned}$$
for some $b\in (c\tau,\tau)$.
If $\sigma'$ is decreasing, the last expression is bounded by $\tau\epsilon$.
If $\sigma'$ is incresing, it is bounded by $\tau\epsilon(M-1)$.
%If instead $f''\leq0$ (and correspondingly $\sigma'$ is increasing) then the only change is a negative sign everywhere after the first line.
In either case we obtain
%$$\left\|\int_{\sigma(c\tau)}^{\sigma(\tau)}f''(u)\big(\sigma(\tau)-u\big)A_{[u,\sigma(\tau)]}\dd u\right\|\leq \tau(1-c)\left|1-\frac{\sigma'(\tau)}{\sigma'(c\tau)}\right|$$
%Since $\sigma'$ is non-increasing, we have $\sigma'(b)\leq\sigma'(c\tau)$ and hence
\begin{equation}\label{eq_lemma_changeofvariable2}
  \left\|\frac1\tau\int_{\sigma(c\tau)}^{\sigma(\tau)}f''(u)\big(\sigma(\tau)-u\big)A_{[u,\sigma(\tau)]}\dd u\right\|\leq M\epsilon
\end{equation}

Putting \eqref{eq_lemma_changeofvariable1} and \eqref{eq_lemma_changeofvariable2} together we obtain
$$\frac1\tau\int_0^\tau a\big(\sigma(s)\big)\dd s=\frac1\tau\int_0^{\sigma(c\tau)}f''(u)\big(\sigma(\tau)-u\big)A_{[u,\sigma(\tau)]}\dd u+R_1$$
where $R_1=R_1(\tau)\in H$ satisfies $\|R_1\|\leq M\epsilon$.

On the other hand, for $u\in\big(0,\sigma(c\tau)\big)$, using \eqref{eq_lemma_changeofvariable3} with $s=\phi(\tau)$ (and hence $\tau=f(s)$) and $\tau$ large enough, we obtain that $\|A_{[u,\sigma(\tau)]}-L\|<\epsilon$.
Therefore
$$\frac1\tau\int_0^\tau a\big(\sigma(s)\big)\dd s=\frac1\tau\int_0^{\sigma(c\tau)}f''(u)\big(\sigma(\tau)-u\big)L\dd u+R_2+R_1$$
for some $R_2=R_2(\tau)\in H$ satisfying $\|R_2\|\leq\epsilon$.
Calculating the last integral using \eqref{eq_lemma_changeofvariable4}, taking the limit as $\tau\to\infty$ and letting $\epsilon\to0$ we conclude that
$$\lim_{\tau\to\infty}\frac1\tau\int_0^\tau a\big(\sigma(s)\big)\dd s=L$$

%Since $\sigma(s)\to\infty$ and $\sigma(c\tau)/\sigma(\tau)>1/2$ for large $\tau$, it follows by L'Hopital's rule that $\sigma'(b)/\sigma'(c\tau)$ is bounded

\end{proof}

%We remark that the inverse function $\sigma$ of a tempered function or of a polynomial satisfies the conditions of
%Lemma \ref{lemma_changeofvariable}.
We will need yet another lemma:

\begin{lemma}\label{lemma_continuousconvergencevanishing}
   Let $(X,{\mathcal B},\mu,(T^s)_{s\in\R})$ be an ergodic continuous measure preserving $\R$-action and let
   $b:\R\to\R$ be a continuous function such that $\lim_{s\to\infty}b(s)=0$.
   Let $f\in L^1(X)$ and let $(g_s)_{s\in\R}$ be a family of functions in $L^\infty(X)$ such that
   $\sup_s\|g_s\|_{L^\infty}<\infty$.
   Then
   $$\lim_{\tau \to\infty}\frac1\tau \int_0^\tau  \int_Xf\cdot g_s\dd\mu\dd s=\lim_{\tau \to\infty}\frac1\tau
   \int_0^\tau  \int_XT^{b(s)}f\cdot g_s\dd\mu\dd s$$
   in the sense that the existence of one limit implies the existence of the other and the equality.
\end{lemma}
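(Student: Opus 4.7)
The plan is to subtract the two expressions and show that the difference has Cesàro average zero. Writing
$$h(s):=\int_X \big(T^{b(s)}f-f\big)\cdot g_s\,\dd\mu,$$
we have
$$\frac1\tau\int_0^\tau\int_X T^{b(s)}f\cdot g_s\,\dd\mu\,\dd s \;-\; \frac1\tau\int_0^\tau\int_X f\cdot g_s\,\dd\mu\,\dd s \;=\; \frac1\tau\int_0^\tau h(s)\,\dd s,$$
so it suffices to prove that this last Cesàro average tends to $0$. Observe that $|h(s)|\leq M\|T^{b(s)}f-f\|_{L^1}$, where $M:=\sup_s\|g_s\|_{L^\infty}<\infty$, and in particular $|h(s)|\leq 2M\|f\|_{L^1}$, so $h$ is bounded.

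Next, I would establish that $h(s)\to 0$ as $s\to\infty$. This is where continuity of the $\R$-action enters: a continuous measure preserving $\R$-action $(T^s)_{s\in\R}$ induces a strongly continuous one-parameter group of isometries on $L^1(X,\mu)$, so $\|T^t f-f\|_{L^1}\to 0$ as $t\to 0$ for every $f\in L^1(X)$. Combined with the hypothesis $b(s)\to 0$, this yields $\|T^{b(s)}f-f\|_{L^1}\to 0$ as $s\to\infty$, hence $h(s)\to 0$.

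Finally, a bounded measurable function on $[0,\infty)$ that tends to $0$ at infinity has Cesàro average tending to $0$: given $\epsilon>0$, pick $S$ with $|h(s)|<\epsilon$ for $s>S$; then for $\tau>S$, $\bigl|\frac1\tau\int_0^\tau h(s)\,\dd s\bigr|\leq \frac{2M\|f\|_{L^1} S}{\tau}+\epsilon$, which is less than $2\epsilon$ for $\tau$ large. This shows $\frac1\tau\int_0^\tau h(s)\,\dd s\to 0$ and proves the equivalence of the two limits (if either exists, so does the other, with the same value). The only non-routine ingredient is the strong $L^1$-continuity of $s\mapsto T^s f$, which I view as part of the standing meaning of \emph{continuous $\R$-action}; ergodicity is in fact not used in this lemma.
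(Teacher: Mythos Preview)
Your proof is correct and follows essentially the same approach as the paper: both bound the difference by $M\|T^{b(s)}f-f\|_{L^1}$ and use strong continuity of the action together with $b(s)\to 0$ to conclude this tends to zero. The paper states the slightly stronger pointwise conclusion $\int_X(f-T^{b(s)}f)g_s\,\dd\mu\to 0$ directly, while you additionally spell out the (routine) Ces\`aro averaging step and correctly note that ergodicity is not used.
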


\begin{proof}
  It suffices to show that
  $$\lim_{s\to\infty}\int_X\big(f-T^{b(s)}f\big)\cdot g_s\dd\mu=0$$
  Let $M=\sup_s\|g_s\|_{L^\infty}$.
  Then we have
  $$\left|\int_X\big(f-T^{b(s)}f\big)\cdot g_s\dd\mu\right|\leq M\left\|f-T^{b(s)}f\right\|_{L^1}$$
  Since $b(s)\to0$ and $T$ is a continuous action, we deduce that $T^{b(s)}f\to f$ in $L^1(X)$ and hence
  $$\limsup_{s\to\infty}\left|\int_X\big(f-T^{b(s)}f\big)\cdot g_s\dd\mu\right|\leq
  M\lim_{s\to\infty}\left\|f-T^{b(s)}f\right\|_{L^1}=0$$
\end{proof}

%We can now state and prove the second step of the proof of Theorem \ref{thm_temperedcontconvergence}.
The next proposition uses \DT.
The exact form we need follows from Theorem \ref{theorem_vdctrickcontinuous} by letting $G=\R$ with its usual topology and the Lebesgue Haar measure $\lambda$, and letting $(F_N)_{N\in\N}$ be any sequence of intervals $[0,b_N]$ with $b_N\to\infty$.
%=====================================================================================================================
%=====================================================================================================================
%=====================================================================================================================

\begin{proposition}\label{prop_convergencecontinuous}
Let $k,\ell$ be positive integers and let $(X,{\mathcal B},\mu)$ be a probability space. For each $i\in\{1,\dots,k\}$ and $j\in\{1,\dots,\ell\}$ let $(T_{i,j}^s)_{s\in\R}:X\to X$ be a continuous ergodic measure preserving $\R$-action and let $q_{i,j}:\R\to\R$ be an \resfunc.
  Assume that all the $T_{i,j}$ commute, that $\lim q_{1,1}'(s)=1$ as $s\to\infty$ and that $\lim q_{i,j}'(s)=0$ for each $(i,j)\neq(1,1)$.

  If either $k=1$, or $k>1$ and Theorem \ref{thm_temperedcontconvergence} holds for $k-1$, then for any $f_1,\dots,f_k\in L^\infty(X)$ with $\int_Xf_1d\mu=0$ we have
  \begin{equation*}\lim_{\tau\to\infty}\frac1\tau\int_0^\tau \prod_{i=1}^k\left[\left(\prod_{j=1}^\ell T_{i,j}^{q_{i,j}(s)}\right)f_i\right]\dd s=0\end{equation*}

  %Let $r,q_2,\dots,q_k:\R\to\R$ be functions satisfying $\lim r'(s)=\lim q_i'(s)=0$ for every $i=2,\dots,k$% and  %$r''$, $r''+q_i''$ and $q_i''-q_j''$ are all (eventually) increasing for every $2\leq i<j\leq k$
  %.
  %Let $q_1(s)=s-r(s)$.
  %Then for any probability space $(X,{\mathcal B},\mu)$, any commuting ergodic continuous measure preserving $\R$-actions $(T_1^s)_{s\in\R},\dots,(T_k^s)_{s\in\R}$ and any functions
  %$f_1,\dots,f_k\in L^\infty(X)$ with $\int_Xf_1d\mu=0$ we have
  %$$\lim_{\tau \to\infty}\frac1\tau \int_0^\tau \prod_{i=1}^kT_i^{q_i(s)}f_i\dd s=0\qquad\text{ in }L^2(X)$$
\end{proposition}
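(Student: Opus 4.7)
The plan is to apply the van der Corput difference theorem for Hilbert spaces (Theorem \ref{theorem_vdctrickcontinuous}) with $G=\R$ to the bounded continuous $L^2(X)$-valued map $s \mapsto v(s) := \prod_{i=1}^k S_i(s) f_i$, where $S_i(s) := \prod_{j=1}^\ell T_{i,j}^{q_{i,j}(s)}$, preceded by a normalization of $q_{1,1}$. By Lemma \ref{lemma_changeofvariable} applied with $\sigma = q_{1,1}$ (an admissible function with $q_{1,1}'(s) \to 1$), I may reduce to the case $q_{1,1}(s) = s$; the remaining exponents $\tilde q_{i,j}(s) := q_{i,j}(q_{1,1}^{-1}(s))$ stay admissible and satisfy $\tilde q_{i,j}'(s) \to 0$ for $(i,j) \ne (1,1)$. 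After this reduction, I write $v(s) = T_{1,1}^s F(s) W(s)$ with $F(s) = \prod_{j \geq 2} T_{1,j}^{\tilde q_{1,j}(s)} f_1$ and $W(s) = \prod_{i \geq 2} S_i(s) f_i$; both are bounded in $L^\infty$ and vary slowly in the $L^2$-sense on intervals of bounded length as $s \to \infty$, by continuity of the actions and the derivative hypothesis.

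For the base case $k=1$, $W$ is empty and $F(s) = U_s f_1$ with $U_s := \prod_{j \geq 2} T_{1,j}^{\tilde q_{1,j}(s)}$ unitary and commuting with $T_{1,1}$. I partition $[0,\tau]$ into intervals $I_k = [kN, (k+1)N]$ and approximate $F(s) \approx F(kN)$ on each; slow variation ensures that the approximation error vanishes in Ces\`aro average over $k$ as $\tau \to \infty$ for fixed $N$. The main contribution per piece simplifies, via commutativity, to $T_{1,1}^{kN} U_{kN} A_N$ with $A_N := \frac{1}{N}\int_0^N T_{1,1}^s f_1\dd s$. The mean ergodic theorem for the ergodic $T_{1,1}$-action together with $\int_X f_1\dd\mu = 0$ yields $\|A_N\|_{L^2} \to 0$ as $N \to \infty$; unitarity of $T_{1,1}^{kN} U_{kN}$ makes the per-piece bound uniform in $k$, so the Ces\`aro average over pieces is $O(\|A_N\|_{L^2}) \to 0$. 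For the inductive step $k>1$, I apply Theorem \ref{theorem_vdctrickcontinuous} to $v$: applying $\prod_i S_i(s)^{-1}$ to the integrand and using measure preservation gives
\[
\langle v(s+h), v(s)\rangle = \int_X \prod_i R_i(s,h) f_i \cdot \prod_i \overline{f_i}\dd\mu
\]
with $R_i(s,h) := S_i(s)^{-1} S_i(s+h) = \prod_j T_{i,j}^{q_{i,j}(s+h) - q_{i,j}(s)}$, and for each fixed $h$, $R_1(s,h) f_1 \to T_{1,1}^h f_1$ and $R_i(s,h) f_i \to f_i$ for $i > 1$ in $L^2$ as $s \to \infty$. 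Invoking the induction hypothesis (Theorem \ref{thm_temperedcontconvergence} for $k-1$) together with Lemma \ref{lemma_continuousconvergencevanishing}, the Ces\`aro average in $s$ of the inner product can be reduced to a form in which the factor involving $f_1$ is isolated, thereby verifying the van der Corput hypothesis.

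The hard part will be the inductive step: a direct pointwise-in-$s$ analysis of the inner product yields $\gamma(h) = \int_X T_{1,1}^h f_1 \cdot \overline{f_1} \cdot \prod_{i > 1}|f_i|^2\dd\mu$, and $\frac{1}{D}\int_0^D |\gamma(h)|\dd h$ need not vanish when $T_{1,1}$ is ergodic but not weakly mixing; hence the van der Corput hypothesis cannot be verified by a naive route. Circumventing this obstruction requires the induction hypothesis to replace the slowly varying $(k-1)$-fold product by its mean (using Lemma \ref{lemma_continuousconvergencevanishing} to handle the vanishing-derivative shifts), after which the assumption $\int_X f_1\dd\mu = 0$ forces the required Ces\`aro decay in $h$.
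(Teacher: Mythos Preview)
Your overall strategy---van der Corput in $L^2(X)$, mean-value-theorem control of the difference exponents, and induction on $k$---matches the paper's, and your base case via a Riemann-sum/slow-variation argument is a legitimate alternative to the paper's use of van der Corput already at $k=1$. The normalization $q_{1,1}(s)=s$ via Lemma~\ref{lemma_changeofvariable} is harmless but unnecessary: the paper works directly with $q_{1,1}$, using only $q_{1,1}(s+h)-q_{1,1}(s)\to h$.

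There is, however, a genuine algebraic error in your inductive step. You write that applying $\prod_i S_i(s)^{-1}$ to the integrand gives
\[
\langle v(s+h),v(s)\rangle=\int_X\prod_i R_i(s,h)f_i\cdot\prod_i\overline{f_i}\dd\mu.
\]
This is false: the Koopman operator $S(s)^{-1}:=\prod_i S_i(s)^{-1}$ is multiplicative, so $S(s)^{-1}\big[\prod_i S_i(s)f_i\big]=\prod_i S(s)^{-1}S_i(s)f_i$, and $S(s)^{-1}S_i(s)=\prod_{i'\neq i}S_{i'}(s)^{-1}\neq\mathrm{Id}$. You cannot apply a different operator $S_i(s)^{-1}$ to each factor separately and still invoke measure preservation. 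The correct manipulation (as in the paper) is to factor each pair: $S_i(s+h)f_i\cdot S_i(s)f_i=S_i(s)\big[R_i(s,h)f_i\cdot f_i\big]$, giving
\[
\langle v(s+h),v(s)\rangle=\int_X\prod_{i=1}^k S_i(s)\big[R_i(s,h)f_i\cdot f_i\big]\dd\mu,
\]
which still has $k$ moving factors.

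This matters for the step you left vague. After Lemma~\ref{lemma_continuousconvergencevanishing} replaces $R_i(s,h)f_i$ by $T_{1,1}^hf_1$ (for $i=1$) or $f_i$ (for $i>1$), you still have a $k$-fold product $\int_X\prod_i S_i(s)\tilde f_i\dd\mu$, and the induction hypothesis is for $k-1$. The paper closes this gap by applying the single measure-preserving map $\prod_j T_{k,j}^{-q_{k,j}(s)}$ to the whole integrand, which strips off the $k$-th factor and leaves $\int_X\prod_{i=1}^{k-1}\big(\prod_j T_{i,j}^{q_{i,j}(s)}T_{k,j}^{-q_{k,j}(s)}\big)\tilde f_i\cdot f_k^2\dd\mu$; one then checks that this new $(k-1)$-fold expression fits the hypotheses of Theorem~\ref{thm_temperedcontconvergence}. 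Your proposal never articulates this reduction, and your incorrect inner-product formula would make it appear unnecessary. Once you fix the formula and insert this step, your argument coincides with the paper's.
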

\begin{proof}
%The proof goes by induction on $k$.
%For $k=1$, the result follows from the spectral theorem and a theorem of Weyl (Lemma 3.2 in BBB).

 With Theorem \ref{theorem_vdctrickcontinuous} in mind (with $G=\R$ and $(F_N)_{N\in\N}$ being any sequence of intervals $[0,\tau_N]$ with $\tau_N\to\infty$), let $a(s)=\prod_{i=1}^k\left[\left(\prod_{j=1}^\ell T_{i,j}^{q_{i,j}(s)}\right)f_i\right]$ and $h\in\R$.
  We have
  \begin{eqnarray*}
    \langle a(s+h),a(s)\rangle&=&\int_X\prod_{i=1}^k\left[\left(\prod_{j=1}^\ell T_{i,j}^{q_{i,j}(s+h)}\right)f_i\cdot \left(\prod_{j=1}^\ell T_{i,j}^{q_{i,j}(s)}\right)f_i\right]\dd\mu\\&=&
    \int_X\prod_{i=1}^k\left[\left(\prod_{j=1}^\ell T_{i,j}^{q_{i,j}(s)}\right)\left[f_i\cdot \left(\prod_{j=1}^\ell T_{i,j}^{q_{i,j}(s+h)-q_{i,j}(s)}\right)f_i\right]\right]\dd\mu%\\&=&
    %\int_XT^{q_1(s)}\big(f_1\cdot T^{h-r(s+h)+r(s)}f_1\big)\cdot\prod_{i=2}^kT^{q_i(s)}\big(f_i\cdot %T^{q_i(s+h)-q_i(s)}f_i\big)\dd\mu
  \end{eqnarray*}
  %Observe that for $i\geq2$ we have $\lim q_i(s)=\infty$ and $\lim q_i(s)/s=0$.
  The mean value theorem implies that $\lim_s q_{i,j}(s+h)-q_{i,j}(s)=0$ for each fixed $h$ and every $(i,j)\neq(1,1)$, and also that $\lim_sq_{1,1}(s+h)-q_{1,1}(s)-h=0$ for any fixed $h$.
  In view of Lemma \ref{lemma_continuousconvergencevanishing} we have
  \begin{multline}\label{eq_continuouscommuting1}
     \lim_{\tau \to\infty}\frac1\tau \int_0^\tau \langle a(s+h),a(s)\rangle\dd s=\\
    \lim_{\tau \to\infty}\frac1\tau
  \int_0^\tau \int_X\left(\prod_{j=1}^\ell T_{1,j}^{q_{1,j}(s)}\right)\big(f_1\cdot T_{1,1}^hf_1\big)\cdot\prod_{i=2}^k\left(\prod_{j=1}^\ell T_{i,j}^{q_{i,j}(s)}\right)\big(f_i^2\big)\dd\mu\dd s
  \end{multline}
  If $k=1$ the expression \eqref{eq_continuouscommuting1} reduces to%product in the variable $i$ above is empty, and hence,
  $$\lim_{\tau \to\infty}\frac1\tau
  \int_0^\tau \int_X\left(\prod_{j=1}^\ell T_{1,j}^{q_{1,j}(s)}\right)\big(f_1\cdot T_{1,1}^hf_1\big)\dd\mu\dd s$$
  Using the fact that each $T_{1,j}^{q_{1,j}(s)}$ preserves $\mu$, we have
  \begin{equation}\label{eq_continuouscommuting3} \lim_{\tau \to\infty}\frac1\tau \int_0^\tau \langle a(s+h),a(s)\rangle\dd s=\int_X\big(f_1\cdot
  T_{1,1}^hf_1\big)\dd\mu
  \end{equation}
  It follows from the mean ergodic theorem (see \cite{vonNeumann32}) that the Ces\`aro limit in $h$ of the right hand side of \eqref{eq_continuouscommuting3} is
  $\left(\int_Xf_1d\mu\right)^2=0$ and hence, by Theorem \ref{theorem_vdctrickcontinuous} the proof is complete.

  %If $k>1$ we will proceed by induction.
  Assume now that $k>1$ and Theorem \ref{thm_temperedcontconvergence} is true for $k-1$.
  Since each $T_{k,j}$ preserves $\mu$ we can apply $\left(\prod_{j=1}^\ell T_{k,j}^{-q_{k,j}(s)}\right)$ to the right hand side of \eqref{eq_continuouscommuting1}.
  Letting $\tilde f_1=f_1\cdot T_{1,1}^hf_1$ and $\tilde f_i=f_i^2$ for $i=2,\dots,k-1$ we have
  \begin{multline}\label{eq_continuouscommuting2}
     \lim_{\tau \to\infty}\frac1\tau \int_0^\tau \langle a(s+h),a(s)\rangle\dd s=\\
    \lim_{\tau \to\infty}\frac1\tau
  \int_0^\tau \int_X\prod_{i=1}^{k-1}\left(\prod_{j=1}^\ell T_{i,j}^{q_{i,j}(s)}T_{k,j}^{-q_{k,j}(s)}\right)\tilde f_i\cdot f_k^2\dd\mu\dd s
  \end{multline}

  %, and letting $\tilde \ell=2\ell$, $\tilde
  %q_i(s)=q_i(s)-q_k(s)$ for $i=1,\dots,k-1$ we get
  %$$\lim_{\tau \to\infty}\frac1\tau \int_0^\tau \langle a(s+h),a(s)\rangle\dd s=\lim_{\tau \to\infty}\frac1\tau
  %\int_0^\tau \int_X\prod_{i=1}^{k-1}T^{\tilde q_i(s)}\tilde f_i\cdot f_k^2\dd\mu\dd s$$
It is not hard to check that the right hand side of \eqref{eq_continuouscommuting2} satisfies the conditions of Theorem \ref{thm_temperedcontconvergence} with $k-1$. Therefore%, hence by induction we deduce that
$$\lim_{\tau \to\infty}\frac1\tau \int_0^\tau \langle a(s+h),a(s)\rangle\dd s=\int_Xf_1\cdot
T_{1,1}^hf_1\dd\mu\cdot\prod_{i=2}^k\int_X f_i^2\dd\mu$$
By the mean ergodic theorem we get
$$\lim_{H\to\infty}\frac1H\int_0^H\lim_{\tau \to\infty}\frac1\tau \int_0^\tau \langle a(s+h),a(s)\rangle\dd s\dd
h=\left(\int_Xf_1\dd\mu\right)^2\cdot\prod_{i=2}^k\int_X f_i^2\dd\mu=0$$
The result in question follows now from Theorem \ref{theorem_vdctrickcontinuous}.
\end{proof}
We can now prove Theorem \ref{thm_temperedcontconvergence}:
\begin{proof}[Proof of Theorem \ref{thm_temperedcontconvergence}]

We proceed by induction.
Assume that either $k=1$, or $k>1$ and the theorem has been proved for $k-1$.
After decomposing $f_1=\int_Xf_1\dd\mu+g_1$, where $\int_Xg_1\dd\mu=0$, the left hand side of \eqref{eq_thm_temperedcontconvergence} becomes the sum of two terms.
The first term is
\begin{equation}\label{eq_thm_temperedconvergence2}
\int_Xf_1\dd\mu\cdot\lim_{\tau\to\infty}\frac1\tau\int_0^\tau\prod_{i=2}^k\left[\left(\prod_{j=1}^\ell T_{i,j}^{p_{i,j}(s)}\right)f_i\right]\dd s
\end{equation}
Since we assumed that either $k=1$, or the theorem holds for $k-1$, it follows that \eqref{eq_thm_temperedconvergence2} equals the left hand side of \eqref{eq_thm_temperedcontconvergence} as desired.
The second term in the decomposition is
$$\lim_{\tau\to\infty}\frac1\tau\int_0^\tau\left(\prod_{j=1}^\ell T_{1,j}^{p_{1,j}(s)}\right)g_1\cdot\prod_{i=2}^k\left[\left(\prod_{j=1}^\ell T_{i,j}^{p_{i,j}(s)}\right)f_i\right]\dd s$$
and all that remains to prove is that it equals $0$.
Therefore we have reduced the Theorem to the case when $\int_Xf_1\dd\mu=0$.

Let $\sigma$ be the inverse of $p_{1,1}$.
Since $p_{1,1}$ is an \resfunc, so is $\sigma$.
Let $q_{i,j}=p_{i,j}\circ\sigma$ for each $i=1,\dots,k$ and $j=1,\dots\ell$; observe that these are all \resfunc s.
Since for every $(i,j)\neq(1,1)$, $p_{i,j}(s)/p_{1,1}(s)\to0$ as $s\to\infty$, also $q_{i,j}(s)/s\to0$ as $s\to\infty$.
In view of Lemma \ref{lemma_changeofvariable}, the left hand side of
  \eqref{eq_thm_temperedcontconvergence} can be replaced with
  $$\lim_{\tau\to\infty}\frac1\tau\int_0^\tau \prod_{i=1}^k\left[\left(\prod_{j=1}^\ell T_{i,j}^{q_{i,j}(s)}\right)f_i\right]\dd s$$
The result now follows from Proposition \ref{prop_convergencecontinuous}.
\end{proof}

\section{A multiplicative variant of \DT}\label{sec_katai}

\begin{definition}
  A function $F:\N\to\C$ is \emph{multiplicative} if for any coprime $a,b\in\N$ we have $F(ab)=F(a)F(b)$.
  %\item Let $Q=\{e^\pi i x:x\in\Q\}$. If $F:\N\to Q$ is a multiplicative sequence with finite image, we define a \emph{multiplicative fiber} as a non-empty set of the form $\{n\in\N:F(n)=z\}$ for some $z\in Q$.

\end{definition}

Classical examples of multiplicative functions include the M\"obius function $\mu(n)$ and the Liouville function $\lambda(n)$.
The Liouville function is defined by $\lambda(p)=-1$ for any prime $p$ and $\lambda(ab)=\lambda(a)\lambda(b)$ for \emph{every} $a,b\in\N$.
The M\"obius function is equal to the Liouville function for every squarefree number, and $\mu(n)=0$ whenever $n$ is divisible by a perfect square $k^2$ with $k\geq2$.
%\begin{example}
 % The set of multiplicatively even (or multiplicatively odd) numbers forms a multiplicative fiber.
  %A number $n$ is multiplicatively even/odd if the number of primes that divide $n$, counted with multiplicities is even/odd.
%\end{example}
The following theorem, due to K\'atai, has a clear similarity to \DT, in the form of Theorem \ref{thm_vdCinC}.

\begin{theorem}[\cite{Katai86}]\label{thm_katai}
  Let $a:\N\to\C$ be bounded and let $F:\N\to\N$ be a multiplicative function.
  If for any distinct primes $p,q$,
  \begin{equation}\label{eq_katai}\lim_{N\to\infty}\frac1N\sum_{n=1}^Na(pn)\overline{a(qn)}=0
  \end{equation}
  then
  \begin{equation}\label{eq_katai2}
    \lim_{N\to\infty}\frac1N\sum_{k=1}^Na(n)F(n)=0
  \end{equation}
\end{theorem}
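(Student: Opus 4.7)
The plan is to deduce \eqref{eq_katai2} from \eqref{eq_katai} in two steps. First, a Turán-Kubilius averaging over small primes reduces $\sum_{n\leq N}a(n)F(n)$ (up to a small error) to a sum over primes $p$ of the form $\sum_p F(p)\sum_{m\leq N/p}a(pm)F(m)$. Second, a Cauchy-Schwarz manipulation of the latter sum brings pairs of distinct primes $(p,q)$ into view, at which point hypothesis \eqref{eq_katai} applies term-by-term. Throughout I will assume $|F|\leq 1$, the natural setting encompassing the M\"obius and Liouville examples mentioned above.

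More explicitly, for a cutoff $K$ let $P_K$ denote the set of primes up to $K$, and set $\Sigma_K=\sum_{p\in P_K}1/p$ and $\omega_K(n)=\#\{p\in P_K:p\mid n\}$. The Turán-Kubilius inequality $\sum_{n\leq N}(\omega_K(n)-\Sigma_K)^2\leq CN\Sigma_K$, combined with Cauchy-Schwarz and the boundedness of $aF$, gives
$$\sum_{n\leq N}a(n)F(n)=\frac{1}{\Sigma_K}\sum_{n\leq N}a(n)F(n)\omega_K(n)+O\!\left(\frac{N}{\sqrt{\Sigma_K}}\right).$$
Expanding $\omega_K(n)=\sum_{p\in P_K,\,p\mid n}1$ and setting $n=pm$, multiplicativity of $F$ on coprime pairs (with the error from $p\mid m$ being $O(N\sum_{p}p^{-2})=O(N)$) converts this into
$$\sum_{n\leq N}a(n)F(n)=\frac{W_K(N)}{\Sigma_K}+O\!\left(\frac{N}{\sqrt{\Sigma_K}}\right),\qquad W_K(N):=\sum_{p\in P_K}F(p)\sum_{m\leq N/p}a(pm)F(m).$$

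To estimate $W_K(N)$ I would swap the order of summation, pulling $F(m)$ outside, and apply Cauchy-Schwarz in $m$, using $|F|\leq 1$:
$$|W_K(N)|^2\leq N\sum_{m\leq N}\left|\sum_{\substack{p\in P_K\\ p\leq N/m}}F(p)a(pm)\right|^2=N\sum_{p,q\in P_K}F(p)\overline{F(q)}\sum_{m\leq N/\max(p,q)}a(pm)\overline{a(qm)}.$$
The diagonal $p=q$ contributes at most $\|a\|_\infty^{2}N\Sigma_K$ to the inner double sum, hence at most $\|a\|_\infty^{2}N^{2}\Sigma_K$ to $|W_K(N)|^{2}$. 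For the off-diagonal terms, hypothesis \eqref{eq_katai} applied to each of the finitely many ordered pairs of distinct primes in $P_K$ (with $N/\max(p,q)\to\infty$ for $p,q$ fixed) shows that every inner $m$-sum is $o(N)$ as $N\to\infty$, so the off-diagonal contribution to $|W_K(N)|^{2}$ is $|P_K|^{2}\cdot o(N^{2})$ as $N\to\infty$ with $K$ held fixed.

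Putting everything together, for each fixed $K$,
$$\limsup_{N\to\infty}\frac{1}{N}\left|\sum_{n\leq N}a(n)F(n)\right|\leq\frac{\|a\|_\infty}{\sqrt{\Sigma_K}}+O\!\left(\frac{1}{\sqrt{\Sigma_K}}\right),$$
the off-diagonal contribution vanishing in the $N$-limsup precisely because $K$ is fixed. Letting $K\to\infty$ now makes $\Sigma_K=\sum_{p\leq K}1/p\to\infty$ and yields \eqref{eq_katai2}. The main obstacle I anticipate is managing the order of quantifiers: the cutoff $K$ must be chosen first so that $1/\sqrt{\Sigma_K}$ is acceptably small, and only afterwards may one invoke the qualitative hypothesis \eqref{eq_katai} on the (now fixed and finite) collection of prime pairs in $P_K$ to drive the off-diagonal error to zero as $N\to\infty$. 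This decoupling is exactly what allows \eqref{eq_katai}, a hypothesis with no uniformity in $p,q$, to suffice.
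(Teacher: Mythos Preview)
The paper does not actually prove this theorem; it is merely stated with a citation to K\'atai's original paper \cite{Katai86}, and the section proceeds directly to applications. So there is no ``paper's own proof'' to compare against.

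That said, your argument is the standard Daboussi--K\'atai proof and is correct as written, under the natural hypothesis $|F|\leq 1$ that you adopt (the paper's ``$F:\N\to\N$'' is almost certainly a typo for $F:\N\to\C$ bounded). The two-step structure---Tur\'an--Kubilius to introduce $\omega_K$, then Cauchy--Schwarz over $m$ to expose the bilinear form in $(p,q)$---is exactly how this result is usually established, and your handling of the order of limits ($K$ fixed first, then $N\to\infty$, then $K\to\infty$) is the crucial point and is correctly explained. One minor bookkeeping remark: the $O(N)$ error from the non-coprime terms $p\mid m$ becomes $O(N/\Sigma_K)$ after dividing by $\Sigma_K$, which is harmlessly absorbed into your $O(N/\sqrt{\Sigma_K})$ term; you might state this explicitly.
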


Limiting statements of the form \eqref{eq_katai2} are central to analytic number theory.
For instance, if $a(n)\equiv1$ and $F(n)=\mu(n)$ (the M{\"o}bius function), then \eqref{eq_katai2} is equivalent to the Prime Number Theorem. %(and if one imposes a rate on the decay to $0$ of the form $N^{-1/2+\epsilon}$ for every positive $\epsilon$, one obtains a statement equivalent to the Riemann hypothesis).
If $a(n)$ is replaced with a periodic function, then \eqref{eq_katai2} is equivalent to the prime number theorem in arithmetic progressions.
(Unfortunately, neither of these theorems can be easily derived from Theorem \ref{thm_katai}.)
For $a(n)=e^{2\pi i\alpha n}$, where $\alpha$ is irrational, an application of Theorem \ref{thm_katai} gives a Theorem of Daboussi (cf.\ \cite{Daboussi_Delange82}), stating that, for this choice of $a(n)$, \eqref{eq_katai2} holds for any multiplicative function $F$.
More generally, one can use Theorem \ref{thm_katai} to deduce the following result similar in spirit to (but much simpler than) the main result in \cite{Green_Tao12}:
\begin{theorem}
  Let $f\in\R[x]$ be a polynomial with an irrational coefficient (other than the constant term) and let $F:\N\to\C$ be a bounded multiplicative function.
  Then
  $$\lim_{N\to\infty}\frac1N\sum_{n=1}^NF(n)e^{2\pi if(n)}=0$$
\end{theorem}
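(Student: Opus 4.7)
The plan is to apply K\'atai's theorem (Theorem \ref{thm_katai}) with the bounded sequence $a(n):=e^{2\pi i f(n)}$, which reduces matters to verifying the hypothesis \eqref{eq_katai}, namely that for every pair of distinct primes $p\neq q$,
$$\lim_{N\to\infty}\frac1N\sum_{n=1}^N a(pn)\overline{a(qn)}=\lim_{N\to\infty}\frac1N\sum_{n=1}^N e^{2\pi i(f(pn)-f(qn))}=0.$$
Once this is established, Theorem \ref{thm_katai} immediately gives the desired conclusion for an arbitrary bounded multiplicative $F$.

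To verify the displayed identity, set $g(n):=f(pn)-f(qn)$. Writing $f(x)=\sum_{k=0}^d a_k x^k$, we compute
$$g(n)=\sum_{k=1}^d a_k(p^k-q^k)\,n^k,$$
so the constant term of $g$ vanishes and the coefficient of $n^k$ for $k\geq 1$ is $a_k(p^k-q^k)$. By hypothesis, there exists some index $k\geq 1$ with $a_k$ irrational; since $p,q$ are distinct primes the integer $p^k-q^k$ is nonzero, hence $a_k(p^k-q^k)$ is irrational as well. Therefore $g$ is a real polynomial with a non-constant irrational coefficient, and Weyl's equidistribution theorem (which, as recalled in the introduction, follows from \DT{} combined with an inductive argument on the degree) asserts that the sequence $(g(n)\bmod 1)_{n\in\N}$ is uniformly distributed in $\T$. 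Applying the Weyl criterion (statement (3) of Theorem \ref{theorem_weyl}) with $h=1$ yields precisely the required Ces\`aro vanishing.

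Putting these pieces together, the hypothesis \eqref{eq_katai} of Theorem \ref{thm_katai} is satisfied for $a(n)=e^{2\pi i f(n)}$, and the conclusion \eqref{eq_katai2} translates directly into
$$\lim_{N\to\infty}\frac1N\sum_{n=1}^N F(n)e^{2\pi i f(n)}=0,$$
as desired. No step is genuinely delicate here: the whole argument is a clean two-move reduction, with K\'atai's theorem doing the multiplicative work and Weyl's theorem doing the polynomial equidistribution work. The only point requiring minor care is the observation that multiplication by the nonzero integer $p^k-q^k$ preserves irrationality of $a_k$, which is what makes the implication from the hypothesis on $f$ to the hypothesis on $g$ go through.
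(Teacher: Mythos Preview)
Your proof is correct and follows essentially the same approach as the paper's own argument: apply K\'atai's theorem with $a(n)=e^{2\pi i f(n)}$, and verify hypothesis \eqref{eq_katai} by observing that $n\mapsto f(pn)-f(qn)$ is a polynomial with an irrational non-constant coefficient, so Weyl's theorem gives the required Ces\`aro vanishing. You supply more detail than the paper does (explicitly computing the coefficients $a_k(p^k-q^k)$ and noting that $p^k-q^k\neq 0$ preserves irrationality), which is a welcome clarification of a step the paper leaves implicit.
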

\begin{proof}
  In view of Theorem \ref{thm_katai} it suffices to show that \eqref{eq_katai} holds for $a(n)=e^{2\pi i f(n)}$.
  For any primes $p\neq q$, the map $n\mapsto f(pn)-f(qn)$ is a polynomial with an irrational (non-constant) coefficient.
  Therefore
  $$\lim_{N\to\infty}\frac1N\sum_{n=1}^Na(pn)\overline{a(qn)}=\lim_{N\to\infty}\frac1N\sum_{n=1}^N e^{2\pi i \big(f(pn)-f(qn)\big)}=0$$
and this finishes the proof.
\end{proof}

\section{Limits along ultrafilters}\label{sec_ultrafilter}

In this section we will use a variant of \DT{} for limits along idempotent ultrafilters.
Since F\o lner sequences are only available in amenable groups, the form of \DT{} in Theorem \ref{theorem_vdctrickcontinuous} does not apply to non-amenable groups, such as a (non-commutative) free group or $SL(n,\Z)$.
Nevertheless, it is possible to study multiple recurrence in the setting of non-amenable groups through the use of ultrafilters.
The following definitions and facts about ultrafilters can be found, for instance, in \cite{Bergelson10} and \cite{Hindman_Strauss98}.

\begin{definition}
An \emph{ultrafilter} on a countable group $G$ is a family $p$ of subsets of $G$ which is closed under intersections, supersets and which contains exactly one member of any finite partition of $G$.
The set of all ultrafilters is denoted by $\beta G$.
\end{definition}
The set $\beta G$ can be identified with the Stone-\v Cech compactification of $G$.
In particular, the operation on $G$ can be lifted to $\beta G$.
Of special interest are ultrafilters $p\in\beta G$ which satisfy $p\cdot p=p$; such ultrafilters are called \emph{idempotent}.

Given a map $f:G\to K$ from $G$ to a compact Hausdorff space $K$ and an ultrafilter $p\in\beta G$, we denote by $\pl_gf(g)$ the (unique) point $x\in K$ with the property that $\{g\in G:f(g)\in U\}\in p$ for any neighborhood $U$ of $x$.
It follows easily from the definitions that for any $f:G\to K$ from $G$ to a compact Hausdorff space and for any $p\in\beta G$, $\pl_gf(g)$ exists and is unique.
%The existence of the $\pl_gf(g)$ for any ultrafilter and any map into a compact space
One property which will be important for the exposition in this section is the following:
\begin{equation}\label{eq_ultralimit}
\pl_g\pl_hf(gh)=\pl_gf(g)\qquad\qquad\text{whenever }p\in\beta G\text{ is idempotent}
\end{equation}
%Given two ultrafilters $p$ and $q$ in $\beta G$, we defined their product by the formula
%$$pq=\big\{A\in \big\}$$
The following proposition is the analog of Theorem \ref{theorem_welldistributionperturbation} in the setting of limits along ultrafilters.
It appeared originally as Theorem 2.3 in \cite{Bergelson_McCutcheon07}.
\begin{proposition}\label{prop_plvdc}
  Let $G$ be a countable group, let $H$ be a Hilbert space, let $u:G\to H$ be a bounded sequence and let $p\in\beta G$ be an idempotent ultrafilter.
  %We have
  %Assume that
  Then
  $$\text{If}\qquad\pl_h\pl_g\langle u(hg),u(g)\rangle=0\qquad\text{then}\qquad\pl_g u(g)=0$$
  where the last limit is in the weak topology.
\end{proposition}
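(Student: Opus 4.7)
The plan is to prove the stronger identity $\pl_h \pl_g \langle u(hg), u(g) \rangle = \|w\|^2$, where $w := \pl_g u(g)$ is the weak $p$-limit, from which the proposition follows immediately via the hypothesis. The weak $p$-limit $w$ exists since $u$ is bounded and the closed ball in $H$ is weakly compact Hausdorff by Banach--Alaoglu, so $\pl_g u(g)$ is well defined in the weak topology.

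The first step is to compute $\|w\|^2$ as an iterated $p$-limit. Since $\langle\,\cdot\,, w\rangle$ is weakly continuous, the defining property of $w = \pl_h u(h)$ yields $\|w\|^2 = \pl_h \langle u(h), w\rangle$. Applying \eqref{eq_ultralimit} to the bounded scalar function $F(h) = \langle u(h), w\rangle$ then gives
$$\|w\|^2 = \pl_h \pl_g \langle u(hg), w\rangle.$$

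The main step is to show that the right-hand side equals $\pl_h \pl_g \langle u(hg), u(g)\rangle$. Interpreting $\pl_h \pl_g$ as a single $p$-limit along the product ultrafilter $p \otimes p$ on $G \times G$, the bijection $\Phi(h, g) = (hg, g)$ should satisfy $\Phi_*(p \otimes p) = p \otimes p$. Verification on a rectangle $B \times C$: $\Phi^{-1}(B \times C) = \{(h, g) : hg \in B,\ g \in C\}$ lies in $p \otimes p$ iff $B \in p \cdot p$ and $C \in p$, which by the idempotency $p \cdot p = p$ matches $B \times C \in p \otimes p$. Granted this invariance on the bounded function $(s, g) \mapsto \langle u(s), u(g)\rangle$, we obtain
$$\pl_h \pl_g \langle u(hg), u(g) \rangle = \pl_s \pl_g \langle u(s), u(g) \rangle = \pl_s \langle u(s), w \rangle = \|w\|^2,$$
using weak continuity in the second slot; combining with the previous identity and the hypothesis then forces $\|w\|^2 = 0$ and hence $w = 0$.

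The main obstacle is upgrading the rectangular invariance $\Phi_*(p \otimes p) = p \otimes p$ to apply to the bounded function $(s,g) \mapsto \langle u(s), u(g)\rangle$, because in general two ultrafilters on $G \times G$ agreeing on rectangles need not coincide on arbitrary subsets. One route is a direct approximation argument exploiting the tensor-like structure of the inner product together with the weak convergence of $u$; another is to mimic the Bass--Naimark strategy used in the proof of Theorem \ref{theorem_vdctrickcontinuous} by setting $\gamma(h) := \pl_g \langle u(hg), u(g)\rangle$, verifying that $\gamma$ is positive definite through iterated applications of \eqref{eq_ultralimit}, and then invoking Naimark's dilation theorem together with a $p$-limit analog of the mean ergodic theorem for matrix coefficients of the resulting unitary representation.
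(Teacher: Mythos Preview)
Your proposal has a genuine gap, and the route you sketch to close it does not go through.

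First, note that the identity you aim for, $\pl_h\pl_g\langle u(hg),u(g)\rangle=\|w\|^2$, is strictly stronger than the proposition: after subtracting $w$ it is equivalent to the statement that $\pl_g v(g)=0$ weakly \emph{implies} $\pl_h\pl_g\langle v(hg),v(g)\rangle=0$, i.e., the converse of what you are asked to prove. So you are attempting something harder than required.

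Your main device---the bijection $\Phi(h,g)=(hg,g)$ and the claim $\Phi_*(p\otimes p)=p\otimes p$---cannot be salvaged from the rectangle check. Two ultrafilters on $G\times G$ may agree on all rectangles yet differ: for nonprincipal $p$, the product $p\otimes p$ and the diagonal pushforward $\Delta_*p$ (under $n\mapsto(n,n)$) agree on every rectangle $A\times B$ but are distinct. So the rectangle verification establishes nothing about the value of the iterated limit on a general bounded function such as $(s,g)\mapsto\langle u(s),u(g)\rangle$.

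Both fallback routes you mention run into the same underlying obstruction: $p$-limits are \emph{not} translation invariant. In the proof of Theorem~\ref{theorem_vdctrickcontinuous} the key step making $\gamma$ positive definite is the change of variables $n\mapsto nh$, which is harmless for F\o{}lner averages but fails for ultrafilter limits (there is no left-invariant ultrafilter on an infinite group). Likewise there is no ``$p$-limit mean ergodic theorem'': the operator $v\mapsto\pl_g U_gv$ is an idempotent but need not be self-adjoint, so one cannot conclude $\pl_h\langle U_hv,v\rangle=\|Pv\|^2$.

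The paper bypasses all of this with Schnell's Cauchy--Schwarz trick, which only yields an \emph{inequality}: iterate \eqref{eq_ultralimit} $N$ times to write $w=\pl_{g_1}\cdots\pl_{g_N}\tfrac1N\sum_{k}u(g_k\cdots g_N)$, apply Cauchy--Schwarz to the norm squared, and then use idempotency again to collapse each off-diagonal term $\pl_{g_1}\cdots\pl_{g_N}\langle u(g_k\cdots g_N),u(g_l\cdots g_N)\rangle$ to $\pl_h\pl_g\langle u(hg),u(g)\rangle$. Under the hypothesis these vanish, leaving $\|w\|^2\le\tfrac1N\pl_g\|u(g)\|^2\to0$. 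No positive-definiteness, no product ultrafilter, and no converse is needed.
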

The proof presented here is due to Schnell \cite{Schnell07}.
\begin{proof}
%I found this nice proof on \href{http://arxiv.org/abs/0711.0484}{this paper} of Schnell (it's the Lemma 4 there).

For each $N\in\N$ we have that
$$\pl_{g}u(g)=\pl_{g_1}\pl_{g_2}\cdots\pl_{g_N}\frac1N \sum_{k=1}^Nu(g_k\cdots g_N)$$
Taking norms and using the Cauchy-Schwarz inequality, % (which implies that the norm of a weak limit is at most the liminf of the norms of the sequence)
we get:
\begin{eqnarray*}
\displaystyle\left\|\pl_{g}u(g)\right\|^2&\leq&\displaystyle \pl_{g_1}\pl_{g_2}\cdots\pl_{g_N}\frac1{N^2} \left\|\sum_{k=1}^Nu(g_k\cdots g_N)\right\|^2\\
&=&\displaystyle \pl_{g_1}\pl_{g_2}\cdots\pl_{g_N}\frac1{N^2} \left\langle\sum_{k=1}^Nu(g_k\cdots g_N),\sum_{l=1}^Nu(g_l\cdots g_N)\right\rangle\\
&=&\displaystyle\frac1{N^2}\sum_{k,l=1}^N \pl_{g_1}\pl_{g_2}\cdots\pl_{g_N}\left
\langle u(g_k\cdots g_N),u(g_l\cdots g_N)\right\rangle
\end{eqnarray*}
Now we use the fact that $p$ is an idempotent ultrafilter to get:

\begin{eqnarray*}
&=&\displaystyle\frac1{N^2}\sum_{k=1}^N\pl_{g_k\to\infty}\|u(n_k)\|^2+ \frac2{N^2}\sum_{k<l}\pl_{g_k\to\infty}\pl_{g_l\to\infty}\langle u(g_kg_l),u(g_l)\rangle\\&=& \displaystyle\frac1N\pl_{g\to\infty}\|u(g)\|^2
\end{eqnarray*}
Since $N$ can be chosen arbitrarily we conclude that $\displaystyle\pl_{n\to\infty}x_n=0$.
\end{proof}

Proposition \ref{prop_plvdc} can be used to give a quick proof of the following polynomial extension of Khintchine's recurrence theorem (see, for instance, \cite[Theorem 5.1]{Bergelson96}).
 \begin{theorem}[cf. {\cite[Theorem 3.12]{Bergelson96}}]\label{theorem_polkhintchine}
   Let $(X,\mu,T)$ be an invertible measure preserving system, let $q\in\Z[x]$ be a polynomial satisfying $q(0)=0$ and let $p\in\beta\Z$ be an idempotent ultrafilter.
   Then there exists an orthogonal projection $P:L^2(X)\to L^2(X)$ such that for any $f\in L^2(X)$
   $$\pl_nf\circ T^{q(n)}=Pf\qquad\qquad\text{in the weak topology}$$
 In particular, for any $\epsilon>0$, the set $R:=\big\{n\in\Z:\mu(A\cap T^{q(n)}A)>\mu(A)^2-\epsilon\big\}$ is in $p$ and hence non-empty.
 \end{theorem}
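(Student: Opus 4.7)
Write $U$ for the Koopman operator $f \mapsto f \circ T$ on $L^2(X)$. For each $f \in L^2(X)$ the bounded sequence $U^{q(n)} f$ has a weak $p$-limit $Pf := \pl_n U^{q(n)} f$ (by weak compactness of closed $L^2$-balls), and $P$ is a contraction. Since every contractive idempotent on a Hilbert space is an orthogonal projection, it is enough to prove $P^2 = P$. I would proceed by induction on $d = \deg q$, using Proposition \ref{prop_plvdc}.

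In the base case $d = 1$, with $q(n) = an$ and $a \in \Z$, the identity \eqref{eq_ultralimit} applied to the scalar sequence $k \mapsto \langle U^{ak} f, g\rangle$ gives
\[
\langle P^2 f, g\rangle = \pl_m \pl_n \langle U^{a(m+n)} f, g\rangle = \pl_k \langle U^{ak} f, g\rangle = \langle Pf, g\rangle,
\]
so $P^2 = P$ and $P$ is the orthogonal projection onto its fixed-point subspace $V := \{f : Pf = f\}$. For the inductive step ($d \ge 2$), fix $f \in V^{\perp}$ and apply Proposition \ref{prop_plvdc} to $u(n) := U^{q(n)} f$. Unitarity gives $\langle u(n+h), u(n)\rangle = \langle U^{q(n+h) - q(n)} f, f\rangle$; since $q(n+h) - q(n) = q(h) + \tilde r_h(n)$ with $\tilde r_h(0) = 0$ and $\deg \tilde r_h \le d - 1$, the inductive hypothesis identifies $\pl_n U^{\tilde r_h(n)}$ as an orthogonal projection $P_{\tilde r_h}$, and
\[
\pl_n \langle u(n+h), u(n)\rangle = \langle U^{q(h)} P_{\tilde r_h} f, f\rangle.
\]
If one can show $P_{\tilde r_h} f = 0$ for every $h$, the inner limit vanishes identically, the van der Corput hypothesis $\pl_h \pl_n \langle u(n+h), u(n)\rangle = 0$ holds, and Proposition \ref{prop_plvdc} yields $Pf = 0$.

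The main obstacle is precisely establishing $P_{\tilde r_h} f = 0$ for $f \in V^{\perp}$ and every $h$; equivalently, that $\mathrm{Ran}(P_{\tilde r_h}) \subseteq V$. The key input is the Bohr-compactification property of idempotent ultrafilters: every idempotent in $\beta\Z$ maps to $0$ in $b\Z$, so $\pl_n e^{2\pi i r(n)\theta} = 1$ for every $\theta \in \T$ and every integer polynomial $r$ with $r(0) = 0$; consequently every $P_r$ acts as the identity on the eigenvectors of $U$, and a parallel argument (using $\pl_h\pl_n F(n+h) = \pl_k F(k)$ from $p + p = p$) shows that the ranges of all such $P_r$ coincide with the same ``rigid'' subspace, namely $V$. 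With $P$ identified as the orthogonal projection onto $V$, the ``in particular'' clause follows from $f = g = 1_A$:
\[
\pl_n \mu\bigl(A \cap T^{q(n)} A\bigr) = \langle P 1_A, 1_A\rangle = \|P 1_A\|^2 \ge |\langle P 1_A, \mathbf{1}\rangle|^2 = \mu(A)^2,
\]
using Cauchy--Schwarz and $\langle P 1_A, \mathbf{1}\rangle = \pl_n \mu(T^{-q(n)} A) = \mu(A)$. Hence $R \supseteq \{n : \mu(A \cap T^{q(n)} A) > \mu(A)^2 - \epsilon\} \in p$, and in particular $R$ is nonempty.
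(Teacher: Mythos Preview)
The paper does not actually supply a proof of this theorem; it only states it and refers to \cite{Bergelson96}. So there is nothing to compare against directly, but your overall strategy (induction on $\deg q$ combined with Proposition~\ref{prop_plvdc}) is indeed the standard one used in the reference.

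There is, however, a genuine gap at exactly the point you flag as the ``main obstacle.'' Your induction hypothesis is only that each $P_r$ is \emph{some} orthogonal projection; it says nothing about the range. But to deduce $P_{\tilde r_h}f=0$ for $f\in V^\perp$ you need $\operatorname{Ran}(P_{\tilde r_h})\subseteq V=\operatorname{Fix}(P_q)$, a statement relating the range of a degree-$(d-1)$ projection to the fixed space of the degree-$d$ operator you are still analysing. Your Bohr/eigenvector remark only shows that every eigenvector of $U$ lies in $\operatorname{Ran}(P_r)$ for every $r$; it gives no containment in the other direction, and the $p$-rigid subspace can be strictly larger than the closed span of eigenvectors (think of a rigid weakly mixing system). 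The ``parallel argument'' you allude to is therefore not enough.

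The standard remedy is to strengthen the induction hypothesis: define once and for all $H_{\mathrm{rig}}=\{f:\pl_n U^nf=f\}$ and prove, by induction on $\deg r$, that $P_r$ is the orthogonal projection onto $H_{\mathrm{rig}}$ for every $r\in\Z[x]$ with $r(0)=0$. The inductive step then has two halves. For $f\in H_{\mathrm{rig}}^\perp$ your van der Corput computation goes through verbatim, since now $P_{\tilde r_h}f=0$ by the strengthened hypothesis. For $f\in H_{\mathrm{rig}}$ you must show directly that $\pl_n U^{q(n)}f=f$; this is where the real work lies (one argues, again inductively, that the sets $\{n:\|U^{q(n)}f-f\|<\epsilon\}$ belong to $p$, using that $\{m:\|U^m f-f\|<\epsilon\}$ is in $p$ and is approximately closed under addition). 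Your proposal omits this half entirely.

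Your derivation of the ``in particular'' clause from the projection property is correct.
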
%For any measure preserving system $(X,\mu,T)$, any polynomial $f\in\Z[x]$ with $f(0)=0$ and any idempotent ultrafilter $p\in\beta\Z$ there exists an orthogonal projection

In fact, Theorem \ref{theorem_polkhintchine} implies that $R$ is an IP$^*$-set (see Definition \ref{def_ipsets} below).
This result was first obtained in \cite{Bergelson_Furstenberg_McCutcheon96} (see also section 3 in \cite{Bergelson96}).
One can deduce combinatorial corollaries from recurrence results using Furstenberg's correspondence principle (see \cite{Furstenberg77,Furstenberg81,Bergelson87b}).
In particular, Theorem \ref{theorem_polkhintchine} implies that for any set $A\subset\N$ with positive upper density $\bar d(A):=\limsup|A\cap[1,N]|/N>0$ and for any $q\in\Z[x]$ with $q(0)=0$, there exist many $x,y\in\N$ such that $\{x,x+q(y)\}\subset A$.

%If one is a little more careful, the above methods can yield results even for systems which are not $p$-mixing.
 %For instance, in any measure preserving system $(X,\mu,T)$ and idempotent ultrafilter $p$, the weak limit $\pl_n T^nf$, defined for every $f\in L^2$, is an orthogonal projection.
%This fact alone, together with standard techniques (including Proposition \ref{prop_plvdc}) implies that for any measurable set $A\subset X$ and every polynomial $f\in\Z[x]$ with $f(0)=0$, there exists $n$ such that $\mu(A\cap T^{-p(n)}A)$ gets arbitrarily close to $\mu^2(A)$.
%In fact one obtains that for any $\epsilon>0$, the set
%$$\{n\in\N:\mu(A\cap T^{-p(n)}A)>\mu(A)^2-\epsilon\}$$
%is an IP$^*$ set.

Next we define the notion of a $p$-mixing measure preserving system%, which takes the place of the classical notion of strongly mixing systems in the setting of limits along ultrafilters
.

\begin{definition}
  Let $G$ be a countable group and let $p\in\beta G$.
  A measure preserving system $(X,\mu,(T_g)_{g\in G})$ is called \emph{$p$-mixing} if for every $f\in L^2(X)$ we have $\pl_g T_gf=\int_Xf\dd\mu$ in the weak topology.

  When $G=\Z$ and $p\in\beta\Z$, a measure preserving system $(X,\mu,T)$ is called \emph{totally $p$-mixing} if for every $\ell\in\N$ the system $(X,\mu,T^\ell)$ is $p$-mixing.
  Equivalently, $(X,\mu,T)$ is totally $p$-mixing if for every $f\in L^2(X)$ and every $\ell\in\N$ we have $\pl_n T^{\ell n}f=\int_Xf\dd\mu$ in the weak topology.
\end{definition}

\subsection{A polynomial ergodic theorem for totally $p$-mixing systems}\label{sec_mildmixingpet}

In this subsection we will use Proposition \ref{prop_plvdc} to deduce the following result.

%The induction step is given by the following lemma:
%\begin{definition}
%  A family $\{a_1(n),\dots,a_k(n)\}$ of sequences of integer numbers is \emph{a family of convergence for mildly mixing systems} if for any mildly mixing measure preserving system $(X,\mu,T)$ and any $f_1,\dots,f_k\in L^\infty(X)$ we have
%  $$\pl_n \prod_{i=1}^kT^{a_i(n)}f_i=\prod_{i=1}^k\int_Xf_i\dd\mu$$
%\end{definition}
%One can now reinterpret Theorem \ref{thm_mmpet} as stating that
%It follows directly from the definition that for any family $\big\{a_1(n),\dots,a_k(n)\big\}$ of convergence for mildly mixing systems and for any constants $b_1,\dots,b_k\in\Z$, the family $\big\{a_1(n)+b_1,\dots,a_k(n)+b_k\big\}$ is also of convergence for mildly mixing systems, simply replacing $f_i$ with $T^{b_i}f_i$ in the equation above.
%\begin{lemma}\label{lemma_mmpetindstep}
%  Let $\{a_1(n),\dots,a_k(n)\}$ be a family of sequences of natural numbers. Assume that for every $h\in\N$
%  the family
%  $$\big\{a_\ell(n+t)-a_1(n)|\ell\in\{1,\dots,k\},t\in\{0,h\}\big\}$$
%  is a family of convergence for mildly mixing systems. Then so is $\{a_1(n),\dots,a_k(n)\}$.
%\end{lemma}

\begin{theorem}\label{thm_pmixingPET}
  Let $p\in\beta\N$ be an idempotent ultrafilter and let $(X,\mu,T)$ be a totally $p$-mixing measure preserving system.
  Let $p_1,\dots,p_k\in\Z[x]$ be polynomials such that for every $i\neq j$, the polynomial $p_i-p_j$ is not constant.
  For every $f_1,\dots,f_k\in L^\infty(X)$ we have:
  \begin{equation}\label{eq_thm_pmixingPET}\pl_n \prod_{i=1}^kT^{p_i(n)}f_i=\prod_{i=1}^k\int_Xf_i\dd\mu\qquad\text{in the weak topology}\end{equation}
\end{theorem}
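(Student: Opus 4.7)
The plan is to establish Theorem \ref{thm_pmixingPET} via the polynomial exhaustion technique (PET) of Bergelson, using the ultrafilter van der Corput lemma (Proposition \ref{prop_plvdc}) as the main engine. First, by decomposing $f_1 = \int f_1\,\dd\mu + g_1$ with $g_1$ of mean zero and inducting on $k$, the theorem reduces to the following \emph{vanishing statement}: if $\int_X f_{i_0}\,\dd\mu = 0$ for at least one $i_0$, then $\pl_n \prod_{i=1}^k T^{p_i(n)} f_i = 0$ weakly. (The statement as written also requires each $p_i$ to be non-constant; otherwise $T^{p_i(n)}f_i$ is a fixed function of $n$ and the conclusion fails in general.)

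To prove the vanishing statement, I would attach a PET complexity $w(p_1,\dots,p_k)$ to the tuple of polynomials -- essentially an ordered multi-set recording the degrees and leading-coefficient data of the $p_i$ -- and induct on $w$. The \emph{base case} is minimal complexity, namely $k = 1$ with $\deg p_1 = 1$: writing $p_1(n) = an + b$ with $a \neq 0$ and $\int f_1\,\dd\mu = 0$, total $p$-mixing of $T^a$ immediately gives $\pl_n T^{an+b}f_1 = T^b\cdot\pl_n T^{an}f_1 = 0$ weakly.

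For the \emph{inductive step}, set $u(n) = \prod_{i=1}^k T^{p_i(n)} f_i \in L^2(X)$. By Proposition \ref{prop_plvdc} it suffices to show $\pl_h \pl_n \langle u(n+h), u(n)\rangle = 0$. A direct computation -- expanding the inner product as an integral and applying the measure-preserving $T^{-p_{i_1}(n)}$ for a cleverly chosen pivot index $i_1$ -- rewrites
\[
\langle u(n+h), u(n)\rangle = \int_X \overline{f_{i_1}} \cdot \prod_j T^{q_j(n,h)} g_j\,\dd\mu,
\]
where the $q_j$ are new polynomials in $n$ with $h$ as a parameter and each $g_j$ is some $f_i$ or $\overline{f_i}$. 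Choosing $i_1$ to subtract off the ``largest'' polynomial in the PET ordering, one checks that the family $(q_j(\cdot,h))_j$ has strictly smaller PET complexity than $(p_i)_i$ for every $h$ in a cofinite set, and that the zero-mean hypothesis persists on some $g_j$ (either $f_{i_0}$ or $\overline{f_{i_0}}$). The inductive hypothesis, applied to this reduced family, yields that the integral above tends to $0$ along $\pl_n$ for each admissible $h$; a second $\pl_h$ then gives $\pl_h \pl_n\langle u(n+h), u(n)\rangle = 0$, and Proposition \ref{prop_plvdc} concludes.

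The principal obstacle is the PET bookkeeping. One must define $w$ and the pivot-selection rule so that (i) $w$ strictly decreases under each van der Corput reduction for all but finitely many $h$; (ii) the hypothesis that all pairwise differences $p_i - p_j$ are non-constant is preserved in the reduced family, with any coincident or constantly-differing $q_j$'s merged into a single twist of a product function $g_j g_{j'}$ (which only shortens the tuple and further decreases complexity); and (iii) at least one zero-mean function persists, so that the induction can be closed. These issues are delicate but standard in the PET literature, and constitute the only substantive technical content beyond the van der Corput step.
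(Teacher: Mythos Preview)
Your overall plan --- PET induction on a complexity invariant, driven by the ultrafilter van der Corput lemma (Proposition \ref{prop_plvdc}) --- is exactly the paper's approach. However, there is a genuine gap in how you set up the induction.

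The problem is your claim (iii), that a zero-mean function always persists in the reduced family. This fails precisely when $f_{i_0}$ is attached to a \emph{linear} polynomial $p_{i_0}(n)=an+b$. In the reduced family, $f_{i_0}$ sits at exponent $p_{i_0}(n+h)-p_{i_1}(n)$ and $\overline{f_{i_0}}$ at $p_{i_0}(n)-p_{i_1}(n)$; these two polynomials differ by the constant $ah$, so your own merging rule (ii) collapses them into the single function $T^{ah}f_{i_0}\cdot\overline{f_{i_0}}$, which has no reason to have mean zero. (If $i_0=i_1$ the same thing happens: $p_{i_1}(n+h)-p_{i_1}(n)$ is constant and $f_{i_0}$ merges with the stationary $\overline{f_{i_0}}$.) Hence the inner $\pl_n$ is \emph{not} zero for each $h$, and the induction on the vanishing statement cannot close.

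The paper avoids this by inducting on the \emph{full} statement of Theorem \ref{thm_pmixingPET} --- the formula $\pl_n\prod_i T^{p_i(n)}f_i=\prod_i\int f_i\,\dd\mu$ --- rather than on the vanishing statement. After reducing (by multilinear expansion) to all $f_i$ having mean zero, it takes as pivot a polynomial $q$ of \emph{smallest} degree, not largest: subtracting the smallest-degree polynomial preserves the higher equivalence classes while strictly lowering the characteristic vector, whereas subtracting the largest does not. Two cases then arise. If $\deg q\geq2$, the reduced family $P_h$ has strictly smaller characteristic vector, its functions are still the original $f_p$'s (all mean zero), and the full inductive hypothesis gives $\pl_n\langle u_{n+h},u_n\rangle=0$ directly. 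If $\deg q=1$, the reduced functions include products $T^{a_ih}f_i\cdot f_i$; the full inductive hypothesis now evaluates $\pl_n\langle u_{n+h},u_n\rangle$ as an explicit product of integrals, one factor being $\int_X T^{a_1h}f_1\cdot f_1\,\dd\mu$, and only the outer $\pl_h$ --- via total $p$-mixing of $T^{a_1}$ --- sends this to $\big(\int f_1\,\dd\mu\big)^2=0$. In that case the outer limit is doing essential work, not mere bookkeeping.
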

If the system is $p$-mixing but not totally $p$-mixing, the conclusion of Theorem \ref{thm_pmixingPET} may not hold.
In fact, there are $p$-mixing systems satisfying $\pl_n T^{2n}f=f$ for every $f\in L^2$, see \cite{Bergelson_Kasjan_Lemanczyk14}.

The proof of Theorem \ref{thm_pmixingPET} utilizes the PET-induction scheme developed in \cite{Bergelson87}.
Two polynomials $p_1,p_2\in\Z[x]$ are \emph{equivalent} if they have the same degree and leading coefficient, in other words, if $p_2-p_1$ has degree strictly smaller than $p_2$.
Given a finite family $P=\{p_1,\dots,p_k\}$ of polynomials, let $d$ be the maximal degree of the $p_i$ and, for each $j=1,\dots,d$ let $s_j$ be the number of equivalence classes in $P$ of polynomials of degree $j$.
The vector $(s_1,\dots,s_d)$ is called the \emph{characteristic vector} of $P$.
Since $s_d>0$, the characteristic vector is unique.
%To get uniqueness of the characteristic vector, we stress that.
For example, the family $P=\{x,2x-1,3x,x^3+2x^2,x^3+1\}$ has characteristic vector $(3,0,1)$.

We order the characteristic vectors by letting $(s_1,\dots,s_d)<(\tilde s_1,\dots,\tilde s_{\tilde d})$ if and only if either $d<\tilde d$ or both $d=\tilde d$ and the maximum $j$ for which $s_j\neq\tilde s_j$ satisfies $s_j<\tilde s_j$.
For example $(1,2,3)<(0,0,0,1)$ and $(9,3,5,2,4)<(1,7,6,2,4)$.

We can now prove Theorem \ref{thm_pmixingPET}.
\begin{proof}[Proof of Theorem \ref{thm_pmixingPET}]

We will prove this theorem by induction on the characteristic vector of $P=\{p_1,\dots,p_k\}$.
If $P$ has characteristic vector $(1)$, then $P$ must consist of a single linear polynomial and \eqref{eq_thm_pmixingPET} holds trivially.
Next assume that the characteristic vector $(s_1,\dots,s_d)$ of $P$ is larger than the vector $(1)$ and that Theorem \ref{thm_pmixingPET} has been proved for any family $\tilde P$ with a smaller characteristic vector.
Without loss of generality we can assume that $\int_Xf_i=0$ for every $i=1,\dots,k$.
We will also assume that the polynomials in $P$ have $0$ constant term.

Let $q\in P$ have the smallest degree.
If we denote by $e$ the degree of $q$, then $s_e\geq1$ and $s_j=0$ for each $j<e$.

Let $u_n=\prod_{p\in P}T^{p(n)}f_p$.
We need to show that $\pl_n u_n=0$.
We consider separately two cases.
\begin{itemize}
\item[Case 1] ($e>1$).
Let $Q\subset P$ be the equivalence class of $q$.
With Proposition \ref{prop_plvdc} in mind, we look at the inner product $\langle u_{n+h},u_n\rangle$:
\begin{eqnarray*}\langle u_{n+h},u_n\rangle&=&\int_X\prod_{p\in P}T^{p(n+h)}f_p\cdot T^{p(n)}f_p\dd\mu\\&=& \int_X\prod_{p\in P}T^{p(n+h)-q(n)}f_p\cdot T^{p(n)-q(n)}f_p\dd\mu\\&=&\int_Xf_q\cdot\prod_{\tilde p\in P_h}T^{\tilde p(n)}\tilde f_{\tilde p}\dd\mu\end{eqnarray*}
where $P_h$ is the family of polynomials consisting of the maps $\tilde p:n\mapsto p(n)-q(n)$ for $p\in P\setminus\{q\}$, associated with the function $\tilde f_{\tilde p}=f_p$, and the maps $\tilde p_h:n\mapsto p(n+h)-q(n)$ for any $p\in P$, associated with the function $\tilde f_{\tilde p}=f_p$.

Notice that, for every $p\in P\setminus Q$ and every $h\in\N$, the polynomial $n\mapsto p(n)-q(n)$ is equivalent to $n\mapsto p(n+h)-q(n)$ and both have the same degree as $p$.
Next, observe that whenever $p_1,p_2\in P\setminus Q$, the polynomial $n\mapsto p_1(n)-q(n)$ is equivalent to $n\mapsto p_2(n)-q(n)$ if and only if $p_1$ is equivalent to $p_2$.
Finally note that if $p\in Q$, then the degree of $p(n)-q(n)$ and $p(n+h)-q(n)$ is strictly smaller than $e$.
Since all polynomials in $P$ have degree at least $2$, the family $P_h$  contains no two polynomials $p_1,p_2$ whose difference $p_1-p_2$ is a constant (for all but finitely many $h$).
It follows from the observations above that the characteristic vector of $P_h$ is strictly smaller than that of $P$.
From the induction hypothesis we obtain
$$\pl_n\langle u_{n+h},u_n\rangle=\int_Xf_q\left(\pl_n\prod_{\tilde p\in P_h}T^{\tilde p(n)}\tilde f_{\tilde p}\right)\dd\mu=0$$
and hence, the desired result $\pl_n u_n=0$ follows from the ultrafilter version of \DT{} (Proposition \ref{prop_plvdc}).

\item[Case 2] ($e=1$).
In this case, for any $p\in P$ with degree $1$, the polynomials $p(n)-q(n)$ and $p(n+h)-q(n)$ have a constant difference.
To overcome this difficulty, let $n\mapsto a_in$, $i=1,\dots,s$ be all the polynomials of degree $1$ in $P$, and assume that $q(n)=a_1n$.
Observe that all the $a_i\in\Z$ are distinct.
Let $Q$ be the set of polynomials in $P$ with degree at least $2$.
We now have
\begin{eqnarray*}\langle u_{n+h},u_n\rangle&=&\int_X\prod_{i=1}^sT^{a_i(n+h)}f_i\cdot T^{a_in}f_i\cdot\prod_{p\in Q}T^{p(n+h)}f_p\cdot T^{p(n)}f_p\dd\mu\\&=&\int_X\prod_{i=1}^sT^{(a_i-a_1)n}\left(T^{a_ih}f_i\cdot f_i\right)\cdot\prod_{p\in Q}T^{p(n+h)-q(n)}f_p\cdot T^{p(n)-q(n)}f_p\dd\mu\\&=&\int_XT^{a_1h}f_1\cdot f_1\cdot\prod_{\tilde p\in P_h}T^{\tilde p(n)}\tilde f_{\tilde p}\dd\mu\end{eqnarray*}
where  the family $P_h$ consists of the polynomials $\tilde p:n\mapsto(a_i-a_1)n$ for $i=2,\dots,s$, associated with the function $\tilde f_{\tilde p}=T^{a_i}f_i\cdot f_i$ together with the polynomials $\tilde p:n\mapsto p(n)-q(n)$ and $\tilde p_h:n\mapsto p(n+h)-q(n)$, both associated with the function $\tilde f_{\tilde p}=\tilde f_{\tilde p_h}=f_p$.

Applying the same reasoning as in the first case we deduce that the family $P_h$ has a smaller characteristic vector than $P$.
It follows from the induction hypothesis that
$$\pl_n\langle u_{n+h},u_n\rangle=\int_XT^{a_1h}f_1\cdot f_1\dd\mu\cdot\prod_{p\in P_h}\int_X\tilde f_p\dd\mu$$
Since each $\tilde f_p\in L^\infty(X)$, the absolute value of the second term in this expression is bounded, say by $M$.
Hence we deduce that
$$\pl_h\left|\pl_n\langle u_{n+h},u_n\rangle\right|\leq\pl_h\left|\int_XT^{a_1h}f_1\cdot f_1\dd\mu\right|\cdot M=0$$
Finally, applying Proposition \ref{prop_plvdc} we obtain the desired conclusion.\qedhere
\end{itemize}
%For each $h\in\N$ we form the family
%$$\tilde P_h:=\big\{n\mapsto p_\ell(n+t)-p_1(n)\mid\ell\in\{1,\dots,k\},t\in\{0,h\}\big\}$$
%We claim that the characteristic vector of $\tilde P_h$ is strictly smaller than the characteristic vector of $P$.
%First notice that, for every $p\in\Z[x]$ not equivalent to $p_1$, and every $h\in\N$, the polynomial $n\mapsto p(n)-p_1(n)$ is equivalent to $n\mapsto p(n+h)-p_1(n)$ and both have the same degree as $p$.
%Next, observe that whenever $p,q\in\Z[x]$ are not equivalent to $p_1$, the polynomial $n\mapsto p(n)-p_1(n)$ is equivalent to $n\mapsto q(n)-p_1(n)$ if and only if $p$ is equivalent to $q$.
%Finally note that if $p$ is equivalent to $p_1$, then the degree of $p(n)-p_1(n)$ and $p(n+h)-p_1(n)$ is strictly smaller than $e$.
%Putting this observations together we conclude that the characteristic vector $(\tilde s_1,\dots,\tilde s_{\tilde d})$ of $\tilde P_h$ satisfies $\tilde s_j=s_j$ for each $j\in\{e+1,\dots,d\}$ (this set may be empty!) and $\tilde s_e<s_e$.
%Together, this facts imply that $(\tilde s_1,\dots,\tilde s_{\tilde d})<(s_1,\dots,s_d)$ as claimed.
%In view of the induction hypothesis, $\tilde P_h$ is a family of convergence for mildly mixing systems, and hence it follows from Lemma \ref{lemma_mmpetindstep} that $P$ is a family of convergence for mildly mixing systems as well.
\end{proof}
%In fact, by a similar technique one can prove a result similar to Theorem \ref{thm_pmixingPET} but dealing with commuting transformations.
%We state the result but omit its proof which is technically heavy:
%\begin{theorem}
 % Let $p\in\beta\N$ be an idempotent ultrafilter, let $(X,{\mathcal B},\mu)$ be a probability space and let $T_1,\dots,T_k$ be commuting totally $p$-mixing measure preserving transformations on $(X,{\mathcal B},\mu)$.
  %Let $p_1,\dots,p_k\in\Z[x]$ be polynomials and assume that they are linearly independent over $\Q$.
  %For every $f_1,\dots,f_k\in L^\infty(X)$ we have:
  %\begin{equation}\label{eq_thm_pmixingPET}\pl_n \prod_{i=1}^kT_i^{p_i(n)}f_i=\prod_{i=1}^k\int_Xf_i\dd\mu\qquad\text{in the weak topology}\end{equation}
%\end{theorem}

%A mildly mixing system is a measure preserving system $(X,\mu,T)$ without rigid functions.

Recall that a measure preserving system is called \emph{mildly mixing} if for any $f\in L^2(X)$ and any $p=p+p\in\beta\N$ we have $\pl_n T^nf=\int_Xf\dd\mu$.
We note in passing that every mixing system is mildly mixing and every mildly mixing system is weakly mixing, but both inclusions are strict (see, for example \cite{Furstenberg_Weiss77}).

Since for every idempotent $p\in\beta\N$ and $\ell\in\N$, the ultrafilter $\ell p\in\beta\N$ \footnote{The ultrafilter $\ell p$ is defined by $A\in \ell p\iff A/\ell\in p$ (where, in turn, $A/\ell$ is defined by $x\in A/\ell\iff \ell x\in A$).} is also idempotent, it follows that a mildly mixing system is in fact totaly $p$-mixing for every idempotent ultrafilter $p$.
This allows us to deduce from Theorem \ref{thm_pmixingPET}  the following corollary:
\begin{corollary}\label{cor_mmpet}
  Let $(X,\mu,T)$ be a mildly mixing measure preserving system and let $p_1,\dots,p_k\in\Z[x]$ be polynomials.
  If for every $i\neq j$, the polynomial $p_i-p_j$ is not constant, then for every $f_1,\dots,f_k\in L^\infty(X)$ we have \eqref{eq_thm_pmixingPET}.
  In particular, for every $\epsilon>0$ and $f_0\in L^2(X)$, the set
  $$\left\{n:\left|\int_Xf_0\cdot\prod_{i=1}^kT^{p_i(n)}f_i\dd\mu-\prod_{i=0}^k\int_Xf_i\dd\mu\right|<\epsilon\right\}$$
  is IP$^*$.
\end{corollary}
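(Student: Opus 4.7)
The strategy is to derive the corollary directly from Theorem \ref{thm_pmixingPET} by first upgrading mild mixing to total $p$-mixing for every idempotent $p\in\beta\N$, and then converting the weak-convergence statement into a statement about membership in every idempotent ultrafilter (which is exactly the dynamical characterization of IP$^*$).

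First I would verify that a mildly mixing system is totally $p$-mixing for every idempotent $p\in\beta\N$. By the definition of mild mixing, $\pl_n T^n f = \int_X f\dd\mu$ weakly, for every idempotent $p$. For any $\ell\in\N$, the ultrafilter $\ell p$ (defined by $A\in\ell p\iff \ell^{-1}A\in p$) is again idempotent, since $(\ell p)\cdot(\ell p)=\ell(p\cdot p)=\ell p$ (this uses that multiplication by $\ell$ is a homomorphism on $(\N,+)$ and its induced map on $\beta\N$ respects the semigroup operation). Applying the mild-mixing hypothesis to the idempotent $\ell p$ gives $\pl_n T^{\ell n} f = \int_X f\dd\mu$ weakly, which is precisely total $p$-mixing.

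Next, I would invoke Theorem \ref{thm_pmixingPET} to conclude that for any idempotent $p\in\beta\N$ and any $f_1,\dots,f_k\in L^\infty(X)$,
\begin{equation*}
\pl_n \prod_{i=1}^k T^{p_i(n)}f_i \;=\; \prod_{i=1}^k \int_X f_i\dd\mu \qquad\text{in the weak topology.}
\end{equation*}
This establishes the first assertion of the corollary. For the ``in particular'' part, fix $f_0\in L^2(X)$ and test the weak limit against $f_0$: continuity of $v\mapsto\langle v,f_0\rangle$ and the definition of $p\operatorname{-lim}$ yield
\begin{equation*}
\pl_n \int_X f_0\cdot\prod_{i=1}^k T^{p_i(n)}f_i\dd\mu \;=\; \prod_{i=0}^k \int_X f_i\dd\mu.
\end{equation*}
Consequently, for every $\epsilon>0$ the set
\begin{equation*}
R_\epsilon:=\left\{n:\left|\int_X f_0\cdot\prod_{i=1}^k T^{p_i(n)}f_i\dd\mu-\prod_{i=0}^k\int_X f_i\dd\mu\right|<\epsilon\right\}
\end{equation*}
is a neighborhood condition satisfied along $p$, so $R_\epsilon\in p$.

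Finally, I would invoke the standard characterization (a consequence of Hindman's theorem) that a subset of $\N$ is IP$^*$ if and only if it is a member of every idempotent ultrafilter in $\beta\N$. Since the argument above shows $R_\epsilon\in p$ for every idempotent $p\in\beta\N$, this gives that $R_\epsilon$ is IP$^*$, completing the proof. The only nontrivial step is the passage from mild mixing to total $p$-mixing for every idempotent, but this follows cleanly because the class of idempotents is closed under multiplication by positive integers; the remaining steps are routine applications of the tools already developed in the section.
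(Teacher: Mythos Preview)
Your proposal is correct and follows exactly the approach the paper outlines in the paragraph preceding the corollary: use that $\ell p$ is idempotent whenever $p$ is, so mild mixing implies total $p$-mixing for every idempotent $p$, then apply Theorem~\ref{thm_pmixingPET}. The paper does not write out a separate proof, and your additional details (testing the weak limit against $f_0$ and invoking the characterization of IP$^*$ sets as those belonging to every idempotent ultrafilter) are the natural way to flesh out the ``in particular'' clause.
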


\subsection{An ultrafilter analog of joint ergodicity}\label{sec_jointpergodicity}
The mean ergodic theorem states that a measure preserving system $(X,{\mathcal B},\mu,T)$ is ergodic if and only if for every $f\in L^2(X)$, the averages $\tfrac1N\sum_{n=1}^NT^nf$ converge in norm to $\int f\dd\mu$ as $N\to\infty$.
The following definition is an extension of the notion of ergodicity to several tranformations:
\begin{definition}Let $k\in\N$ and let $T_1,\dots,T_k$ be invertible measure preserving transformations on the same probability space $(X,{\mathcal B},\mu)$.
We say that the maps $T_1,\dots,T_k$ are \emph{jointly ergodic} if for any $f_1,\dots,f_k\in L^\infty(X)$
$$\lim_{N\to\infty}\frac1N\sum_{n=1}^NT_1^nf_1\cdots T^n_kf_k=\int_Xf_1\dd\mu\cdots\int_Xf_k\dd\mu\qquad\text{in }L^2$$
\end{definition}
%For example, the maps $T$ and $T^2$ are jointly ergodic if and only if $T$ is weakly mixing.
The notion of joint ergodicity was introduced in \cite{Berend_Bergelson84}, where the authors gave the following necessary and sufficient conditions for a tuple of invertible commuting transformations to be jointly ergodic.

\begin{theorem}[{\cite{Berend_Bergelson84}}]\label{thm_jointoriginal}
  Let $k\in\N$ and let $T_1,\dots,T_k$ be invertible measure preserving transformations on the same probability space $(X,{\mathcal B},\mu)$.
  Then $T_1,\dots,T_k$ are jointly ergodic if and only if all the transformations $T_iT_j^{-1}$ and $T_1\times\cdots\times T_k$ are ergodic.
\end{theorem}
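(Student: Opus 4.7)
The plan is to establish both directions using the Koopman representation, one application of van der Corput's difference theorem, and the mean ergodic theorem on the product space $X^k$.

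\emph{Necessity.} Assume $T_1,\dots,T_k$ are jointly ergodic. Testing the defining identity with $f_\ell=1$ for $\ell\neq i$ reduces it to the mean ergodic theorem for $T_i$, so each $T_i$ is ergodic. For ergodicity of $T_iT_j^{-1}$: a non-constant $T_iT_j^{-1}$-invariant function $g$ with $\int g\,d\mu=0$ would satisfy $T_i^ng=T_j^ng$, so the choice $f_i=g$, $f_j=\bar g$, $f_\ell=1$ otherwise gives $\prod_\ell T_\ell^nf_\ell=|T_j^ng|^2$, whose Ces\`aro limit has $L^1$-norm $\|g\|_2^2>0$, contradicting the joint-ergodicity value $\prod_\ell\int f_\ell\,d\mu=0$. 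For ergodicity of $T:=T_1\times\cdots\times T_k$: otherwise, the joint spectral theorem for the commuting unitaries $T_1,\dots,T_k$ yields a non-trivial tuple $(\theta_1,\dots,\theta_k)$ of $T_i$-eigenvalues with $\sum\theta_i\equiv0\pmod{1}$ and eigenfunctions $\phi_i$; taking $f_i=\phi_i$ (or $\equiv1$ when $\theta_i=0$) makes $\prod T_i^nf_i=\prod\phi_i$ a non-constant non-zero function independent of $n$, while joint ergodicity demands its Ces\`aro limit equal $\prod\int\phi_i=0$.

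\emph{Sufficiency} proceeds by induction on $k$, the case $k=1$ being the mean ergodic theorem. For $k\geq2$, decomposing $f_1=\int f_1\,d\mu+g_1$ and applying the inductive hypothesis to $T_2,\dots,T_k$ (whose pairwise quotients remain ergodic and whose product is ergodic as a factor of $T$) reduces to the case $\int f_1\,d\mu=0$, where one must show $\tfrac1N\sum_nu_n\to0$ in $L^2(X)$ for $u_n=\prod_iT_i^nf_i$. Applying van der Corput (Theorem \ref{theorem_vdctrickcontinuous} with $G=\Z$), it suffices to show $\lim_H\tfrac1H\sum_hc(h)=0$, where $c(h):=\lim_N\tfrac1N\sum_n\langle u_{n+h},u_n\rangle$. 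Measure-preservation of $T_1$ yields
\[
\langle u_{n+h},u_n\rangle=\int g_1^{(h)}\prod_{i=2}^kS_i^ng_i^{(h)}\,d\mu,\qquad g_i^{(h)}:=T_i^hf_i\cdot\bar f_i,\quad S_i:=T_iT_1^{-1},
\]
and applying the inductive hypothesis to the $k-1$ transformations $S_2,\dots,S_k$ gives $c(h)=\prod_i\langle T_i^hf_i,f_i\rangle=\langle T^hF,F\rangle_{L^2(X^k)}$ with $F:=f_1\otimes\cdots\otimes f_k$. The mean ergodic theorem for the ergodic transformation $T$ on $L^2(X^k,\mu^k)$ then yields
\[
\lim_H\tfrac1H\sum_{h=1}^Hc(h)=\lim_H\tfrac1H\sum_h\langle T^hF,F\rangle=\|P_TF\|^2=\left|\int F\,d\mu^k\right|^2=\Big|\prod_i\int f_i\,d\mu\Big|^2=0,
\]
since $\int f_1\,d\mu=0$, and van der Corput concludes the proof.

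\emph{The main obstacle} is verifying the inductive hypothesis for $S_2,\dots,S_k$: while $S_iS_j^{-1}=T_iT_j^{-1}$ is ergodic by assumption, the ergodicity of $S_2\times\cdots\times S_k$ on $X^{k-1}$ is not immediate, as a $(S_2\times\cdots\times S_k)$-invariant function does not directly lift to a $T$-invariant on $X^k$ by ignoring the first coordinate. The resolution: any non-constant invariant $H$ decomposes into tensors of $S_i$-eigenfunctions $\phi_i$ with $\prod\lambda_i=1$ and some $\lambda_i=e^{2\pi i\alpha_i}\neq1$; each $\phi_i$ decomposes further in the joint $(T_1,T_i)$-spectrum as $\phi_i=\sum_\mu\phi_{i,\mu}$, where $\phi_{i,\mu}$ has $T_1$-eigenvalue $e^{2\pi i\mu}$ and $T_i$-eigenvalue $e^{2\pi i(\mu+\alpha_i)}$. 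Choosing a non-zero $\phi_{i,\mu_i}$ for each $i$ and using that the eigenvalues of ergodic $T_1$ form a multiplicative subgroup of $\T$, one adjoins a $T_1$-eigenfunction $\psi_1$ of eigenvalue $e^{-2\pi i\sum\mu_i}$ on the first coordinate, producing a non-constant $T$-invariant tensor $\psi_1\otimes\phi_{2,\mu_2}\otimes\cdots\otimes\phi_{k,\mu_k}$ on $X^k$ and contradicting ergodicity of $T$. It is precisely at this step that both hypotheses of the theorem (on pairwise quotients and on the full product) combine essentially.
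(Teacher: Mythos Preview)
The paper does not give its own proof of this theorem; it is merely quoted from \cite{Berend_Bergelson84} as background for the ultrafilter analogue, Theorem~\ref{thm_pJointlyMixing}. So there is no in-paper argument to compare against directly. That said, your overall architecture---necessity via testing with suitable $f_i$, sufficiency via induction on $k$ combined with van der Corput and the mean ergodic theorem on $X^k$---is exactly the strategy of the original Berend--Bergelson proof and parallels the paper's proof of Theorem~\ref{thm_pJointlyMixing}.

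Your identification of the ``main obstacle'' is correct and is indeed the crux, but your resolution has a genuine gap. You assert that an $S_i$-eigenfunction $\phi_i$ (where $S_i=T_iT_1^{-1}$) ``decomposes further in the joint $(T_1,T_i)$-spectrum as $\phi_i=\sum_\mu\phi_{i,\mu}$,'' with each $\phi_{i,\mu}$ a genuine $T_1$-eigenfunction, and then ``choose a non-zero $\phi_{i,\mu_i}$.'' This step is not justified. The relation $T_i\phi_i=\lambda_iT_1\phi_i$ tells you only that on the cyclic subspace $\overline{\mathrm{span}}\{T_1^n\phi_i:n\in\Z\}$ the two operators are related by $T_i=\lambda_iT_1$; it says nothing about whether $T_1$ has \emph{pure point} spectrum on that subspace. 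If the spectral measure of $\phi_i$ with respect to $T_1$ happens to be continuous, there is no nonzero $\phi_{i,\mu}$ available to select, and your construction of a nontrivial $(T_1\times\cdots\times T_k)$-invariant tensor collapses. Establishing that $S_2\times\cdots\times S_k$ is ergodic under the stated hypotheses is a real lemma requiring its own argument (this is precisely where the original paper does extra work), and your sketch does not supply it.

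Two smaller remarks. First, the theorem (here and in \cite{Berend_Bergelson84}) tacitly assumes the $T_i$ commute; you use this throughout (e.g., deducing $T_i^ng=T_j^ng$ from $T_iT_j^{-1}g=g$, and in every spectral step), so it should be stated. Second, in the necessity argument for ergodicity of the product, your appeal to ``the joint spectral theorem for the commuting unitaries $T_1,\dots,T_k$'' on $L^2(X)$ is not what is needed; the relevant fact is that if each $T_i$ is ergodic but $T_1\times\cdots\times T_k$ is not, then there exist $T_i$-eigenvalues $\lambda_i$ (not all $1$) with $\prod_i\lambda_i=1$---this is a statement about the product system on $X^k$, and again requires a short argument you have not written out.
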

Later, in \cite{Bergelson_Rosenblatt88}, the definition of joint ergodicity was (somewhat modified and) extended to actions of more general groups.
Let $G$ be a countable group\footnote{In \cite{Bergelson_Rosenblatt88} the setup is that of general locally compact amenable groups.} and let $(T_1^g)_{g\in G},\dots,(T^g_k)_{g\in G}$ be measure preserving actions of $G$ on a probability space $(X,{\mathcal B},\mu)$.
We say that these actions \emph{commute} if for any $g,h\in G$ and $i\neq j$ we have $T_i^gT_j^h=T_j^hT_i^g$.
Note that when $G$ is non-commutative, this equality may fail for $i=j$.

In the above setting, whenever $1\leq i\leq j\leq k$, we denote by $T_{[i,j]}^g$ the measure preserving map $T_{[i,j]}^g=T_i^gT_{i+1}^g\cdots T^g_j$.
Observe that the commutativity assumption implies that $\big(T_{[i,j]}^g\big)_{g\in G}$ is an action of $G$.
\begin{definition}
  Let $G$ be a countable amenable group, let $(F_N)_{N\in\N}$ be a F\o lner sequence in $G$, let $k\in\N$ and let $(T_1^g)_{g\in G},\dots,(T^g_k)_{g\in G}$ be commuting measure preserving actions of $G$ on a probability space $(X,{\mathcal B},\mu)$.
  The actions $T_1,\dots, T_k$ are called \emph{jointly ergodic} if for every $f_1,\dots,f_k\in L^\infty(X)$
   $$\lim_{N\to\infty}\frac1{|F_N|}\sum_{g\in F_N}\prod_{i=1}^kT_{[1,i]}^gf_i=\prod_{i=1}^k\int_Xf_i\dd\mu\qquad\text{in }L^2(X)$$
\end{definition}
%The following necessary and sufficient conditions for joint ergodicity analog to those in \cite{Berend_Bergelson84} were established in \cite{Bergelson_Rosenblatt88}:
One has the following theorem.
\begin{theorem}[cf. {\cite[Theorems 2.4 and 2.6]{Bergelson_Rosenblatt88}}]\label{thm_joinergamenable}
  Let $G$ be a countable amenable group, let $(F_N)_{N\in\N}$ be a F\o lner sequence in $G$, let $k\in\N$ and let $(T_1^g)_{g\in G},\dots,(T^g_k)_{g\in G}$ be commuting measure preserving actions of $G$ on a probability space $(X,{\mathcal B},\mu)$.
  Then the actions $T_1,\dots,T_k$ are jointly ergodic if and only if
  $$  \big(T_{[i,j]}^g\big)_{g\in G}\qquad\text{ are ergodic for every }i\leq j$$
  and $T_1\times T_{[1,2]}\times\cdots\times T_{[1,k]}$ is ergodic on $(X^k,{\mathcal B}^{\otimes k},\mu^{\otimes k})$.
\end{theorem}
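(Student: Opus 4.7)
My strategy is to prove the two implications of the theorem separately. The forward direction $(\Rightarrow)$ is extracted from joint ergodicity by substituting well-chosen test functions in the defining limit; the substantive direction $(\Leftarrow)$ is proved by strong induction on $k$, with Theorem \ref{theorem_vdctrickcontinuous} providing the core reduction step.

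For the forward direction, fix $1\leq i\leq j\leq k$. When $i=1$, specialising joint ergodicity to $f_\ell=1$ for $\ell\neq j$ and $f_j=h$ yields the mean ergodic theorem for $T_{[1,j]}$ directly. When $i>1$, specialising instead to $f_\ell=1$ for $\ell\notin\{i-1,j\}$, $f_{i-1}=\bar h$, $f_j=h$, then pairing the $L^2$-limit with the constant $1$ and using the measure-invariance of $T_{[1,i-1]}^g$, gives
\[\frac{1}{|F_N|}\sum_{g\in F_N}\int_X\bar h\cdot T_{[i,j]}^g h\dd\mu\;\longrightarrow\;\int\bar h\dd\mu\cdot\int h\dd\mu,\]
which is the weak mean ergodic theorem for $T_{[i,j]}$ tested against arbitrary $\bar h\in L^\infty(X)$, establishing its ergodicity. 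Ergodicity of the product action $S:=T_1\times T_{[1,2]}\times\cdots\times T_{[1,k]}$ on $(X^k,\mu^{\otimes k})$ requires a standard spectral/approximation argument combining joint ergodicity with tensor-product test functions $F=f_1\otimes\cdots\otimes f_k$, essentially as in \cite[Theorem 2.4]{Bergelson_Rosenblatt88}.

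For the backward direction I induct on $k$; the case $k=1$ is the classical mean ergodic theorem. For the inductive step, decompose $f_i=c_i+f_i'$ with $c_i=\int f_i\dd\mu$ and expand $\prod_{i=1}^k T_{[1,i]}^g(c_i+f_i')$ into $2^k$ subproducts. The empty-subset term is the constant $\prod c_i$, which is the target limit. Any cross term indexed by a proper non-empty $S=\{i_1<\cdots<i_m\}\subsetneq[k]$ is, after reindexing via the shorter commuting family $R_j:=T_{[i_{j-1}+1,i_j]}$ (with $i_0:=0$), the joint-ergodic sum for a family of size $m<k$; its hypotheses are inherited, since each $R_{[j_1,j_2]}$ equals some $T_{[a,b]}$ and the required product ergodicity $R_1\times R_{[1,2]}\times\cdots\times R_{[1,m]}=T_{[1,i_1]}\times\cdots\times T_{[1,i_m]}$ on $X^m$ is precisely the $S$-marginal onto the coordinates $(i_1,\dots,i_m)$. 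The strong inductive hypothesis therefore handles these cross terms, and matters reduce to showing that if $c_1=\cdots=c_k=0$ then $\operatorname{avg}_{g\in F_N}u(g)\to 0$ in $L^2(X)$, where $u(g):=\prod_{i=1}^k T_{[1,i]}^g f_i$. Applying Theorem \ref{theorem_vdctrickcontinuous} and simplifying via commutativity and the measure-invariance of $T_1^g$ yields
\[\langle u(gh),u(g)\rangle=\int_X g_1^{(h)}\prod_{i=2}^k T_{[2,i]}^g g_i^{(h)}\dd\mu,\qquad g_i^{(h)}:=f_i\cdot T_{[1,i]}^h f_i,\]
with $(T_{[2,i]})_{i=2}^k$ being the partial products of the $k-1$ commuting actions $T_2,\dots,T_k$. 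Invoking the inductive hypothesis at level $k-1$ on this shorter family gives
\[\lim_{N\to\infty}\frac{1}{|F_N|}\sum_{g\in F_N}\langle u(gh),u(g)\rangle=\prod_{i=1}^k\int g_i^{(h)}\dd\mu=\langle S^h F,F\rangle_{L^2(X^k)},\]
where $F:=f_1\otimes\cdots\otimes f_k$; ergodicity of $S$ together with $\int F\dd\mu^{\otimes k}=0$ then gives $\lim_{D\to\infty}\operatorname{avg}_{h\in F_D}\langle S^h F,F\rangle=0$, which closes the induction via Theorem \ref{theorem_vdctrickcontinuous}.

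The most delicate point in this plan is the appeal to the inductive hypothesis at level $k-1$: its product-ergodicity requirement is that $T_2\times T_{[2,3]}\times\cdots\times T_{[2,k]}$ be ergodic on $X^{k-1}$, and this is \emph{not} a naive marginal of the $S$-ergodicity on $X^k$. Its derivation combines the assumed ergodicity of $S$ with the ergodicity of $T_1$ via a disintegration/spectral argument, and is the main technical obstacle; I would follow the approach of \cite{Bergelson_Rosenblatt88}. A secondary, minor point is that Theorem \ref{theorem_vdctrickcontinuous} is stated with an absolute value in its hypothesis, whereas the $S$-ergodicity of $X^k$ only controls the signed Ces\`aro average of $\langle S^h F,F\rangle$; this is harmless here because the inner limit in $g$ exists in the closed form $\langle S^h F,F\rangle$ for every $h$, so the subsequence step in the proof of Theorem \ref{theorem_vdctrickcontinuous} is unnecessary and signed averaging in $h$ suffices to extract its conclusion.
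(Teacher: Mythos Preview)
The paper does not prove Theorem \ref{thm_joinergamenable}; it merely quotes the result from \cite{Bergelson_Rosenblatt88} and moves on to the ultrafilter analog. So there is no ``paper's own proof'' to compare against.

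Your strategy is the natural one and closely parallels the paper's proof of the $p$-mixing analog, Theorem \ref{thm_pJointlyMixing}. You have also correctly located the genuine obstruction. In the van der Corput step you reduce to the family $T_2,\dots,T_k$, and to invoke the inductive hypothesis at level $k-1$ you need ergodicity of $T_2\times T_{[2,3]}\times\cdots\times T_{[2,k]}$ on $X^{k-1}$. As you observe, this action is \emph{not} a factor of $S=T_1\times T_{[1,2]}\times\cdots\times T_{[1,k]}$, so its ergodicity does not follow by projection from the hypotheses. This obstruction is absent in the $p$-mixing setting precisely because Theorem \ref{thm_pJointlyMixing} carries no product hypothesis at all: the $p$-mixing of each $T_{[i,j]}$ passes verbatim to the shifted family $\tilde T_i=T_{i+1}$. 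In the Ces\`aro setting the gap is real and requires the spectral (eigenvalue) argument that you defer to \cite{Bergelson_Rosenblatt88}. The same applies to the forward-direction claim that joint ergodicity implies ergodicity of $S$, which you also black-box.

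Two smaller points. First, in your forward argument for ergodicity of $T_{[i,j]}$ with $i>1$, choosing $f_{i-1}=\bar h$ and $f_j=h$ only yields the diagonal correlations $\langle T_{[i,j]}^g h,h\rangle$; to get ergodicity directly you should allow $f_{i-1}$ and $f_j$ to be independent test functions (or invoke polarization). Second, your remark about the absolute value in Theorem \ref{theorem_vdctrickcontinuous} is well taken; alternatively, since here the inner limit in $g$ exists and equals a real inner product $\langle S^hF,F\rangle$, one can apply the mean ergodic theorem to $S$ to control the signed average and feed that into the positive-definite-function argument used in the proof of Theorem \ref{theorem_vdctrickcontinuous}.

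In sum: your write-up is a correct and well-organized outline, but it is not a self-contained proof; the two deferred spectral facts are exactly where the substance of \cite{Bergelson_Rosenblatt88} lies.
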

%A similar set of necessary and sufficient conditions was established in the general setting.
One can show that when $G=\Z$, Theorem \ref{thm_joinergamenable} implies Theorem \ref{thm_jointoriginal}.

In this subsection we establish necessary and sufficient conditions, analogous to those obtained in \cite{Berend_Bergelson84} and \cite{Bergelson_Rosenblatt88}, for joint $p$-mixing.

\begin{definition}
  Let $G$ be a countable group, let $p\in\beta G$ be an idempotent ultrafilter, let $k\in\N$ and let $(T_1^g)_{g\in G},\dots,(T^g_k)_{g\in G}$ be commuting measure preserving actions of $G$ on a probability space $(X,{\mathcal B},\mu)$.
  The actions $T_1,\dots, T_k$ are called \emph{jointly $p$-mixing} if for every $f_1,\dots,f_k\in L^\infty(X)$
   \begin{equation}\label{eq_thm_pJointlyMixing}
  \pl_g\prod_{i=1}^kT_{[1,i]}^gf_i=\prod_{i=1}^k\int_Xf_i\dd\mu\qquad\text{weakly in }L^2(X)
  \end{equation}
  \end{definition}

\begin{theorem}\label{thm_pJointlyMixing}
  Let $G$ be a countable group, let $p\in\beta G$ be an idempotent ultrafilter, let $k\in\N$ and let $(T_1^g)_{g\in G},\dots,(T^g_k)_{g\in G}$ be commuting measure preserving actions of $G$ on a probability space $(X,{\mathcal B},\mu)$.
  Then the actions $T_1,\dots,T_k$ are jointly $p$-mixing if and only if
  \begin{equation}\label{eq_thm_pJointlyMixing2}
  \big(T_{[i,j]}^g\big)_{g\in G}\qquad\text{ are }p\text{-mixing for every }i\leq j
  \end{equation}
\end{theorem}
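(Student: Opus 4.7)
\emph{Necessity.} Fix $1 \leq i \leq j \leq k$ and arbitrary $f, h \in L^\infty(X)$, and apply the joint $p$-mixing hypothesis \eqref{eq_thm_pJointlyMixing} with the choices $f_j := f$, $f_{i-1} := \bar h$ (when $i>1$), and $f_\ell := 1$ for all other $\ell$. Since $T^g_{[1,\ell]}(1) = 1$, the product $\prod_\ell T^g_{[1,\ell]} f_\ell$ collapses to $T^g_{[1,i-1]} \bar h \cdot T^g_{[1,j]} f$. Using $T^g_{[1,j]} = T^g_{[1,i-1]} T^g_{[i,j]}$ (from commutativity of the actions) together with $\mu$-invariance of $T^g_{[1,i-1]}$, the $L^2$ inner product of this product with the constant $1$ equals $\langle T^g_{[i,j]} f, h \rangle$. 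Joint $p$-mixing therefore forces $\pl_g \langle T^g_{[i,j]} f, h \rangle = \int f \dd\mu \cdot \int \bar h \dd\mu$, which is exactly $p$-mixing of $T_{[i,j]}$ tested weakly against $h$.

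\emph{Sufficiency --- induction and integral form.} We proceed by induction on $k$; the base $k=1$ is the definition. Set $u_g := \prod_{i=1}^k T^g_{[1,i]} f_i$ and factor $u_g = T^g_1(f_1 \psi_g)$, where $\psi_g := \prod_{i=2}^k T^g_{[2,i]} f_i$. The inductive hypothesis applied to the $k-1$ commuting actions $T_2,\ldots,T_k$ (whose sub-products $T_{[i,j]}$ for $2 \leq i \leq j \leq k$ are $p$-mixing by assumption) gives $\pl_g \psi_g = \prod_{i \geq 2} \int f_i \dd\mu$ weakly in $L^2$. Combined with $T^g_1$-invariance of $\mu$, this yields the \emph{integral form}
\[ (\star) \qquad \pl_g \int u_g \dd\mu \;=\; \pl_g \langle \psi_g, \bar f_1 \rangle \;=\; \prod_{i=1}^k \int f_i \dd\mu. \]

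\emph{Sufficiency --- van der Corput step.} Decomposing each $f_i = \int f_i \dd\mu + g_i$ with $\int g_i \dd\mu = 0$ and expanding $u_g$ into $2^k$ terms, the general convergence $\pl_g u_g = \prod_i \int f_i \dd\mu$ reduces to showing $\pl_g u_g = 0$ weakly whenever some $\int f_{j_0} \dd\mu = 0$, which we establish via Proposition \ref{prop_plvdc}. The central computation (presented assuming $G$ is abelian; see the obstacle below) uses the ring-homomorphism property of $T^g_{[1,i]}$ on functions together with the commutation $T^h_{[1,i]} T^g_{[1,i]} = T^g_{[1,i]} T^h_{[1,i]}$ to rewrite
\[ T^{hg}_{[1,i]} f_i \cdot \overline{T^g_{[1,i]} f_i} \;=\; T^g_{[1,i]}\!\left( T^h_{[1,i]} f_i \cdot \bar f_i \right) \;=\; T^g_{[1,i]} F^h_i, \qquad F^h_i := T^h_{[1,i]} f_i \cdot \bar f_i \in L^\infty(X), \]
so $\langle u_{hg}, u_g \rangle = \int \prod_{i=1}^k T^g_{[1,i]} F^h_i \dd\mu$. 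Applying $(\star)$ to the $L^\infty$ family $\{F^h_i\}_{i=1}^k$ (with $h$ fixed) yields $\pl_g \langle u_{hg}, u_g \rangle = \prod_{i=1}^k \langle T^h_{[1,i]} f_i, f_i \rangle$. Taking $\pl_h$ and invoking the $p$-mixing of each $T_{[1,i]}$ gives $\pl_h \langle T^h_{[1,i]} f_i, f_i \rangle = |\int f_i \dd\mu|^2$, and hence $\pl_h \pl_g \langle u_{hg}, u_g \rangle = \prod_{i=1}^k |\int f_i \dd\mu|^2 = 0$ since $\int f_{j_0} \dd\mu = 0$. Proposition \ref{prop_plvdc} then concludes $\pl_g u_g = 0$ weakly, closing the induction.

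\emph{Main obstacle.} The central identity above hinges on the commutation $T^h_{[1,i]} T^g_{[1,i]} = T^g_{[1,i]} T^h_{[1,i]}$ \emph{within the single action} $T_{[1,i]}$, which is automatic when $G$ is abelian (since then $hg = gh$) but fails in general. For non-abelian $G$ the same calculation produces $T^g_{[1,i]} F^{g^{-1} h g}_i$, where the conjugate $g^{-1} h g$ depends on $g$, so $(\star)$ cannot be applied with fixed inner functions. Pushing the argument through in this setting requires exploiting properties of the idempotent ultrafilter $p$ --- notably, that the iterated $p$-limit over $h$ is insensitive to replacing $h$ by $g^{-1} h g$ --- and this is the most delicate step of the proof.
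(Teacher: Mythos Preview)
Your overall approach coincides with the paper's: necessity by specializing the functions, sufficiency by induction on $k$ and the ultrafilter van der Corput lemma (Proposition~\ref{prop_plvdc}). Your necessity argument is in fact a bit cleaner than the paper's --- the paper first shows each $T_{[1,i]}$ is $p$-mixing and then, for $i>1$, plugs $f_{i-1}:=\tilde f:=\pl_g T_{[i,j]}^g f$ and $f_j:=f$ (with $\int f=0$) to conclude $\int \tilde f^{\,2}\,d\mu=0$; your choice $f_{i-1}=\bar h$ with arbitrary $h\in L^\infty$ achieves the same thing in one step.

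The obstacle you flag, however, is not genuinely delicate and does not require any conjugation-invariance of $p$-limits. The paper simply computes $\langle u_{gh},u_g\rangle$ rather than $\langle u_{hg},u_g\rangle$. With the action convention $T^{gh}=T^gT^h$ one has, for \emph{every} group $G$,
\[
T_{[1,i]}^{gh}f_i\cdot T_{[1,i]}^{g}f_i \;=\; T_{[1,i]}^{g}T_{[1,i]}^{h}f_i\cdot T_{[1,i]}^{g}f_i \;=\; T_{[1,i]}^{g}\big(T_{[1,i]}^{h}f_i\cdot f_i\big),
\]
with no appeal to commutativity within a single action. After factoring out $T_1^g$ and using its $\mu$-invariance exactly as you do, the inner $\pl_g$ is handled by the induction hypothesis applied to $T_2,\dots,T_k$, yielding $\pl_g\langle u_{gh},u_g\rangle=\prod_{i=1}^k\int T_{[1,i]}^h f_i\cdot f_i\,d\mu$; the outer $\pl_h$ then gives $\prod_i\big(\int f_i\big)^2=0$ by $p$-mixing of each $T_{[1,i]}$. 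So the ``main obstacle'' dissolves once you take the product in the order $gh$; the matching form of Proposition~\ref{prop_plvdc} (with $u(gh)$ in place of $u(hg)$) is obtained by the same proof with the iterated-limit identity read on the appropriate side. No argument about conjugates $g^{-1}hg$ is needed.

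Two minor comments: (i) the paper reduces only to $\int f_k\,d\mu=0$ (splitting just $f_k$ and invoking induction for the constant part), which is slightly more economical than your $2^k$-term expansion; (ii) your intermediate ``integral form'' $(\star)$ is correct and is exactly what the paper obtains after pulling out $T_1^g$, so it is not an independent ingredient.
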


\begin{proof}
We first prove that \eqref{eq_thm_pJointlyMixing2} implies joint $p$-mixing, i.e., \eqref{eq_thm_pJointlyMixing}.
  We proceed by induction on $k\in\N$.
  When $k=1$, \eqref{eq_thm_pJointlyMixing} reduces to the very definition of $p$-mixing.

  Assume now that the result has been proved for $k-1$ transformations.
  Since both sides of \eqref{eq_thm_pJointlyMixing} are linear on each $f_i$, and for constant $f_k$ the result follows by induction, we can reduce the statement to the case when $\int_Xf_k\dd\mu=0$.
  Now the right hand side of \eqref{eq_thm_pJointlyMixing} vanishes, so all we need to show is that $\pl_g u_g=0$, where $u_g=\prod_{i=1}^kT_{[1,i]}^gf_i$.
  In order to obtain this we will resort to \DT, in the form of Proposition \ref{prop_plvdc}.
  Let $h\in G$ be an arbitrary element different from the identity.
  \begin{eqnarray*}\langle u_{gh},u_g\rangle&=&\int_X\prod_{i=1}^kT_{[1,i]}^{gh}f_i\cdot T_{[1,i]}^gf_i\dd\mu\\ &=&\int_X\prod_{i=1}^kT_{[1,i]}^g\Big(T_{[1,i]}^hf_i\cdot f_i\Big)\dd\mu \\&=&\int_XT_1^g\left(T_1^hf_1\cdot f_1\cdot\prod_{i=2}^kT_{[2,i]}^g\Big(T_{[1,i]}^hf_i\cdot f_i\Big)\right)\dd\mu \\&=&\int_XT_1^hf_1\cdot f_1\cdot\prod_{i=2}^kT_{[2,i]}^g\Big(T_{[1,i]}^hf_i\cdot f_i\Big)\dd\mu\end{eqnarray*}
  Putting $\tilde T_i=T_{i+1}$ and $\tilde f_i=T_{[1,i+1]}^hf_{i+1}\cdot f_{i+1}$ for each $i=1,\dots,k-1$, observe that $\tilde T_{[i,j]}=T_{[i+1,j+1]}$.
  %In order to use induction on $\tilde T_1,\dots,\tilde T_{k-1}$ we need to check that they satisfy the required conditions:
  In particular, by induction we obtain that
  $$\pl_g\prod_{i=2}^kT_{[2,i]}^g\Big(T_{[1,i]}^hf_i\cdot f_i\Big)=\pl_g\prod_{i=1}^{k-1}\tilde T_{[1,i]}^g\tilde f_i=\prod_{i=1}^{k-1}\int_X\tilde f_i\dd\mu$$
  and hence we deduce that
  $$\pl_g\langle u_{gh},u_g\rangle=\int_XT_1^hf_1\cdot f_1\cdot\prod_{i=1}^{k-1}\int_X\tilde f_i\dd\mu=\prod_{i=1}^k\int_XT_{[1,i]}^hf_i\cdot f_i\dd\mu$$
  Finally, taking the $\pl$ in $h$ we conclude that
  $$\pl_h\pl_g\langle u_{gh},u_g\rangle=\prod_{i=1}^k\left(\int_Xf_i\dd\mu\right)^2=0$$
  and the result follows from Proposition \ref{prop_plvdc}.

Now we prove that joint $p$-mixing implies \eqref{eq_thm_pJointlyMixing2}.
It follows directly from \eqref{eq_thm_pJointlyMixing}, putting all functions but one equal to the constant $1$, that for any $i$ one has $\pl_g T_{[1,i]}^gf=\int_Xfd\mu$.
In particular $T_{[1,i]}$ is $p$-mixing for every $i$.
Next, let $1<i\leq j\leq k$ and let $f\in L^\infty(X)$. % be such that $\pl_g T_{[i,j]}^gf=f$.
We can assume, without loss of generality, that $\int_Xf\dd\mu=0$, and we need to show that $\tilde f:=\pl_g T_{[i,j]}^gf$ is equal to $0$ (where the limit is in the weak topology of $L^2(X)$).%$f$ is a constant.

Take $f_j=f$, $f_{i-1}=\tilde f$ and $f_r=1$ for all the other $r$.
%If $i=1$, let $f_j=f^2$ and it follows from \eqref{eq_thm_pJointlyMixing} that $\int_Xf\d
Then from \eqref{eq_thm_pJointlyMixing} we obtain
$$0=\int_Xf\dd\mu\int_X\tilde f\dd\mu=\pl_g\prod_{r=1}^kT^g_{[1,r]}f_r=\pl_gT^g_{[1,i-1]}\big(\tilde f\cdot T_{[i,j]}^gf\big)=\int_X\big(\tilde f\big)^2\dd\mu$$
We conclude that indeed $\tilde f=0$.
Observe that in order to invoke \eqref{eq_thm_pJointlyMixing} we used implicitly the (easily checkable) fact that $\tilde f\in L^\infty$.
\end{proof}

\section{A topological variant of the Difference Theorem}\label{sec_topologicalvdC}
%The classical van der Corput \DT{} allows one to deduce the equidistribution of a sequence $(u_n)_{n\in\N}$ in $\T$ from
%the equidistribution of its difference sequences $(u_{n+h}-u_n)_{n\in\N}$ for each $h\in\N$.
%Equidistribution of sequences is closely related with convergence of (non-conventional) ergodic averages, which in turn can be used to obtain (multiple) recurrence results for measure preserving systems.

As we have seen in previous sections, several versions of \DT{} are useful in the study of multiple recurrence in ergodic theory.
The phenomenon of (multiple) recurrence can also be studied from a topological standpoint.
In this setting there is no good analog of ergodic averages, and hence the Hilbertian \DT{} (Theorem \ref{theorem_vdctrickcontinuous}) does not
apply.
Nevertheless, one can still adapt the fundamental idea of complexity reduction present in \DT{} to this setup.
This method was developed in \cite{Bergelson_Leibman96}, with a somewhat related ideas appearing earlier in \cite{Blaszczyk_Plewik_Turek89}.

\begin{definition}\label{def_ipsets}
  Let ${\mathcal F}$ denote the family of all finite non-empty subsets of $\N$.
  Given an increasing sequence $(n_k)_{k\in\N}$ we define the \emph{IP-set generated by $(n_k)$} to be the (image of the) map $n:{\mathcal F}\to\N$ defined by $n_\alpha=\sum_{k\in\alpha}n_k$.
  A set $A\subset\N$ is called an \emph{IP$^*$ set} if it has nonempty intersection with every IP-set.
  %A subset $A\subset\N$ will be called simply an \emph{IP-set} if it contains the set $\{n_\alpha:\alpha\in{\mathcal F}\}$ for some increasing sequence $(n)k)_{k\in\N}$.
\end{definition}
A famous theorem of Hindman \cite{Hindman74} states that for any finite partition of an IP-set, one of the cells of the partition contains an IP-set.
Equivalently, it states that a finite intersection of IP$^*$ sets is still an IP$^*$ set.

In this section we will deal with topological dynamical systems, namely with pairs $(X,T)$ where $X$ is a compact Hausdorff space and $T:X\to X$ is a homeomorphism.
The system $(X,T)$ (or the map $T$) is called \emph{minimal} if there is no non-empty compact subset $Y\subset X$ such that $T^{-1}Y\subset Y$.
\begin{definition}
    A family of sequences $A=\{a_1(n),\dots,a_k(n)\}$ of integers is a \emph{family of multiple IP topological recurrence} if for any minimal system $(X,T)$, any non-empty open set $U\subset X$ and any IP set
    $(n_\alpha)_{\alpha\in{\mathcal F}}$, there exists $\alpha\in{\mathcal F}$ such that
  $$U\cap T^{-a_1(n_\alpha)}U\cap\cdots\cap T^{-a_k(n_\alpha)}U\neq\emptyset$$
  Equivalently, $A$ is a family of multiple IP topological recurrence if the set
  $$\{n\in\Z:U\cap T^{-a_1(n)}U\cap\cdots\cap T^{-a_k(n)}U\neq\emptyset\}$$
  is an IP$^*$ set.
\end{definition}

\begin{remark}\label{remark_iprechindman}
  It is an easy consequence of Hindman's theorem that whenever $\{a_1(n),\dots,a_k(n)\}$ and
$\{b_1(n),\dots,b_\ell(n)\}$ are families of multiple IP topological recurrence,
$(X,T)$, $(Y,S)$ are minimal systems and $U\subset X$, $V\subset Y$ are open
sets, there exists $n\in\N$ such that
$$U\cap T^{-a_1(n)}U\cap\cdots\cap T^{-a_k(n)}U\neq\emptyset\neq V\cap S^{b_1(n)}V\cap\cdots\cap S^{-b_\ell(n)}V$$
In fact, the set of such $n$ is IP$^*$.
\end{remark}

%The main reason for interest in families of
Multiple IP topological recurrence has strong connections with combinatorics.
This connection between topological dynamics and Ramsey theory was established by Furstenberg and Weiss in \cite{Furstenberg_Weiss78}.
Using the methods developed in \cite{Furstenberg_Weiss78} one can show that if $\{a_1(n),\dots,a_k(n)\}$ is a family of multiple IP topological recurrence, then for any finite partition of the natural numbers $\N=C_1\cup\cdots\cup C_r$ there exists $i\in\{1,\dots,r\}$, $x\in C_i$ and $n\in\N$ such that $x+a_j(n)\in C_i$ for every $j=1,\dots,k$.
Moreover, $n$ can be chosen from any prescribed IP-set.

\begin{definition}\label{def_syndthick}
  A subset $S\subset\N$ is \emph{syndetic} if it has bounded gaps\footnote{Note that a set $S\subset\N$ is syndetic if and only if it has positive lower Banach density (cf. Definition \ref{def_besicovitch}).}; in other words if for some $n\in\N$, the union $S\cup(S-1)\cup\cdots\cup(S-n)=\N$.
A set $T\subset\N$ is \emph{thick} if it contains arbitrarily long intervals.
A set $A\subset\N$ is called \emph{piecewise syndetic} if it is the intersection $A=S\cap T$ of a syndetic set $S$ and a thick set $T$.
\end{definition}

For any finite partition of a piecewise syndetic set, one of the cells is piecewise syndetic.
More generally, for any semigroup $G$, a set $S\subset G$ is called (left) \emph{syndetic} if there exists a finite set $F\subset G$ such that $F^{-1}S:=\{g\in G:\exists x\in F\text{ s.t. }xg\in S\}=G$.
A set $T\subset G$ is called (right) \emph{thick} if it has non-empty intersection with every syndetic set and a set $A\subset G$ is called piecewise syndetic if it equals the intersection of a syndetic set and a thick set.

One can show that if $\{a_1(n),\dots,a_k(n)\}$ is a family of multiple IP topological recurrence and $A\subset\N$ is a piecewise syndetic set, then there exists $x\in A$ and $n\in\N$ such that $\big\{x,x+a_1(n),\dots,x+a_k(n)\big\}\subset A$.
Moreover, $n$ can be chosen from any prescribed IP-set.

  The family $\{a(n)\}$ consisting the single degenerated sequence $a(n)\equiv0$ is trivially a family of multiple IP topological recurrence.
  The following key lemma, which is the analog of \DT{} in this setting, will allow us to quickly obtain some interesting applications:

%(INTRODUCE NEXT LEMMA AS ANALOG OF \DT)
\begin{lemma}[Reduction of complexity (IP version)]\label{lemma_topvdcIP}
  Let $\{a_1(n),\dots,a_s(n)\}$, $n\in\N$ be a finite family of sequences of positive integers and put $a_\ell(0)=0$ for all
  $\ell=1,\dots,s$.
  If for any finite set $F\subset\N\cup\{0\}$ the family of sequences
  $$\big\{n\mapsto a_\ell(n+h)-a_1(n)-a_\ell(h)\mid h\in F,~\ell\in\{1,\dots,s\}\big\}$$ is a family of multiple IP topological recurrence, then so is the family $\{a_1(n),\dots,a_k(n)\}$, $n\in\N$.
\end{lemma}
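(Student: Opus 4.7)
I will argue by contradiction, producing a finite $F \subset \N \cup \{0\}$ for which the difference family $\{a_\ell(n+h) - a_1(n) - a_\ell(h) : h \in F,\ \ell = 1, \dots, s\}$ fails to have multiple IP topological recurrence --- contrary to the hypothesis. Suppose there exist a minimal system $(X, T)$, a nonempty open $U \subset X$, and an IP-set $(n_\alpha)_{\alpha \in \mathcal{F}}$ with
\[
U \cap T^{-a_1(n_\alpha)} U \cap \cdots \cap T^{-a_s(n_\alpha)} U = \emptyset \qquad \text{for every } \alpha \in \mathcal{F}. \tag{$\ast$}
\]

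The first step is a Hindman-style refinement using compactness of $X$: fixing $x_0 \in U$, I refine $(n_\alpha)$ to a sub-IP-set along which, for each $\ell$, the IP-limit $y_\ell$ of $T^{a_\ell(n_\alpha)} x_0$ along the IP-set exists. A further IP-Ramsey refinement, using ($\ast$), lets me assume that some fixed $\ell_0$ has $T^{a_{\ell_0}(n_\alpha)} x_0 \notin U$ for every $\alpha$ in the refined IP-set, so that $y_{\ell_0} \notin U$. This sets the stage: the ``bad behavior'' expressed by ($\ast$) has been concentrated at a single coordinate $\ell_0$ in a coherent manner along an IP-set.

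The core of the argument rests on the topological analogue of the classical vdC identity
\[
a_\ell(n+h) \;=\; a_1(n) + a_\ell(h) + \bigl(a_\ell(n+h) - a_1(n) - a_\ell(h)\bigr).
\]
I choose a finite set $F \subset \N \cup \{0\}$ drawn from the refined IP-set with $0 \in F$, and build an auxiliary nonempty open set $W$ from iterated preimages and translates of $U$ indexed by $F$ and by $\ell \in \{1, \dots, s\}$, sitting inside a minimal subsystem of an appropriate power of $(X, T)$. The construction is arranged so that any recurrence configuration for the difference family inside $W$ translates, via the identity above, into a configuration witnessing $U \cap \bigcap_\ell T^{-a_\ell(n + h_\ast)} U \neq \emptyset$ for some $n$ furnished by the hypothesis and some $h_\ast \in F$. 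Since $F$ is drawn from $(n_\alpha)$ and IP-sets are closed under finite sums (Hindman's theorem), $n + h_\ast$ lies in $(n_\alpha)$, contradicting ($\ast$).

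The principal obstacle is the precise engineering of $F$, of the auxiliary open set $W$, and of the minimal subsystem in which the hypothesis is applied, so that (i) $W$ is nonempty --- this calls for a coherent choice of $F$ guided by the IP-limits $y_\ell$ and by minimality of $(X, T)$ --- and (ii) the IP-set for $n$ produced by the hypothesis is compatible with $(n_\alpha)$, which is where iterated use of Hindman's theorem becomes necessary. This engineering, which mirrors the Hilbertian vdC manipulation but without the benefit of inner products, is where all the technical work lies; it is the topological face of the complexity-reduction principle already at the heart of Theorem \ref{theorem_vdctrickcontinuous}.
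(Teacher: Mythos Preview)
Your proposal is not a proof but a program: the final paragraph concedes that ``the precise engineering of $F$, of the auxiliary open set $W$, and of the minimal subsystem'' is ``where all the technical work lies,'' and that work is never carried out. Everything before that paragraph is setup (IP-limits of a single orbit point, a Ramsey refinement isolating one bad index $\ell_0$), none of which is used in any concrete way afterward. In particular you never say what $W$ is, why it is nonempty, or how a recurrence time for the difference family inside $W$ produces a point of $U\cap\bigcap_\ell T^{-a_\ell(m)}U$ for some $m$ in the original IP-set. The sentence ``IP-sets are closed under finite sums (Hindman's theorem)'' is also not sufficient justification that $n+h_\ast$ lies in $(n_\alpha)$: for that you need $n$ and $h_\ast$ to be finite sums over \emph{disjoint} index sets from the same generating sequence, and arranging this requires bookkeeping you have not done.

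The paper's argument is quite different in shape and is fully explicit. It is direct rather than by contradiction, works with open sets rather than IP-limits of a point, and never passes to a product system. Using minimality one fixes $r$ with $\bigcup_{t=0}^r T^{-t}U=X$, then inductively builds $\alpha_k\in\mathcal F$, colours $t_k\in\{0,\dots,r\}$, and nonempty open sets $U_k\subset T^{-t_k}U$ satisfying
\[
T^{-a_\ell(n_{\alpha_{j+1}\cup\cdots\cup\alpha_k})}U_j\supset U_k\qquad(j<k,\ \ell=1,\dots,s).
\]
At each stage the hypothesis is invoked with $F$ equal to the finite set of partial sums $\{m_{j,k-1}\}$ already accumulated, applied to the previously constructed open set $U_{k-1}$; this is exactly where the identity $a_\ell(n+h)=a_1(n)+a_\ell(h)+\big(a_\ell(n+h)-a_1(n)-a_\ell(h)\big)$ enters. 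The proof concludes by pigeonhole on the finitely many colours $t_k$. If you want to salvage your outline, you will need a construction at least this specific; as written, the proposal contains the right identity and the right intuition but none of the mechanism.
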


\begin{proof}

  Let $X$ be a compact Hausdorff space, let $T:X\to X$ be a minimal homeomorphism and let $U\subset X$ be a non-empty
  open set.
  Since the system is minimal, there exists some $r\in\N$ such that $U\cup T^{-1}U\cup T^{-2}U\cup\cdots\cup
  T^{-r}U=X$.
  Let $(n_\alpha)_{\alpha\in{\mathcal F}}$ be an arbitrary IP-set in $\N$.
  Let $\alpha_1\in{\mathcal F}$ be arbitrary, let $U_1=U$ and $t_1=0$.
  For each $k\in\N$ we will construct
  inductively $\alpha_k\in{\mathcal F}$, $t_k\in\{0,\dots,r\}$ and $U_k\subset T^{-t_k}U$ a non-empty open set such
  that
  \begin{equation}\label{eq_topvdc_inductionhypothesis3}T^{-a_\ell(n_{\alpha_{j+1}\cup\alpha_{j+2}\cup\cdots\cup\alpha_k})}U_j\supset
  U_k\qquad\text{ for each }j<k\text{ and }\ell\in\{1,\dots,s\}\end{equation}

  For simplicity, whenever $j\leq k$, denote $n_{\alpha_{j+1}\cup\alpha_{j+2}\cup\cdots\cup\alpha_k}$ by $m_{j,k}$, with the understanding that $m_{k,k}=0$.
  Assume we have already chosen $\alpha_i,t_i,U_i$ for $i<k$.
  Consider the family of sequences of the form $n\mapsto a_\ell(n+m_{j,k-1})-a_1(n)-a_\ell(m_{j,k-1})$ for each
  $j\in\{1,\dots,k-2\}$ and $\ell\in\{1,\dots,s\}$.
  By hypothesis this is a family of multiple IP topological recurrence, so there exists some $\alpha_k\in{\mathcal F}$, disjoint
  from $\bigcup_{i<k}\alpha_i$ and such that
  \begin{equation}\label{eq_topvdc_inductionconclusion3}V_k:=\bigcap_{\ell=1}^s\bigcap_{j=1}^{k-1}
  T^{-\big(a_\ell(n_{\alpha_k}+m_{j,k-1})-a_1(n_{\alpha_k})-a_\ell(m_{j,k-1})\big)}U_{k-1}\neq\emptyset\end{equation}
  Next choose $t_k\in\{0,\dots,r\}$ such that $U_k:=T^{-a_0(n_{\alpha_k})}V_k\cap T^{-t_k}U\neq\emptyset$.
  To see how \eqref{eq_topvdc_inductionhypothesis3} follows from \eqref{eq_topvdc_inductionconclusion3}, let $x\in
  U_k$, let $\ell\in\{1,\dots,s\}$ and let $j<k$.
  If $j=k-1$ then the first inclusion below degenerates to a trivial equality.
  In any case we have:
  %$$T^{-a_\ell(n_k)}U_{k-1}=T^{-a_0(\ell)}T^{\big(-a_\ell(n_k)-a_0(n_k)\big)}U_{k-1}\supset T^{-a_0(\ell)}V_k\supset
  %U_k$$
  %If $j<k-1$ then by induction we have
  \begin{eqnarray*}
    T^{-a_\ell(m_{j,k})}U_j&=&
    T^{-a_1(n_{\alpha_k})}T^{-\big(a_\ell(m_{j,k})-a_1(n_{\alpha_k})-a_\ell(m_{j,k-1})\big)}T^{-a_\ell(m_{j,k-1})}U_j\\
    &\supset& T^{-a_1(n_{\alpha_k})}T^{-\big(a_\ell(m_{j,k})-a_1(n_{\alpha_k})-a_\ell(m_{j,k-1})\big)}U_{k-1}\\
    &\supset& T^{-a_1(n_{\alpha_k})}V_k\\ &\supset& U_k
  \end{eqnarray*}
  Finally, take $k>j$ such that $t_k=t_j=:t$ (such a pair must exist since each $t_i$ belongs to the finite set
  $\{0,\dots,r\}$).
  From \eqref{eq_topvdc_inductionhypothesis3} it follows that $T^{-a_\ell(m_{j,k})}U_j\supset U_k$ for each
  $\ell\in\{1,\dots,s\}$, which implies that $U_k\subset(T^{-t}U)\cap T^{-a_\ell(m_{j,k})}(T^{-t}U)=T^{-t}(U\cap
  T^{-a_\ell(m_{j,k})}U)$ and therefore
  \[\emptyset\neq T^t(U_k)\subset U\cap\bigcap_{\ell=1}^sT^{-a_\ell(m_{j,k})}U\qedhere\]
\end{proof}

%\begin{remark}
 % The above proof can be adapted to a version where we don't need to assume that $a_\ell(0)=0$.
  %This somewhat artificial assumption (in this general context) makes the proof a little easier.
%\end{remark}
\begin{remark}
  One can state and prove the previous lemma (using the same method) for the case of regular (i.e., not IP) recurrence, weakening both the assumptions and the conclusion.
\end{remark}

We now present some applications of Lemma \ref{lemma_topvdcIP}.
\begin{theorem}[Topological IP van der Waerden Theorem (cf. {\cite[Theorem 3.2]{Furstenberg_Weiss78}})]\label{theorem_topvdw}
  Let $(X,T)$ be a minimal system and let $U\subset X$ be a non-empty open set.
  Then for any $k\in\N$ and any IP-set $(n_\alpha)_{\alpha\in{\mathcal F}}$ there exists $\alpha\in{\mathcal F}$ such
  that
  $$U\cap T^{-n_\alpha}U\cap T^{-2n_\alpha}\cap\cdots\cap T^{-kn_\alpha}U\neq\emptyset$$
\end{theorem}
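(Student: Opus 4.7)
The plan is to prove the theorem by induction on $k$, with Lemma \ref{lemma_topvdcIP} as the workhorse. The statement is equivalent to asserting that the family $\{a_1(n),\dots,a_k(n)\}=\{n,2n,\dots,kn\}$ is a family of multiple IP topological recurrence, so I would set things up to apply the reduction-of-complexity lemma with $a_\ell(n)=\ell n$.

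For the inductive step, suppose that $\{n,2n,\dots,(k-1)n\}$ is already known to be a family of multiple IP topological recurrence. The key computation is that for any $h\in\N\cup\{0\}$ and any $\ell\in\{1,\dots,k\}$,
$$a_\ell(n+h)-a_1(n)-a_\ell(h)=\ell(n+h)-n-\ell h=(\ell-1)n,$$
which does not depend on $h$ at all. Hence, regardless of which finite set $F\subset\N\cup\{0\}$ is fixed in the hypothesis of Lemma \ref{lemma_topvdcIP}, the associated derived family collapses to (repeated copies of) $\{0,n,2n,\dots,(k-1)n\}$. Since $T^0=\mathrm{id}$, adjoining the zero sequence to any family does not affect its IP topological recurrence status, so by the induction hypothesis this derived family is itself a family of multiple IP topological recurrence. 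Applying Lemma \ref{lemma_topvdcIP} then delivers the same conclusion for $\{n,2n,\dots,kn\}$. The base case $k=1$ is handled identically: its derived family is just the constant zero sequence, whose IP topological recurrence reduces to the tautology $U\cap T^0U=U\neq\emptyset$.

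There is no real technical obstacle here; the entire argument hinges on the fortunate identity $\ell(n+h)-n-\ell h=(\ell-1)n$, which reflects the fact that linear sequences are exact in the sense that differencing leaves no $h$-dependent remainder. This is precisely the feature that makes arithmetic progressions the simplest case of a much broader polynomial IP van der Waerden scheme (cf.\ \cite{Bergelson_Leibman96}). For genuinely polynomial sequences $a_\ell$, the same Lemma would still apply, but the derived family would genuinely depend on $h$, and one would have to invoke a secondary PET-type induction on a characteristic vector (as in the proof of Theorem \ref{thm_pmixingPET}) to close the loop.
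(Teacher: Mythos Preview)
Your proposal is correct and follows essentially the same approach as the paper: induction on $k$ via Lemma \ref{lemma_topvdcIP}, using the identity $a_\ell(n+h)-a_1(n)-a_\ell(h)=(\ell-1)n$ to reduce to the family $\{0,n,\dots,(k-1)n\}$. The only cosmetic difference is that the paper takes $k=0$ as the (vacuous) base case rather than $k=1$.
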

\begin{proof}
  We prove the claim by induction on $k$.
  %The case $k=1$ follows from the proof of Poicar\'e's Recurrence Theorem.
  The case $k=0$ is vacuously true.
  Next assume the result is true for $k-1$.
  We want to show that the family of sequences $\{n,2n,\dots,kn\}$ is a family of multiple IP topological
  recurrence.
  Letting $a_\ell(n)=\ell n$ we see that $a_\ell(n+h)-a_1(n)-a_\ell(h)=(\ell-1)n$ for any $h\in\N\cup\{0\}$.
  Therefore for any finite subset $F\subset\N\cup\{0\}$ we have
  $$\big\{n\mapsto a_\ell(n+h)-a_1(n)-a_\ell(h)\mid h\in
  F,~\ell\in\{1,\dots,s\}\big\}\subset\{0,n,2n,\dots,(k-1)\ell\}$$
By the induction hypothesis, this is a family of multiple IP topological recurrence.
It follows from Lemma \ref{lemma_topvdcIP} that $\{n,2n,\dots,kn\}$ is also a family of multiple IP topological
recurrence, which finishes the proof.
\end{proof}

As a corollary we obtain van der Waerden's celebrated theorem \cite{vdWaerden27} stating that for any finite partition $\N=C_1\cup\cdots\cup C_r$, one of the cells $C_i$ contains arbitrarily long arithmetic progressions (Theorem \ref{theorem_topvdw} actually implies that any piecewise syndetic set contains arbitrarily long arithmetic progressions).
Moreover, the common difference can be chosen from any prescribed IP-set.

\begin{theorem}[Topological IP S\'ark\"ozy theorem (cf. Subsection 1.3 in {\cite{Bergelson_Leibman96}})]\label{thm_sarkozy}
Let $(X,T)$ be a minimal system and let $U\subset X$ be a non-empty open set.
  Then for any IP-set $(n_\alpha)_{\alpha\in{\mathcal F}}$ there exists $\alpha\in{\mathcal F}$ such that
  $$U\cap T^{-n_\alpha^2}U\neq\emptyset$$
\end{theorem}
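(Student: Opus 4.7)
The plan is to deduce this directly from Lemma \ref{lemma_topvdcIP} (the complexity reduction lemma) together with the topological IP van der Waerden theorem (Theorem \ref{theorem_topvdw}), mimicking the classical reduction by which S\'ark\"ozy-type results follow from van der Waerden-type results once one has a van der Corput-type tool.

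I would apply Lemma \ref{lemma_topvdcIP} with $s=1$ and $a_1(n)=n^2$. The crucial algebraic identity is
\[
a_1(n+h)-a_1(n)-a_1(h)=(n+h)^2-n^2-h^2=2hn,
\]
so that for any finite set $F\subset\N\cup\{0\}$ the hypothesis of Lemma \ref{lemma_topvdcIP} reduces to showing that the family $\{n\mapsto 2hn:h\in F\}$ is a family of multiple IP topological recurrence (the term coming from $h=0$ is identically $0$ and contributes nothing).

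To establish this, I would use the fact that if $(n_\alpha)_{\alpha\in{\mathcal F}}$ is the IP-set generated by $(n_k)$, then $(2n_\alpha)_{\alpha\in{\mathcal F}}$ is the IP-set generated by $(2n_k)$. Setting $H=\max F$ and applying Theorem \ref{theorem_topvdw} with $k=H$ to the minimal system $(X,T)$, the open set $U$, and the IP-set $(2n_\alpha)$, we obtain some $\alpha\in{\mathcal F}$ such that
\[
U\cap T^{-2n_\alpha}U\cap T^{-2\cdot 2n_\alpha}U\cap\cdots\cap T^{-H\cdot 2n_\alpha}U\neq\emptyset.
\]
Since for every $h\in F$ we have $2hn_\alpha=h\cdot(2n_\alpha)$ with $1\leq h\leq H$, this intersection is contained in $U\cap\bigcap_{h\in F}T^{-2hn_\alpha}U$, which is therefore non-empty. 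Thus $\{n\mapsto 2hn:h\in F\}$ is indeed a family of multiple IP topological recurrence.

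Lemma \ref{lemma_topvdcIP} then immediately yields that $\{n^2\}$ itself is a family of multiple IP topological recurrence, which is precisely the statement of the theorem. There is no real obstacle here: once one notices the identity $(n+h)^2-n^2-h^2=2hn$, the squaring sequence is reduced (in one step of the \DT{} complexity reduction) to a family of linear sequences, which is handled by the already-established topological IP van der Waerden theorem via rescaling of the IP-set. This is the simplest non-trivial illustration of how Lemma \ref{lemma_topvdcIP} implements ``polynomial PET induction'' in the topological category.
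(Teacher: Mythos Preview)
Your proof is correct and follows essentially the same approach as the paper: apply Lemma \ref{lemma_topvdcIP} with $a_1(n)=n^2$, use the identity $(n+h)^2-n^2-h^2=2hn$, and invoke Theorem \ref{theorem_topvdw} to handle the resulting linear family. The only cosmetic difference is that the paper sets $k=2\max F$ and observes directly that $\{2hn:h\in F\}\subset\{0,n,2n,\dots,kn\}$, whereas you rescale the IP-set to $(2n_\alpha)$ and apply Theorem \ref{theorem_topvdw} with $k=\max F$; these are equivalent maneuvers.
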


\begin{proof}
  We need to show that the family $\{n^2\}$ consisting of a single sequence $a_1(n)=n^2$, $n\in\N$ is a family of IP
  topological recurrence; we will use Lemma \ref{lemma_topvdcIP} to that end.
  Let $F\subset\N\cup\{0\}$ be an arbitrary finite set and let $k=2\max F$.
  One can easily check that
  $$\big\{n\mapsto a_1(n+h)-a_1(n)-a_1(h)\mid h\in F\big\}\subset\{0,n,2n,\dots,kn\}$$
  It follows from Theorem \ref{theorem_topvdw} that this is a family of multiple IP topological recurrence.
  Applying Lemma \ref{lemma_topvdcIP} we conclude that $\{n^2\}$ is a family of IP topological recurrence as
  desired.
\end{proof}
A combinatorial corollary of Theorem \ref{thm_sarkozy} states that any piecewise syndetic set $A\subset\N$ contains a pair of the form $\{x,x+n^2\}$ (this result also follows from a stronger statement first obtained independently by Furstenberg \cite{Furstenberg77} and S\'ark\"ozy \cite{Sarkozy78}, which involves sets of positive upper density in $\N$).
%Therefore, for any finite partition, one of the cells of the partition (the piecewise syndetic one) contains such configuration.
In addition, $n$ can be chosen from any prescribed IP-set.

  A finitistic analog of IP-sets are IP$_s$-sets, defined as follows: let $s\in\N$ and denote by ${\mathcal F}_s$ the family of non-empty subsets of $\{1,\dots,s\}$.
  Given an increasing sequence $(n_k)_{k=1}^s$ of natural numbers, define the map $n:{\mathcal F}_s\to\N$ by the formula $n_\alpha=\sum_{k\in\alpha}n_k$.
  The \emph{IP$_s$-set generated by $(n_k)$} is the set $\{n_\alpha:\alpha\in{\mathcal F}_s\}$.
  A set $A\subset\N$ that intersects non-trivially any IP$_s$ set is called an \emph{IP$_s^*$ set}.
  There exists an analog of Hindman's theorem, stating that the intersection of two IP$_s^*$-sets contains an IP$_r^*$-subset, for some $r$ which depends only on $s$ (and in particular the intersection is non-empty. For a proof of this fact see, for instance, \cite{Bergelson_Robertson14}).

  Using the PET induction scheme and applying Lemma \ref{lemma_topvdcIP} repeatedly, one can prove the general IP
  polynomial van der Waerden theorem, i.e., the fact that any family $\{a_1(n),\dots,a_k(n)\}$ where
  each $a_i\in\Z[x]$ and $a_i(0)=0$ is a family of multiple IP topological
  recurrence (see \cite{Bergelson_Leibman96}).
  %An even stronger form of the polynomial van der Waerden theorem (which can be proved using a stronger version of Lemma \ref{lemma_topvdcIP}) is the following theorem.
  One actually has the following ostensibly stronger theorem (see \cite{Bergelson_Leibman96} and \cite[Theorem 7]{Bergelson_Leibman_Ziegler11}).
\begin{theorem}\label{thm_ipsstarpolvdW}
  Let $X$ be a compact Hausdorff space, let $k\in\N$, let $T_1,\dots, T_k:X\to X$ be commuting minimal homeomorphisms, and let $U\subset X$ be a non-empty open set.
  Then for any $p_1,\dots,p_k\in\Z[x]$ with $p_i(0)=0$ there exists $s\in\N$ such that the set
  $$\left\{n\in\N:U\cap T_1^{-p_1(n)}U\cap \cdots\cap T_k^{-p_k(n)}U\neq\emptyset\right\}$$
  is IP$_s^*$
\end{theorem}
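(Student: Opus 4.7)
The proof follows the pattern of the polynomial IP van der Waerden theorem described in the paragraph preceding the statement (the single-transformation version from \cite{Bergelson_Leibman96}), with two extensions: first, Lemma \ref{lemma_topvdcIP} must be upgraded to handle several commuting minimal homeomorphisms simultaneously, and second, all the finite combinatorics must be tracked carefully enough to refine the conclusion from IP$^*$ to IP$_s^*$.

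I would begin by proving a multi-transformation analog of Lemma \ref{lemma_topvdcIP}. Given commuting homeomorphisms $T_1,\dots,T_k$ with at least $T_1$ minimal, a family of $\Z^k$-valued sequences $\{\mathbf{a}^{(j)}\}_{j=1}^{s}$ vanishing at $0$ should be declared a \emph{family of multi-recurrence} if for every IP-set $(n_\alpha)_{\alpha\in\mathcal{F}}$ and every non-empty open $U\subset X$ there exists $\alpha$ with
\[
U\cap\bigcap_{j=1}^{s}\big(T_1^{a^{(j)}_1(n_\alpha)}\cdots T_k^{a^{(j)}_k(n_\alpha)}\big)^{-1}U\neq\emptyset.
\]
The extended lemma then reads: if, for every finite $F\subset\N\cup\{0\}$, the derivative family $\{n\mapsto \mathbf{a}^{(j)}(n+h)-\mathbf{a}^{(1)}(n)-\mathbf{a}^{(j)}(h):h\in F,\,j\le s\}$ is such a family of multi-recurrence, then so is $\{\mathbf{a}^{(j)}\}_{j=1}^{s}$. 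The proof mimics that of Lemma \ref{lemma_topvdcIP} almost verbatim, with the single group element $T^{a_\ell(n)}$ replaced by the commuting product $T_1^{a^{(j)}_1(n)}\cdots T_k^{a^{(j)}_k(n)}$; the finite cover $X=\bigcup_{t=0}^{r}T_1^{-t}U$ (available because $T_1$ is minimal) yields the pigeonhole that terminates the iteration in at most $r+1$ rounds, and commutativity of the $T_i$ is precisely what makes the algebraic identity corresponding to the coordinate-wise derivative hold in the inductive step.

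With this tool in hand, I would run PET induction on the family $\{p_1(n)\mathbf{e}_1,\dots,p_k(n)\mathbf{e}_k\}$ of $\Z^k$-valued polynomials, where $\mathbf{e}_i$ is the $i$-th standard basis vector. Extending the characteristic vector of Section \ref{sec_mildmixingpet} to this vector-valued setting (ordering tuples first by the highest degree appearing in any coordinate, then lexicographically by which coordinate carries that highest-degree term and by its leading coefficient), one checks that the derivative family produced by the extended lemma has strictly smaller characteristic vector. The induction terminates at the base case of linear polynomials $c_i n\mathbf{e}_i$, which is handled by applying Theorem \ref{theorem_topvdw} to each $T_i$ individually and intersecting the resulting IP$^*$ sets as in Remark \ref{remark_iprechindman}.

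The main obstacle is the IP$_s^*$ upgrade. At each application of the extended lemma, the construction draws $r+1$ fresh generators from the IP-set, where $r$ is the cover parameter for $(X,T_1,U)$, and the inner call to the derivative family depends on a finite $F$ consisting of IP-sums of previously selected generators. Since the PET induction terminates after a number of steps that depends only on the polynomials $p_1,\dots,p_k$, a careful bookkeeping, using the finitary form of Hindman's theorem alluded to in the passage on IP$_s^*$-sets just before the theorem statement, shows that only finitely many generators are needed overall, and $s$ may be taken as the total count. Verifying that this quantitative accounting survives the vector-valued PET reduction---in particular that the set $F$ appearing at each inductive step can be absorbed into the parameters of the next step without blowing up---is the delicate part of the argument.
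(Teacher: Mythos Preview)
The paper does not prove this theorem; it is simply stated with references to \cite{Bergelson_Leibman96} and \cite{Bergelson_Leibman_Ziegler11}, and the text moves on immediately. There is thus no in-paper argument to compare your proposal against.

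Your outline follows the strategy of those sources, but the base case does not work as written. Applying Theorem \ref{theorem_topvdw} to each $T_i$ separately and intersecting the return-time sets via Remark \ref{remark_iprechindman} yields an $n$ for which each $U\cap T_i^{-c_i n}U$ is individually nonempty, but with a priori different witness points $x_i\in X$; it does not produce a single $x\in U\cap\bigcap_{i}T_i^{-c_i n}U$, which is what joint recurrence on the same space requires. The remedy is not to stop at the linear stage at all: one further application of your extended lemma sends a linear family in $k$ independent directions to one in $k-1$ directions, and iterating reaches the trivial family, which is the genuine base case. This in turn forces your PET ordering to be defined on arbitrary $\Z^k$-valued polynomial families from the outset (after even one step the derived family already mixes coordinates, since $\mathbf{a}^{(j)}(n+h)-\mathbf{a}^{(1)}(n)-\mathbf{a}^{(j)}(h)$ has nontrivial $\mathbf{e}_1$ and $\mathbf{e}_j$ components for $j\neq1$); the ordering you sketch is not quite the one in \cite{Bergelson_Leibman96}, and checking that it is well-founded and decreases under the reduction is where care is needed. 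The IP$_s^*$ bookkeeping you flag as delicate is indeed the remaining substantive issue and is carried out in \cite{Bergelson_Leibman_Ziegler11}.
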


On the other hand, even the family consisting of the single function
$a(n)=\lfloor n^\alpha\rfloor$ where $\alpha\in(0,1)$ is \emph{not} a family of multiple IP topological
recurrence.
In order to study recurrence along such sequences we will make use of the notion of central$^*$ sets in a semigroup.
We refer the reader to \cite[Definition 3.1 (b)]{Bergelson_Hindman90} for a definition of central sets.
A subset of a semigroup is called a central$^*$ set if it has non-empty intersection with every central set.
The only properties of central$^*$ sets that we will need are listed in the following proposition.%We now list the three relevant properties of central$^*$ sets.

%Since it is not easy to define central$^*$ sets without invoking the Stone-C\v ech compactification, we just list the properties of central$^*$ sets which we will need.
\begin{proposition}\

 % For every semigroup $S$ there exists a family ${\mathcal C}_S$ of subsets of $S$ (so called central$^*$ sets) with the following properties:
  \begin{enumerate}
    %\item $S\in{\mathcal C}_S$.
    \item If $A,B$ are central$^*$, then so is the intersection $A\cap B$.
    \item If $\phi:S\to R$ is a semigroup isomorphism and $C\subset S$ is central$^*$, then so is $\phi(C)\subset R$.
    \item \cite[Theorem 3.5]{Bergelson_Hindman94} If $A\subset\N$ is an IP$_s^*$ set, then $A$ is central$^*$ in the semigroup $(\N,\times)$.
\end{enumerate}
\end{proposition}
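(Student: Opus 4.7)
The plan is to deduce (1) and (2) from the standard ultrafilter characterization of central and central$^*$ sets, while (3) is quoted directly from \cite{Bergelson_Hindman94}.

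The key preliminary observation is the following reformulation: a set $A\subset S$ is central$^*$ if and only if $A$ belongs to every minimal idempotent ultrafilter in $\beta S$. One direction is immediate from the definition: if $A\in p$ for every minimal idempotent $p$, and $C\subset S$ is central, then $C\in q$ for some minimal idempotent $q$, so $A\cap C\in q$ and is in particular nonempty, showing $A$ meets every central set. Conversely, if some minimal idempotent $p$ fails to contain $A$, then the complement $S\setminus A$ lies in $p$ and is therefore central, yet $A\cap(S\setminus A)=\emptyset$ contradicts $A$ being central$^*$.

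With this characterization in hand, (1) is immediate: if $A$ and $B$ both lie in every minimal idempotent $p\in\beta S$, then so does $A\cap B$, because ultrafilters are closed under finite intersections. For (2), I would use the standard fact that a semigroup isomorphism $\phi:S\to R$ extends to a topological and algebraic isomorphism $\tilde\phi:\beta S\to\beta R$, acting on ultrafilters by pushforward $p\mapsto\{U\subset R:\phi^{-1}(U)\in p\}$. Since $\tilde\phi$ is an isomorphism of compact right-topological semigroups, it carries the smallest two-sided ideal of $\beta S$ bijectively onto that of $\beta R$, and in particular bijects minimal idempotents. As $C\in p$ if and only if $\phi(C)\in\tilde\phi(p)$, the property of belonging to every minimal idempotent is preserved under $\phi$, establishing (2).

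Part (3) is precisely \cite[Theorem 3.5]{Bergelson_Hindman94} and I would invoke it as a black box; the substantive content is a Ramsey-theoretic argument showing that the additive rigidity of IP$_s^*$-sets forces them to intersect every multiplicatively central set. The only mild subtlety in the arguments for (1) and (2) is to be careful in (2) that $\phi$ being a \emph{semigroup} isomorphism (not merely a bijection of underlying sets) is what guarantees $\tilde\phi$ respects the semigroup structure of $\beta S$, and hence carries its minimal ideal onto the minimal ideal of $\beta R$; I would not expect any real obstacle beyond bookkeeping.
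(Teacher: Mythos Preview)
Your argument is correct. The ultrafilter characterization of central$^*$ sets (membership in every minimal idempotent of $\beta S$) is standard, and from it (1) follows from closure of ultrafilters under finite intersections, while (2) follows from the fact that a semigroup isomorphism extends to an isomorphism of Stone--\v Cech compactifications as compact right-topological semigroups, hence bijects minimal idempotents. Citing (3) as a black box is exactly what the paper does.

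However, note that the paper does \emph{not} supply a proof of this proposition at all: it simply lists these three properties as the facts about central$^*$ sets it will need, with (3) attributed to \cite{Bergelson_Hindman94}, and then moves on. So there is no ``paper's own proof'' to compare against; your write-up goes beyond what the paper offers by actually justifying (1) and (2). The approach you take is the standard one and is entirely appropriate.
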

%Elements of ${\mathcal C}$ are called central$^*$.
We prove below the following theorem:

\begin{theorem}\label{thm_centralstarn^m}
  Let $k,m\in\N$, let $X$ be a compact Hausdorff space, let $k\in\N$, let $T_1,\dots, T_k:X\to X$ be commuting minimal homeomorphisms, let $U\subset X$ be a nonempty open set and, for each $i\in\{1,\dots,k\}$ let $a_i\in\Z[x]$ with $a_i(0)=0$ and let $b_i(n)=\left\lfloor a_i\Big(n^{1/m}\Big)\right\rfloor$.
  Then the set
  $$\left\{n:U\cap T_1^{-b_1(n)}U\cap\cdots\cap T_k^{-b_k(n)}U\neq\emptyset\right\}$$
  is central$^*$ in the semigroup of $m$-th powers, $\big(\{n^m:n\in\N\},\times\big)$.
\end{theorem}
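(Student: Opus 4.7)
The plan is to reduce the theorem to the additive polynomial recurrence result Theorem~\ref{thm_ipsstarpolvdW} via the semigroup isomorphism $\phi:(\N,\times)\to(\{n^m:n\in\N\},\times)$ given by $\phi(\ell)=\ell^m$. The key observation that makes the reduction possible is that along $m$-th powers the floor in the definition of $b_i$ is cosmetic: since $a_i\in\Z[x]$, for every $\ell\in\N$ one has $a_i(\ell)\in\Z$, whence
$$b_i(\ell^m)=\lfloor a_i(\ell)\rfloor=a_i(\ell).$$

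The argument then proceeds in three steps. First, I would apply Theorem~\ref{thm_ipsstarpolvdW} to the polynomials $a_1,\dots,a_k$ (which all vanish at $0$) and the commuting minimal homeomorphisms $T_1,\dots,T_k$, obtaining some $s\in\N$ such that
$$R:=\{\ell\in\N:U\cap T_1^{-a_1(\ell)}U\cap\cdots\cap T_k^{-a_k(\ell)}U\neq\emptyset\}$$
is IP$_s^*$ in $\N$. Second, I would invoke property~(3) of the listed properties of central$^*$ sets to conclude that $R$ is central$^*$ in the multiplicative semigroup $(\N,\times)$. Third, since $\phi$ is a semigroup isomorphism, property~(2) transfers this to the image $\phi(R)=\{\ell^m:\ell\in R\}$, which is therefore central$^*$ in $(\{n^m:n\in\N\},\times)$. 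The identity from the previous paragraph then shows that $\phi(R)$ is exactly the intersection of the set in the statement with the semigroup of $m$-th powers, so the proof is complete.

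There is no substantial obstacle here: the argument is a clean composition of ingredients already supplied in the excerpt. The only conceptual point is the matching between the additive polynomial recurrence captured by Theorem~\ref{thm_ipsstarpolvdW} on $(\N,+)$ and the desired multiplicative central$^*$ statement on the $m$-th powers. This matching is mediated by the fact that $\ell\mapsto\ell^m$ serves two roles simultaneously: it is the semigroup isomorphism that transports central$^*$ along property~(2), and it is the reindexing under which the sequences $b_i$ reduce to the polynomials $a_i$. Once one notices that these two roles agree, the theorem follows by quoting the cited tools.
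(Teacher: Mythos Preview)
Your proposal is correct and follows essentially the same route as the paper: apply Theorem~\ref{thm_ipsstarpolvdW} to the polynomials $a_i$ to obtain an IP$_s^*$ set, upgrade to central$^*$ in $(\N,\times)$ via property~(3), and then push forward through the isomorphism $\ell\mapsto\ell^m$ using property~(2), noting that $b_i(\ell^m)=a_i(\ell)$. The only minor difference is that the paper phrases the last step as an inclusion $\phi(R)\subset R_b$ rather than an equality, but either suffices since supersets of central$^*$ sets are central$^*$.
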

\begin{corollary}\label{cor_sqrtrectimesintersection}
  Let $k,t,m\in\N$. % and let $a_i^{(j)}\in\Z[x]$ with $a_i^{(j)}(0)=0$ for every $1\leq i\leq k$, $1\leq j\leq t$.
  For each $j\in\{1,\dots,t\}$ let $X_j$ be a compact Hausdorff space, let $T_{j,1},\dots, T_{j,k}:X_j\to X_j$ be commuting minimal homeomorphisms, let $U_j\subset X_j$ be open and, for each $i\in\{1,\dots,k\}$ let $a_i^{(j)}\in\Z[x]$ with $a_i^{(j)}(0)=0$ and let $b_i^{(j)}(n)=\left\lfloor a_i^{(j)}\Big(n^{1/m}\Big)\right\rfloor$.
  Then there exists $n\in\N$ such that for each $j\in\{1,\dots,t\}$ we have
  $$U_j\cap T_{j,1}^{-b_1^{(j)}(n)}U_j\cap\cdots\cap T_{j,k}^{-b_k^{(j)}(n)}U_j\neq\emptyset$$
\end{corollary}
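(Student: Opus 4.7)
The plan is to reduce the corollary to Theorem \ref{thm_centralstarn^m} applied separately to each index $j$, and then close up using the fact that central$^*$ sets are closed under finite intersections in a fixed semigroup. The key structural observation is that the exponent $m$ in the statement is the \emph{same} for all $j$, so all the ``return-time'' sets obtained from Theorem \ref{thm_centralstarn^m} will be central$^*$ in one common semigroup.

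First, I would fix the multiplicative semigroup $S := (\{n^m : n \in \N\}, \times)$. For each $j \in \{1,\dots,t\}$, apply Theorem \ref{thm_centralstarn^m} to the system $(X_j, T_{j,1},\dots,T_{j,k})$ together with the polynomials $a_1^{(j)},\dots,a_k^{(j)}$ and the open set $U_j$. This produces, for each $j$, a set
$$S_j := \left\{n \in \N : U_j \cap T_{j,1}^{-b_1^{(j)}(n)} U_j \cap \cdots \cap T_{j,k}^{-b_k^{(j)}(n)} U_j \neq \emptyset\right\}$$
that is central$^*$ in $S$. Crucially, the ambient semigroup $S$ does not depend on $j$ because $m$ is fixed.

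Next, I would invoke property (1) of the proposition listing the properties of central$^*$ sets, which says that the intersection of two (and hence, by induction, finitely many) central$^*$ subsets of a fixed semigroup is again central$^*$. Applying this to $S_1,\dots,S_t\subset S$ yields that $\bigcap_{j=1}^t S_j$ is central$^*$ in $S$, and in particular nonempty (since any central$^*$ set must meet every central set, and central sets exist in every semigroup). Any $n$ in this intersection simultaneously witnesses the desired nonempty intersection for every $j\in\{1,\dots,t\}$, proving the corollary.

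There is no real obstacle here beyond bookkeeping: the corollary is a direct consequence of Theorem \ref{thm_centralstarn^m} together with the lattice-like closure of central$^*$ sets under finite intersection. The only point meriting attention is the observation that all the $S_j$ are central$^*$ in the \emph{same} semigroup $S$ — which is why the statement of the corollary fixes a common $m$ across all $t$ systems; if different $m_j$'s were allowed one would need an additional compatibility argument (for instance, passing to a semigroup of common powers).
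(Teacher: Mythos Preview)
Your proposal is correct and matches the paper's intended argument: the paper does not give an explicit proof of this corollary, but its placement immediately after Theorem~\ref{thm_centralstarn^m} and the proposition listing properties of central$^*$ sets makes clear that exactly your reasoning---apply the theorem to each $j$ to get central$^*$ sets in the common semigroup $(\{n^m:n\in\N\},\times)$, then intersect using property~(1)---is what is meant. Your remark about why a common $m$ is needed is also on point.
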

Here is a combinatorial application of Corollary \ref{cor_sqrtrectimesintersection}.
\begin{corollary}\label{cor_lastone}
  Let $k,t,m\in\N$. % and let $a_i^{(j)}\in\Z[x]$ with $a_i^{(j)}(0)=0$ for every $1\leq i\leq k$, $1\leq j\leq t$.
  For each $j\in\{1,\dots,t\}$ let $A_j\subset\N^k$ be a piecewise syndetic set and, for each $i\in\{1,\dots,k\}$ let $a_i^{(j)}\in\Z[x]$ with $a_i^{(j)}(0)=0$ and let $b_i^{(j)}(n)=\left\lfloor a_i^{(j)}\Big(n^{1/m}\Big)\right\rfloor$.
  Then there exists $n\in\N$ such that for each $j\in\{1,\dots,t\}$ there exists $x_j\in\N^k$ such that
  $$\Big\{x_j+b_i^{(j)}(n)\cdot e_i:i\in\{1,\dots,k\}\Big\}\subset A_j$$
  where $\{e_1,\dots,e_k\}$ form the canonical basis for $\N^d$.
\end{corollary}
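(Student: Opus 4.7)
The plan is to apply Furstenberg's correspondence principle to translate the combinatorial claim into a statement about simultaneous topological multiple recurrence for commuting $\Z^k$-actions, and then invoke Corollary \ref{cor_sqrtrectimesintersection}.

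First, for each $j \in \{1,\dots,t\}$ I will construct a compact dynamical system $(X_j, T_{j,1}, \dots, T_{j,k})$ of commuting minimal homeomorphisms, together with a non-empty open set $U_j \subset X_j$, enjoying the following transfer property: whenever $n_1,\dots,n_k\in\N$ satisfy
$$U_j \cap T_{j,1}^{-n_1}U_j \cap \cdots \cap T_{j,k}^{-n_k}U_j \neq \emptyset,$$
there exists $x_j \in \N^k$ with $\{x_j + n_i e_i : i \in \{1,\dots,k\}\} \subset A_j$. The construction follows the standard Furstenberg pattern: view $\omega_j := 1_{A_j}$ as an element of $\Omega := \{0,1\}^{\Z^k}$ (extending by zero outside $\N^k$), equip $\Omega$ with the coordinate shift homeomorphisms $S_1, \dots, S_k$, and let $X_j$ be a minimal subsystem of the $\Z^k$-orbit closure of $\omega_j$ that meets the clopen cylinder $\{x : x(0) = 1\}$; then take $T_{j,i} := S_i|_{X_j}$ and $U_j := X_j \cap \{x : x(0)=1\}$.

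Next, the transfer property follows from the usual approximation argument: any point $y$ in the asserted intersection satisfies $y(0) = y(n_1 e_1) = \cdots = y(n_k e_k) = 1$, and since $y$ lies in the closure of the orbit of $\omega_j$, these finitely many coordinate conditions determine an open neighborhood of $y$ containing some genuine translate $S^w \omega_j$ with $w \in \N^k$ large enough that $w + n_i e_i \in \N^k$ for every $i$; reading off coordinates yields $\omega_j(w + n_i e_i) = 1$ for each $i$, so setting $x_j := w$ gives the required configuration. Having produced the systems, I apply Corollary \ref{cor_sqrtrectimesintersection} to the finite collection $(X_j, T_{j,1}, \dots, T_{j,k}, U_j)_{j=1}^{t}$ with the polynomials $a_i^{(j)}$ to obtain a single $n \in \N$ making all the intersections $U_j \cap T_{j,1}^{-b_1^{(j)}(n)}U_j \cap \cdots \cap T_{j,k}^{-b_k^{(j)}(n)}U_j$ simultaneously non-empty; the transfer property then furnishes the sought $x_j$ for each $j$.

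The main obstacle is verifying, at the outset, that the minimal subsystem $X_j$ does meet the cylinder $\{x(0) = 1\}$ non-trivially — this is precisely where the piecewise syndeticity of $A_j$ enters essentially. The standard argument proceeds by exploiting the thick portion of $A_j$: one first extracts boxes $v_n + [0,N_n]^k$ (with $N_n \to \infty$) on which $A_j$ is syndetic with a uniform density, then chooses $u_n \in v_n + [0,N_n]^k$ with $u_n \in A_j$ and passes to a subsequential limit of $S^{u_n}\omega_j$, producing a point $\tilde{\omega}_j$ whose orbit closure is entirely "visited" by the cylinder $\{x(0)=1\}$ along a syndetic set of shifts; any minimal subsystem of this closure therefore meets the cylinder. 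The remaining verifications are routine.
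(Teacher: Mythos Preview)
The paper does not give an explicit proof of Corollary~\ref{cor_lastone}; it presents it as a direct combinatorial application of Corollary~\ref{cor_sqrtrectimesintersection} via the Furstenberg--Weiss correspondence discussed earlier in the section. Your proposal carries out precisely this implicit argument and is correct in outline: build a minimal $\Z^k$-system from the orbit closure of $1_{A_j}$, use piecewise syndeticity to guarantee that some minimal subsystem meets the cylinder $\{x(0)=1\}$, and then invoke Corollary~\ref{cor_sqrtrectimesintersection}.

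Two small points worth tightening. First, your transfer property is stated for $n_1,\dots,n_k\in\N$, but the integers $b_i^{(j)}(n)$ may well be negative (nothing forces the polynomials $a_i^{(j)}$ to take nonnegative values); you already handle this correctly in the approximation step by choosing $w$ far enough into $\N^k$, so just state the transfer property for $n_i\in\Z$. Second, the last paragraph could be made cleaner by noting that if the limit point $\tilde\omega_j$ has syndetic support, then finitely many shifts of the clopen cylinder cover the orbit closure of $\tilde\omega_j$, and hence cover any minimal subsystem $Y$; since $Y$ is $S$-invariant this forces $Y\cap\{x(0)=1\}\neq\emptyset$. With these cosmetic fixes the argument is complete and matches the intended route.
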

%A concrete illustration of this corollary is the following:
\begin{proof}[Proof of Theorem \ref{thm_centralstarn^m}]
Let
$$R_a:=\left\{n:\bigcap_{i=0}^k T^{-a_i(n)}U\neq\emptyset\right\}\quad\text{and}\quad R_b:=\left\{n:\bigcap_{i=0}^k T^{-b_i(n)}U\neq\emptyset\right\}$$
It follows from Theorem \ref{thm_ipsstarpolvdW} that $R_a$ is an IP$_s^*$ set for some $s\in\N$.
Hence, by \cite[Theorem 3.5]{Bergelson_Hindman94} we deduce that $R_a$ is central$^*$ in the semigroup $(\N,\times)$.
Let $R=\{n^m:n\in R_a\}$.

It is clear that $R\subset R_b$.
It thus suffices to show that $R$ is a central$^*$ subset of the semigroup $\big(\{n^m:n\in\N\},\times\big)$.
Indeed, the map $n\mapsto n^m$ is a semigroup isomorphism between $(\N,\times)$ and $\big(\{n^m:n\in\N\},\times\big)$, and hence the image of any central$^*$ set is again a central$^*$ set in the image semigroup.
\end{proof}

We conclude this section (and the paper) with a concrete example illustrating Corollary \ref{cor_lastone}.
\begin{example}
  For any piecewise syndetic %\footnote{A subset $A\subset\N$ is called piecewise syndetic if it is the intersection $A=S\cap T$ of a set $S$ with bounded gaps and a set $T$ containing intervals of any finite length. For any finite partition of $\N$, one of the cells is piecewise syndetic.}
  sets $A,B\subset\N$ there exist $x,y,n\in\N$ such that
  $$\big\{x,x+\lfloor \sqrt{n}\rfloor,x+\lfloor n^{7/2}\rfloor\big\}\subset A\qquad\big\{y,y+\lfloor n^{5/2}\rfloor,y+\lfloor n^{7/2}\rfloor\big\}\subset B$$
\end{example}

\paragraph{\textbf{Acknowledgements}}
The authors thank Donald Robertson for helpful comments regarding an earlier draft of this paper.

\bibliography{refs-joel}
\bibliographystyle{plain}

\end{document}